\RequirePackage{fix-cm}
\documentclass{amsart} 
\usepackage[utf8]{inputenc}
\usepackage{mathptmx}
\usepackage{latexsym}
\usepackage{amsmath}
\usepackage{amssymb}
\usepackage{amsfonts} 
\usepackage{eucal} 
\usepackage{amsbsy}
\usepackage{amsthm}
\usepackage[all]{xy}
\usepackage{hyperref}
\usepackage{graphicx}
\usepackage{xcolor}

\textwidth13cm
\oddsidemargin1.3cm
\evensidemargin1.3cm

\newtheorem{thm}{Theorem}[section]
\newtheorem*{thm*}{Theorem}
\newtheorem{lemma}[thm]{Lemma}
\newtheorem{prop}[thm]{Proposition}
\newtheorem{cor}[thm]{Corollary}
\newtheorem*{cor*}{Corollary}

\theoremstyle{definition}
\newtheorem{defn}[thm]{Definition}
\newtheorem{example}[thm]{Example}

\theoremstyle{remark}
\newtheorem{remark}[thm]{Remark}

\newcommand {\Aa}    {\ensuremath{\mbox{$\mathcal{A}$}}}
\newcommand {\Ba}    {\ensuremath{\mbox{$\mathcal{B}$}}}
\newcommand {\Ca}    {\ensuremath{\mbox{$\mathcal{C}$}}}

\newcommand {\Ea}    {\ensuremath{\mbox{$\mathcal{E}$}}}
\newcommand {\Fa}    {\ensuremath{\mbox{$\mathcal{F}$}}}

\newcommand {\Ha}    {\ensuremath{\mbox{$\mathcal{H}$}}}

\newcommand {\Ja}    {\ensuremath{\mbox{$\mathcal{J}$}}}

\newcommand {\Sa}    {\ensuremath{\mbox{$\mathcal{S}$}}}
\newcommand {\Ta}    {\ensuremath{\mbox{$\mathcal{T}$}}}

\newcommand {\Xa}    {\ensuremath{\mbox{$\mathcal{X}$}}}
\newcommand {\Ya}    {\ensuremath{\mbox{$\mathcal{Y}$}}}

\newcommand {\mbl}   {\ensuremath{\mathbb{L}}}

\newcommand {\real}  {\ensuremath{\mathbb{R}}}
\newcommand {\intg}  {\ensuremath{\mathbb{Z}}}

\newcommand {\cplx}  {\ensuremath{\mathbb{C}}}
\newcommand {\rat}   {\ensuremath{\mathbb{Q}}}

\newcommand {\Hom}   {\ensuremath{\operatorname{Hom}}}
\newcommand {\Tor}   {\ensuremath{\operatorname{Tor}}}

\newcommand {\im}    {\operatorname{im}}
\newcommand {\rk}    {\operatorname{rk}}

\newcommand {\omwt}  {\ensuremath{\Omega^{\operatorname{Witt}}}}

\newcommand {\ch}    {\ensuremath{\operatorname{ch}}}

\newcommand {\smlhf} {\ensuremath{\mbox{$\frac{1}{2}$}}}

\newcommand {\pro}   {\ensuremath{\operatorname{pr}}}

\newcommand {\redh}  {\ensuremath{\widetilde{H}}}

\newcommand {\syml}  {\ensuremath{\mathbb{L}^{\bullet}}}
\newcommand {\redsyml}  {\ensuremath{\widetilde{\mathbb{L}^\bullet}}}

\newcommand {\cone}  {\ensuremath{\operatorname{cone}}}

\newcommand {\pt}    {\ensuremath{\operatorname{pt}}}
\newcommand {\id}    {\ensuremath{\operatorname{id}}}

\newcommand {\Ob}   {\ensuremath{\operatorname{Ob}}}

\newcommand {\BF}   {\ensuremath{\operatorname{BF}}}

\newcommand {\BG}   {\ensuremath{\operatorname{BG}}}

\newcommand {\BSO}   {\ensuremath{\operatorname{BSO}}}

\newcommand {\BSPL}   {\ensuremath{\operatorname{BSPL}}}

\newcommand {\BBSPL}   {\ensuremath{\operatorname{B}\widetilde{\operatorname{SPL}}}}
\newcommand {\BBPL}   {\ensuremath{\operatorname{B}\widetilde{\operatorname{PL}}}}
\newcommand {\BSTOP}   {\ensuremath{\operatorname{BSTOP}}}

\newcommand {\MSO}   {\ensuremath{\operatorname{MSO}}}

\newcommand {\RMSO}   {\ensuremath{\widetilde{\operatorname{MSO}}}}
\newcommand {\MSPL}   {\ensuremath{\operatorname{MSPL}}}

\newcommand {\RMSPL}   {\ensuremath{\widetilde{\operatorname{MSPL}}}}
\newcommand {\MSTOP}   {\ensuremath{\operatorname{MSTOP}}}

\newcommand {\RMSTOP}   {\ensuremath{\widetilde{\operatorname{MSTOP}}}}
\newcommand {\SO}   {{\ensuremath{\operatorname{SO}}}}
\newcommand {\SPL}   {{\ensuremath{\operatorname{SPL}}}}
\newcommand {\PL}   {{\ensuremath{\operatorname{PL}}}}
\newcommand {\TPL}   {{\ensuremath{\widetilde{\operatorname{PL}}}}}
\newcommand {\STOP}   {{\ensuremath{\operatorname{STOP}}}}

\newcommand {\TOP}   {{\ensuremath{\operatorname{TOP}}}}

\newcommand {\PLB}   {{\ensuremath{\operatorname{PLB}}}}

\newcommand {\Witt}   {\ensuremath{\operatorname{Witt}}}
\newcommand {\MWITT}   {\ensuremath{\operatorname{MWITT}}}

\newcommand {\Th}   {\ensuremath{\operatorname{Th}}}

\newcommand {\Oo}    {\ensuremath{\mathcal{O}}}

\newcommand {\pr}  {\ensuremath{\mathbb{P}}}

\newcommand {\alg}    {\ensuremath{{\operatorname{alg}}}}
\newcommand {\BM}    {\ensuremath{{\operatorname{BM}}}}

\newcommand {\td}    {\ensuremath{{\operatorname{td}}}}
\newcommand {\MHM}    {\ensuremath{{\operatorname{MHM}}}}
\newcommand {\Sp}    {\ensuremath{\operatorname{Sp}}}
\newcommand {\cl}    {\ensuremath{\operatorname{cl}}}
\newcommand {\Per}    {\ensuremath{\operatorname{Per}}}
\newcommand {\qrat}    {\ensuremath{\operatorname{rat}}}
\newcommand {\Bl}    {\ensuremath{\operatorname{Bl}}}

\newcommand {\Sing}    {\ensuremath{\operatorname{Sing}}}

\begin{document}

%******************** TITLE PAGE *****************

\title[Gysin Restriction of Topological and Hodge-Theoretic
Characteristic Classes]{Gysin Restriction of Topological and Hodge-Theoretic
Characteristic Classes for Singular Spaces}

\author{Markus Banagl}

\address{Mathematisches Institut, Universit\"at Heidelberg,
  Im Neuenheimer Feld 205, 69120 Heidelberg, Germany}

\email{banagl@mathi.uni-heidelberg.de}

\thanks{This work is supported by Deutsche Forschungsgemeinschaft (DFG) 
 under Germany’s Excellence Strategy EXC-2181/1 - 390900948 
 (the Heidelberg STRUCTURES Cluster of Excellence).}

\date{October, 2019}

\subjclass[2010]{57R20, 55R12, 55N33, 57N80, 32S60, 32S20, 14J17, 32S35, 14C30,
                 14C17, 57R40, 55N22, 57Q20, 55R60, 57Q50, 32S50}
% 57R20   	Characteristic classes and numbers (manifolds and cell complexes) 
% 57R40   	Embeddings (manifolds and cell complexes)
% 55N22   	Bordism and cobordism theories, formal group laws 
% 55N33   	Intersection homology and cohomology 
% 55R12   	Transfer 
% 55R60   	Microbundles and block bundles (fiber spaces and bundles in alg. top.) 
% 57N80   	Stratifications 
% 57Q20   	Cobordism (in PL topology) 
% 57Q50   	Microbundles and block bundles (in PL topology) 
% 14C17   	Intersection theory, characteristic classes, intersection multiplicities (alg. geo.)
% 14C30   	Transcendental methods, Hodge theory, Hodge conjecture 
% 32S20   	Global theory of singularities; cohomological properties 
% 14J17   	Singularities (surfaces and higher dimensional varieties) 
% 32S35   	Mixed Hodge theory of singular varieties 
% 32S50   	Topological aspects: Lefschetz theorems, topological classification, invariants 
% 32S60   	Stratifications; constructible sheaves; intersection cohomology 

\keywords{Gysin homomorphism, Characteristic Classes, Singularities, Stratified Spaces, 
          Intersection Homology, Bordism, Algebraic L-Theory, 
          Verdier-Riemann-Roch formulae, Hodge theory}

%********************** ABSTRACT ****************************

\begin{abstract}
We establish formulae
that explain how the topological Goresky-MacPherson 
characteristic L-classes as well as the Hodge-theoretic 
Hirzebruch characteristic classes
defined by Brasselet, Sch\"urmann and Yokura
transform
under Gysin restrictions associated to normally nonsingular
embeddings of singular spaces. We find that both types of classes
transform in the same manner. We give a first application of these
formulae in obtaining algebraic rigidity results for topologically
homeomorphic projective varieties.
\end{abstract}

\maketitle

%******************** TABLE OF CONTENTS *********************

\tableofcontents

%=================================================================
%=================================================================
%=================================================================

\section{Introduction}

We establish Verdier-Riemann-Roch type formulae
that describe the behavior of both topological 
characteristic $L$-classes and Hodge-theoretic characteristic classes
under Gysin restrictions associated to normally nonsingular
embeddings of singular spaces.

In \cite{hirzebruch}, Hirzebruch introduced cohomological $L$-classes $L^*$ for smooth
manifolds as certain polynomials with rational coefficients in the
tangential Pontrjagin classes. In view of the signature theorem,
Thom described the Poincar\'e duals of these $L$-classes
by organizing the signatures of submanifolds with trivial normal
bundle into a homology class, using global transversality and
bordism invariance of the signature.
For oriented compact polyhedral pseudomanifolds $X$, stratified without
strata of odd codimension, Goresky and MacPherson employed their
intersection homology to obtain a bordism invariant signature
and thus, using Thom's method, $L$-classes
$L_i (X)\in H_i (X;\rat)$ in (ordinary) homology, \cite{gmih1}.
It turned out later that one can move well beyond spaces with
only strata of even codimension:
A first step was the work \cite{siegel} of Siegel, yielding
$L$-classes for Witt spaces. A general treatment of
$L$-classes for arbitrary pseudomanifolds has been given 
in \cite{banagl-mem} and \cite{banagl-lcl},
where a local obstruction theory in terms of Lagrangian
structures along strata of odd codimension is described.

Due on the one hand to their close relation to the normal invariant
map on the set of homotopy smoothings of a Poincar\'e complex,
and on the other hand 
to their remarkable invariance under homeomorphisms, discovered by
Novikov, Hirzebruch's $L$-classes have come to occupy a central role
in high-dimensional manifold classification theory.
A particularly striking illustration is a classical result of
Browder and Novikov, which can readily be deduced from the
smooth surgery exact sequence:
a closed, smooth, simply connected manifold
of even dimension at least $5$ is determined, up to
finite ambiguity, by its homotopy type and its $L$-classes.
By work of Cappell and Weinberger (\cite{cw2}, \cite{weinberger}),
the Goresky-MacPherson $L$-class can be assigned a similar role
in the global classification of singular spaces, and it is still a topological invariant. 
But much less is known about the transformational laws that
govern its behavior, and this is reflected in the sparsity of
concrete calculations that have been carried out.
This is particularly true for complex algebraic varieties, where
the author is presently only aware of the formulae
obtained by Maxim and Schürmann in \cite{ms} for
simplicial, projective toric varieties. (Actually, such varieties are orbifolds,
and hence rational homology manifolds, so it is really the Thom-Milnor
$L$-class of a rational homology manifold that is being calculated, and
intersection homology is not required there.)
A well-known general result in the algebraic setting is the decomposition 
theorem \cite{bbd}, but determining the supports in the decomposition, as well as resolving
the twisting of $L$-classes by
local systems, often present challenges in concrete examples.

This lack of effective computations can be traced back to
two main issues: First, and most obviously, the
missing naturality and limited multiplicativity stemming from the fact
that characteristic classes of singular spaces live in homology and cannot
generally be lifted to cohomology.
Second, triviality of normal bundles, germane to the topological nature of 
Goresky-MacPherson-Thom's construction, is in fact not very natural 
in algebraic geometry. For example, it prevents one in practice to build
$L$-class calculations involving
transverse planar sections of singular projective varieties.
From this point of view, one should thus seek to incorporate nontrivial
normal geometry into the singular $L$-class picture, 
and this is what we do in the first half of the present paper.
An oriented normally nonsingular inclusion $g: Y\hookrightarrow X$ 
of real codimension $c$ has a \emph{Gysin homomorphism} 
$g^!: H_* (X) \to H_{*-c} (Y)$
on ordinary homology.
Our Theorem \ref{thm.lclassgysin} asserts:
\begin{thm*}
Let $g: Y \hookrightarrow X$ be a normally nonsingular inclusion of closed
oriented even-dimensional piecewise-linear Witt pseudomanifolds. Let $\nu$ be the 
topological normal bundle of $g$.
Then
\[ g^! L_* (X) = L^* (\nu) \cap L_* (Y). \]
\end{thm*}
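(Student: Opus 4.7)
My strategy is to mimic the smooth-manifold case and bridge the singular gap sheaf-theoretically. For a smooth oriented embedding $g\colon N \hookrightarrow M$ of closed manifolds the result is immediate: the Whitney sum $g^*TM \cong TN \oplus \nu$ gives $g^* L^*(M) = L^*(N)\cdot L^*(\nu)$, and combined with $g^![M]=[N]$ and the projection formula $g^!(\alpha\cap\beta)=g^*\alpha\cap g^!\beta$ one obtains $g^!L_*(M)=g^*L^*(M)\cap[N]=L^*(\nu)\cap L_*(N)$. In the singular Witt setting no cohomological $L^*(X)$ is available, so I would replace the identity $L_*(X)=L^*(X)\cap[X]$ by the sheaf-theoretic statement that $L_*(X)$ is the Cappell-Shaneson L-class of the self-dual intersection chain sheaf $\ic_X$.

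The first step is to identify $\ic_X$ near $Y$. By the definition of normal nonsingularity, $Y$ has an open tubular neighborhood $U\subset X$ stratified-homeomorphic to the total space $E(\nu)$ of the oriented topological normal bundle, with $Y$ appearing as the zero-section and the $X$-stratification on $U$ equal to the pullback of the $Y$-stratification along the bundle projection $\pi\colon E(\nu)\to Y$. The axiomatic characterization of intersection chain sheaves then yields a canonical isomorphism $\ic_X|_U\simeq\pi^*\ic_Y$ in the constructible derived category, since the smooth $\real^c$-fibers contribute nothing to the local intersection cohomology along strata. Restricting along the zero section gives $g^*\ic_X\simeq\ic_Y$, and consequently $g^!\ic_X$ is a shift of $\ic_Y$ by an even integer (because the codimension $c$ is even, as both $\dim X$ and $\dim Y$ are even). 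Such even shifts preserve both self-duality and the Cappell-Shaneson L-class, so $L_*(g^!\ic_X)=L_*(Y)$.

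The second and main step is a Verdier-Riemann-Roch formula for Cappell-Shaneson L-classes under oriented NNS Gysin restriction,
\[
g^! L_*(\shs) \;=\; L^*(\nu) \cap L_*(g^!\shs),
\]
valid for every self-dual constructible sheaf $\shs$ on $X$. Applied to $\shs=\ic_X$ and combined with the identification from Step~1, this yields the theorem. I expect this VRR formula itself to be the main obstacle. My plan for it is to exploit the bordism/signature characterization of $L_*$: L-classes are determined by pairings with cohomology, computable as Witt signatures of transverse preimages, and under the NNS hypothesis the relevant preimages in $X$ fiber as affine slices of $\nu$ over the corresponding transverse preimages in $Y$. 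The Chern-Hirzebruch-Serre multiplicativity of the signature in oriented fiber bundles then produces precisely the $L^*(\nu)$ correction, while the even parity of $c$ keeps orientation signs and degree shifts consistent throughout.
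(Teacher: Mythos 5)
The reduction in your Step~1 is sound: for a normally nonsingular inclusion of even codimension $2c$ one indeed has $g^!\simeq g^*[-2c]$ on the constructible derived category, and the axiomatic characterization of the intersection chain sheaf forces $g^*\ic_X[-c]\simeq \ic_Y$; since $2c$ is even this preserves self-duality together with the Cappell--Shaneson $L$-class, so $L_*(g^!\ic_X)=L_*(Y)$. And the smooth-manifold derivation at the top is the standard one, which the paper also records as motivation.

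The gap is in Step~2, which is where the entire content of the theorem is concentrated, and the sketch you give does not carry the weight placed on it. The Chern--Hirzebruch--Serre theorem produces the multiplicativity $\sigma(E)=\sigma(F)\cdot\sigma(B)$ for a fiber bundle with trivial $\pi_1(B)$-action on $H^*(F;\rat)$; it gives a \emph{product of signatures}, and by itself produces no $L^*(\nu)$-correction whatsoever. To extract an $L^*(\nu)$-weighted formula from a fiber bundle one would need something of the strength of the Atiyah family signature theorem, which is a substantially stronger statement and still would not apply here, because the transverse preimage $F^{-1}(q)\subset X$ of a representative of the Gysin pushforward $g_!(\eta)$ is not in general a fiber bundle over the corresponding transverse preimage in $Y$: the Thom class of a nontrivial $\nu$ is not represented by a map to a sphere, so $g_!(\eta)$ does not factor through a smash $\Th(\nu)\to S^{c}\wedge S^{m-c}$, and the claimed fibering by ``affine slices of $\nu$'' breaks down exactly when $\nu$ is nontrivial. (When $\nu$ is trivial the slicing does work, but then the formula collapses to $g^!L_*(X)=L_*(Y)$, which is the Cappell--Shaneson result already cited in the paper as the known trivial-normal-bundle case.) In short, the Verdier--Riemann--Roch formula $g^!L_*(\shs)=L^*(\nu)\cap L_*(g^!\shs)$ for self-dual sheaves is, modulo Step~1, logically equivalent to the theorem itself, and the proposed route via transverse preimages and CHS multiplicativity does not establish it.

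For contrast, the paper proves the theorem by a completely different mechanism that never touches signatures of transverse preimages directly: it first shows $g^![X]_{\Witt}=[Y]_{\Witt}$ in Witt bordism by an explicitly geometric mock-bundle cap-product computation (Theorem~\ref{thm.gysinpreserveswittfundclass}), then transports this along the ring map $\tau\colon\MWITT\to\syml(\rat)$ to obtain $g^![X]_\mbl=[Y]_\mbl$ in $\syml(\rat)$-homology (Theorem~\ref{thm.gysinpreserveslhomfundclass}), and only then rationalizes. The factor $L^*(\nu)$ enters through Ranicki's computation $u_\mbl(\nu)\otimes\rat=L^*(\nu)^{-1}\cup u_\rat$ of the rationalized $\syml$-cohomology Thom class; unwinding the cap with this class after tensoring with $\rat$ produces the stated formula. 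Nothing in your sketch supplies a substitute for this input, and I do not see how to rescue the CHS-based argument.
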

For smooth manifolds this is a straightforward consequence of naturality,
the Whitney product formula and the fact that Gysin restrictions map
the fundamental class to the fundamental class.
For singular spaces, our guiding philosophy is the following:
Drop down to ordinary homology as late as possible from more elevated theories such 
as $\syml$-homology, or better yet, bordism. Then on bordism it is
possible to see the relation geometrically, using in particular
geometric descriptions of
cobordism due to Buoncristiano-Rourke-Sanderson in terms of mock bundles.
Implementing this philosophy requires a
wide-ranging portfolio of methods and results, including
Ranicki's symmetric algebraic L-theory, Siegel's Witt bordism,
natural transformations from such singular bordism theories  
to L-theory as
introduced recently by Banagl-Laures-McClure \cite{blm}, and various
unblocked and blocked bundle theories and Thom spectra, notably
the aforementioned theory of mock bundles \cite{buonrs}.

In the course of carrying out this program, we prove that
the Witt-bordism Gysin map sends the Witt-bordism fundamental class of $X$ to the 
Witt-bordism fundamental class of $Y$,
$g^! [X]_{\Witt} = [Y]_{\Witt}$
(Theorem \ref{thm.gysinpreserveswittfundclass}).
Using this, we prove that the $\syml$-homology Gysin restriction
sends the $\syml (\rat)$-homology fundamental class of $X$ to the 
$\syml (\rat)$-homology fundamental class of $Y$,
$g^! [X]_\mbl = [Y]_\mbl$
(Theorem \ref{thm.gysinpreserveslhomfundclass}).
Finally, one arrives at the above theorem on $L$-classes essentially by
localizing at zero.

We apply the above $L$-class Gysin Theorem in proving two rigidity results,
Theorem \ref{thm.rigid} and Proposition \ref{cor.rigid}, for projective 
algebraic varieties.
We find that the signature of generic complex $2$-dimensional planar sections of
(possibly singular) projective varieties is invariant under homeomorphisms with
respect to the complex topology,
which are homologically compatible with the projective embeddings.
Calculations of the (real) codegree-$4$ Goresky-MacPherson $L$-class, pushed forward into the
homology of projective space, are obtained for projective varieties homeomorphic
to varieties regular in codimension $2$.\\

If $\xi$ is a complex vector bundle over a base space $B$ 
with Chern roots $a_i$, Hirzebruch had also defined a
generalized Todd class
$T^*_y (\xi) \in H^* (B)\otimes \rat [y],$ whose
specialization to $y=1$ is the $L$-class, $T^*_1 = L^*.$
Let $X$ be a possibly singular complex algebraic variety of pure dimension,
let $MHM (X)$ denote the abelian category of Morihiko Saito's algebraic mixed
Hodge modules on $X$ and
$K_0 (MHM (X))$ the associated Grothendieck group.
A motivic Hirzebruch class transformation 
\[ MHT_{y*} :K_0 (MHM(X)) \to H^\BM_{2*} (X) \otimes 
    \rat [y^{\pm 1}, (1+y)^{-1}] \] 
to Borel-Moore homology
has been defined by Brasselet, Sch\"urmann and Yokura
in \cite{bsy}, based on insights of Totaro \cite{totaro}.
Applying this to the mixed Hodge object $\rat^H_X$, one gets a homological characteristic class
$T_{y*} (X)= MHT_{y*} [\rat^H_X]$ such that
for $X$ smooth and $y=1$, $T_{1*} (X) = L_* (X)$.
For this reason, $T_{1*} (X)$ has been called the Hodge $L$-class of $X$.
However, examples of singular curves show that 
generally $T_{1*} (X) \not= L_* (X).$
This suggests applying $MHT_{y*}$
to the intersection Hodge module $IC^H_X$, which yields an
intersection generalized Todd class 
$IT_{y*} (X) = MHT_{y*} [IC^H_X [-\dim_\cplx X]].$
If $X$ is an algebraic rational homology manifold, then
$\rat^H_X [\dim_\cplx X] \cong IC^H_X$, so $IT_{y*} (X) = T_{y*} (X)$.

In the second half of the present paper, we prove that $IT_{1*}$
transforms under Gysin restrictions associated to suitably normally nonsingular
closed algebraic regular embeddings
in the same manner as the Goresky-MacPherson $L$-class
in the above Theorem. In the algebraic setting, one uses the algebraic normal
bundle of a regular embedding.
Since such a bundle need not
correctly reflect the topology near the subvariety, 
the Gysin result requires a tightness assumption (Definition \ref{def.tightemb}), which
holds automatically in transverse situations.
We introduce a condition called \emph{upward normal nonsingularity}
(Definition \ref{def.upwardlynns}), which requires for a tight regular embedding that
the exceptional divisor in the
blow-up relevant to deformation to the normal cone be normally nonsingular.
This holds in suitably transverse situations and is related to the
clean blow-ups of Cheeger, Goresky and MacPherson. 
Our Algebraic Gysin Theorem \ref{thm.it1classgysin} is:
\begin{thm*}
Let $X,Y$ be pure-dimensional compact complex algebraic varieties and
let $g: Y \hookrightarrow X$ 
be an upwardly normally nonsingular embedding.
Let $N = N_Y X$ be the algebraic normal bundle of $g$
and let $\nu$ denote the topological normal bundle of the
topologically normally nonsingular inclusion underlying $g$.
Then
\[ g^! IT_{1*} (X) = L^* (N) \cap IT_{1*} (Y)
      = L^* (\nu) \cap IT_{1*} (Y). \]
\end{thm*}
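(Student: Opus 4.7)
The plan is to work directly with mixed Hodge modules and deduce the formula for $IT_{1*}$ from a Verdier-Riemann-Roch behavior of the motivic Hirzebruch transformation $MHT_{1*}$ under Gysin restrictions. The theorem has two content pieces: the Gysin formula with the algebraic normal bundle $N$, and the identification $L^*(N) = L^*(\nu)$ in $H^*(Y;\rat)$. The latter should follow from the tightness built into upward normal nonsingularity (Definition \ref{def.upwardlynns}), which guarantees that the topological realization of $N$ agrees with $\nu$, so that their real Pontrjagin classes, and hence their $L^*$-classes, coincide. The substantive content is therefore the first equality.

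For the first equality, I would use deformation to the normal cone, which is the natural setting of the algebraic Gysin construction. The open deformation space $M^\circ = \Bl_{Y \times \{0\}} (X \times \aff^1) \setminus \Bl_Y X$ has generic fiber $X$ and special fiber the total space of $N$; the algebraic Gysin $g^!$ factors as specialization to the central fiber followed by the zero-section Gysin $s_0^!$ of the vector bundle $\pi: N \to Y$. This decomposes the proof into two parts. First, establish a VRR identity for the zero-section,
\[ s_0^! IT_{1*}(N) = L^*(N) \cap IT_{1*}(Y), \]
which reduces, via $s_0^! \pi^! = \id$, to the standard smooth VRR of $MHT_{1*}$ applied to the flat smooth map $\pi$, together with the Hodge-module statement that $\pi^*$ transports $IC^H_Y [-d_Y]$ to $IC^H_N [-d_N]$ up to the appropriate Tate twist. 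Second, show that the specialization along the deformation family converts $[IC^H_X [-d_X]]$ to $[IC^H_N [-d_N]]$ in $K_0(MHM(N))$, so that applying $MHT_{1*}$ yields $IT_{1*}(N)$ from $IT_{1*}(X)$.

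The main obstacle is this second step: controlling the specialization of the intersection Hodge module $IC^H_X$ along the deformation to the normal cone. In the underlying topological/perverse-sheaf setting this is classical for NNS inclusions of pseudomanifolds, but the Hodge-theoretic upgrade requires the full nearby- and vanishing-cycle calculus of Saito on $\Bl_Y X$. Upward normal nonsingularity is engineered precisely to make this calculus behave as in the clean transverse case of Cheeger-Goresky-MacPherson: the exceptional divisor of the blow-up being itself normally nonsingular in $\Bl_Y X$ ensures that no spurious supports or nontrivial monodromy contributions arise, so the limit on the central fiber is cleanly $IC^H_N$. Assembling the two steps yields $g^! IT_{1*}(X) = L^*(N) \cap IT_{1*}(Y)$, and the topological identification $L^*(N) = L^*(\nu)$ completes the proof.
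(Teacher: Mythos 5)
Your proposal follows essentially the same architecture as the paper's proof: factor $g^!$ through deformation to the normal cone as Borel--Moore specialization followed by zero-section Gysin restriction, handle the zero-section step via Sch\"urmann's $MHC_y$-VRR for smooth pullbacks combined with $\pi^*[IC^H_Y] = [IC^H_N]$, handle the specialization step via Saito's nearby-cycle calculus applied to $IC^H$ on the deformation space under the upward normal nonsingularity hypothesis, and read off $L^*(N) = L^*(\nu)$ from tightness. One small recall error: the normal nonsingularity in Definition~\ref{def.upwardlynns} concerns the exceptional divisor inside $\Bl_{Y\times 0}(X\times\cplx)$, not inside $\Bl_Y X$. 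Also worth flagging, since you compress it into ``applying $MHT_{1*}$'': the bridge between the Borel--Moore specialization $\Sp_\BM$ of $IT_{1*}(X)$ and the Hodge-module-level computation $\psi'^H_p IC^H_{W^\circ}\cong IC^H_N$ is not tautological --- the paper supplies it via the Cappell--Maxim--Sch\"urmann--Shaneson hypersurface-Gysin theorem (Proposition~\ref{prop.thm15cmss}) plus a Chow-group argument showing that on algebraic cycles $\Sp_\BM$ factors through $i^!$ (Lemma~\ref{lem.restropenlims0isishriek}). But these are execution details; your decomposition and the key lemmas you identify match the paper's.
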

Viewed in conjunction, our Gysin theorems may be interpreted as
further evidence towards a conjectural equality $IT_{1*} = L_*$ (\cite[Remark 5.4]{bsy})
for pure-dimensional compact complex algebraic varieties.
Sections \ref{sec.lclassintro} -- \ref{sec.rigiditytheorems} deal with the topological L-class of 
piecewise-linear (PL) pseudomanifolds, whereas the remaining Sections
\ref{sec.hodgeclasses} and \ref{sec.hodgeundernnsincl} are concerned 
with the Hodge-theoretic class $IT_{1*}$.
These two parts can be read independently.

The behavior of the $L$-class for singular spaces under Gysin
transfers associated to finite degree covers is already
completely understood.
In \cite{banaglcovertransfer}, we showed that for a
closed oriented Whitney stratified pseudomanifold $X$ admitting Lagrangian structures
along strata of odd codimension (e.g. $X$ Witt)
and $p: X' \to X$ an orientation preserving topological covering map of finite degree,
the $L$-class of $X$ transfers to the $L$-class of the cover, i.e.
\[ p_! L_* (X) = L_* (X'), \]
where $p_!: H_* (X;\rat) \to H_* (X';\rat)$ is the transfer induced by $p$.
This enabled us, for example, to establish the above conjecture 
for normal connected complex projective $3$-folds $X$
that have at worst canonical singularities, trivial canonical divisor, and  
$\dim H^1 (X;\Oo_X)>0$. (Note that such varieties are rational homology manifolds.)
In the complex algebraic setting, results concerning the multiplicativity
of the $\chi_y$-genus (which in the smooth compact context
corresponds to the signature for $y=1$)
under finite covers were obtained by A. Libgober and L. Maxim in
\cite[Lemma 2.3]{maximlibgober}. J. Schürmann discusses going up-and-down techniques for the
behavior of the motivic Chern class transformation $MHC_y$
under \'etale morphisms in \cite[Cor. 5.11, Cor. 5.12]{schuermannmsri}.
Let $\sigma (X)$ denote the signature of a compact Witt space $X$.
If $X$ is a complex projective algebraic variety, then by Saito's 
intersection cohomology Hodge index theorem
(\cite{saito88}, \cite[Section 3.6]{mss}), $IT_{1,0} (X)=\sigma (X)=L_0 (X)$, that is,
the conjecture is known to hold in degree $0$. 
Furthermore, Cappell, Maxim, Schürmann and Shaneson \cite[Cor. 1.2]{cmssequivcharsing}
have shown that the conjecture holds for orbit spaces $X=Y/G$,
where $Y$ is a projective $G$-manifold and $G$ a finite group of algebraic
automorphisms.
The conjecture holds for simplicial projective toric varieties
\cite[Corollary 1.2(iii)]{ms} and for certain complex hypersurfaces with
isolated singularities \cite[Theorem 4.3]{cmss}.

Following the overall strategy introduced in the present paper, formulae
describing the behavior of the Goresky-MacPherson $L$-class and of
$IT_{1*}$ under transfer homomorphisms associated to fiber bundles with
nonsingular positive dimensional fiber can also be obtained, but 
deserve treatment in a separate paper.\\

\textbf{Acknowledgements.}
We express our gratitude to J\"org Sch\"urmann, whose 
thoughtful comments on an earlier version of this
paper lead to numerous improvements,
and to Laurentiu Maxim for
providing helpful information on certain aspects of mixed Hodge modules.

\section{The $L$-Class of a Pseudomanifold}
\label{sec.lclassintro}

If $\xi$ is a real vector bundle over a topological space $B$, let
\[ L^* (\xi) = L^0 (\xi) + L^1 (\xi) + L^2 (\xi) +\cdots \in H^{4*} (B;\rat),~
  L^0 (\xi)=1, \]
denote its cohomological Hirzebruch $L$-class with
$L^i (\xi) \in H^{4i} (B;\rat)$.
For a closed oriented smooth manifold $M$ of real dimension $n$,
\[ L_* (M^n) = L_n (M) + L_{n-4} (M) + L_{n-8} (M) + \cdots \]
denotes the Poincar\'e dual of the Hirzebruch $L$-class 
$L^* (M) = L^* (TM)$ 
of the tangent bundle $\xi = TM$ of $M$. Thus
\[ L_i (M)\in H_i (M;\rat),~ L_{n-4i} (M) = L^i (M)\cap [M]. \]
We have
\[ L_n (M) = L^0 (M)\cap [M] = 1\cap [M] = [M], \]
and if $M$ has real dimension $n=4k$, then
\[ \epsilon_* L_0 (M)= \epsilon_* (L^k (M)\cap [M]) = \sigma (M), \]
where $\sigma (M)$ denotes the signature of $M$.

Let $X$ be a compact oriented piecewise-linear (PL) pseudomanifold of dimension $n$.
Such a pseudomanifold can be equipped with a choice of PL stratification, and there is
a PL-intrinsic such stratification. 
Siegel called $X$ a \emph{Witt space} if the middle degree, lower middle perversity
rational intersection homology of even-dimensional links of strata vanishes, \cite{siegel}.
This condition turns out to be independent of the choice of PL stratification,
\cite[Section 2.4]{gmih2}.
A pure-dimensional complex algebraic variety can be Whitney stratified, and thus
PL stratified, without strata of odd dimension and is thus a Witt space.
Compact Witt spaces $X$ have homological $L$-classes
\[ L_i (X) \in H_i (X;\rat) \cong \Hom (H^i (X;\rat), \rat), \]
on which a cohomology class $\xi \in H^i (X;\rat),$ 
stably represented as $\xi = f^* (u),$ $f:X\to S^i$ transverse with regular value
$p\in S^i$,
evaluates to $\langle \xi, L_i (X) \rangle = \sigma (f^{-1} (p))$,
where $u\in H^i (S^i)$ is the appropriate generator and $\sigma$ denotes the signature.
Note that the transverse preimage $f^{-1} (p)$ is again a Witt space.
Using $L^2$-forms on the top stratum with respect to conical Riemannian metrics,
Cheeger gave a local formula for $L_* (X)$ in terms of eta-invariants of links, \cite{cheeger}.
Again $\epsilon_* L_0 (X) = \sigma (X)$ and if $X=M$ is a smooth manifold, then
$L_i (X)$ agrees with the above Poincar\'e duals $L_i (M)$ of Hirzebruch's class.

\section{Behavior of the $L$-Class Under Normally Nonsingular Inclusions}

Let $g:Y\hookrightarrow X$ be an inclusion of compact oriented stratified pseudomanifolds.
If the inclusion is normally nonsingular with trivial normal bundle, then,
by the very definition of the $L$-class, there is a clear relationship
between the $L$-classes of $X$ and $Y$.
In the projective algebraic situation, the triviality assumption on the
normal bundle is not very natural, and it is important to understand the
relationship of these characteristic classes for arbitrary normal bundles.
This will be accomplished in the present section by establishing a precise
formula (one of the main results of this paper)
involving the Gysin transfer associated to the normally nonsingular 
embedding $g$. The formula is motivated by the special case of a smooth
embedding of manifolds, where it is easily established (see below).

\begin{defn}
A \emph{topological stratification} of a topological space $X$
is a filtration 
\[ X=X_n \supset X_{n-1} \supset \cdots \supset X_1 \supset X_0 
      \supset X_{-1}=\varnothing \]
by closed subsets $X_i$ such that the difference sets $X_i - X_{i-1}$
are topological manifolds of pure dimension $i$. The connected components $X_\alpha$
of these difference sets are called the \emph{strata}.
We will often write stratifications as $\Xa = \{ X_\alpha \}$.
\end{defn}
The following definition is due to Siebenmann
\cite{siebenmann}; see also Schürmann \cite[Def. 4.2.1, p. 232]{schuermanntsscs}.
\begin{defn}
A topological stratification $\{ X_i \}$ of $X$
is called \emph{locally cone-like} if for all $x\in X_i - X_{i-1}$
there is an open neighborhood $U$ of $x$ in $X$,
a compact topological space $L$ with filtration
\[ L=L_{n-i-1} \supset L_{n-i-2} \supset \cdots \supset L_0 
      \supset L_{-1}=\varnothing, \]
and a filtration preserving homeomorphism
$U \cong \real^i \times \operatorname{cone}^\circ (L),$
where $\operatorname{cone}^\circ (L)$ denotes the open cone on $L$.
\end{defn}
Locally cone-like topological stratifications are also called \emph{cs-stratifications}.
We understand an \emph{algebraic stratification} of a complex algebraic variety $X$ to
be a locally cone-like topological stratification $\{ X_{2i} \}$ of $X$ such that
all subspaces $X_{2i}$ are closed algebraic subsets of $X$.
Complex algebraic Whitney stratifications are algebraic stratifications in this sense.

\begin{defn} \label{def.snns}
Let $X$ be a topological space with locally cone-like topological stratification
$\Xa = \{ X_\alpha \}$ and let $Y$ be any topological space.
An embedding $g:Y\hookrightarrow X$ is called
\emph{normally nonsingular} (with respect to $\Xa$), if
\begin{enumerate}
\item $\Ya := \{ Y_\alpha := X_\alpha \cap Y \}$ is a locally
 cone-like topological stratification of $Y$,
\item there exists a topological vector bundle $\pi: E\to Y$ and
\item there exists a (topological) embedding $j:E \to X$ such that
 \begin{enumerate}
  \item $j(E)$ is open in $X$,
  \item $j|_Y =g,$ and 
  \item the homeomorphism $j:E\stackrel{\cong}{\longrightarrow} j(E)$
    is stratum preserving, where the open set $j(E)$ is endowed with
    the stratification $\{ X_\alpha \cap j(E) \}$ and $E$ is endowed
    with the stratification $\Ea = \{ \pi^{-1} Y_\alpha \}$.
 \end{enumerate}
\end{enumerate}
\end{defn}
Note that the above stratification $\Ea$ of the total space $E$
is automatically topologically locally cone-like.

\begin{defn} \label{def.compstrat}
If $X$ and $Y$ are complex algebraic varieties and
$g:Y\hookrightarrow X$ a closed algebraic embedding whose underlying
topological embedding $g(\cplx)$ in the complex topology is normally
nonsingular, then we will call $g$ and $g(\cplx)$ \emph{compatibly stratifiable}
if there exists an algebraic stratification $\Xa$ of $X$ such that
$g(\cplx)$ is normally nonsingular with respect to $\Xa$ and the induced
stratification $\Ya$ is an algebraic stratification of $Y$.
\end{defn}

An oriented normally nonsingular inclusion $g: Y\hookrightarrow X$ 
of real codimension $c$ has a \emph{Gysin map} 
\[ g^!: H_* (X) \longrightarrow H_{*-c} (Y) \]
on ordinary singular homology, given as follows:
Let $u\in H^{c} (E,E_0)$ denote the Thom class in ordinary cohomology 
of the rank $c$ vector bundle $\pi: E\rightarrow Y$, where
$E_0 \subset E$ denotes the complement of the zero section in $E$.
Then $g^!$ is the composition
\[ H_k (X) \rightarrow H_k (X,X-Y)
  \overset{e_\ast}{\underset{\cong}{\leftarrow}} H_k (E,E_0)
  \overset{u\cap -}{\underset{\cong}{\rightarrow}} H_{k-c} (E)
  \overset{\pi_\ast}{\underset{\cong}{\rightarrow}} H_{k-c} (Y), \]
where we use the embedding $j:E\to X$ in defining the excision
isomorphism $e_*$.
For classes $x\in H^p (X),$ $y\in H_* (X),$ the formula
\begin{equation} \label{equ.gysinofcap} 
g^! (x\cap y) = g^\ast x \cap g^! y 
\end{equation}
holds, provided either $p$ or the real codimension $c$ is even
(\cite[Lemma 5, p. 613]{banaglcappshan}, \cite[Ch. V, \S 6.2]{boardman}).
In the special case of a smooth embedding $g: N\hookrightarrow M$ of
closed oriented even-dimensional smooth manifolds, the Gysin transfer
maps the fundamental class $[M]$ of $M$ to the fundamental class $[N]$ of $N$. 
Thus in this case, using naturality and the Whitney sum formula, and
with $\nu$ the normal bundle $E\to N$ of $N$ in $M$,
\[ 
g^! L_* (M)
   = g^* L^* (TM) \cap g^! [M] 
 = L^* (g^* TM) \cap [N] 
   = L^* (\nu) \cap L_* (N).
\]
(All involved classes lie in even degrees and hence no signs enter.)
In this section, we shall show that this relation continues to hold
for normally nonsingular inclusions of singular spaces.
Note that when the normal bundle is trivial, the formula becomes
$g^! L_* (M) = L_* (N),$ as it should be.
In fact, for trivial normal bundle, the relation $g^! L_* (X) = L_* (Y)$
was already known to Cappell and Shaneson \cite{cappshanstratifmaps} 
in the singular context, even for general Verdier self-dual complexes of sheaves.

Complex algebraic pure-dimensional varieties are Witt spaces in the
sense of Siegel \cite{siegel}. 
Bordism of Witt spaces, denoted by $\omwt_* (-)$, is a generalized
homology theory represented by a spectrum $\MWITT$.
For a (real) codimension $c$ normally nonsingular inclusion
$g: Y^{n-c} \hookrightarrow X^n$ of (compact, oriented) Witt spaces,
we will define a Gysin map
\[ g^!: \omwt_k (X) \longrightarrow \omwt_{k-c} (Y), \]
and we shall prove that it sends the Witt-orientation of $X$, represented by
the identity map, to
the Witt orientation of $Y$. This will then be applied in proving
the analogous statement for the $\syml (\rat)$-homology orientations,
using the full force of the machinery of Banagl-Laures-McClure \cite{blm}.
We write $\syml = \syml (\intg) = \syml \langle 0 \rangle (\intg)$ for Ranicki's
connected symmetric algebraic $L$-spectrum
with homotopy groups $\pi_n (\syml)=L^n (\intg),$ the symmetric
$L$-groups of the ring of integers. 
Localization $\intg \to \rat$ induces a map
$\syml (\intg) \to \syml (\rat)$ and $\pi_n (\syml (\rat))=L^n (\rat)$ with
\[ L^n (\rat) \cong \begin{cases}
 \intg \oplus (\intg/_2)^\infty \oplus (\intg/_4)^\infty,& n\equiv 0 (4) \\
   0,& n\not\equiv 0 (4).
\end{cases} \]
As far as cobordism is concerned, the idea is to
employ the framework of 
Buoncristiano-Rourke-Sanderson \cite{buonrs},
which provides a geometric description of 
\emph{co}bordism in terms of mock bundles, as well
as geometric descriptions of Thom classes in cobordism, and
cap products between cobordism and bordism.

\subsection{Thom Classes in Cobordism}
\label{sec.thomclso}

Our approach requires uniform notions of Thom spaces and Thom classes 
in cobordism for various types of bundle theories and cobordism theories.
This will now be set up.

The term \emph{fibration} will always mean Hurewicz fibration.
A \emph{sectioned fibration} is a pair $(\xi,s)$, where
$\xi$ is a fibration $p: E\to B$, $s:B\to E$ is a section of $p$,
and the inclusion of the image of $s$ in $E$ is a fiberwise cofibration
over $B$. Let $(S^n,*)$ be a pointed $n$-sphere.
An \emph{$(S^n,*)$-fibration} is a sectioned $S^n$-fibration $(\xi,s)$
such that $(p^{-1}(b), s(b))$ is pointed homotopy equivalent
to $(S^n,*)$ for every $b\in B$.
Such $(S^n,*)$-fibrations are classified by maps into a classifying
space $\BF_n$. In particular, over $\BF_n$, there is a universal 
$(S^n,*)$-fibration $\gamma^F_n$.
\begin{defn}
The \emph{Thom space} of an $(S^n,*)$-fibration $\alpha = (\xi,s)$ is defined to be
\[ \Th (\alpha) := E / s(B). \]
\end{defn}
(See Rudyak \cite{rudyak}.)
Let $(\xi,s), (\xi', s')$ be $(S^n,*)$-fibrations with $\xi, \xi'$
given by $p: E\to B,$ $p':E' \to B',$ respectively.
A \emph{morphism of $(S^n,*)$-fibrations}
$\phi: (\xi,s) \to (\xi', s')$ is a pair $\phi = (g,f)$, where
$f:B\to B'$ and $g: E \to E'$ are maps such that $p'\circ g = f\circ p$,
\[ g|: (p^{-1} (b), s(b)) \longrightarrow (p'^{-1} (f(b)), s' (f(b))) \]
is a pointed homotopy equivalence for all $b\in B$, and
$\phi$ respects the sections, i.e. 
$g\circ s = s' \circ f$. The composition of two morphisms  of
$(S^n,*)$-fibrations is again an $(S^n,*)$-fibration and the identity 
is a morphism of $(S^n,*)$-fibrations. Thus $(S^n,*)$-fibrations form a category.
A morphism $\phi: \alpha = (\xi,s) \to (\xi',s')=\alpha'$ 
of $(S^n,*)$-fibrations induces a map
\[ \Th (\phi):\Th (\alpha) = E/s(B) \longrightarrow E'/s'(B') = \Th (\alpha'). \]
In this way, $\Th (\cdot)$ becomes a functor on the category of $(S^n,*)$-fibrations.
Let $\theta = \theta^F_1$ denote the trivial (product) $(S^1,*)$-fibration over a point.
Then, using fiberwise homotopy smash product $\wedge^h$,
$\gamma^F_n \wedge^h \theta$ is an $(S^{n+1},*)$-fibration over $\BF_n$, and hence
has a classifying morphism $\phi_n: \gamma^F_n \wedge^h \theta \to \gamma^F_{n+1}$
of $(S^{n+1},*)$-fibrations. This yields in particular maps
$f_n: \BF_n \to \BF_{n+1}$ and we denote the stable classifying space by $\BF$.
In addition to $\BF_n$, the following classifying spaces will be relevant:
\begin{itemize}
\item $\BSO_n$, classifying oriented real vector bundles,
\item $\BSPL_n$, classifying oriented PL $(\real^n, 0)$-bundles
  (and oriented PL microbundles),
\item $\BSTOP_n$, classifying oriented topological $(\real^n, 0)$-bundles
  (and oriented topological microbundles),
\item $\BBSPL_n$, classifying oriented PL closed disc block bundles,
\item $\BG_n$, classifying spherical fibrations with fiber $S^{n-1}$.
\end{itemize}  
The unoriented versions of these spaces will be denoted by omitting the `S'.  
For the theory of block bundles, due to Rourke and Sanderson, 
we ask the reader to consult
\cite{rosaannounce}, \cite{rosablockbundles1}, \cite{rosablockbundles3}, and
\cite{rosatopblockbundles}; the definition of a block bundle will be briefly
reviewed further below.
There is a homotopy commutative diagram
\[
\xymatrix{
\BSO_n \ar[r]^{LR} & \BSPL_n \ar[r]^{\operatorname{forget}} \ar[d] & \BSTOP_n \ar[d] &  \\
  & \BBSPL_n \ar[r] & \BG_n  \ar[r] & \BF_n,
} \]
whose philosophy here is that we can flush Thom space issues down to the level of $\BF_n$.
Thus,
a vector bundle has an underlying microbundle, \cite[p. 55, Example (2)]{milnormicro}.
The leftmost horizontal arrow is due to Lashof and Rothenberg \cite{lashofrothen},
who showed that $\operatorname{O}_n$-vector bundles can be triangulated.
The left vertical arrow is due to Rourke and Sanderson:
A PL microbundle gives rise to a unique equivalence class of PL block bundles,
\cite{rosaannounce}.
A PL block bundle determines a unique spherical fibration with fiber
$S^{n-1}$, \cite[Cor. 5.9, p. 23]{rosablockbundles1}.
(Also cf. Casson \cite{casson}.)
Of course, given an (oriented) topological $(\real^n,0)$-bundle, one can
delete the zero-section to obtain an $S^{n-1}$-fibration, which describes the
composition $\BSTOP_n \to \BG_n$.
Consider $S^0 = \{ -1,+1 \}$ as the trivial $S^0$-bundle $\theta_0$ over 
a point. Given an $S^{n-1}$-fibration $\xi$, there is a canonical
$(S^n,*)$-fibration $\xi^\bullet$ associated to it, namely
$\xi^\bullet := \xi * \theta_0$ (fiberwise unreduced suspension).
Note that the fiberwise unreduced suspension $\xi^\bullet$ can
be given a canonical section, by consistently taking north poles (say).
This describes the map $\BG_n \to \BF_n.$

To fix notation, let $\xi$ be a rank $n$ oriented vector bundle over 
the polyhedron $X=|K|$ of a finite
simplicial complex $K$. 
Then $\xi$ has a classifying map
$\xi: X \longrightarrow \BSO_n.$
(We denote classifying maps and the bundle they classify by the same letter.)
Composing with the Lashof-Rothenberg map $LR$, we get a classifying map
\[ \xi_\PL: X \longrightarrow \BSPL_n, \]
which determines an underlying oriented PL $(\real^n,0)$-bundle (or PL microbundle)
over $X$.
We compose further with the map
$\BSPL_n \longrightarrow \BBSPL_n$ and get a classifying map
\[ \xi_\PLB: X \longrightarrow \BBSPL_n, \]
which determines an underlying oriented PL block bundle $\xi_\PLB$ over $X$.
On the other hand, we may compose $\xi_\PL$ with the forget map
to obtain a classifying map
\[ \xi_\TOP: X \longrightarrow \BSTOP_n, \]
which determines an underlying oriented topological $(\real^n, 0)$-bundle
(or topological microbundle) $\xi_\TOP$ over $X$.
Composing with the map $\BSTOP_n \to \BG_n$, we receive a
classifying map
\[ \xi_{\operatorname{G}}: X \longrightarrow \BG_n, \]
which determines an underlying $S^{n-1}$-fibration $\xi_{\operatorname{G}}$
over $X$, which in turn has an underlying $(S^n,*)$-fibration 
$\xi^\bullet = \xi_{\operatorname{G}}^\bullet$.
\begin{defn} \label{def.thomspaceoftopbundle}
Let $\xi$ be a real vector bundle, or PL/topological $(\real^n,0)$-bundle, or
PL closed disc block bundle, or $S^{n-1}$-fibration. 
Then the \emph{Thom space} $\Th (\xi)$ of $\xi$
is defined to be the Thom space of its underlying $(S^n,*)$-fibration,
\[ \Th (\xi) := \Th (\xi^\bullet). \]
\end{defn}
In particular for an oriented vector bundle $\xi$,
\[ \Th (\xi) = 
 \Th (\xi_\PL) = \Th (\xi_\PLB) = \Th (\xi_\TOP) = \Th (\xi^\bullet). \]

Uniform constructions of Thom spectra can be given via the notion of
Thom spectrum of a map $f$.
Let $X$ be a CW complex and $f:X\to \BF$ a continuous map.
The Thom spaces $\Th (f^*_n \gamma^F_n)$ of the pullbacks 
under $f_n:X_n \to \BF_n$ of the universal $(S^n, *)$-fibrations 
form a spectrum $\Th (f)$, whose structure maps are induced on Thom spaces
by the morphisms $f^*_n \gamma^F_n \oplus \theta \to f^*_{n+1} \gamma^F_{n+1}$.
Here, $f_n$ is the restriction of $f$ to an increasing and
exhaustive CW-filtration $\{ X_n \}$ of $X$ such that $f(X_n)\subset \BF_n$.
The spectrum $\Th (f)$ is called the \emph{Thom spectrum of the map $f$}.
This construction applies to the map $f: \BSTOP \to \BF$, filtered by
$f_n: \BSTOP_n \to \BF_n$, and yields
the Thom spectrum $\MSTOP = \Th (\BSTOP \to \BF)$.
Note that $f^*_n \gamma^F_n$ has classifying map $f_n:\BSTOP_n \to \BF_n$,
but so does the underlying $(S^n,*)$-fibration $(\gamma^\STOP_n)^\bullet$
of the universal oriented topological $(\real^n,0)$-bundle $\gamma^\STOP_n$ over
$\BSTOP_n$. Hence $f^*_n \gamma^F_n$ and $(\gamma^\STOP_n)^\bullet$ are
equivalent $(S^n,*)$-fibrations and so have homotopy equivalent
Thom spaces.
Similarly, we obtain Thom spectra
$\MSPL = \Th (\BSPL \to \BF)$ and 
$\MSO = \Th (\BSO \to \BF)$.
These spectra $\MSO, \MSPL, \MSTOP$ are commutative ring spectra,
Rudyak \cite[Cor. IV.5.22, p. 261]{rudyak}.

Let $\Omega^\STOP_n (-),$
$\Omega^\SPL_n (-),$ and $\Omega^\SO_n (-)$
denote bordism of oriented topological, or PL, or smooth manifolds.
The Pontrjagin-Thom theorem provides natural isomorphisms
\[ \Omega^\STOP_n (X)\cong \MSTOP_n (X),~ \Omega^\SPL_n (X)\cong \MSPL_n (X),~
 \Omega^\SO_n (X)\cong \MSO_n (X). \]
(In the TOP case, this requires Kirby-Siebenmann topological transversality in high
dimensions, and the work of Freedman and Quinn in dimension $4$.)

We shall next construct maps between Thom spectra.
This can be achieved using the following general principle:
Let $X',X$ be CW complexes with CW filtrations $\{ X'_n \},$ $\{ X_n \}$,
respectively.
Let $g:X' \to X$ be a map with $g(X'_n)\subset X_n$.
Let $f:X\to \BF$ be a map as above so that $\Th (f)$ is defined. 
Then composition gives a map $f' =fg: X'\to \BF$
such that the Thom spectrum $\Th (f')$ is defined as well.
The map $g$ induces a map of spectra
\[ \Th (f') \longrightarrow \Th (f). \]
Applying this principle to $g: \BSPL =X' \to X=\BSTOP$, with
$f:X=\BSTOP \to \BF$ as in the above definition of $\MSTOP,$
yields a map of spectra
\[  \phi_F: \MSPL = \Th (f') \longrightarrow \Th (f) = \MSTOP. \]
Similarly, we get $\phi_{LR}: \MSO \to \MSPL$
using the Lashof-Rothenberg map.

We turn to uniform constructions of Thom classes in cobordism theory.
First, say, for topological bundles:
Let $\xi$ be an oriented topological $(\real^n, 0)$-bundle.
Then $\xi$ is classified by a map $t:X\to \BSTOP_n$
and has an underlying $(S^n,*)$-fibration $\xi^\bullet$
with classifying map the composition
\[ X \stackrel{t}{\longrightarrow} \BSTOP_n
       \stackrel{f_n}{\longrightarrow} \BF_n. \]
Let $\zeta^T_n$ be the $(S^n,*)$-fibration such that
$\MSTOP_n = \Th (\zeta^T_n),$
i.e. $\zeta^T_n = f_n^* \gamma^F_n$.
(This is nothing but $(\gamma^\STOP_n)^\bullet$.)
Then 
\[ t^* \zeta^T_n = t^* f^*_n \gamma^F_n = \xi^\bullet, \]
with corresponding morphism $\psi: \xi^\bullet \to \zeta^T_n$ of $(S^n,*)$-fibrations.
This morphism induces on Thom spaces a map
\[ \Th (\psi): \Th (\xi^\bullet) \longrightarrow \Th (\zeta^T_n) = \MSTOP_n. \]
By Definition \ref{def.thomspaceoftopbundle}, $\Th (\xi^\bullet) = \Th (\xi)$.
So we may write $\Th (\psi)$ as
\[ \Th (\psi): \Th (\xi) \longrightarrow \Th (\zeta^T_n) = \MSTOP_n. \]
Suspension and composition with the structure maps of $\MSTOP$ gives a
map of spectra
\[ \Sigma^\infty \Th (\xi) \longrightarrow \Sigma^n \MSTOP. \]
Here $\Sigma^\infty Y$ denotes the suspension spectrum of a space $Y$,
and $\Sigma^n E$ of a spectrum $E$ is the spectrum with
$(\Sigma^n E)_k = E_{n+k}.$
The map of spectra determines a homotopy class 
\[ u_\STOP (\xi) \in [\Sigma^\infty \Th (\xi), \Sigma^n \MSTOP]
  = \RMSTOP^n (\Th (\xi)). \]
This class $u_\STOP (\xi)$ is called the
\emph{Thom class} of $\xi$ in oriented topological cobordism and is
indeed an $\MSTOP$-orientation of $\xi^\bullet$ in the sense of Dold.

We proceed in a similar way to construct the Thom class of a PL bundle:
Let $\xi$ be an oriented PL $(\real^n, 0)$-bundle over a 
compact polyhedron $X$.
Then $\xi$ is classified by a map 
$h:X\to \BSPL_n$. 
Forgetting the PL structure, we have an underlying topological
$(\real^n,0)$-bundle $\xi_\TOP$, classified by the composition
\[ X \stackrel{h}{\longrightarrow} \BSPL_n 
      \stackrel{g_n}{\longrightarrow} \BSTOP_n. \]
This topological bundle in turn
has an underlying $(S^n,*)$-fibration $(\xi_\TOP)^\bullet$
with classifying map the composition
\[ X \stackrel{h}{\longrightarrow} \BSPL_n 
      \stackrel{g_n}{\longrightarrow} \BSTOP_n
       \stackrel{f_n}{\longrightarrow} \BF_n. \]
Of course $\xi$ itself has an underlying $(S^n,*)$-fibration
$\xi^\bullet$ and $\xi^\bullet = (\xi_\TOP)^\bullet$.
Let $\zeta^P_n$ be the $(S^n,*)$-fibration such that
$\MSPL_n = \Th (\zeta^P_n),$
i.e. $\zeta^P_n = (f_n g_n)^* \gamma^F_n$.
(This is nothing but $(\gamma^\SPL_n)^\bullet$.)
Then 
\[ h^* \zeta^P_n = h^* g^*_n f^*_n \gamma^F_n = \xi^\bullet, \]
with corresponding morphism $\phi: \xi^\bullet \to \zeta^P_n$ 
of $(S^n,*)$-fibrations.
This morphism induces on Thom spaces a map
\[ \Th (\phi): \Th (\xi^\bullet) \longrightarrow \Th (\zeta^P_n) = \MSPL_n. \]
By Definition \ref{def.thomspaceoftopbundle}, $\Th (\xi^\bullet) = \Th (\xi)$.
So we may write $\Th (\phi)$ as
\[ \Th (\phi): \Th (\xi) \longrightarrow \Th (\zeta^P_n) = \MSPL_n. \]
We arrive thus at a map of spectra
\[ \Sigma^\infty \Th (\xi) \longrightarrow \Sigma^n \MSPL, \]
which determines a homotopy class 
\[ u_\SPL (\xi) \in [\Sigma^\infty \Th (\xi), \Sigma^n \MSPL]
  = \RMSPL^n (\Th (\xi)). \]
This class $u_\SPL (\xi)$ is called the
\emph{Thom class} of $\xi$ in oriented PL cobordism.
As in the topological case, one verifies that this is an $\MSPL$-orientation of $\xi^\bullet$.
Earlier, we had constructed a map of Thom spectra
$\phi_F: \MSPL \longrightarrow \MSTOP.$
Recall that the underlying topological bundle $\xi_\TOP$ of a 
PL bundle $\xi_\PL$ and $\xi_\PL$ itself have the same Thom space,
\[ \Th (\xi_\PL) = \Th (\xi^\bullet) = \Th (\xi_\TOP). \]
\begin{lemma} \label{lem.usplustop}
Let $\xi_\PL$ be an oriented PL $(\real^n,0)$-bundle.
On cobordism groups, the induced map
\[ \phi_F: \RMSPL^n (\Th (\xi_\PL)) \longrightarrow
              \RMSTOP^n (\Th (\xi_\TOP)) \]
maps the Thom class of $\xi_\PL$ to the Thom class of the
underlying topological $(\real^n,0)$-bundle $\xi_\TOP$,
\[  \phi_F (u_\SPL (\xi_\PL) = u_\STOP (\xi_\TOP). \]              
\end{lemma}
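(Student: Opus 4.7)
The plan is to show that the two Thom class constructions factor through a single commutative diagram of $(S^n,*)$-fibrations, so that the map $\phi_F$ at level $n$ is induced precisely by the forgetful morphism of universal fibrations $\zeta^P_n \to \zeta^T_n$.

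First I would exhibit the key morphism between the universal objects. Since
\[ \zeta^P_n = (f_n g_n)^* \gamma^F_n = g_n^* (f_n^* \gamma^F_n) = g_n^* \zeta^T_n, \]
the pullback construction provides a canonical morphism $\Psi_n : \zeta^P_n \to \zeta^T_n$ of $(S^n,*)$-fibrations covering the forget map $g_n : \BSPL_n \to \BSTOP_n$. Unpacking the general Thom spectrum construction by which $\phi_F$ was defined, its level-$n$ component $\MSPL_n \to \MSTOP_n$ is precisely $\Th(\Psi_n): \Th(\zeta^P_n) \to \Th(\zeta^T_n)$, and the $\Psi_n$ are compatible with the structure morphisms $\zeta^P_n \wedge^h \theta \to \zeta^P_{n+1}$ and $\zeta^T_n \wedge^h \theta \to \zeta^T_{n+1}$ so as to yield a genuine map of spectra.

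Next I would assemble the three morphisms of $(S^n,*)$-fibrations into a diagram. The classifying maps of the bundles involved give a commutative triangle $X \xrightarrow{h} \BSPL_n \xrightarrow{g_n} \BSTOP_n$, which the classifying map of $\xi_\TOP$ equals. On pullbacks of the universal $(S^n,*)$-fibrations this yields, on the one hand, the morphism $\phi : \xi^\bullet = h^*\zeta^P_n \to \zeta^P_n$ used to define $u_\SPL(\xi_\PL)$ and, on the other hand, the morphism $\psi : \xi^\bullet = (g_n h)^* \zeta^T_n \to \zeta^T_n$ used to define $u_\STOP(\xi_\TOP)$. The composite $\Psi_n \circ \phi$ is then a morphism of $(S^n,*)$-fibrations $\xi^\bullet \to \zeta^T_n$ covering $g_n h$, so by the universal property (any two morphisms of $(S^n,*)$-fibrations covering a common classifying map are fiberwise homotopy equivalent through morphisms of $(S^n,*)$-fibrations), $\Psi_n \circ \phi$ and $\psi$ induce homotopic maps of Thom spaces.

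Finally I would pass to spectra. Applying $\Th$ and suspending to $\Sigma^\infty \Th(\xi) \to \Sigma^n \MSPL \to \Sigma^n \MSTOP$, the identity $\Th(\psi) \simeq \Th(\Psi_n) \circ \Th(\phi)$ together with the description of $\phi_F$ at level $n$ as $\Th(\Psi_n)$ translates precisely to the equality $\phi_F(u_\SPL(\xi_\PL)) = u_\STOP(\xi_\TOP)$ in $\RMSTOP^n(\Th(\xi_\TOP))$. The main obstacle, and the one worth being explicit about, is the uniqueness-up-to-homotopy statement for morphisms of $(S^n,*)$-fibrations covering a fixed classifying map; once that is granted, the rest is diagram-chasing through the definitions of Thom class and Thom spectrum.
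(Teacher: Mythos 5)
The paper offers no proof beyond the phrase ``The proof is a standard verification,'' so your write-up is an attempt to supply the details; the overall structure you give (identify $\phi_F$ at level $n$ with the morphism $\Psi_n : \zeta^P_n = g_n^*\zeta^T_n \to \zeta^T_n$, then chase through the Thom-class definitions) is the right one and leads to the correct conclusion.

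However, the step you single out as the ``main obstacle'' is where you go astray. You invoke the claim that ``any two morphisms of $(S^n,*)$-fibrations covering a common classifying map are fiberwise homotopy equivalent.'' That statement is false in general: two such morphisms $\phi_1,\phi_2:\alpha\to\beta$ over the same base map differ by a self fiber-homotopy-equivalence of $\alpha$ over $X$, and these form exactly the group measuring nontriviality of $(S^n,*)$-fibrations, so they are by no means all homotopic to the identity. Fortunately you don't need any such uniqueness principle here, and appealing to one obscures what is actually going on. The morphisms you are comparing are not arbitrary: $\phi:\xi^\bullet = h^*\zeta^P_n \to \zeta^P_n$ is the canonical pullback morphism over $h$, and $\Psi_n: \zeta^P_n = g_n^*\zeta^T_n \to \zeta^T_n$ is the canonical pullback morphism over $g_n$. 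Functoriality of pullback --- the natural identification $h^*g_n^* = (g_n h)^*$ together with the fact that composing canonical pullback morphisms yields the canonical pullback morphism along the composite --- gives $\Psi_n\circ\phi = \psi$ on the nose (or canonically isomorphic), not merely up to some unproved homotopy. With this replacement the argument closes cleanly: applying $\Th$, suspending, and using that $\phi_F$ at level $n$ is $\Th(\Psi_n)$ gives $\phi_F(u_\SPL(\xi_\PL)) = u_\STOP(\xi_\TOP)$ directly. So the proof is salvageable with a smaller and more honest ingredient than the one you flag.
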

The proof is a standard verification.

The cobordism Thom class of an oriented real vector bundle $\xi$ can be similarly
fit into this picture. 
If $n$ is the rank of $\xi$, then $\xi$ has a \emph{Thom class}
\[ u_\SO (\xi) \in [\Sigma^\infty \Th (\xi), \Sigma^n \MSO]
  = \RMSO^n (\Th (\xi)). \]
in smooth oriented cobordism.
Recall that we had earlier described a map $\phi_{LR}: \MSO \longrightarrow \MSPL$
based on the Lashof-Rothenberg map. The following compatibility result is
again standard (and readily verified).
\begin{lemma} 
Let $\xi$ be a rank $n$ oriented vector bundle over a compact polyhedron $X$.
On cobordism groups, the induced map
\[ \phi_{LR}: \RMSO^n (\Th (\xi)) \longrightarrow
              \RMSPL^n (\Th (\xi_\PL)) \]
maps the Thom class of $\xi$ to the Thom class of the
underlying oriented PL $(\real^n,0)$-bundle $\xi_\PL$,
\[  \phi_{LR} (u_\SO (\xi)) = u_\SPL (\xi_\PL). \]              
\end{lemma}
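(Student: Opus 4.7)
The plan is to mimic the proof of Lemma~\ref{lem.usplustop}: unwind the definitions of the two Thom classes and of the spectrum map $\phi_{LR}$, and then verify that the relevant diagram of morphisms of $(S^n,\ast)$-fibrations commutes up to fibrewise homotopy, so that applying $\Th(\cdot)$ and stabilising yields the claimed equality. The map $\phi_{LR}:\MSO\to\MSPL$ is obtained, via the Thom-spectrum-of-a-map construction, from the Lashof--Rothenberg map $LR:\BSO\to\BSPL$ together with the classifying map $f^{\SPL}:\BSPL\to\BF$; the commutative triangle
\[
\xymatrix{
\BSO_n \ar[r]^{LR_n} \ar[dr]_{f^{\SO}_n} & \BSPL_n \ar[d]^{f^{\SPL}_n} \\
 & \BF_n
}
\]
identifies the $(S^n,\ast)$-fibration $\zeta^{\SO}_n := (f^{\SO}_n)^\ast \gamma^F_n$, for which $\MSO_n=\Th(\zeta^{\SO}_n)$, with $LR_n^\ast\zeta^P_n$. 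This yields a canonical morphism $\Lambda_n:\zeta^{\SO}_n\to\zeta^P_n$ of $(S^n,\ast)$-fibrations lying over $LR_n$ whose Thom-space image $\Th(\Lambda_n):\MSO_n\to\MSPL_n$ is, by construction of the Thom spectrum of a map, the $n$-th space-level map underlying $\phi_{LR}$.

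Next I would recall that $u_\SO(\xi)$ is represented by $\Th(\psi):\Th(\xi)\to\MSO_n$ induced by the morphism $\psi:\xi^\bullet\to\zeta^{\SO}_n$ lying over $\xi:X\to\BSO_n$, while $u_\SPL(\xi_\PL)$ is represented by $\Th(\phi'):\Th(\xi_\PL)\to\MSPL_n$ induced by $\phi':\xi_\PL^\bullet\to\zeta^P_n$ lying over $\xi_\PL=LR_n\circ\xi$. Since both $\xi^\bullet$ and $\xi_\PL^\bullet$ have classifying map $f^{\SO}_n\circ\xi = f^{\SPL}_n\circ\xi_\PL:X\to\BF_n$, they coincide as $(S^n,\ast)$-fibrations; in particular $\Th(\xi)=\Th(\xi_\PL)$.

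The heart of the argument is then the observation that the composition $\Lambda_n\circ\psi:\xi^\bullet\to\zeta^{\SO}_n\to\zeta^P_n$ is a morphism of $(S^n,\ast)$-fibrations lying over $\xi_\PL$; by uniqueness up to fibrewise homotopy of such morphisms covering a fixed base map into a universal $(S^n,\ast)$-fibration, it is fibrewise homotopic to $\phi'$, so $\Th(\Lambda_n\circ\psi)$ and $\Th(\phi')$ agree up to homotopy as maps $\Th(\xi_\PL)\to\MSPL_n$. Invoking functoriality of $\Th(\cdot)$ and passing to stable homotopy classes then gives $\phi_{LR}(u_\SO(\xi))=u_\SPL(\xi_\PL)$. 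The entire argument is a direct parallel of Lemma~\ref{lem.usplustop}; the only delicate point is the fibrewise-homotopy uniqueness of classifying morphisms, which is standard for the classifying space $\BF_n$ of sectioned sphere fibrations.
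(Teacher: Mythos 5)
Your proposal is correct and is precisely the ``standard verification'' the paper alludes to without spelling out: unwind the construction of the Thom spectrum of a map, observe that all three classes $\zeta^{\SO}_n$, $\zeta^P_n$, and $\xi^\bullet = \xi_\PL^\bullet$ are pullbacks of $\gamma^F_n$, and note that the relevant pullback squares compose.

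One small simplification: you invoke fibrewise-homotopy uniqueness of classifying morphisms at the end, but you do not need it. In the paper's set-up the morphisms $\psi\colon\xi^\bullet\to\zeta^{\SO}_n$, $\phi'\colon\xi_\PL^\bullet\to\zeta^P_n$, and $\Lambda_n\colon\zeta^{\SO}_n\to\zeta^P_n$ are all taken to be the \emph{canonical} pullback morphisms over $\xi$, $\xi_\PL=LR_n\circ\xi$, and $LR_n$ respectively. Since $\xi^*(LR_n^*\zeta^P_n)=(LR_n\circ\xi)^*\zeta^P_n$ with the canonical identification, the two pullback squares paste to give $\Lambda_n\circ\psi=\phi'$ on the nose, and applying $\Th(\cdot)$ and stabilizing gives the identity $\phi_{LR}(u_\SO(\xi))=u_\SPL(\xi_\PL)$ directly. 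Appealing to uniqueness up to fibrewise homotopy is not wrong, but it establishes less than what is already true by construction.
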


\subsection{Ranicki's Thom Class in $\syml$-Cohomology}
\label{sec.relbtwthomclasses}

We review Ranicki's definition of a Thom class
for topological $(\real^n,0)$-bundles (or microbundles) in symmetric 
$\mbl$-cohomology.
He constructs a map
\[ \sigma^*: \MSTOP \longrightarrow \syml, \]
see \cite[p. 290]{ranickitotsurgob}.
Let $X$ be the polyhedron of a finite simplicial complex and
$\xi: X\to \BSTOP_n$ a topological $(\real^n,0)$ bundle (or microbundle) over $X$.
Then, following \cite[pp. 290, 291]{ranickitotsurgob}, 
$\xi$ has a canonical $\syml$-cohomology orientation
\[ u_\mbl (\xi) \in {\redsyml}^n (\Th (\xi)), \]
which we shall also refer to as the 
\emph{$\syml$-cohomology Thom class} of $\xi$, defined by
\[ u_\mbl (\xi) := \sigma^* (u_\STOP (\xi)). \]
The morphism of spectra $\syml (\intg)\to \syml (\rat)$ coming from
localization induces a homomorphism
\[ \redsyml^n (\Th (\xi)) \longrightarrow \redsyml (\rat)^n (\Th (\xi)). \]
We denote the image of $u_\mbl (\xi)$ under this map again by
$u_\mbl (\xi) \in \redsyml (\rat)^n (\Th (\xi)).$

\subsection{Geometric Description of the $\MSO$-Thom class}
\label{sec.geomdescrmsothomcl}

Buoncristiano, Rourke and Sanderson \cite{buonrs}
give a geometric description of $\MSPL$-cobordism
and use it to obtain in particular a geometric description of the Thom
class $u_\SPL (\xi),$ which we reviewed homotopy theoretically
in Section \ref{sec.thomclso}.
The geometric cocycles are given by (oriented) mock bundles, whose
definition we recall here. The polyhedron of a ball complex $K$ is denoted
by $|K|$.
\begin{defn}
Let $K$ be a finite ball complex and $q$ an integer (possibly negative).
A \emph{$q$-mock bundle} $\xi^q/K$ with base $K$ and
total space $E(\xi)$ consists of a PL map
$p:E(\xi)\to |K|$ such that, for each $\sigma \in K$,
$p^{-1} (\sigma)$ is a compact PL manifold of dimension
$q + \dim (\sigma),$ with boundary $p^{-1} (\partial \sigma)$.
The preimage $\xi (\sigma) := p^{-1} (\sigma)$ is called the
\emph{block} over $\sigma$.
\end{defn}
The empty set is regarded as a manifold of
any dimension; thus $\xi (\sigma)$ may be empty for some
cells $\sigma \in K$.
Note that if $\sigma^0$ is a $0$-dimensional cell of $K$,
then $\partial \sigma^0 = \varnothing$ and thus
$p^{-1} (\partial \sigma)=\varnothing$.
Hence the blocks over $0$-dimensional cells are \emph{closed} manifolds.
For our purposes, we need \emph{oriented}
mock bundles, which are defined using incidence numbers of cells and blocks:
Suppose that $(M^n,\partial M)$ is an oriented PL manifold
and $(N^{n-1}, \partial N)$ is an oriented PL manifold
with $N \subset \partial M$. Then an incidence number
$\epsilon (N,M)=\pm 1$ is defined by comparing the orientation
of $N$ with that induced on $N$ from $M$ (the induced orientation
of $\partial M$ is defined by taking the inward normal last);
$\epsilon (N,M)=+1$ if these orientation agree and $-1$ if they
disagree. An \emph{oriented cell complex} $K$ is a cell complex
in which each cell is oriented. We then have the incidence
number $\epsilon (\tau, \sigma)$ defined for faces
$\tau^{n-1} < \sigma^n \in K$.
\begin{defn}
An \emph{oriented mock bundle} is a mock bundle $\xi/K$
over an oriented (finite) ball complex $K$ in which every block
is oriented (i.e. is an oriented PL manifold) such that
for each $\tau^{n-1} < \sigma^n \in K$,
$\epsilon (\xi (\tau), \xi (\sigma)) = \epsilon (\tau, \sigma)$.
\end{defn}
The following auxiliary result is an
analog of \cite[Lemma 1.2, p. 21]{buonrs}:
\begin{lemma} \label{lem.totalspaceofmockiswitt}
Let $(K,K_0)$ be a (finite) ball complex pair such that
$|K|$ is an $n$-dimensional (compact) Witt space with (possibly empty) boundary
$\partial |K|=|K_0|$.
Orient $K$ in such a way that the sum of oriented $n$-balls
is a cycle rel boundary. (This is possible since $|K|$,
being a Witt space, is an oriented
pseudomanifold-with-boundary.)
Let $\xi/K$ be an oriented $q$-mock bundle over $K$.
Then the total space $E(\xi)$ is an
$(n+q)$-dimensional (compact) Witt space with boundary 
$p^{-1} (\partial |K|).$
\end{lemma}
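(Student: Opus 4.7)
The plan is to extend the argument of \cite[Lemma 1.2]{buonrs}---which handles the case when $|K|$ is a PL manifold and proves $E(\xi)$ is then a PL manifold---to the Witt setting. I will proceed in three steps: (i) establishing that $E(\xi)$ is an $(n+q)$-dimensional compact oriented PL pseudomanifold with boundary $p^{-1}(|K_0|)$; (ii) producing a join formula for the links of $E(\xi)$; and (iii) deducing the middle-perversity intersection-homology vanishing condition on these links from the Witt hypothesis on $|K|$.

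First, for step (i), I will observe that the top-dimensional blocks $\xi(\sigma)$, as $\sigma$ ranges over $n$-cells of $K \setminus K_0$, are compact oriented PL $(n+q)$-manifolds with boundary $p^{-1}(\partial\sigma)$. Since the oriented mock-bundle incidence condition $\epsilon(\xi(\tau), \xi(\sigma)) = \epsilon(\tau, \sigma)$ faithfully reproduces the incidences of $K$, the assumption that $\sum_\sigma \sigma$ is an oriented cycle rel $|K_0|$ lifts to the statement that $\sum_\sigma \xi(\sigma)$ is an oriented fundamental chain of $E(\xi)$ rel $p^{-1}(|K_0|)$. Consequently, each interior codimension-one block will lie in the boundary of exactly two top-dimensional blocks with cancelling orientations, which gives the oriented pseudomanifold-with-boundary structure.

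Next, for step (ii), I will fix $y \in E(\xi) \setminus p^{-1}(|K_0|)$, let $x = p(y)$, and take $\sigma \in K \setminus K_0$ to be the unique cell with $x \in \operatorname{int}(\sigma)$, so that $y$ is interior to the PL manifold-with-boundary $\xi(\sigma) = p^{-1}(\sigma)$. Combining the locally conical PL structure of $|K|$ at $x$ (encoded by the polyhedral link $L_\sigma := \operatorname{link}(\sigma, K)$) with PL collar uniqueness for the inclusions $\xi(\sigma) \subset \partial \xi(\tau)$ over cells $\tau$ strictly containing $\sigma$, I will identify a regular neighborhood of $y$ in $E(\xi)$ with the PL product $B^{\dim\sigma + q} \times \operatorname{cone}(L_\sigma)$, where the first factor is a regular neighborhood of $y$ in the manifold block $\xi(\sigma)$ and the second is the polyhedral cone on $L_\sigma$. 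Taking boundaries and using the identity $\partial(B^d \times \operatorname{cone} X) = S^{d-1} \ast X$ will then yield the link formula
\[
\operatorname{link}(y, E(\xi)) \;\cong\; S^{\dim\sigma + q - 1} \ast L_\sigma.
\]
For step (iii), the natural stratification of $E(\xi)$ lifting the cell-stratification of $K$ assigns the stratum through $y$ the same codimension $n - \dim\sigma$ as $\sigma$ has in $|K|$. By the invariance of the Witt property under products with PL balls, the Witt condition at $y$ reduces to that of $\operatorname{cone}(L_\sigma)$; and by the cone formula for intersection homology, this further reduces to $L_\sigma$ being Witt together with the middle-degree vanishing $IH^{\bar m}_{\dim L_\sigma / 2}(L_\sigma; \rat) = 0$ when $\dim L_\sigma$ is even---both of which are precisely the content of the Witt hypothesis on $|K|$ applied at the cell $\sigma$.

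The hard part will be the local product decomposition in step (ii): since mock bundles are not in general locally trivial as fiber bundles, identifying a regular neighborhood of $y$ with $B^{\dim\sigma + q} \times \operatorname{cone}(L_\sigma)$ will require PL-topological techniques of the kind underlying \cite[Lemma 1.2]{buonrs}---most notably a passage to a compatible simplicial refinement on which $p$ becomes simplicial, together with collar uniqueness for the inclusions of $\xi(\sigma)$ into each adjacent block $\xi(\tau)$. Once this local product form is secured, steps (i) and (iii) will amount to bookkeeping with orientations and intersection-homology degrees.
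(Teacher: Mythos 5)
Your proposal follows essentially the same route as the paper's proof: invoke the Buoncristiano--Rourke--Sanderson/Laures--McClure arguments to obtain a local PL product decomposition of a neighborhood of an interior point $y\in E(\xi)$ as (PL ball)$\times$(cone on a link), then reduce the Witt condition at $y$ to the Witt condition on $|K|$ at $p(y)$ by noting that the two local models differ only by the size of the ball factor, and that the Witt property is preserved under products with PL balls. The one substantive difference is which link appears in the cone factor. You stop at the polyhedral link $L_\sigma = \operatorname{link}(\sigma,K)$, so that $|D(\sigma)|\cong\cone(L_\sigma)$, whereas the paper refines further, using McCrory's structured cone complexes to write $|D(\sigma)|\cong D^{j-k}\times\cone(L)$ with $L$ the \emph{intrinsic} link at $\hat\sigma$, and then concludes that the intrinsic link of $y$ in $E(\xi)$ is literally $L$. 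The intrinsic-link version is cleaner for invoking the Witt hypothesis, since that hypothesis is stated for links of a PL pseudomanifold stratification (equivalently, intrinsic links). With the polyhedral link, your phrase ``both of which are precisely the content of the Witt hypothesis on $|K|$ applied at the cell $\sigma$'' should be softened: a ball complex may have codimension-one cells $\sigma$, for which $L_\sigma\cong S^0$ and $IH^{\bar m}_0(L_\sigma;\rat)\cong\rat^2\neq 0$, so your condition (b) fails as stated and the cone formula does not apply to the codimension-one cone point. At such $\sigma$ this is harmless --- $\cone(L_\sigma)\cong D^1$ is a manifold, so $y$ is a manifold point of $E(\xi)$ --- but it needs to be said. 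The underlying reduction (``Witt at $y$ $\Leftrightarrow$ Witt at $p(y)$ because both local models are $\text{ball}\times\cone(L_\sigma)$'') is sound and is the same mechanism the paper uses, just phrased via intrinsic rather than polyhedral links.
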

\begin{proof}
One merely has to modify the proof of \cite[Lemma 1.2]{buonrs}
for the Witt context, see also the proof of the IP-ad theorem
\cite[Theorem 4.4]{blm}.
First, choose a structuring of $K$ as a
\emph{structured cone complex} in the sense of
McCrory \cite{mccrory} by choosing points $\hat{\sigma}$ in the
interior of $\sigma$ for every cell $\sigma \in K$.
The associated first derived subdivision $\hat{K}$
is a simplicial complex and induces a concept of dual cells
$D(\sigma)$ for cells $\sigma \in K$.
Let $X=|K|$ be the underlying polyhedron of $K$.
Polyhedra have intrinsic PL stratifications, \cite{akin}.
In particular, points in $X$ have intrinsic links $L$ with respect
to this stratification.
The simplicial link of $\hat{\sigma}$ in $\hat{K}$ is a suspension 
of the intrinsic link $L$ at $\hat{\sigma}$.
Then the polyhedron of the dual complex of $\sigma$ 
can be written in terms of the intrinsic link $L$ as
\[ |D(\sigma)| \cong D^{j-k} \times \cone (L), \]
where $D^{j-k}$ denotes a closed disc of dimension $j-k$.

Now let $(X,\partial X)=(|K|,|K_0|)$ be an $n$-dimensional 
PL pseudomanifold-with-boundary, where
$(K,K_0)$ is a ball complex pair with $K$ structured as described above so that
dual blocks of balls are defined.
Assume that $X$ is Witt and $\xi$ is an oriented $q$-mock bundle over $K$.
By the arguments in the proof of \cite[Lemma 4.6]{blm},
the total space $E(\xi)$ is a PL pseudomanifold with collared boundary
$p^{-1} (|K_0|)$. (Those arguments do indeed cover the present case,
since they only require that the base $(|K|,|K_0|)$, as well as the blocks over
cells of that base, be PL pseudomanifolds with
collared boundary --- IP or Witt conditions are irrelevant for this argument.)
 
An orientation of $E(\xi)$ is
induced by the given orientation data as follows:
Triangulate $E(\xi)$ so that each block $\xi (\sigma)$ is a subcomplex.
Let $s$ be a top dimensional simplex of $E(\xi)$ in this triangulation.
Then there is a unique block $\xi (\sigma_s)$ that contains $s$.
This block is an oriented PL manifold with boundary (since $\xi$
is oriented as a mock bundle), and this orientation induces an
orientation of $s$. Note that $\sigma_s \in K$ is $n$-dimensional.
The sum of all $n$-dimensional oriented cells in $K$ is a cycle rel $|K_0|$,
since $|K|$ is a Witt space, and thus in particular oriented.
Then the preservation of incidence numbers between base cells
and blocks implies that
the sum of all $s$ is a cycle rel $p^{-1} (|K_0|)$.
Hence $E(\xi)$ is oriented as a pseudomanifold-with-boundary.

It remains to be shown that $E(\xi) - \partial E(\xi)$ satisfies
the Witt condition.
Let $x\in E(\xi) - \partial E(\xi)$ be a point in the interior of the total space.
There is a unique $\sigma \in K$ for which $x$ is in the
interior of the block $\xi (\sigma)$.
Note that then $p(x)$ lies in the interior of $\sigma$.
Let $d=\dim \sigma$.

By the arguments used to prove \cite[Lemma II.1.2]{buonrs} and 
\cite[Prop. 6.6]{lauresmcclure},
there exists (inductively, using collars)
a compact neighborhood $N$ of $x$ in $E(\xi)$,
a compact neighborhood $V\cong D^{q+d}$ of $x$ in 
the $(q+d)$-dimensional manifold $\xi (\sigma)$, and
a PL homeomorphism
\[ N \cong V \times |D(\sigma)|. \]
Since
\[ N \cong V \times |D(\sigma)| \cong D^{q+d} \times D^{j-k} \times \cone (L)
  \cong D^{q+d+j-k} \times \cone (L), \]
by a PL homeomorphism which sends $x$ to $(0,c)$, where $c\in \cone (L)$
denotes the cone vertex,
we conclude that the 
intrinsic link at $x$ in $E(\xi)$ is the intrinsic
link $L$ of $\sigma$ at $p(x)$ in $|K|$. If this link has even dimension
$2k$, then $IH^{\bar{m}}_{k} (L;\rat)=0$ since $|K|$ is a Witt space.
But then this condition is also satisfied for the intrinsic
link at $x$ in $E(\xi)$. Hence $E(\xi)-\partial E(\xi)$ is Witt.
\end{proof}
If $|K|$ is a compact Witt space with boundary $\partial |K| = |K_0|$ for a
subcomplex $K_0 \subset K$, and $\xi$ is an oriented mock bundle
over $K$ which is empty over $K_0$, then by 
Lemma \ref{lem.totalspaceofmockiswitt},
\[ \partial E(\xi) = p^{-1} (\partial |K|)= p^{-1} (|K_0|) = \varnothing, \]
i.e. $E(\xi)$ is a \emph{closed} Witt space.

Let $L\subset K$ be a subcomplex.
Oriented mock bundles $\xi_0$ and $\xi_1$ over $K$,
both empty over $L$,
are \emph{cobordant}, if there is an oriented mock bundle
$\eta$ over $K\times I$, empty over $L\times I$,
such that $\eta|_{K\times 0} \cong \xi_0$,
$\eta|_{K\times 1} \cong \xi_1$.
This is an equivalence relation and we set
\[ \Omega^q_\SPL (K,L) :=
 \{ [\xi^q/K] :~ \xi|_L = \varnothing \}, \]
where $[\xi^q/K]$ denotes the cobordism class of the 
oriented $q$-mock bundle $\xi^q/K$ over $K$.
Then the duality theorem \cite[Thm. II.3.3]{buonrs}
asserts that
$\Omega^{-*}_\SPL (-)$ is Spanier-Whitehead dual to oriented PL bordism
$\Omega^\SPL_* (-) \cong \MSPL_* (-)$; see also
\cite[Remark 3, top of p. 32]{buonrs}.
But so is $\MSPL^* (-)$. Hence Spanier-Whitehead duality 
provides an isomorphism
\begin{equation} \label{equ.identmsplcohbrstq} 
\beta: \Omega^{-q}_\SPL (K,L) \cong\MSPL^q (K,L) 
\end{equation}
for compact $|K|, |L|$, which is natural with respect to
inclusions $(K', L')\subset (K,L)$.
This is the geometric description of oriented
PL cobordism that we will use.
We shall now give an explicit description of the
isomorphism $\beta$ in (\ref{equ.identmsplcohbrstq}).
We write $X=|K|$ and $Y=|L|$ for the associated polyhedra, and assume
them to be compact. Embed $X$ into some sphere $S^N$ so that we have
inclusions $Y \subset X \subset S^N.$
We write $X^c, Y^c$ for the complements of $X,Y$ in the sphere.
We can regard $X^c$ and $Y^c$ also as compact polyhedra by removing
the interior of derived neighborhoods of $X$ and $Y$.
Then, according to \cite[Duality Theorem II.3.3]{buonrs},
there is a natural isomorphism
\[ \phi: \Omega^{-q}_\SPL (X,Y) \stackrel{\cong}{\longrightarrow}
  \Omega^\SPL_{N-q} (Y^c, X^c). \]
The Thom-Pontrjagin construction gives a natural isomorphism
\[ \tau: \Omega^\SPL_{N-q} (Y^c, X^c) \stackrel{\cong}{\longrightarrow}
  \MSPL_{N-q} (Y^c, X^c), \]
and Alexander duality provides an isomorphism
\[ \alpha: \MSPL_{N-q} (Y^c, X^c) \stackrel{\cong}{\longrightarrow}
  \MSPL^q (X,Y), \]
which is natural with respect to inclusions.  
On the technical level, we work with
$\alpha := (-1)^N \gamma_t$, where $\gamma_t$ is
Switzer's Alexander duality map \cite[Thm. 14.11, p. 313]{switzer}.
This choice of sign guarantees that for the $n$-ball,
\[ \alpha: \MSPL_0 (D^{\circ n}) = \MSPL_0 (Y^c, X^c)
   \longrightarrow \MSPL^n (D^n, \partial D^n)= \MSPL^n (X,Y) \]
sends the unit
$1\in \MSPL_0 (\pt) = \pi_0 (\MSPL) = \MSPL^0 (\pt)$
to the element
\[ \sigma^n \in \widetilde{\MSPL}^n (S^n) 
          = \MSPL^n (D^n, \partial D^n), \]
obtained by suspending the unit $n$ times.
Then $\beta$ in (\ref{equ.identmsplcohbrstq}) is the composition
\[ \Omega^{-q}_\SPL (X,Y) \stackrel{\phi}{\longrightarrow}
  \Omega^\SPL_{N-q} (Y^c, X^c) \stackrel{\tau}{\longrightarrow}
  \MSPL_{N-q} (Y^c, X^c) \stackrel{\alpha}{\longrightarrow}
  \MSPL^q (X,Y). \]
Let us describe $\phi$ in more detail, following \cite{buonrs}:
Let $N(X), N(Y)$ be derived neighborhoods of $X,Y$ in $S^N$.
Note that $N(X)$ and $N(Y)$ are manifolds with boundaries
$\partial N(X), \partial N(Y)$.
With $j: (X,Y)\hookrightarrow (NX,NY)$ the inclusion, 
pullback (restriction) of mock bundles defines a map
\[ j^*: \Omega^{-q}_\SPL (NX,NY) \longrightarrow \Omega^{-q}_\SPL (X,Y), \]
which is an isomorphism. Amalgamation defines a map
\[ \operatorname{amal}: \Omega^{-q}_\SPL (NX, NY) \longrightarrow 
  \Omega^\SPL_{n+q} (NX-NY, \partial NX - \partial NY), \]
which works as follows:
Given a mock bundle over $NX$, the amalgamation, i.e. the 
union of all its blocks, i.e. the total space, is a manifold,
since the blocks are manifolds and the base $NX$ is a 
manifold as well (this is \cite[Lemma 1.2, p. 21]{buonrs}).
The projection gives a map of the amalgamation to $NX$.
Furthermore, the boundary of the amalgamation is the
material lying over $\partial NX$. Moreover, if the
mock bundle is empty over $NY$, then the boundary of
the amalgamation will not map to $\partial NY$.
Thus we have a map as claimed.
Finally the inclusion 
\[ j: (NX-NY, \partial NX - \partial NY) \hookrightarrow (Y^c, X^c) \]
induces a map
\[ j_*: \Omega^\SPL_{N-q} (NX-NY, \partial NX - \partial NY) 
   \longrightarrow \Omega^\SPL_{N-q} (Y^c, X^c). \]
Then $\phi$ is the composition
\[ \xymatrix@C=80pt{
\Omega^{-q}_\SPL (X,Y) \ar[rdd]_\phi & 
   \Omega^{-q}_\SPL (NX,NY) \ar[l]_{j^*}^{\cong} \ar[d]^{\operatorname{amal}} \\
& \Omega^\SPL_{N-q} (NX-NY, \partial NX - \partial NY) \ar[d]^{j_*} \\
& \Omega^\SPL_{N-q} (Y^c, X^c).
} \]
The following example illustrates
the behavior of $\phi$ and will be used later.
\begin{example} \label{exple.phixdnysn1}
We consider the $n$-ball $X=D^n$ and its boundary sphere
$Y=\partial D^n$. Take $N=n$ and embed $D^n$ into $S^N = S^n$ as the
upper hemisphere so that $\partial D^n$ is embedded as the equatorial sphere.
Then $NY$ is a closed band containing the equator and $NX$ is the union of
this band with the upper hemisphere. The complement $X^c$ is the open
lower hemisphere and the complement $Y^c$ is the disjoint union of
open upper and lower hemisphere.
Note that $D^n$ may be regarded as the total space of a trivial
block bundle (\cite{rosablockbundles1}) over a point.
A block bundle always has a zero section, which for the trivial block
bundle over a point is the inclusion $i: \{ 0 \} \hookrightarrow D^n,$
where $D^n$ is triangulated so that its center $0$ is a vertex.
(Then $i$ is a simplicial inclusion.)
The BRS-Thom class $u_{BRS} (\xi)$ of a block bundle $\xi$ is explained further below,
in (\ref{equ.defubrs}).
For $\xi = \epsilon^n,$ the trivial $n$-block bundle over a point,
it is given by
\[ u_{BRS} (\epsilon^n) =
   [ \{ 0 \} \stackrel{i}{\hookrightarrow} D^n ]\in      
   \Omega^{-n}_\SPL (D^n, \partial D^n). \]
Here, we interpret the inclusion $\{ 0 \} \hookrightarrow D^n$ as the
projection of a $(-n)$-mock bundle over $D^n$ with block $\{ 0 \}$ over
the cell $D^n$ and empty blocks over all boundary cells of 
the polyhedron $D^n$.
We shall compute the image under $\phi$ of this element $u_{BRS} (\epsilon^n)$.
The center $0$ includes into $NX$, so we have
$\{ 0 \} \hookrightarrow NX.$
Again, we interpret this inclusion as a mock bundle over $NX$,
so it defines an element
$[\{ 0 \} \hookrightarrow NX] \in \Omega^{-n}_\SPL (NX,NY),$
as the blocks over the equatorial band $NY$ are all empty.
Induced mock bundles are given by pulling back under simplicial maps.
If $j$ is the (simplicial) inclusion $j:X\hookrightarrow NX$, then the 
pullback of the mock bundle $\{ 0 \} \hookrightarrow NX$
is given by
$j^* [\{ 0 \} \hookrightarrow NX] = [\{ 0 \} \hookrightarrow D^n].$
To compute the amalgamation of 
the mock bundle $\{ 0 \} \hookrightarrow NX$ over $NX$, we observe that
its total space consists of only one block
(namely $\{ 0 \}$), so there is nothing to amalgamate.
Thus
\[ \operatorname{amal} [\{ 0 \} \hookrightarrow NX]
   = [\{ 0 \} \hookrightarrow NX-NY] \in 
   \Omega^\SPL_0 (NX-NY, \partial NX - \partial NY). \]
(Note that $\{ 0 \} \not\in NY$.)
Now the boundary $\partial NY$ of the band $NY$ consists of two
disjoint circles, one in the upper hemisphere, the other in the lower
hemisphere. The circle in the lower hemisphere is $\partial NX$.
Therefore, $\partial NX - \partial NY = \varnothing$.
In particular,
\[ \Omega^\SPL_0 (NX-NY, \partial NX - \partial NY) =
   \Omega^\SPL_0 (NX-NY). \]
Since $Y^c$ is the disjoint union of two open discs, and
$X^c$ is the lower one of these discs, we have by excision
\[ \Omega^\SPL_0 (Y^c,X^c) = \Omega^\SPL_0 (D^{\circ n}) =
  \Omega^\SPL_0 (\{ 0 \}), \]
where $D^{\circ n}$ is the upper open disc, i.e. the one containing 
the point $0$. Under this identification,
\[ j_* [\{ 0 \} \hookrightarrow NX-NY] = 
   [\{ 0 \} \stackrel{\id}{\longrightarrow} \{ 0 \}] \in 
   \Omega^\SPL_0 (\pt). \]
We have shown that
\[ \phi (u_{BRS} (\epsilon^n)) =
 [ \{ 0 \} \hookrightarrow Y^c] \in \Omega^\SPL_0 (Y^c, X^c).  \]
\end{example}

Let $I$ denote the unit interval.
Recall that
a PL (closed disc) \emph{$q$-block bundle} $\xi^q/K$ consists of a PL total space
$E(\xi)$ and a ball complex $K$ covering a polyhedron $|K|$
such that $|K| \subset E(\xi)$,
for each $n$-ball $\sigma$ in $K$, there is a (closed)
PL $(n+q)$-ball $\beta (\sigma) \subset E(\xi)$ (called the \emph{block} over $\sigma$)
and a PL homeomorphism of pairs
\[ (\beta (\sigma), \sigma) \cong (I^{n+q}, I^n), \]
$E(\sigma)$ is the union of all blocks $\beta (\sigma),$ $\sigma \in K$,
the interiors of the blocks are disjoint, and
if $L$ is the complex covering the polyhedron 
$\sigma_1 \cap \sigma_2$, then $\beta (\sigma_1)\cap \beta (\sigma_2)$
is the union of the blocks over cells in $L$.
So a block bundle need not have a projection from the
total space to the base, but it always has a canonical zero section
$i: K\hookrightarrow E(\xi)$.
The trivial $q$-block bundle has total space
$E(\xi) = |K| \times I^q$ and blocks
$\beta (\sigma) = \sigma \times I^q$ for each $\sigma \in K$.
The cube $I^q$ has boundary $\Sigma^{q-1} = \partial I^q$ and
a block preserving PL homeomorphism 
$\Delta^n \times I^q
    \stackrel{\cong}{\to} \Delta^n \times I^q$
restricts to a block preserving PL homeomorphism    
$\Delta^n \times \Sigma^{q-1} 
    \stackrel{\cong}{\to} \Delta^n \times \Sigma^{q-1}$,
where $\Delta^n$ is the standard $n$-simplex.    
Hence there is a homomorphism
$\TPL_n (I) \rightarrow \TPL_n (\Sigma)$ of semi-simplicial groups
given by restriction, \cite[p. 436]{rosablockbundles3}.
On classifying spaces, this map induces
$\BBPL_n \to \BBPL_n (\Sigma)$.
Thus a closed disc block bundle $\xi$ has a well-defined \emph{sphere block bundle} 
$\dot{\xi}$, see
\cite[\S 5, p. 19f]{rosablockbundles1}, whose total space $\dot{E}$
is a PL subspace $\dot{E} \subset E(\xi)$ of the total space of $\xi$.

Now let $\xi:|K|\to \BBSPL_n$ be an oriented PL closed disc block bundle
of rank $n$ over a finite complex $K$. Then $\xi$ has a Thom class
as follows (cf. \cite[p. 26]{buonrs}):
Let $i: K \to E=E(\xi)$ be the zero section.
Endow $E$ with the ball complex structure given by taking the
blocks $\beta (\sigma)$ of the bundle $\xi$ as balls,
together with the balls of a suitable ball complex structure on the total
space $\dot{E}$ of the sphere block bundle $\dot{\xi}$.
Then $i: K\to E$ is the projection of an oriented 
$(-n)$-mock bundle, 
and thus determines an element
\begin{equation} \label{equ.defubrs} 
u_{BRS} (\xi) :=
   [i] \in \Omega^{-n}_\SPL (E,\dot{E}),  
\end{equation}   
which we shall call the
\emph{BRS-Thom class} of $\xi$.
Note that if $\sigma$ is a cell in $\dot{E}$, then 
$i^{-1} (\sigma) = \sigma \cap |K| = \varnothing$,
so $[i]$ defines indeed a class rel $\dot{E}$.
The BRS-Thom class is natural, \cite[p. 27]{buonrs}.

Let $\xi:|K|\to \BSPL_n$ be an oriented PL $(\real^n,0)$-bundle.
This bundle has a Thom class
\[ u_\SPL (\xi) \in \RMSPL^n (\Th (\xi)), \]
as discussed in Section \ref{sec.thomclso}.
Composing with the map
$\BSPL_n \to \BBSPL_n$, we get a map
$\xi_\PLB: |K|\longrightarrow \BBSPL_n,$
which is the classifying map of the underlying 
oriented PL block bundle $\xi_\PLB$ of $\xi$.
\begin{lemma} \label{lem.usplgoestoubrsfortriv}
For the trivial oriented PL $(\real^n,0)$-bundle $\epsilon^n$ 
over a point, the isomorphism (\ref{equ.identmsplcohbrstq}),
\[ \beta: \Omega^{-n}_\SPL (D^n, \partial D^n) \cong \MSPL^n (D^n, \partial D^n), \]
maps the BRS-Thom class
$u_{BRS} (\epsilon^n_\PLB)$ to the Thom class $u_\SPL (\epsilon^n)$.
\end{lemma}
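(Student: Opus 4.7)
The plan is to trace the class $u_{BRS}(\epsilon^n_\PLB)$ through the three successive isomorphisms $\phi$, $\tau$, $\alpha$ that compose to form $\beta$, and then match the output against $u_\SPL(\epsilon^n)$ computed directly from its definition as a suspended unit.

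First, I would invoke Example \ref{exple.phixdnysn1} verbatim: taking $X=D^n$, $Y=\partial D^n$, $N=n$, and the ball complex structure on $D^n$ in which the center $0$ is a vertex, that example shows
\[ \phi(u_{BRS}(\epsilon^n_\PLB)) = [\{0\}\hookrightarrow Y^c]\in \Omega^\SPL_0(Y^c,X^c), \]
where after excision $\Omega^\SPL_0(Y^c,X^c)\cong \Omega^\SPL_0(D^{\circ n})\cong \Omega^\SPL_0(\pt)$ and the class corresponds to $[\pt \xrightarrow{\id}\pt]$. Next, the Pontrjagin--Thom isomorphism $\tau:\Omega^\SPL_0(\pt)\xrightarrow{\cong}\MSPL_0(\pt)=\pi_0(\MSPL)$ sends this cobordism class to the unit $1\in\pi_0(\MSPL)$. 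Applying Alexander duality $\alpha$ — which, by the sign convention fixed in the paper just before the definition of $\beta$ — sends $1\in\MSPL_0(D^{\circ n})$ to the $n$-fold suspension
\[ \sigma^n\in \widetilde{\MSPL}^n(S^n)=\MSPL^n(D^n,\partial D^n), \]
we obtain
\[ \beta(u_{BRS}(\epsilon^n_\PLB))=\sigma^n. \]

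It then remains to verify $u_\SPL(\epsilon^n)=\sigma^n$. For this I would unwind the homotopy-theoretic construction of $u_\SPL$ given in Section \ref{sec.thomclso} in the special case of the trivial bundle over a point. The classifying map $h:\pt\to\BSPL_n$ is the basepoint, so the morphism $\phi:\xi^\bullet\to\zeta^P_n$ of $(S^n,*)$-fibrations is the inclusion of the fiber over the basepoint. The induced Thom space map $\Th(\phi):\Th(\epsilon^n)=S^n\to\MSPL_n$ is therefore precisely the inclusion of the $n$-th space of the spectrum coming from the unit map $S^0\to\MSPL$ suspended $n$ times. Passing to the associated map of spectra and its homotopy class, this is exactly $\sigma^n$ in $[\Sigma^\infty S^n,\Sigma^n \MSPL]=\widetilde{\MSPL}^n(S^n)$. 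Hence $u_\SPL(\epsilon^n)=\sigma^n=\beta(u_{BRS}(\epsilon^n_\PLB))$, as desired.

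The only subtle point — and the reason the sign $\alpha=(-1)^N\gamma_t$ was introduced in the definition of $\beta$ — is matching Switzer's Alexander duality map with the suspension-of-the-unit normalization of the Thom class; the paper already builds this compatibility into the definition of $\alpha$, so once one recognizes that $u_\SPL(\epsilon^n)$ is the suspended unit $\sigma^n$, the lemma follows from the chain of identifications without further computation.
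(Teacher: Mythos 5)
Your proposal is correct and follows exactly the same route as the paper: applying Example \ref{exple.phixdnysn1} to evaluate $\phi$, then $\tau$ and $\alpha$, yielding $\sigma^n$, and separately recognizing $u_\SPL(\epsilon^n)=\sigma^n$ from the definition. The only difference is that you spell out why $u_\SPL(\epsilon^n)=\sigma^n$ by unwinding the Thom-space construction for the trivial bundle over a point, whereas the paper simply asserts this; your elaboration is a correct reading of Section \ref{sec.thomclso}.
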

\begin{proof}
The isomorphism $\beta$ is the composition
\[ \Omega^{-n}_\SPL (D^n,\partial D^n) \stackrel{\phi}{\longrightarrow}
  \Omega^\SPL_0 (Y^c, X^c) \stackrel{\tau}{\longrightarrow}
  \MSPL_{0} (Y^c, X^c) \stackrel{\alpha}{\longrightarrow}
  \MSPL^n (D^n,\partial D^n). \]
Note that the underlying PL block bundle $\epsilon^n_\PLB$ of $\epsilon^n$
is the trivial block bundle over a point.
Thus, by Example \ref{exple.phixdnysn1},
\[ \phi (u_{BRS} (\epsilon^n_\PLB)) =
 [ \{ 0 \} \hookrightarrow Y^c] \in \Omega^\SPL_0 (Y^c, X^c)  \]
and under the identification
$\Omega^\SPL_0 (Y^c,X^c) = \Omega^\SPL_0 (D^{\circ n}) = \Omega^\SPL_0 (\{ 0 \}),$
we have
\[ \phi (u_{BRS} (\epsilon^n_\PLB)) = 
   [\{ 0 \} \stackrel{\id}{\longrightarrow} \{ 0 \}] \in 
   \Omega^\SPL_0 (\{ 0 \}) = \Omega^\SPL_0 (\pt). \]
The Thom-Pontrjagin construction $\tau$ sends
$[\id_{\{ 0 \}}]$ to the unit $1\in \MSPL_0 (\pt)$.
Finally, the Alexander duality map $\alpha$
sends the unit $1\in \MSPL_0 (\pt)$ to
$\sigma^n \in \MSPL^n (D^n, \partial D^n).$
So
\[ \beta (u_{BRS} (\epsilon^n_\PLB))
 = \alpha \tau \phi (u_{BRS} (\epsilon^n_\PLB))
 = \alpha \tau [\id_{\{ 0 \}}] = \alpha (1) = \sigma^n. \]
Directly from the construction of $u_\SPL$ one sees that
$u_\SPL (\epsilon^n) = \sigma^n$ as well.
\end{proof}

\begin{lemma} \label{lem.usplgoestoubrs}
Let $\xi:|K|\to \BSPL_n$ be an oriented PL $(\real^n,0)$-bundle, $|K|$ compact.
Under the isomorphism $\beta$ in (\ref{equ.identmsplcohbrstq}),
the BRS-Thom class $u_{BRS} (\xi_\PLB)$ 
of the underlying oriented PL block bundle
gets mapped to the Thom class $u_\SPL (\xi)$ .
\end{lemma}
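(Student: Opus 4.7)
The plan is to reduce to the already-settled trivial case (Lemma~\ref{lem.usplgoestoubrsfortriv}) by combining the naturality of both $u_{BRS}$ and $u_\SPL$ with the naturality of $\beta$, and then invoking a fiberwise characterization of $\MSPL$-Thom classes.

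Concretely, I would endow $E = E(\xi_\PLB)$ with the ball complex structure described just before~(\ref{equ.defubrs}) (the blocks of $\xi_\PLB$ together with a compatible structure on $\dot{E} = \dot{E}(\xi_\PLB)$), and, after subdividing $K$ so that any prescribed point of $|K|$ becomes a vertex, consider for each vertex $v\in K$ the inclusion of ball complex pairs
\[ \iota_v: (\beta(v), \dot{\beta}(v)) = (D^n, \partial D^n) \hookrightarrow (E, \dot{E}), \]
which realizes $\xi_\PLB|_v$ as the trivial block bundle $\epsilon^n_\PLB$ over a point. Applying in turn the naturality of $u_{BRS}$ noted after~(\ref{equ.defubrs}) (whose content here is that the mock bundle $[i: K\hookrightarrow E]$ restricts over $\beta(v)$ to $[\{v\}\hookrightarrow \beta(v)]$), the naturality of $\beta$ for inclusions of pairs stated after~(\ref{equ.identmsplcohbrstq}), Lemma~\ref{lem.usplgoestoubrsfortriv}, and the analogous naturality of $u_\SPL$ built in Section~\ref{sec.thomclso}, I would obtain
\[ \iota_v^*\beta(u_{BRS}(\xi_\PLB)) = \beta(u_{BRS}(\epsilon^n_\PLB)) = u_\SPL(\epsilon^n) = \sigma^n = \iota_v^* u_\SPL(\xi). \]

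Since $\MSPL$ is a commutative ring spectrum (noted in Section~\ref{sec.thomclso}) and $\xi$ is oriented, the Thom class of $\xi$ is the unique $\MSPL$-cohomology class of $\Th(\xi) = E/\dot{E}$ whose restriction to every fiber is the canonical generator $\sigma^n$; the previous display shows that both $\beta(u_{BRS}(\xi_\PLB))$ and $u_\SPL(\xi)$ satisfy this characterization, so they coincide. In my view the principal obstacle is rigorously setting up the naturality of $\beta = \alpha\circ\tau\circ\phi$ for the fiber pair inclusion $(\beta(v), \dot{\beta}(v))\hookrightarrow (E, \dot{E})$, because each of the Spanier--Whitehead duality $\phi$, the Thom--Pontrjagin isomorphism $\tau$ and the Alexander duality $\alpha$ requires coherent choices of ambient sphere embeddings and derived neighborhoods for the two pairs; this is routine once those choices are made compatibly, and as a fallback one could instead induct over the skeleta of $K$, attaching cells via Mayer--Vietoris, whose base case is again Lemma~\ref{lem.usplgoestoubrsfortriv}.
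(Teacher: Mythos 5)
Your overall strategy is the right one and coincides with the paper's: restrict to fibers via the naturality of $u_{BRS}$, $u_\SPL$, and the duality isomorphism $\beta$, reduce to the trivial case settled in Lemma~\ref{lem.usplgoestoubrsfortriv}, and then invoke a uniqueness principle for $\MSPL$-orientations. The fiber-restriction computation
\[ \iota_v^*\beta(u_{BRS}(\xi_\PLB)) = \beta(u_{BRS}(\epsilon^n_\PLB)) = u_\SPL(\epsilon^n) = \sigma^n = \iota_v^* u_\SPL(\xi) \]
matches the paper's chain of equalities almost verbatim (the paper uses one point per path component rather than every vertex, which is all that is needed and is cheaper, but your version is not wrong).

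The genuine gap is in the final uniqueness step. You assert that because $\MSPL$ is a \emph{commutative ring spectrum} and $\xi$ is oriented, there is a unique $\MSPL$-class on $\Th(\xi)$ restricting to $\sigma^n$ on each fiber. Commutativity is not the property that makes this true, and as stated the claim is false for general commutative ring spectra: for instance $KU$ is a commutative ring spectrum, yet a complex vector bundle over a compact base admits many distinct $KU$-orientations restricting to the canonical generator on every fiber. The hypothesis actually used in the paper is that $\MSPL$ is a \emph{connected} spectrum; under this hypothesis the paper cites Switzer~\cite[14.8, p.~311]{switzer} for the statement that an $\MSPL$-orientation of $\xi$ is determined by its fiber restrictions at one point from each path component of the (compact) base. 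Without replacing commutativity by connectedness (and supplying or citing the resulting uniqueness lemma), your argument does not close. Your worry about coherent ambient choices in setting up naturality of $\beta = \alpha\circ\tau\circ\phi$ is, by contrast, a non-issue: the paper records the naturality of $\beta$ for inclusions of pairs when introducing~(\ref{equ.identmsplcohbrstq}), and this is all that is used.
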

\begin{proof}
We write $X=|K|$ for the compact polyhedron of $K$.
Let $x\in X$ be a point. The bundle $\xi$ has a projection
$p: E\to X$ and we can speak of the fiber $E_x = p^{-1} (x) \cong \real^n$
over $x$. 
Let $E_0 \subset E$ be the complement of the zero section and let
$E_{0x} = E_x \cap E_0 \cong \real^n - \{ 0 \}$.
Let $E'$ denote the total space of the block bundle $\xi_\PLB$,
and $\dot{E}'$ the total space of the sphere block bundle of $\xi_\PLB$.
We may identify
$\MSPL^{n}_\SPL (E', \dot{E}') \cong
\MSPL^{n}_\SPL (E, E_0)$, since 
$E'/ \dot{E}'$ and $\Th (\xi_\PLB) = \Th (\xi^\bullet) = \Th (\xi)$
are naturally homotopy equivalent.
Let $\xi_\PLB|_{\{ x \}}$ denote the restriction of $\xi_\PLB$ to 
$\{ x \}$, where we subdivide $K$ so that $x$ becomes a vertex, if necessary.
Let $E'_x$ denote the total space of $\xi_\PLB|_{\{ x \}}$,
and $\dot{E}'_x$ the total space of the sphere block bundle of $\xi_\PLB|_{\{ x \}}$.
The inclusions
\[ (E'_x, \dot{E}'_x) 
      \hookrightarrow (E', \dot{E}'),~
 (E_x, E_{0x}) \hookrightarrow (E,E_0) \]
will be denoted by $j_x$.
By naturality of the isomorphism $\beta$
with respect to inclusions of pairs, the diagram
\[ \xymatrix{
\Omega^{-n}_\SPL (E', \dot{E}') \ar[r]^\cong_\beta \ar[d]_{j^*_x} &
  \MSPL^n (E', \dot{E}') \ar[d]^{j^*_x}  \ar@{=}[r]^\sim &
  \MSPL^n (E,E_0) \ar[d]^{j^*_x} \\
\Omega^{-n}_\SPL (E'_x, \dot{E}'_x) \ar[r]^\cong_\beta  &
  \MSPL^n (E'_x, \dot{E}'_x) \ar@{=}[r]^\sim &
  \MSPL^n (E_x, E_{x0})
} \]
commutes.
As $X$ is compact, it has finitely many path components $X_1,\ldots, X_m$.
For every $i=1,\ldots, m$, choose a point $x_i \in X_i$.
We shall compute the fiber restrictions of our two classes to these points.
Let $x\in \{ x_1,\ldots, x_m \}$.
Directly from the construction of $u_\SPL$, we have
$j^*_x u_\SPL (\xi) = \sigma^n \in \widetilde{\MSPL}^n (S^n)
\cong \widetilde{\MSPL}^0 (S^0)$.
In particular, $u_\SPL (\xi)$ is an orientation for $\xi$
(and $\xi^\bullet$) in Dold's sense.
For $\beta (u_{BRS} (\xi_\PLB))$ we have, using the above commutative diagram,
the naturality of both the BRS-Thom class
and $u_\SPL$, and Lemma \ref{lem.usplgoestoubrsfortriv},
\[ j^*_x (\beta (u_{BRS} (\xi_\PLB))) =
   \beta (j^*_x u_{BRS} (\xi_\PLB)) = \beta (u_{BRS} (\xi_\PLB|_{\{ x \}}) 
 = u_\SPL (\xi|_{\{ x \}}) = j^*_x u_\SPL (\xi). \]   
This shows that $\beta (u_{BRS} (\xi_\PLB))$ is also an orientation for
$\xi$.
Since $\MSPL$ is a connected spectrum, an orientation $u$ in
$\MSPL^n (E,E_0)$ for $\xi$ is uniquely determined by $j^*_x (u)$,
$x\in \{ x_1,\ldots, x_m \}$ (\cite[14.8, p. 311]{switzer}).
The above calculation shows that the $\MSPL$-orientations
$u_\SPL (\xi)$ and $\beta (u_{BRS} (\xi_\PLB))$ have the same restrictions
under the $j^*_x$ and thus 
$u_\SPL (\xi) = \beta (u_{BRS} (\xi_\PLB))$.
\end{proof}

\subsection{Witt Bordism and Cap Products}
\label{sec.geomcap}

Recall that we had the Lashof-Rothenberg map $\phi_{LR}: \MSO \to \MSPL$.
Let $\MWITT$ be the spectrum representing Witt-bordism $\omwt_* (-),$ 
considered explicitly first in \cite{curran}.
Curran proves in \cite[Thm. 3.6, p. 117]{curran} that
$\MWITT$ is an $\MSO$-module spectrum.
It is even an
$\MSPL$-module spectrum because the product of a Witt space
and an oriented PL manifold is again a Witt space.
(Further remarks on the structure of $\MWITT$ will be
made in Section \ref{ssec.relwittsymlgysin} below.)
Thus there is a cap product
\[ \cap: \MSPL^c (X,A) \otimes \MWITT_n (X,A) \longrightarrow \MWITT_{n-c} (X). \]
By Buoncristiano-Rourke-Sanderson, a geometric description of this cap product
is given as follows:
One uses the isomorphism (\ref{equ.identmsplcohbrstq}) to think
of the cap product as a product
\[ \cap: \Omega_\SPL^{-c} (K,L) \otimes \omwt_n (|K|,|L|) \longrightarrow
  \omwt_{n-c} (|K|) \]
for finite ball complexes $K$ with subcomplex $L\subset K$.
Let us first discuss the absolute case $L=\varnothing$, and then
return to the relative one.
If $f: Z\to |K|$ is a continuous map from an 
$n$-dimensional closed Witt space $Z$ to
$|K|$, and $\xi^q$ is a $q$-mock bundle over $K$ (with $q=-c$), 
then one defines (cf. \cite[p. 29]{buonrs})
\[ [\xi^q/K] \cap [f:Z\to |K|] :=
  [h: E(f^* \xi)\to |K|] \in \omwt_{n-c} (|K|),  \]
where $h$ is the diagonal arrow in the cartesian diagram
\[ \xymatrix{
E(f^* \xi) \ar[r] \ar[d] \ar[rd]^h & E(\xi) \ar[d]^p \\
Z \ar[r]^{f'} & K.
} \]
Here, we subdivide simplicially, homotope $f$ to a simplicial map $f',$
and use the fact (\cite[II.2, p. 23f]{buonrs}) that mock
bundles admit pullbacks under simplicial maps.
By Lemma \ref{lem.totalspaceofmockiswitt},
$E(f^* \xi)$ is a closed Witt space.
For the relative case, we observe that
if $(Z,\partial Z)$ is a compact Witt space with boundary,
$f:(Z,\partial Z)\to (|K|,|L|)$ maps the boundary into $|L|$,
and $\xi|_L = \varnothing$, then
$f^* \xi|_{\partial Z} = \varnothing$
and so $\partial E(f^* \xi) = \varnothing,$
i.e. the Witt space $E(f^* \xi)$ is \emph{closed}.
Hence it defines an absolute bordism class.

\subsection{The Gysin Map on Witt Bordism}
\label{sec.gysinwittbordism}

For a (real) codimension $c$ normally nonsingular inclusion
$g: Y^{n-c} \hookrightarrow X^n$ of closed oriented PL pseudomanifolds,
we define a Gysin map
\[ g^!: \omwt_k (X) \longrightarrow \omwt_{k-c} (Y), \]
and we shall prove that it sends the Witt orientation of $X$ to
the Witt orientation of $Y$, if $X$ and $Y$ are Witt spaces.
This will then be applied in proving
the analogous statement for the $\syml$-homology orientations.

Let $\nu$ be the normal bundle of the embedding $g$.
By definition of normal nonsingularity, $\nu$
is a vector bundle over $Y$, and it is canonically oriented
since $X$ and $Y$ are oriented. 
Thus $\nu$ is classified by a continuous map $\nu: Y\to \BSO_c$.
As explained in Section \ref{sec.thomclso}, $\nu$ determines
an oriented PL $(\real^c,0)$-bundle $\nu_\PL$,
an oriented PL (closed disc) block bundle $\nu_\PLB$, and
an oriented topological $(\real^c,0)$-bundle $\nu_\TOP$.
Let $E = E(\nu_\PLB)$ denote the total space
of the PL block bundle $\nu_\PLB$. Then $E$ is a compact PL pseudomanifold with
boundary $\partial E = \dot{E} = \dot{E}(\nu_\PLB)$.
(This uses that $Y$ is closed.)
The Thom space $\Th (\nu)$ of $\nu$ is homotopy equivalent
to the PL space $\Th' (\nu_\PLB) := E \cup_{\dot{E}} \cone \dot{E}$.
The standard map $j: X \to \Th' (\nu_\PLB)$ is the identity on
an open tubular neighborhood of $Y$ in $X$ and sends points
farther away from $Y$ to the cone point $\infty \in \Th' (\nu_\PLB)$.
As in Ranicki \cite[p. 186]{ranickialtm}, this map extends to a map
$j: X_+ \to \Th' (\nu_\PLB)$ by sending the additional disjoint point to $\infty$.
By Lashof-Rothenberg triangulation, we can and will assume that $j$ is simplicial.
This map induces a homomorphism
\[ j_*: \omwt_k (X) = \omwt_k (X_+,\pt)
\longrightarrow 
  \omwt_k (\Th' (\nu_\PLB),\infty) \cong \omwt_k (E,\dot{E}). \]
Recall from Section \ref{sec.geomcap} that we had a cap product
\[ \cap: \Omega^{-c}_\SPL (E,\dot{E}) \otimes \omwt_k (E,\dot{E}) \longrightarrow
  \omwt_{k-c} (E), \]
which we had described geometrically.
Capping with the BRS-Thom class
$u_{BRS} (\nu_\PLB) \in \Omega^{-c}_\SPL (E,\dot{E}),$
we get a map
\[ u_{BRS} (\nu_\PLB) \cap -: \omwt_k (E,\dot{E}) 
  \longrightarrow \omwt_{k-c} (E). \]
Composing this with the above map $j_*$, we get
the \emph{Witt-bordism Gysin map}
\[ g^! := (u_{BRS} (\nu_\PLB) \cap -)\circ j_*:
  \omwt_k (X) \longrightarrow \omwt_k (E,\dot{E})
  \longrightarrow \omwt_{k-c} (E)\cong \omwt_{k-c} (Y), \]
where the last isomorphism is the inverse of the isomorphism
induced by the zero section.  
A closed $n$-dimensional Witt space $X^n$ has a canonical 
\emph{Witt-bordism fundamental class}
\[ [X]_{\Witt} := [\id:X\to X] \in \omwt_n (X). \]

\begin{thm} \label{thm.gysinpreserveswittfundclass}
The Witt-bordism Gysin map $g^!$ of a
(real) codimension $c$ normally nonsingular inclusion
$g: Y^{n-c} \hookrightarrow X^n$ of closed (oriented) Witt spaces,
sends the Witt-bordism fundamental class of $X$ to the 
Witt-bordism fundamental class of $Y$:
\[ g^! [X]_{\Witt} = [Y]_{\Witt}. \]
\end{thm}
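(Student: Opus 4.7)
The plan is to trace the Witt-bordism fundamental class $[X]_{\Witt} = [\id \colon X \to X]$ through each stage of the composition $g^! = (u_{BRS}(\nu_\PLB) \cap -) \circ j_{*}$ and apply the geometric descriptions already established, especially the geometric cap product of Section \ref{sec.geomcap} and Lemma \ref{lem.totalspaceofmockiswitt}.

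First I would analyze $j_{*}[X]_{\Witt}$ inside $\omwt_{n}(\Th'(\nu_\PLB), \infty) \cong \omwt_{n}(E, \dot{E})$. By construction of $j$, there is a closed PL tubular neighborhood $\bar{U} = j^{-1}(E) \subset X$ of $Y$ such that $j$ restricts to a PL homeomorphism of pairs $(\bar{U}, \partial \bar{U}) \xrightarrow{\cong} (E, \dot{E})$, while everything outside $\bar{U}$ is crushed to the cone point. Under the excision/homotopy identification $\omwt_{n}(\Th'(\nu_\PLB), \infty) \cong \omwt_{n}(E, \dot{E})$, the class $[j \colon X \to \Th'(\nu_\PLB)]$ is therefore represented by the identity map, i.e.\ $j_{*}[X]_{\Witt} = [\id \colon E \to E] \in \omwt_{n}(E, \dot{E})$.

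Next I would carry out the cap product geometrically. The BRS-Thom class $u_{BRS}(\nu_\PLB) \in \Omega_\SPL^{-c}(E, \dot{E})$ is represented by the zero section $i \colon Y \hookrightarrow E$ viewed as a $(-c)$-mock bundle over the ball complex structure on $E$ assembled from the blocks of $\nu_\PLB$ together with a ball complex structure on $\dot{E}$. Pulling this mock bundle back along $\id \colon E \to E$ returns the same mock bundle $i \colon Y \to E$; Lemma \ref{lem.totalspaceofmockiswitt} guarantees that its total space $Y$ remains a closed Witt space. The geometric formula for the cap therefore gives
\[ u_{BRS}(\nu_\PLB) \cap [\id \colon E \to E] = [i \colon Y \to E] \in \omwt_{n-c}(E). \]
Finally, the zero section isomorphism $i_{*} \colon \omwt_{n-c}(Y) \xrightarrow{\cong} \omwt_{n-c}(E)$ sends $[Y]_{\Witt} = [\id_{Y}]$ to $[i \colon Y \to E]$, so inverting yields $g^{!}[X]_{\Witt} = [Y]_{\Witt}$.

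The main obstacle is not conceptual but combinatorial: one must arrange compatible PL/ball-complex structures so that the geometric cap product is literally applicable. Concretely, one must subdivide and triangulate so that $j$ is simplicial (already guaranteed by the Lashof--Rothenberg triangulation invoked above), that the tubular neighborhood $\bar{U} \subset X$ is identified with $E$ as \emph{stratified} PL pseudomanifolds (using the stratum-preserving clause in Definition \ref{def.snns}), and that $\id_{E}$ is simplicial for the chosen ball complex structure so that the mock-bundle pullback in the cap product is literally the identity. All of these are standard maneuvers, but they are what transforms the short calculation above into a complete argument.
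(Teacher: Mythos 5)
Your argument is correct and in essence the same as the paper's; the only difference is which side of the excision isomorphism you carry the geometric cap product out on, and the paper's choice turns out to be slightly more economical. You first move the relative bordism cycle across the isomorphism, rewriting $j_*[X]_{\Witt}$ as $[\id\colon E\to E]\in\omwt_n(E,\dot E)$, and then pull $i\colon Y\to E$ back along $\id_E$. The paper transfers the \emph{mock bundle} instead, representing the BRS-Thom class in $\Omega^{-c}_\SPL(\Th'(\nu_\PLB),\infty)$ by composing $i$ with the inclusion $E\hookrightarrow \Th'(\nu_\PLB)$, and then forms the cartesian square with $j\colon X\to\Th'(\nu_\PLB)$ itself along the bottom. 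The fiber product $E(j^*i)$ is identified with $Y$ at once, since the points of $X$ that $j$ sends onto the zero section are precisely the points of $Y$. By contrast, your intermediate step ``the class $[j\colon X\to\Th'(\nu_\PLB)]$ is therefore represented by the identity map'' is correct but not automatic from what precedes it: one must decompose $X=\bar U\cup_{\partial\bar U}(X-\operatorname{int}\bar U)$, argue that the summand mapping entirely into $\infty$ is zero in $\omwt_n(\Th'(\nu_\PLB),\infty)$, and transport the remaining relative cycle $[\bar U,j|_{\bar U}]$ across the PL homeomorphism to $(E,\dot E)$. That is all routine, but it amounts to reproving a piece of geometric excision that the paper's direct computation of the fiber product over $j$ avoids entirely. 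The remaining steps --- the appeal to Lemma \ref{lem.totalspaceofmockiswitt} to keep the pullback a closed Witt space and the final inversion of $i_*$ --- match the paper's.
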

\begin{proof}
The image of $[\id: X\to X]$ under $j_*$ is
$[j: X \to \Th' (\nu_\PLB)] \in \omwt_n (\Th' (\nu_\PLB),\infty) 
\cong \omwt_n (E,\dot{E})$.
The BRS-Thom class of $\nu_\PLB$ is given by the class $[i:Y\to E]$
of the zero-section. Under the identification
$\Omega^{-c}_\SPL (E,\dot{E}) \cong \Omega^{-c}_\SPL (\Th' (\nu_\PLB),\infty),$
it is represented by composing $i$ with the inclusion $E \to \Th' (\nu_\PLB)$.
We call the resulting map again $i: Y \to \Th' (\nu_\PLB)$; it is a $(-c)$-mock
bundle projection, where $\Th' (\nu_\PLB)$ is equipped with a ball complex
structure which contains $\infty$ as a zero dimensional ball.
Since $Y$ does not touch $\infty$, this mock bundle is empty over the ball
$\infty$.
The cap product
\[  [i:Y\to \Th' (\nu_\PLB)] \cap [j:X \to \Th' (\nu_\PLB)], \]
is given by $[h]$, where $h$ is the diagonal arrow
in the cartesian diagram
\[ \xymatrix{
E(j^* (i)) \ar[r] \ar[d] \ar[rd]^h & Y \ar[d]^i \\
X \ar[r]_j & \Th' (\nu_\PLB).
} \]
The pullback $E(j^* (i))$ is just $Y$ and the above 
diagram is
\[ \xymatrix{
Y \ar[r]^{\id} \ar[d]_g \ar[rd]^h & Y \ar[d]^i \\
X \ar[r]_j & \Th' (\nu_\PLB).
} \]
(Recall that $j$ is the identity in a tubular neighborhood
of $Y$; the points of $X$ that are mapped under $j$ to the zero section
are precisely the points of $Y$.)
So
\[ [i:Y\to \Th' (\nu_\PLB)] \cap [j:X \to \Th' (\nu_\PLB)] = [h] = [i]. \]
Now under the isomorphism 
\[ i_*: \omwt_{n-c} (Y) \stackrel{\cong}{\longrightarrow} 
     \omwt_{n-c} (E), \]
the Witt-bordism fundamental class $[\id:Y\to Y]$ is sent to $[i]$.
\end{proof}

\subsection{The Gysin Map on $\syml$-Homology}

We continue in the context of Section \ref{sec.gysinwittbordism}.
Thus $g: Y^{n-c} \hookrightarrow X^n$ is a normally nonsingular
inclusion of closed Witt spaces with
normal vector bundle $\nu$.
The canonical map $j: X_+ \to \Th (\nu)$ induces a homomorphism
\[ j_*: \syml (\rat)_* (X) \longrightarrow 
  \redsyml (\rat)_* (\Th (\nu)). \]
As discussed in Section \ref{sec.relbtwthomclasses},
the oriented topological $(\real^c,0)$-bundle
$\nu_\TOP$ determined by $\nu$ has an $\syml$-cohomology Thom class
\[ u_\mbl (\nu_\TOP) \in \redsyml (\rat)^c (\Th (\nu_\TOP)), \]
defined by
\[ u_\mbl (\nu_\TOP) = \epsilon_\rat \sigma^* (u_\STOP (\nu_\TOP)), \]
where $\epsilon_\rat$ is induced by
$\syml (\intg)\to \syml (\rat)$.
Capping with this class, we receive a map
\[ u_\mbl (\nu_\TOP) \cap -: \redsyml (\rat)_k (\Th (\nu)) 
  \longrightarrow \syml (\rat)_{k-c} (Y). \]
Composing this with the above map $j_*$ on $\syml (\rat)$-homology, we get
the \emph{$\syml$-homology Gysin map}
\[ g^! := (u_\mbl (\nu_\TOP) \cap -)\circ j_*:
  \syml (\rat)_k (X) \longrightarrow \syml (\rat)_k (\Th (\nu))
  \longrightarrow \syml (\rat)_{k-c} (Y). \]
(Of course this map can be defined over $\syml$, but we only need it
over $\syml (\rat)$.)

\subsection{Relation between Witt and $\syml$-Gysin Maps}
\label{ssec.relwittsymlgysin}

The spectra $\syml (\intg)$ and $\syml (\rat)$ are ring spectra.
The product of
two $\rat$-Witt spaces is again a $\rat$-Witt space. This implies essentially that
$\MWITT$ is a ring spectrum; for more details see \cite{blm}.
There, we constructed a map
\[ \tau: \MWITT \longrightarrow \syml (\rat). \]
(Actually, we even constructed an integral map
$\operatorname{MIP} \to \syml,$ where $\operatorname{MIP}$ represents
bordism of integral intersection homology Poincar\'e spaces studied in
\cite{gorsie} and \cite{pardon}, 
but everything works in the same manner for Witt, if one
uses the $\syml$-spectrum with rational coefficients.)
This map is multiplicative, i.e. a ring map, as shown
in \cite[Section 12]{blm}. 
Using this map $\tau$, a closed Witt space $X^n$ has a canonical 
\emph{$\syml (\rat)$-homology fundamental class}
\[ [X]_\mbl \in \syml (\rat)_n (X), \]   
which is by definition the image of $[X]_{\Witt}$ under
the map
\[ \tau_*: \omwt_n (X) = \MWITT_n (X) \longrightarrow
   \syml (\rat)_n (X), \]
i.e.
\[ [X]_\mbl := \tau_* ([X]_{\Witt}). \]   
Every oriented PL manifold is a Witt space. Hence there is a map
\[ \phi_W: \MSPL \longrightarrow \MWITT, \]
which, using the methods of ad-theories and Quinn spectra
employed in \cite{blm}, can be constructed to be multiplicative, i.e. a map of ring spectra.
By the construction of $\tau$ in \cite{blm}, the diagram
\begin{equation} \label{equ.esigmaphifistauphiw}
\xymatrix{
& \MSTOP \ar[r]^{\sigma^*} & \syml (\intg) \ar[dd]^{\epsilon_\rat} \\
\MSPL \ar[ru]^{\phi_F} \ar[rd]_{\phi_W} & & \\
& \MWITT \ar[r]_\tau & \syml (\rat)
} \end{equation}
homotopy commutes.
In the proof of Theorem \ref{thm.gysinpreserveslhomfundclass} below,
we shall use the following standard fact:
\begin{lemma} \label{lem.caponemodmor}
If $E$ is a ring spectrum, $F,F'$ module spectra over $E$ and
$\phi: F\to F'$ an $E$-module morphism, then the diagram
\[ \xymatrix{
E^c (X,A) \otimes F_n (X,A) \ar[r]^>>>>>\cap  \ar[d]_{\id \otimes \phi_*}
  & F_{n-c} (X) \ar[d]^{\phi_*} \\
E^c (X,A) \otimes F'_n (X,A) \ar[r]^>>>>>\cap & F'_{n-c} (X)
} \]
commutes: if $u\in E^c (X,A),$ and $a\in F_n (X,A)$, then
\[ \phi_* (u\cap a) = u\cap \phi_* (a). \]
\end{lemma}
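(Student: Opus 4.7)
The plan is to unpack the standard construction of the cap product via the module structure map, and use the defining property of an $E$-module morphism.

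Recall that the cap product
\[ \cap: E^c (X,A) \otimes F_n (X,A) \longrightarrow F_{n-c} (X) \]
is built from the $E$-module structure $\mu_F: E \wedge F \to F$ as follows. Represent $u \in E^c(X,A)$ by a spectrum map $\tilde{u}: \Sigma^\infty (X/A) \to \Sigma^c E$ and $a \in F_n(X,A)$ by $\tilde{a}: S^n \to F \wedge (X/A)$. Then $u \cap a$ is represented by the composite
\[ S^n \xrightarrow{\tilde{a}} F \wedge (X/A) \xrightarrow{F \wedge \Delta} F \wedge X_+ \wedge (X/A) \xrightarrow{\id \wedge \text{twist} \wedge \tilde{u}} \Sigma^c (F \wedge E \wedge X_+) \xrightarrow{\Sigma^c(\mu_F \wedge \id)} \Sigma^c (F \wedge X_+), \]
where $\Delta$ denotes the relative diagonal. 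The analogous formula with $F$ replaced by $F'$ and $\mu_F$ replaced by $\mu_{F'}$ defines the bottom cap product in the diagram. The class $\phi_*(a)$ is represented by $(\phi \wedge \id) \circ \tilde{a}$, and $\phi_*(u \cap a)$ is represented by post-composing the above displayed composite with $\phi$.

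The key step is to compare these two spectrum-level representatives. Since $\phi$ is an $E$-module morphism, the square
\[ \xymatrix{
E \wedge F \ar[r]^{\mu_F} \ar[d]_{\id_E \wedge \phi} & F \ar[d]^\phi \\
E \wedge F' \ar[r]_{\mu_{F'}} & F'
} \]
commutes (up to homotopy), which lets us slide $\phi$ past the final multiplication $\mu_F$ and turn it into $\mu_{F'} \circ (\id \wedge \phi)$. The remaining maps in the composite (the diagonal, the twist, and the smash with $\tilde{u}$) are all natural in the $F$-variable, so $\phi$ slides past them as well. The resulting composite is precisely the representative of $u \cap \phi_*(a)$, proving $\phi_*(u \cap a) = u \cap \phi_*(a)$.

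The only real obstacle is bookkeeping: one must track the order of smash factors, the desuspensions, and the difference between $X_+$ and $X/A$ conventions for relative versus absolute theories. The content of the argument is entirely formal once the module-morphism compatibility square is in hand, which is why this is a standard fact and we do not dwell on it further.
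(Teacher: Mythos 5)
The paper does not prove this lemma at all—it is stated immediately before Theorem \ref{thm.gysinpreserveslhomfundclass} with the preface that it is a ``standard fact,'' and no proof is supplied. Your proposal therefore cannot be compared to a proof in the paper; instead it fills in what the author left implicit. Your argument is correct and is indeed the standard one: represent the cap product on the spectrum level via the relative diagonal and the module action map, then use the defining homotopy-commutative square of an $E$-module morphism, $\mu_{F'} \circ (\id_E \wedge \phi) \simeq \phi \circ \mu_F$, together with naturality of the diagonal and the shuffle/twist maps in the $F$-variable, to slide $\phi$ past the entire composite. One small wrinkle worth flagging: your displayed composite places the $E$-factor to the right of $F$ and applies $\mu_F \wedge \id$, which implicitly treats $F$ as a right $E$-module, whereas the compatibility square you invoke is written for a left action $E \wedge F \to F$; of course for a commutative ring spectrum such as $\MSPL$ (the case actually used in the paper) the two are interchangeable via the twist, but the conventions should be kept consistent if one were to write this out rigorously. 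This is precisely the kind of bookkeeping you acknowledge at the end, and it does not affect the correctness of the argument.
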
 

\begin{thm} \label{thm.gysinpreserveslhomfundclass}
The $\syml$-homology Gysin map $g^!$ of a
(real) codimension $c$ normally nonsingular inclusion
$g: Y^{n-c} \hookrightarrow X^n$ of closed (oriented) Witt spaces
sends the $\syml (\rat)$-homology fundamental class of $X$ to the 
$\syml (\rat)$-homology fundamental class of $Y$:
\[ g^! [X]_\mbl = [Y]_\mbl. \]
\end{thm}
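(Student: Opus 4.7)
The plan is to deduce this theorem from Theorem \ref{thm.gysinpreserveswittfundclass} by applying the transformation $\tau_{*}$ and verifying that the Witt-bordism Gysin map and the $\syml$-homology Gysin map are intertwined by $\tau_{*}$. Writing $g^{!}_{W}$ and $g^{!}_{\mbl}$ for the two Gysin maps, the target equality reads
\[ g^{!}_{\mbl}[X]_{\mbl} = g^{!}_{\mbl}(\tau_{*}[X]_{\Witt}) \stackrel{?}{=} \tau_{*}(g^{!}_{W}[X]_{\Witt}) = \tau_{*}[Y]_{\Witt} = [Y]_{\mbl}, \]
so the whole content is the middle equality, i.e. the naturality statement $\tau_{*} \circ g^{!}_{W} = g^{!}_{\mbl}\circ \tau_{*}$.

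Since $g^{!}_{W} = (u_{BRS}(\nu_\PLB) \cap -)\circ j_{*}$ and $g^{!}_{\mbl} = (u_\mbl(\nu_\TOP) \cap -)\circ j_{*}$ with the same underlying simplicial map $j$, the proof splits into two compatibilities. First, naturality of $j_{*}$ for the spectrum map $\tau:\MWITT \to \syml(\rat)$ is automatic and gives $\tau_{*}\circ j_{*} = j_{*}\circ \tau_{*}$ on the relevant (reduced) homology groups. Second, I will verify the cap-product compatibility
\[ \tau_{*}(u_{BRS}(\nu_\PLB)\cap a) = u_\mbl(\nu_\TOP)\cap \tau_{*}(a) \]
for $a\in \omwt_{n}(\Th(\nu))$ via Lemma \ref{lem.caponemodmor}. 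To invoke that lemma, I take $E=\MSPL$, which acts on $\MWITT$ through the ring map $\phi_{W}$ (the product of a PL manifold with a Witt space is Witt) and acts on $\syml(\rat)$ through the composite ring map $\epsilon_{\rat}\circ \sigma^{*}\circ \phi_{F}:\MSPL \to \syml(\rat)$. The homotopy commutativity of diagram (\ref{equ.esigmaphifistauphiw}), together with the multiplicativity of $\tau$ and $\phi_{W}$ established in \cite{blm}, says precisely that $\tau$ is a morphism of $\MSPL$-module spectra, which is exactly the hypothesis of Lemma \ref{lem.caponemodmor}.

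It then remains to check that the BRS-Thom class $u_{BRS}(\nu_\PLB)$, viewed in $\MSPL^{c}(\Th(\nu))$ via the isomorphism $\beta$ of (\ref{equ.identmsplcohbrstq}), is sent to $u_\mbl(\nu_\TOP)$ under the ring map $\epsilon_{\rat}\circ \sigma^{*}\circ \phi_{F}$ applied on cohomology. This is a three-step chase: Lemma \ref{lem.usplgoestoubrs} identifies $\beta(u_{BRS}(\nu_\PLB))$ with the $\MSPL$-Thom class $u_\SPL(\nu)$; Lemma \ref{lem.usplustop} sends this under $\phi_{F}$ to $u_\STOP(\nu_\TOP)$; and the very definition of $u_\mbl$ in Section \ref{sec.relbtwthomclasses} as $\epsilon_{\rat}\sigma^{*}(u_\STOP(\nu_\TOP))$ closes the chain.

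The main obstacle, and the only place where something non-formal happens, is the module-map property of $\tau$: one must know that the map $\tau:\MWITT \to \syml(\rat)$ is not just a map of spectra but a map of $\MSPL$-module spectra with respect to the two module structures described above. This is essentially recorded by the homotopy commutativity of diagram (\ref{equ.esigmaphifistauphiw}) together with $\phi_{W}$ being multiplicative, both of which come from the ad-theoretic construction of $\tau$ in \cite{blm}. Once this is in hand, combining the two compatibilities yields $g^{!}_{\mbl}[X]_{\mbl} = \tau_{*}(g^{!}_{W}[X]_{\Witt})$, and an application of Theorem \ref{thm.gysinpreserveswittfundclass} finishes the proof.
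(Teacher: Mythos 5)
Your proposal follows the paper's proof essentially step for step: you reduce to the intertwining $\tau_* \circ g^!_W = g^!_\mbl \circ \tau_*$, split this into naturality of $j_*$ plus cap-product compatibility, invoke Lemma \ref{lem.caponemodmor} with the $\MSPL$-module structures induced by the ring maps $\phi_W$ and $\tau\phi_W \simeq \epsilon_\rat\sigma^*\phi_F$, use the multiplicativity of $\tau$ and $\phi_W$ together with diagram (\ref{equ.esigmaphifistauphiw}) to see $\tau$ is an $\MSPL$-module morphism, and close with the Thom class chase through Lemmas \ref{lem.usplgoestoubrs} and \ref{lem.usplustop}. This is correct and is the same argument as in the paper.
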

\begin{proof}
Let $\nu$ be the topological normal vector bundle of $g$.
The diagram
\[ \xymatrix{
\omwt_n (X) \ar[r]^{\tau_*} \ar[d]_{j_*} & \syml (\rat)_n (X) \ar[d]^{j_*} \\
\omwt_n (\Th (\nu),\infty) \ar[r]^{\tau_*}  
& \syml (\rat)_n (\Th (\nu),\infty) 
} \]
commutes, since $\tau_*$ is a natural transformation of homology theories.
We shall prove next that the diagram
\begin{equation} \label{equ.tauandu} 
\xymatrix{
\omwt_n (\Th (\nu),\infty) \ar[r]^{\tau_*} \ar[d]_{u_{BRS} (\nu_\PLB) \cap -} 
& \syml (\rat)_n (\Th (\nu),\infty) \ar[d]^{u_\mbl (\nu_\TOP) \cap -}  \\
\omwt_{n-c} (Y) \ar[r]^{\tau_*} & \syml (\rat)_{n-c} (Y), 
} 
\end{equation}
commutes as well.
Let $a\in \omwt_n (\Th (\nu),\infty)$ be an element.
According to the definition of the $\syml$-cohomology Thom class, we have
\[ u_\mbl (\nu_\TOP) \cap \tau_* (a) 
  = \epsilon_\rat \sigma^* (u_\STOP (\nu_\TOP)) \cap \tau_* (a). \]
By Lemma \ref{lem.usplustop},
\[ \epsilon_\rat \sigma^* (u_\STOP (\nu_\TOP)) \cap \tau_* (a)
 = \epsilon_\rat \sigma^* \phi_F (u_\SPL (\nu_\PL)) \cap \tau_* (a). \]
Using diagram (\ref{equ.esigmaphifistauphiw}),
\[ \epsilon_\rat \sigma^* \phi_F (u_\SPL (\nu_\PL)) \cap \tau_* (a)
  = \tau \phi_W (u_\SPL (\nu_\PL)) \cap \tau_* (a). \]
In the above formulae, the symbol $\cap$ denotes the cap-product
on $\syml$-(co)homology.
Using the ring map $\phi_W: \MSPL \to \MWITT$, the spectrum $\MWITT$
becomes an $\MSPL$-module with action map
\[ \MSPL \wedge \MWITT \longrightarrow \MWITT \]
given by the composition
\[ \MSPL \wedge \MWITT \stackrel{\phi_W \wedge \id}{\longrightarrow}
   \MWITT \wedge \MWITT \longrightarrow \MWITT. \]
Using the ring map $\tau \phi_W: \MSPL \to \syml (\rat)$, 
the spectrum $\syml (\rat)$
becomes an $\MSPL$-module with action map
\[ \MSPL \wedge \syml (\rat) \longrightarrow \syml (\rat) \]
given by the composition
\[ \MSPL \wedge \syml (\rat) \stackrel{(\tau \phi_W) \wedge \id}{\longrightarrow}
   \syml (\rat) \wedge \syml (\rat) \longrightarrow \syml (\rat). \]
Hence
\[ \tau \phi_W (u_\SPL (\nu_\PL)) \cap \tau_* (a)
   =  u_\SPL (\nu_\PL)) \cap \tau_* (a), \]
where $\cap$ on the left hand side denotes the $\syml$-internal cap-product,
whereas $\cap$ on the right hand side denotes the cap-product 
coming from the above structure of $\syml (\rat)$ as an $\MSPL$-module.
The homotopy commutative diagram
\[ \xymatrix{
\MSPL \wedge \MWITT \ar[r]^{\id \wedge \tau} \ar[d]_{\phi_W \wedge \id} & 
   \MSPL \wedge \syml (\rat) \ar[d]^{(\tau \phi_W)\wedge \id} \\
\MWITT \wedge \MWITT \ar[r]^{\tau \wedge \tau} \ar[d] 
   & \syml (\rat) \wedge \syml (\rat) \ar[d] \\
\MWITT \ar[r]^\tau & \syml (\rat)
} \]
shows that $\tau: \MWITT \to \syml (\rat)$ is an
$\MSPL$-module morphism.
Thus by Lemma \ref{lem.caponemodmor},
\[ \xymatrix{
\MSPL^c (\Th (\nu),\infty) \otimes \MWITT_n (\Th (\nu),\infty) 
    \ar[r]^>>>>>>\cap  \ar[d]_{\id \otimes \tau_*}
  & \MWITT_{n-c} (Y) \ar[d]^{\tau_*} \\
\MSPL^c (\Th (\nu),\infty) \otimes \syml (\rat)_n (\Th (\nu),\infty) 
  \ar[r]^>>>>>>>\cap & \syml (\rat)_{n-c} (Y)
} \]
commutes, so that
\[ u_\SPL (\nu_\PL) \cap \tau_* (a)
    = \tau_* (u_\SPL (\nu_\PL) \cap a). \]
By Lemma \ref{lem.usplgoestoubrs},
the canonical isomorphism (\ref{equ.identmsplcohbrstq})
identifies the Thom class $u_\SPL (\nu_\PL)$ with the BRS-Thom class
$u_{BRS} (\nu_\PLB)$. Therefore,
\[ \tau_* (u_\SPL (\nu_\PL) \cap a) 
    = \tau_* (u_{BRS} (\nu_\PLB) \cap a). \]
Altogether then,
\[ u_\mbl (\nu_\TOP) \cap \tau_* (a) 
 = \tau_* (u_{BRS} (\nu_\PLB) \cap a), \]
which shows that the diagram (\ref{equ.tauandu}) commutes as claimed.
We have shown that the diagram
\[ \xymatrix{
\omwt_n (X) \ar[r]^{\tau_*} \ar[d]_{j_*} & \syml (\rat)_n (X) \ar[d]^{j_*} \\
\omwt_n (\Th (\nu),\infty) \ar[r]^{\tau_*} \ar[d]_{u_{BRS} (\nu_\PLB) \cap -} 
& \syml (\rat)_n (\Th (\nu),\infty) \ar[d]^{u_\mbl (\nu_\TOP) \cap -}  \\
\omwt_{n-c} (Y) \ar[r]^{\tau_*} & \syml (\rat)_{n-c} (Y), 
} \]
commutes.
Thus the diagram of Gysin maps
\[ \xymatrix{
\omwt_n (X) \ar[r]^{\tau_*} \ar[d]_{g^!} & \syml (\rat)_n (X) \ar[d]^{g^!} \\
\omwt_{n-c} (Y) \ar[r]^{\tau_*} & \syml (\rat)_{n-c} (Y)
} \]
commutes.
Using Theorem \ref{thm.gysinpreserveswittfundclass}, it follows that
\[ g^! [X]_\mbl = g^! \tau_* [X]_{\Witt} =
  \tau_* g^! [X]_{\Witt} = \tau_* [Y]_{\Witt} = [Y]_\mbl. \]
\end{proof}

\begin{thm}  \label{thm.lclassgysin}
Let $g: Y \hookrightarrow X$ be a normally nonsingular inclusion of closed
oriented even-dimensional PL Witt pseudomanifolds. Let $\nu$ be the 
topological normal bundle of $g$.
Then
\[ g^! L_* (X) = L^* (\nu) \cap L_* (Y). \]
\end{thm}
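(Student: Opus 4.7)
The plan is to descend the Gysin identity $g^![X]_\mbl = [Y]_\mbl$ of Theorem \ref{thm.gysinpreserveslhomfundclass} from $\syml(\rat)$-homology to ordinary rational homology, where the Goresky-MacPherson-Siegel $L$-class lives. Rationally the symmetric $L$-spectrum splits as $\syml(\rat) \simeq \prod_{k \geq 0} \Sigma^{4k} H\rat$ via the total Hirzebruch $L$-genus, and this induces a natural transformation of generalized homology theories
\[
\lambda: \syml(\rat)_*(Z) \longrightarrow \bigoplus_{k \geq 0} H_{*-4k}(Z;\rat),
\]
together with a parallel transformation on cohomology. Two calibrations of $\lambda$ are needed. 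First, on fundamental classes, $\lambda([Z]_\mbl) = L_*(Z)$ for any closed oriented Witt pseudomanifold $Z$ -- this expresses how Siegel's signature-of-transverse-preimage definition of $L_*(Z)$ sits inside the $\syml$-orientation via the transformation $\tau$ of \cite{blm}. Second, on Thom classes, by the Sullivan-Ranicki comparison of the $\syml$-orientation of a topological $\real^c$-bundle $\nu$ with its ordinary rational orientation,
\[
\lambda(u_\mbl(\nu_\TOP)) = U_\rat \cdot \pi^* L^*(\nu)^{-1} \in \bigoplus_{k\geq 0} H^{c+4k}(\Th(\nu);\rat),
\]
where $U_\rat$ is the rational Thom class, $\pi: E(\nu) \to Y$ is the bundle projection, and $L^*(\nu)$ is the Hirzebruch $L$-class of $\nu$ (which for smooth bundles recovers the classical $L$-polynomial in the Pontrjagin classes).

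Granting these, I would apply $\lambda$ to the identity $g^!_\mbl [X]_\mbl = [Y]_\mbl$. Naturality of $\lambda$ with respect to $j_*$ and the zero-section isomorphism, its multiplicative compatibility with the cap product (via the $\MSTOP$-module structures, exactly as in the proof of Theorem \ref{thm.gysinpreserveslhomfundclass}), and the projection formula for the ordinary Thom isomorphism yield
\[
\lambda(g^!_\mbl [X]_\mbl)
  = \pi_*\bigl((U_\rat \cdot \pi^* L^*(\nu)^{-1}) \cap j_* L_*(X)\bigr)
  = L^*(\nu)^{-1} \cap g^!_{\mathrm{ord}} L_*(X),
\]
where $g^!_{\mathrm{ord}}$ denotes the ordinary rational Gysin map. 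On the other hand $\lambda([Y]_\mbl) = L_*(Y)$, so $L^*(\nu)^{-1} \cap g^!_{\mathrm{ord}} L_*(X) = L_*(Y)$; capping with $L^*(\nu)$, which is a unit in $H^{4*}(Y;\rat)$ since its degree-zero component is $1$, produces the asserted identity $g^! L_*(X) = L^*(\nu) \cap L_*(Y)$.

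The main technical obstacle I anticipate is the Thom-class calibration of $\lambda$: that the $\syml$-orientation of a \emph{topological} bundle differs from the rational orientation precisely by $L^*(\nu)^{-1}$. For smooth bundles this follows from the Hirzebruch signature theorem by transversality, but in the topological category it rests on Ranicki's spectrum map $\sigma^*:\MSTOP \to \syml$ and the Sullivan-Ranicki interpretation of $L^*(\nu_\TOP)$. All remaining steps are formal naturality diagrams parallel to those already set up in the proof of Theorem \ref{thm.gysinpreserveslhomfundclass}.
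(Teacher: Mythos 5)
Your proposal is correct and follows essentially the same route as the paper: apply the rationalization of $\syml(\rat)$-homology to the identity $g^![X]_\mbl = [Y]_\mbl$ of Theorem \ref{thm.gysinpreserveslhomfundclass}, calibrate fundamental classes via \cite[Lemma 11.1]{blm} and the $\syml$-Thom class via Ranicki's \cite[Remark 16.2, p. 176]{ranickialtm} (the $L^*(\nu)^{-1}$ discrepancy), and unwind by the projection formula. The paper phrases the passage to ordinary homology as $\otimes\, \rat$ rather than as your transformation $\lambda$, but the two are the same map, and the two ``calibration'' inputs you flag as the technical obstacles are precisely the two citations the paper invokes.
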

\begin{proof}
By Theorem \ref{thm.gysinpreserveslhomfundclass},
the $\syml$-homology Gysin map $g^!$ of $g$
sends the $\syml (\rat)$-homology fundamental class of $X$ to the 
$\syml (\rat)$-homology fundamental class of $Y$:
$g^! [X]_\mbl = [Y]_\mbl.$
It remains to analyze what this equation means after we tensor with
$\rat$. By \cite[Lemma 11.1]{blm},
\[ [X]_\mbl \otimes \rat = L_* (X),~
   [Y]_\mbl \otimes \rat = L_* (Y). \]
Furthermore, according to \cite[Remark 16.2, p. 176]{ranickialtm},
$u_\mbl (\nu) \otimes \rat = L^* (\nu)^{-1} \cup u_\rat,$
where $u_\rat \in \redh^c (\Th (\nu);\rat)$ is the Thom class of $\nu$
in ordinary rational cohomology.   
(Note that Ranicki omits cupping with $u_\rat$ in his notation.)
Thus
\begin{align*}
L_* (Y)
&= [Y]_\mbl \otimes \rat 
    = (g^! [X]_\mbl) \otimes \rat 
    = (j_* [X]_\mbl \cap u_\mbl (\nu)) \otimes \rat \\
&= j_* ([X]_\mbl \otimes \rat) \cap (u_\mbl (\nu) \otimes \rat) 
= j_* L_* (X) \cap (L^* (\nu)^{-1} \cup u_\rat) \\
&= j_* L_* (X) \cap (u_\rat \cup L^* (\nu)^{-1}) 
= (j_* L_* (X) \cap u_\rat) \cap L^* (\nu)^{-1} \\
&= (g^! L_* (X)) \cap L^* (\nu)^{-1}.
\end{align*}
(Note that all involved classes lie in even degrees and hence no signs
come in.)
\end{proof}

\begin{example}
For the top $L$-class, Theorem \ref{thm.lclassgysin}
implies ($n=\dim X,$ $m=\dim Y$)
\begin{align*}
g^! [X]
&= g^! L_n (X) = (L^* (\nu) \cap L_* (Y))_m \\
&= ((1 + L^1 (\nu) + \cdots)\cap (L_m (Y) + L_{m-4} (Y) + \cdots))_m \\
&= ((1 + L^1 (\nu) + \cdots)\cap ([Y] + L_{m-4} (Y) + \cdots))_m \\
&= 1\cap [Y] = [Y],
\end{align*}
i.e. Gysin maps fundamental classes to fundamental classes.
\end{example}

\section{Rigidity Theorems}
\label{sec.rigiditytheorems}

As an application of the $L$-class Gysin Theorem \ref{thm.lclassgysin}, we 
prove results that 
exhibit rigid algebraic properties
of topologically similar projective varieties.
In their general form, these theorems make no restrictions
on the nature of the singularities to which they are applicable. 
The idea is to exploit, in addition to the Gysin formulae, the topological
invariance of the Goresky-MacPherson $L$-class, \cite{csw}, \cite[12.3.]{weinberger}.

The naturality of Thom classes, together with \cite[Part I, Thm. 1.11]{gmsmt}, implies:
\begin{lemma} \label{lem.homgysincommsqu}
Let $M$ be an oriented smooth manifold and $g:N\hookrightarrow M$ the inclusion
of an oriented smooth submanifold $N$ with 
(compatibly oriented) normal bundle $\nu_N$. Let $f: X \hookrightarrow M$
be the inclusion of a closed Whitney stratified subset, which is
assumed to be an oriented pseudomanifold.
If each stratum of $X$ is transverse to $N$, then
$Y = X \cap N$ is an oriented pseudomanifold Whitney stratified by its 
intersection with the strata of $X$,
the inclusion $g_Y:Y \hookrightarrow X$ is normally nonsingular
with oriented normal bundle isomorphic to the restriction of $\nu_N$, and
the diagram
  \[ \xymatrix{
  H_{k-c} (Y) \ar[r]^{f_{Y*}} & H_{k-c} (N) \\
  H_k (X) \ar[u]^{g^!_Y} \ar[r]_{f_*} & H_k (M) \ar[u]_{g^!}
  } \]
 commutes, where $f_Y: Y \hookrightarrow N$ is the inclusion and
 $c$ is the (real) codimension of $N$ in $M$. 
\end{lemma}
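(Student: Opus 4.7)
The plan is to settle the stratification and normal-nonsingularity claims first by transverse geometry, and then deduce the commutativity from naturality of the cohomology Thom class together with the projection formula, using a tubular neighborhood of $N$ in $M$ that restricts to one of $Y$ in $X$.

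First I would verify (1) and (2). By transversality, each intersection $Y_\alpha = X_\alpha \cap N$ is a smooth submanifold of $X_\alpha$ of real codimension $c$, and Whitney $(b)$-regularity for $\{Y_\alpha\}$ is inherited from that of $\{X_\alpha\}$; orientations on the $Y_\alpha$ are induced fibrewise from those of $N$ and $X$, making $Y$ into an oriented Whitney stratified pseudomanifold. Invoking \cite[Part I, Thm.~1.11]{gmsmt}, choose a tubular neighborhood $j: E(\nu_N) \hookrightarrow M$ arranged so that every stratum of $X$ is transverse to the zero-section $N$; then $j^{-1}(X)$ is the total space of $\nu_N|_Y$ as a sub-bundle of $E(\nu_N)$, and $j$ restricts to a stratum-preserving open embedding $j_Y: E(\nu_N|_Y) \hookrightarrow X$. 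This simultaneously gives the identification of the normal bundle of $g_Y$ with $\nu_N|_Y$ and a pair of compatible tubular neighborhoods for $N \subset M$ and $Y \subset X$.

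Second, with these compatible tubular neighborhoods in hand, $f$ restricts to an oriented vector bundle map $\tilde{f}: E(\nu_N|_Y) \hookrightarrow E(\nu_N)$ covering $f_Y$, and naturality of the ordinary Thom class under bundle maps yields $\tilde{f}^* u = u_Y$, where $u \in H^c(E(\nu_N), E_0(\nu_N))$ and $u_Y \in H^c(E(\nu_N|_Y), E_0(\nu_N|_Y))$ are the Thom classes. The commutativity of the Gysin square then reduces to a stage-by-stage verification along the four steps in the definition of the Gysin map: the projection to relative homology commutes with $f_*$ because $f$ carries $X-Y$ into $M-N$; the two excision isomorphisms commute with $\tilde{f}_*$ because $j_Y$ equals the restriction of $j \circ \tilde{f}$; the cap-product step commutes by the projection formula $\tilde{f}_*(\tilde{f}^* u \cap z) = u \cap \tilde{f}_*(z)$ together with $u_Y = \tilde{f}^* u$; and the final bundle projection commutes with $f_{Y*}$ since $\tilde{f}$ covers $f_Y$. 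Composing the resulting four commuting squares yields $f_{Y*} \circ g_Y^! = g^! \circ f_*$.

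The main obstacle, though a mild one, is the geometric input at the first step: producing a single tubular neighborhood of $N$ in $M$ whose intersection with $X$ simultaneously realises $Y$ as normally nonsingular in $X$ with normal bundle $\nu_N|_Y$. This is precisely the content of the cited Goresky--MacPherson transverse regularity theorem; once it is in hand, the remainder of the argument is a routine unwinding of definitions together with naturality of Thom classes and the projection formula.
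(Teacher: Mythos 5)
Your proposal is correct and takes essentially the same approach as the paper: the paper proves this lemma with a one-sentence remark citing naturality of Thom classes together with Goresky--MacPherson's transverse regularity theorem \cite[Part I, Thm.~1.11]{gmsmt}, and your argument is a careful unpacking of exactly these two ingredients (the compatible tubular neighborhoods from GM~1.11 yielding the stratification and normal bundle claims, and naturality of the Thom class plus the projection formula yielding the stage-by-stage commutativity of the Gysin squares).
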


\begin{thm} \label{thm.rigid}
Let $X, X' \subset \pr^N$ be complex, purely $n$-dimensional, 
arbitrarily singular, closed algebraic 
varieties in projective space,
both equipped with algebraic Whitney stratifications, $n\geq 2$.
Let $P, P' \subset \pr^N$ be $(N+2-n)$-dimensional planes transverse to the
Whitney strata of $X,X',$ respectively.
If $h: X\to X'$ is any orientation preserving topological homeomorphism 
(complex topology) whose composition with the
embedding $X' \subset \pr^N$ agrees homologically with the embedding $X\subset \pr^N$, then
the signatures of the planar sections $X\cap P$ and $X' \cap P'$ agree,
\[ \sigma (X\cap P) = \sigma (X' \cap P'). \]
\end{thm}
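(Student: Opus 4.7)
The plan is to express $\sigma(Y)$ and $\sigma(Y')$ for $Y := X \cap P$ and $Y' := X' \cap P'$ by pushing the $L$-class Gysin formula into $\pr^N$, and then to reduce their equality to invariants preserved by the hypotheses on $h$. Since $Y$ and $Y'$ are pure-dimensional complex algebraic varieties of complex dimension $2$, they are oriented $4$-dimensional closed PL Witt pseudomanifolds, and $\sigma(Y) = \epsilon_* L_0(Y)$.

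I would first assemble the inclusions $f: X\hookrightarrow \pr^N$, $g: P\hookrightarrow \pr^N$, $g_Y: Y\hookrightarrow X$, $f_Y: Y\hookrightarrow P$, $k_Y := g\circ f_Y$, together with their primed analogues. By the transversality hypothesis, $g_Y$ is normally nonsingular with topological normal bundle $\nu_Y \cong f_Y^*\nu_P$; and since $P\cong \pr^{N+2-n}$ is linearly embedded, $\nu_P \cong g^*\xi$ for $\xi := \Oo_{\pr^N}(1)^{\oplus(n-2)}$. Theorem \ref{thm.lclassgysin} then gives $g_Y^! L_*(X) = L^*(\nu_Y)\cap L_*(Y)$. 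Because $\dim_\real Y = 4$, one has $L_*(Y) = [Y] + L_0(Y)$, so extracting the $H_0(Y;\rat)$-component yields
\[ g_Y^! L_{2(n-2)}(X) \;=\; L_0(Y) + L^1(\nu_Y)\cap [Y]. \]

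Next I would push this equation forward via $k_Y$ into $H_0(\pr^N;\rat)$. Two applications of the projection formula, using $\nu_Y = f_Y^*\nu_P$ and $\nu_P = g^*\xi$, convert the $L^1$-correction term into $L^1(\xi)\cap k_{Y*}[Y]$. Lemma \ref{lem.homgysincommsqu} gives $f_{Y*}g_Y^! = g^! f_*$, and since $g$ is a smooth embedding of a linear subspace one has $g_*g^! = H^{n-2}\cap (-)$ on $H_*(\pr^N)$, where $H\in H^2(\pr^N;\intg)$ is the hyperplane class. Combining and augmenting to a point yields
\[ \sigma(Y) \;=\; \langle H^{n-2},\, f_*L_{2(n-2)}(X)\rangle \;-\; \langle L^1(\xi),\, k_{Y*}[Y]\rangle, \]
with the analogous formula for $\sigma(Y')$. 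The right-hand side depends only on the $f_*$-images in $H_*(\pr^N;\rat)$ of the fundamental class and the $L_{2(n-2)}$-class of $X$.

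Finally I would check that both $f_*$-images are invariant under the hypotheses on $h$. Topological invariance of the Goresky-MacPherson $L$-class (\cite{csw}, \cite[12.3.]{weinberger}) gives $h_* L_*(X) = L_*(X')$, and the hypothesis $f_* = f'_*h_*$ then yields $f_*L_{2(n-2)}(X) = f'_*L_{2(n-2)}(X')$. For the second term, the Example immediately following Theorem \ref{thm.lclassgysin} (Gysin sends fundamental class to fundamental class) combined with Lemma \ref{lem.homgysincommsqu} gives $k_{Y*}[Y] = g_*g^! f_*[X] = H^{n-2}\cap f_*[X]$, and orientation-preservation of $h$ gives $f_*[X] = f'_*[X']$, so $k_{Y*}[Y] = k_{Y'*}[Y']$. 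Both right-hand sides agree and $\sigma(Y) = \sigma(Y')$. There is no substantial obstacle beyond bookkeeping once Theorem \ref{thm.lclassgysin} is in hand; the one delicate point is the identification $\nu_Y \cong f_Y^*\nu_P$ of topological normal bundles under transverse intersection, which is packaged inside Lemma \ref{lem.homgysincommsqu}.
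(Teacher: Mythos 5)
Your proof is correct and follows essentially the same strategy as the paper: Lemma \ref{lem.homgysincommsqu} to relate the Gysin restrictions on $X$ and on $\pr^N$, Theorem \ref{thm.lclassgysin} to express $\sigma(Y)$ as $\epsilon_*g_Y^!L_{2(n-2)}(X)$ minus a normal-bundle correction, topological invariance of the Goresky--MacPherson $L$-class, and orientation-preservation of $h$. The one place you diverge is in how the correction term is shown to be an $h$-invariant: the paper writes $\langle L^1(\nu),[S]\rangle = \langle L^1(\nu_P), f_{S*}[S]\rangle$, then compares $L^1(\nu_P)$ with $L^1(\nu_{P'})$ by means of a projective linear automorphism $T\in\operatorname{PGL}_{N+1}$ carrying $P$ to $P'$, and uses that the degree $d$ of $S$ in $P$ equals that of $X$ in $\pr^N$. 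You instead note $\nu_P \cong g^*\bigl(\Oo_{\pr^N}(1)^{\oplus(n-2)}\bigr)$, pull the whole computation up to a single bundle $\xi$ over $\pr^N$, and reduce the correction term to $\langle L^1(\xi), H^{n-2}\cap f_*[X]\rangle$. This neatly avoids the automorphism argument and makes the dependence on $f_*[X]$ alone transparent; the paper's version has the small advantage of making the role of the degree $d$ explicit (which it then exploits in Proposition \ref{cor.rigid}).
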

\begin{proof}
Since $X,X'$ are pure-dimensional, they are oriented pseudomanifolds.
Call the planar sections $S=X\cap P$ and $S' = X'\cap P'$.
By transversality to the Whitney strata, the inclusions
$g: S\hookrightarrow X$ and $g':S' \hookrightarrow X'$ are both normally nonsingular
of real codimension $c=2(n-2)$,
with topological normal bundles $\nu, \nu'$ given by restricting the 
normal bundles $\nu_P, \nu_{P'}$ of $P,P'$ to $S,S'$,
by Lemma \ref{lem.homgysincommsqu}.
Note that $S,S'$ are complex surfaces, generally singular of course, and they are
pseudomanifolds.
Let $\ell: P \hookrightarrow \pr^N$ denote the linear inclusion.
By Lemma \ref{lem.homgysincommsqu}, there is a commutative diagram
\[ \xymatrix{
  H_0 (S) \ar[r]^{f_{S*}} & H_0 (P) \\
  H_c (X) \ar[u]^{g^!} \ar[r]_{f_*} & H_c (\pr^N), \ar[u]_{\ell^!}
  } \]
where $f_*$ is induced by the inclusion $f:X\hookrightarrow \pr^N$.
The group $H_c (\pr^N)$ is generated by the fundamental class
$[\pr^{n-2}]$, which
under Gysin maps to $\ell^! [\pr^{n-2}] = [\pt] \in H_0 (P)$.
For any topological space $Z$, let $\epsilon_*: H_0 (Z)\to \rat$
denote the augmentation homomorphism.
We consider the $L$-class $L_c (X) \in H_c (X;\rat)$.
By topological invariance of the Goresky-MacPherson $L$-class
\cite{csw}, \cite[12.3.]{weinberger},
$h_* L_c (X) = L_c (X')$. Using the diagram,
\[ f_* L_c (X) = (\epsilon_* g^! L_c (X)) [\pr^{n-2}]. \]
Since on homology
$f_* = f'_* h_*$, where $f'$ is the embedding $f':X'\hookrightarrow \pr^N,$ we have
$f_* L_c (X) = f'_* L_c (X'),$ i.e. 
$(\epsilon_* g^! L_c (X)) [\pr^{n-2}] = (\epsilon_* g'^! L_c (X')) [\pr^{n-2}].$
By our $L$-class Gysin Theorem \ref{thm.lclassgysin},
\[
\epsilon_* g^! L_c (X) 
 = \epsilon_* (L^* (\nu) \cap L_* (S)) 
 = \sigma (S) + \langle L^1 (\nu), [S] \rangle
\]
(similarly for $S'$), from which we deduce that
$\sigma (S) + \langle L^1 (\nu), [S] \rangle =
   \sigma (S') + \langle L^1 (\nu'), [S'] \rangle.$
By naturality of the cohomological $L$-class,
$\langle L^1 (\nu), [S] \rangle = \langle L^1 (\nu_P), f_{S*} [S] \rangle.$
Let us write $d$ for the degree of $X$ in $\pr^N$,
let $[\pr^2] \in H_4 (\pr^N)=\intg$ be the preferred generator, and let
\[ p_2 = \ell^{-1}_* [\pr^2] \in H_4 (P),~
   p'_2 = \ell'^{-1}_* [\pr^2] \in H_4 (P') \]
be the corresponding generators in the homology of $P,P'$.
(Note that $N\geq 2$ and
$\ell_*: H_4 (P)\to H_4 (\pr^N),$
$\ell'_*: H_4 (P')\to H_4 (\pr^N)$
are isomorphisms.)
Using transversality, $d$ is also the degree of $S$ in $P$ and
thus $f_{S*} [S] = dp_2 \in H_4 (P).$
The first $L$-class of the normal bundle $\nu_P$ of $P$ does not
depend on the choice of linear embedding $P\hookrightarrow \pr^N$:
There exists an automorphism $T:\pr^N \to \pr^N$ which restricts to
$t:= T|:P\to P'$.
By naturality of the $L$-class,
$L^1 (\nu_P) = t^* L^1 (\nu_{P'}).$
The induced isomorphism $T_*$ is the
identity on homology and $t_* (p_2) = p'_2.$
Therefore, 
\[
\langle L^1 (\nu_P), f_{S*} [S] \rangle
= \langle t^* L^1 (\nu_{P'}), dp_2 \rangle 
= \langle L^1 (\nu_{P'}), d p'_2) \rangle 
   = \langle L^1 (\nu_{P'}), f_{S'*} [S'] \rangle,
\]
which implies
$\langle L^1 (\nu), [S] \rangle =
   \langle L^1 (\nu'), [S'] \rangle,$
and hence
$\sigma (S)=\sigma (S')$.
\end{proof}

\begin{example}
We illustrate the assumptions on the homeomorphism $h$ in the Rigidity Theorem \ref{thm.rigid}
by a simple curve example.
Let $C\subset \pr^2$ be the cuspidal plane cubic curve
$y^2 z - x^3 =0$, $\deg C = 3$.
The singular set of $C$ consists of the point
$(0:0:1)$, which is an $A_2$ singularity.
The rational parametrization $h: \pr^1 \to C,$
$h (u:v) = (u^2 v : u^3 : v^3),$
is a homeomorphism in the complex topology.
It is not an isomorphism of varieties, since $\pr^1$ is nonsingular
and $C$ is singular. Considering $\pr^2 = \{ (x:y:z:0) \}$ as a hyperplane
of $\pr^3$, we may consider $C$ as a curve in $\pr^3$ with inclusion $j$.
The degree of $C$ in $\pr^3$ is still $3$.
Now consider the Veronese embedding
$\nu: \pr^1 \hookrightarrow \pr^3,$
$\nu (u:v) = (u^3 : u^2 v : u v^2 : v^3),$
whose image $\nu (\pr^1)$ is the rational normal curve in $\pr^3$
(the twisted cubic).
The degree of $\nu (\pr^1)$ in $\pr^3$ is $3$.  
The diagram
\[ \xymatrix@R=5pt{
\pr^1 \ar@{^{(}->}[rd]^\nu \ar[dd]_h & \\
& \pr^3 \\
C~ \ar@{^{(}->}[ur]_j 
} \]
does not commute, but induces a commutative diagram on homology.
In fact, in this example, the diagram does commute up to homotopy,
since by the Hurewicz theorem, the homotopy invariants 
$\pi_2 (\pr^3) \cong H_2 (\pr^3)= \intg$ are already completely given
by the degree.
\end{example}

The following proposition, building on the methods used to prove the
Rigidity Theorem, makes no explicit assumption
on the singular set of $X'$. Its dimension may well
increase under the topological homeomorphism $h$.
\begin{prop} \label{cor.rigid}
Let $X, X' \subset \pr^{n+2}$ be complex purely $n$-dimensional, 
possibly singular, closed algebraic varieties in projective space, $n\geq 2$.
Suppose that the singular set of $X$ has codimension at least $3$.
Let $h: X\to X'$ be any orientation preserving topological homeomorphism 
(complex topology) whose composition with the
embedding $X' \subset \pr^{n+2}$ agrees homologically with the embedding $X\subset \pr^{n+2}$.
Then, depending on the degree $d$ of $X$, the Goresky-MacPherson $L$-class 
$L_{2(n-2)} (X')$ pushes forward to
\[ f'_* L_{2(n-2)} (X') 
  \begin{cases}
    = \frac{1}{3}(n+1)[\pr^{n-2}],& \text{ if } d=1, \\
    = \frac{2}{3}(n-2)[\pr^{n-2}],& \text{ if } d=2, \\
    \in \{ (n-7)[\pr^{n-2}], (n-2)[\pr^{n-2}] \},& \text{ if } d=3, \\
    \in \left( \{ -16, -4, 1 \} + \frac{4(n-2)}{3} \right) [\pr^{n-2}],& \text{ if } d=4.
   \end{cases}  \]
\end{prop}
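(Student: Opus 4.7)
The plan is to leverage the hypothesis that $\Sing X$ has complex codimension at least three to force a generic complex $2$-dimensional planar section of $X$ to be a \emph{smooth} algebraic surface, and then to reduce the computation of the pushforward of $L_{2(n-2)}(X')$ to a calculation in the classical geometry of smooth surfaces of low degree in $\pr^4$.

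First, I would invoke topological invariance of the Goresky-MacPherson $L$-class \cite{csw,weinberger}, which gives $h_* L_*(X) = L_*(X')$; combined with the homological compatibility of $h$ with the projective embeddings this yields $f'_* L_{2(n-2)}(X') = f_* L_{2(n-2)}(X)$, so it suffices to evaluate the pushforward into $H_{2(n-2)}(\pr^{n+2};\rat) \cong \rat \cdot [\pr^{n-2}]$ on the left. Next, I fix an algebraic Whitney stratification of $X$ and choose a generic complex $4$-plane $P \subset \pr^{n+2}$ transverse to all its strata. Because $\dim_\cplx \Sing X \leq n-3$, the dimension estimate $\dim P + \dim \Sing X \leq 4 + (n-3) = n+1 < \dim \pr^{n+2}$ guarantees that a generic such $P$ misses $\Sing X$ entirely. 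Consequently $S = X \cap P$ is a \emph{smooth} complex projective surface of degree $d$ contained in $P \cong \pr^4$.

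Applying Lemma \ref{lem.homgysincommsqu} and the $L$-class Gysin Theorem \ref{thm.lclassgysin} exactly as in the proof of Theorem \ref{thm.rigid} gives
\[ f_* L_{2(n-2)}(X) = \bigl(\sigma(S) + \langle L^1(\nu),[S]\rangle\bigr)[\pr^{n-2}], \]
where $\nu$ is the normal bundle of the normally nonsingular inclusion $g: S \hookrightarrow X$, isomorphic to the restriction to $S$ of the normal bundle $\nu_P = \Oo_P(1)^{\oplus(n-2)}$ of $P$ in $\pr^{n+2}$. A routine Chern class computation yields $p_1(\nu_P) = (n-2)h^2$ and hence $L^1(\nu_P) = \tfrac{n-2}{3}h^2$, where $h$ is the hyperplane class of $P$. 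Since $[S] = d\cdot p_2$ in $H_4(P)$ with $p_2$ the class of a linear $\pr^2$ in $P$, one finds $\langle L^1(\nu),[S]\rangle = d(n-2)/3$, independently of the geometry of $S$ beyond its degree.

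The remaining step, and the place where I expect to do the real work, is to enumerate the possible values of $\sigma(S)$ for a smooth surface $S \subset \pr^4$ of each degree $d \leq 4$. For $d=1$, $S$ is a linear $\pr^2$ with $\sigma=1$. For $d=2$, every smooth surface of degree two in $\pr^4$ is degenerate, contained in a hyperplane $\pr^3$, hence is a smooth quadric $\pr^1 \times \pr^1$ with $\sigma=0$. For $d=3$, the classical classification yields either the rational cubic scroll $\FF_1 = \Bl_{\pt} \pr^2$ (with $\sigma=0$) or a smooth cubic surface in a hyperplane $\pr^3$, i.e.\ a blow-up of $\pr^2$ at six points, with $\sigma=-5$. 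For $d=4$, the possibilities are the Veronese surface, isomorphic to $\pr^2$ with $\sigma=1$; a smooth Del Pezzo quartic, i.e.\ a smooth complete intersection of two quadrics in $\pr^4$, which is $\pr^2$ blown up at five points with $\sigma=-4$; and a smooth quartic $K3$ in a hyperplane $\pr^3$, with $\sigma=-16$. Substituting $\sigma(S) + d(n-2)/3$ in each case reproduces the four lists of values in the statement. The main technical care lies in justifying the enumeration of smooth surfaces of degree $\leq 4$ in $\pr^4$ together with their signatures; all other steps are direct consequences of machinery already established in the paper.
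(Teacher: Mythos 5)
Your proposal is correct and follows essentially the same route as the paper's proof: both reduce via topological invariance of $L_*$ and the Gysin formula to the formula $f_* L_{2(n-2)}(X) = (\sigma(S) + d(n-2)/3)[\pr^{n-2}]$ for a generic $4$-plane section $S$, use the codimension-$\geq 3$ hypothesis and transversality to ensure $S$ is a smooth surface in $\pr^4$, and then invoke the classical enumeration of smooth surfaces of degree $\leq 4$ in $\pr^4$ with their signatures. The only cosmetic difference is that you compute $L^1(\nu_P)$ from the splitting $\nu_P \cong \Oo_P(1)^{\oplus(n-2)}$, whereas the paper derives it from the Whitney product formula applied to $T\pr^{n+2}|_P$ and $TP$; both yield $\tfrac{n-2}{3}h^2$.
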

\begin{proof}
We use notation as in the proof of the Rigidity Theorem, $c=2(n-2)$.
Endow $X$ with an algebraic Whitney stratification and
let $X^\circ = X - \Sing_X$ denote the regular part of $X$.
Let $P_0 \subset \pr^{n+2}$ be a plane of dimension $N+2-n = 4$.
Using Kleiman-Bertini type theorems, one finds a transformation
$g\in \operatorname{PGL}_{n+3} (\cplx)$
such that with $P=gP_0,$
$P\cap X^\circ$ is nonsingular and $P$ is transverse to the
algebraic Whitney stratification of $X$.
Since the singular set $\Sigma =\Sing_X$ of $X$ 
has dimension at most $n-3$, transversality implies that
$\Sigma$ does not intersect $P$, whence $P \cap X^\circ = P\cap X=S$.
Thus $S$ is a nonsingular algebraic surface in $P=\pr^4$.  

Let $g\in H^2 (\pr^{n+2})\cong \intg$ denote the generator Poincar\'e dual
to the hyperplane class$[\pr^{n+1}]$.
The restriction $b=\ell^* (g) \in H^2 (P^4)$ is the generator Poincar\'e dual
to the hyperplane class $[\pr^3] \in H_6 (P^4)$.
Using the Whitney product formula,
\[
L^1 (\nu_P) 
 = \ell^* L^1 (T\pr^{n+2}) - L^1 (TP) 
 = \frac{1}{3} (\ell^* (n+3) g^2 - 5 \ell^* (g)^2) 
 = \frac{n-2}{3} b^2.
\]
Since $f_{S*} [S] = d[\pr^2],$ we have
$\langle L^1 (\nu_P), f_{S*} [S] \rangle = d(n-2)/3.$
  
While every smooth projective surface can be embedded into $\pr^5$,
embeddings into $\pr^4$ impose significant restrictions on the surface $S$:
If the degree of $S$ in $P^4$ (which agrees with $d$) is $1$, then
the only possibility is the plane $S=\pr^2$, in which case $\sigma (S)=1$.
If $d=2$, then $S=\pr^1 \times \pr^1$ with signature $\sigma (S)=0$.
If $d=3$, then $S$
is either a complete intersection $S_{1,3}$ with signature
$\sigma (S_{1,3})=-5$, or the Hirzebruch surface $\mathbb{F}_1$
with $\sigma (\mathbb{F}_1)=0$.
If $d=4$, then $S$ is the complete intersection
$S_{1,4}$, or the Del Pezzo surface $S_{2,2}$, or else
the Veronese surface $V$.
The corresponding signatures are
$\sigma (S_{1,4})=-16$, $\sigma (S_{2,2})=-4$, $\sigma (V)=1$.
In the proof of the Rigidity Theorem we have seen that
$f_* L_c (X) = (\epsilon_* g^! L_c (X)) [\pr^{n-2}]$
with
\[ \epsilon_* g^! L_c (X) 
   = \sigma (S) + \langle L^1 (\nu_P), f_{S*} [S] \rangle =
     \sigma (S) + \frac{d(n-2)}{3}. \]
\end{proof}
One may continue the list of the proposition somewhat beyond degree $d=4$;
the results become less and less definite, of course.

\section{Hodge-Theoretic Characteristic Classes}
\label{sec.hodgeclasses}

For an algebraic variety $X$, let $K^\alg_0 (X)$ denote the Grothendieck
group of the abelian category of coherent sheaves of $\Oo_X$-modules. 
When there is no danger of
confusion with other $K$-homology groups, we shall also write $K_0 (X) = K^\alg_0 (X)$.
Let $K^0 (X) = K_\alg^0 (X)$ denote the Grothendieck group of 
the exact category of algebraic vector
bundles over $X$.
The tensor product $\otimes_{\Oo_X}$ induces a 
\emph{cap product}
\[ \cap: K^0 (X) \otimes K_0 (X) \longrightarrow K_0 (X),~
  [E] \cap [\Fa] = [E \otimes_{\Oo_X} \Fa]. \]
Thus,
\begin{equation} \label{equ.kthtokhomalg} 
-\cap [\Oo_X]: K^0 (X)\longrightarrow K_0 (X) 
\end{equation}
sends a vector bundle $[E]$ to its associated (locally free) sheaf of germs of local
sections $[E\otimes \Oo_X]$.
If $X$ is smooth, then $-\cap [\Oo_X]$ is an isomorphism.

Let $X$ be a complex algebraic variety and $E$ an algebraic vector bundle
over $X$. For a nonnegative integer $p$, let $\Lambda^p (E)$ denote the
$p$-th exterior power of $E$.
The \emph{total $\lambda$-class} of $E$ is by definition
\[ \lambda_y (E) = \sum_{p\geq 0} \Lambda^p (E)\cdot y^p, \]
where $y$ is an indeterminate functioning as a bookkeeping device.
This construction induces a homomorphism
$\lambda_y (-): K^0_\alg (X) \longrightarrow K^0_\alg (X)[y]$
from the additive group of $K^0 (X)$ to the multiplicative monoid of
the polynomial ring $K^0 (X)[y]$.
Now let $X$ be a smooth variety, let $TX$ denote its tangent bundle and
$T^* X$ its cotangent bundle.
Then $\Lambda^p (T^* X)$ is the vector bundle ``of $p$-forms on $X$''.
Its associated sheaf of sections is denoted by $\Omega^p_X$. Thus
\[ [\Lambda^p (T^* X)] \cap [\Oo_X] = [\Omega^p_X] \]
and hence
\[ \lambda_y (T^* X) \cap [\Oo_X] = \sum_{p=0}^{\dim X} [\Omega^p_X] y^p. \]
Let $X$ be a complex algebraic variety and let
$MHM (X)$ denote the abelian category of M. Saito's algebraic mixed
Hodge modules on $X$. 
Totaro observed in \cite{totaro} that Saito's construction of a pure
Hodge structure on intersection homology implicitly contains a
definition of certain characteristic homology classes for singular
algebraic varieties. 
The following definition is based on this observation and due to 
Brasselet, Schürmann and Yokura, \cite{bsy}, see also the expository paper
\cite{schuermannmsri}.
\begin{defn}
The \emph{motivic Hodge Chern class transformation}
\[ MHC_y: K_0 (MHM(X)) \to K^\alg_0 (X) \otimes \intg [y^{\pm 1}] \]
is defined by
\[ MHC_y [M]
= \sum_{i,p} (-1)^i [\Ha^i (Gr^F_{-p} DR[M])] (-y)^p. \]
\end{defn}
A flat morphism $f:X\to Y$ gives rise to a flat pullback
$f^*: \operatorname{Coh} (Y)\to \operatorname{Coh}(X)$
on coherent sheaves, which is exact and hence induces a flat pullback
$f^*_K: K^\alg_0 (Y)\to K^\alg_0 (X)$. This applies in particular
to smooth morphisms and is then often called smooth pullback.
An arbitrary algebraic morphism $f:X \to Y$ 
(not necessarily flat) induces a homomorphism
\[ f^*: K_0 (MHM(Y)) \longrightarrow K_0 (MHM (X)) \]
which corresponds under the functor
$\qrat: D^b MHM (-)\to D^b_c (-;\rat)$
to $f^{-1}$ on constructible complexes of sheaves.
We record Schürmann's \cite[Cor. 5.11, p. 459]{schuermannmsri}:
\begin{prop} \label{prop.mhcvrrsmpullb}
(Verdier-Riemann-Roch for smooth pullbacks.)
For a smooth morphism $f:X\to Y$ of complex algebraic varieties,
the Verdier Riemann-Roch formula
\[ \lambda_y (T^*_{X/Y}) \cap f^*_K MHC_y [M] =
  MHC_y (f^* [M]) = MHC_y [f^* M] \]
holds for $M \in D^b MHM (Y),$ where $T^*_{X/Y}$ denotes
the relative cotangent bundle of $f$.
\end{prop}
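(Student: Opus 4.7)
The plan is to reduce the identity to a description of the filtered de Rham complex under smooth pullback. Let $(\Ma, F)$ be the filtered $D_Y$-module underlying the mixed Hodge module $M$, so that $DR_Y[M] = \Omega^{\bullet}_Y \otimes^{L}_{\Oo_Y} \Ma$ carries an induced Hodge filtration $F$. For a smooth morphism $f : X \to Y$, Saito's definition of $f^*$ on mixed Hodge modules gives the underlying filtered $D_X$-module $f^* \Ma = \Oo_X \otimes_{f^{-1} \Oo_Y} f^{-1} \Ma$ equipped with the tensor-product Hodge filtration (flatness of $\Oo_X$ over $f^{-1} \Oo_Y$ for smooth $f$ makes this well-defined). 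The relative cotangent sequence $0 \to f^* \Omega^{1}_Y \to \Omega^{1}_X \to \Omega^{1}_{X/Y} \to 0$ is locally split exact, and its exterior powers equip $\Omega^{\bullet}_X$ with a descending filtration whose associated graded is $\bigoplus_{i+j=\bullet} f^* \Omega^{j}_Y \otimes_{\Oo_X} \Omega^{i}_{X/Y}$. Tensoring this filtered complex with $f^* \Ma$ and combining with the Hodge filtration $F$ produces, at the level of associated graded pieces, a quasi-isomorphism
\[
\operatorname{Gr}^F_{-p} DR_X[f^* M] \;\simeq\; \bigoplus_{i+j=p} \bigl(f^* \operatorname{Gr}^F_{-j} DR_Y[M]\bigr) \otimes_{\Oo_X} \Omega^{i}_{X/Y}[-i]
\]
in $D^b(\operatorname{Coh}(X))$, where $[-i]$ records that $\Omega^{i}_{X/Y}$ sits in cohomological degree $i$ of the relative de Rham factor.

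\textbf{Computation.} Since $\Omega^{i}_{X/Y}$ is locally free and $f^*$ is exact on coherent sheaves, taking cohomology sheaves commutes with the tensor and with $f^*$, so
\[
\Ha^k \operatorname{Gr}^F_{-p} DR_X[f^* M] \;\cong\; \bigoplus_{i+j=p} f^* \Ha^{k-i} \operatorname{Gr}^F_{-j} DR_Y[M] \otimes_{\Oo_X} \Omega^{i}_{X/Y}.
\]
Substituting into the definition of $MHC_y$, reindexing $k' = k - i$, and using the identity $(-1)^{k'+i}(-y)^{i+j} = (-1)^{k'}(-y)^{j} \cdot y^{i}$, the sum factors as
\[
MHC_y[f^* M] \;=\; \Bigl(\sum_{i}[\Omega^{i}_{X/Y}]\,y^{i}\Bigr) \cdot \Bigl(\sum_{k',j}(-1)^{k'}[f^* \Ha^{k'} \operatorname{Gr}^F_{-j} DR_Y[M]](-y)^{j}\Bigr).
\]
The right factor is $f^{*}_K MHC_y[M]$, while the left factor equals $\lambda_y(T^{*}_{X/Y}) \cap [\Oo_X]$ by (\ref{equ.kthtokhomalg}) combined with $[\Lambda^{i} T^{*}_{X/Y}] \cap [\Oo_X] = [\Omega^{i}_{X/Y}]$. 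Interpreting the juxtaposition as the $K^0$-module action $\cap$ on $K^{\alg}_0(X)[y^{\pm 1}]$ yields the desired identity $\lambda_y(T^{*}_{X/Y}) \cap f^{*}_K MHC_y[M] = MHC_y[f^* M]$. The second equality $MHC_y(f^*[M]) = MHC_y[f^* M]$ is the very definition of $f^*$ on $K_0(MHM)$.

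\textbf{Main obstacle.} The delicate point is the first paragraph: one must verify that Saito's smooth pullback on mixed Hodge modules genuinely realizes the tensor-product Hodge filtration on the underlying filtered $D$-module, and that the splitting of the relative cotangent sequence lifts to a \emph{strictly} filtered quasi-isomorphism of filtered complexes, so that passing to $\operatorname{Gr}^F$ produces an honest direct-sum decomposition rather than merely a spectral sequence. Both assertions are built into Saito's foundational framework for smooth morphisms; once granted, the remainder of the argument is the sign bookkeeping above. An alternative route, closer to Sch\"urmann's exposition, is to use Saito's Koszul-type identification of $\operatorname{Gr}^F DR$ with the symmetric algebra on $T$ applied to $\operatorname{Gr}^F \Ma$, for which smooth pullback compatibility becomes a direct Koszul computation.
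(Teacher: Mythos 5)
The paper does not prove this proposition; it explicitly \emph{records} it as Sch\"urmann's \cite[Cor.\ 5.11]{schuermannmsri} and gives no argument of its own. So there is no in-paper proof to compare against, and the right question is whether your blind argument is itself sound. It is, and it is essentially the standard proof (the one Sch\"urmann's cited corollary rests on): reduce to the behavior of the graded de Rham complex under smooth pullback, and then do the bookkeeping. Your sign computation is correct: with $k'=k-i$ one has $(-1)^{k'+i}(-y)^{i+j}=(-1)^{k'}(-y)^{j}\,y^{i}$, which is exactly what makes the $\lambda_y$-factor appear with the variable $y$ rather than $-y$; a sanity check with $Y=\mathrm{pt}$ and $M=\rat^H_{\mathrm{pt}}$ reproduces $MHC_y(\rat^H_X)=\sum_p[\Omega^p_X]y^p=\lambda_y(T^*X)\cap[\Oo_X]$, as it must.

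You are right that the entire weight of the argument is carried by the quasi-isomorphism
\[
\operatorname{Gr}^F_{-p} DR_X(f^* M)\;\simeq\;\bigoplus_{i+j=p}\bigl(f^*\operatorname{Gr}^F_{-j} DR_Y(M)\bigr)\otimes_{\Oo_X}\Omega^{i}_{X/Y}[-i],
\]
which is indeed a theorem in Saito's framework for smooth morphisms, and you correctly flag it as the main obstacle rather than asserting it casually. Two things worth making explicit if you were to write this in full: first, for singular $Y$ (and $X$) the filtered $D$-module underlying $M$ lives on a local smooth embedding $Y\hookrightarrow W$, and the smooth pullback must be implemented via compatible local embeddings of $X$; this is where the strictness you mention gets verified. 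Second, since $MHC_y$ is defined on $K_0(MHM(Y))$, it suffices to establish the isomorphism for single mixed Hodge modules $M$ and then extend by additivity in the Grothendieck group — this sidesteps any worry about $f^*M$ not being concentrated in a single degree. With those caveats noted, the proof is correct and matches the approach of the cited reference.
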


Let $E$ be a complex vector bundle and let $a_i$ denote the Chern roots of $E$.
In \cite{hirzebruch},
Hirzebruch introduced a cohomological characteristic class
\[ T^*_y (E) = \prod_{i=1}^{\rk E} Q_y (a_i),
\]
where $y$ is an indeterminate, coming from the power series
\[ Q_y (a) = \frac{a (1+y)}{1-e^{-a (1+y)}} - a y \in \rat [y][[a]]. \]
If $R$ is an integral domain over $\rat$, then a power series
$Q(a)\in R[[a]]$ is called \emph{normalized} if it starts with $1$, i.e.
$Q(0)=1$.
With $R=\rat [y],$ we have $Q_y (0)=1$, so $Q_y (a)$ is normalized.
For $y=0,$
\begin{equation} \label{equ.t0istd} 
T^*_0 (E) = \prod_{i=1}^{\rk E} \frac{a_i}{1-e^{-a_i}} = \td^* (E)
\end{equation}
is the classical Todd class of $E$, while 
for $y=1,$
\begin{equation} \label{equ.todd1ishirzelcohom}
T^*_1 (E) = \prod_{i=1}^{\rk E} \frac{a_i}{\tanh a_i} = L^* (E)
\end{equation}
is the Hirzebruch $L$-class of the vector bundle $E$, as in Section \ref{sec.lclassintro}.
We shall also need a certain unnormalized version of $Q_y (a)$:
Let 
\[ \widetilde{Q}_y (a) = \frac{a (1+ye^{-a})}{1-e^{-a}} \in \rat [y][[a]] \]
and set
\[ \widetilde{T}^*_y (E) = \prod_{i=1}^{\rk E} \widetilde{Q}_y (a_i). \]
Note that $\widetilde{Q}_y (0) = 1+y \not= 1$, whence
$\widetilde{Q}_y (a)$ is unnormalized.
The relation
\[ (1+y) Q_y (a) = \widetilde{Q}_y ((1+y)a) \]
implies:
\begin{prop} \label{prop.relunnormtynormty}
If $E$ is a complex vector bundle of complex rank $r$, then for the degree $2i$ components:
\[ \widetilde{T}^i_y (E) = (1+y)^{r-i} T^i_y (E). \]
\end{prop}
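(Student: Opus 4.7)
The plan is to leverage the identity $(1+y)Q_y(a) = \widetilde{Q}_y((1+y)a)$ stated just above the proposition, and combine it with a homogeneity/scaling argument on the Chern roots.

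First I would verify that identity by a direct calculation: setting $u = e^{-(1+y)a}$ for brevity, one has
\[
\widetilde{Q}_y((1+y)a) = \frac{(1+y)a\,(1+yu)}{1-u},
\qquad
(1+y)Q_y(a) = \frac{(1+y)^2 a}{1-u} - (1+y)ay,
\]
and clearing the common denominator $1-u$ on the right-hand side yields $(1+y)a(1+yu)/(1-u)$, matching the left-hand side. Solving for $\widetilde{Q}_y$ this says $\widetilde{Q}_y(b) = (1+y)\,Q_y\!\bigl(b/(1+y)\bigr)$ as formal power series in $b$.

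Next I would apply this to each Chern root $a_j$ of $E$ and take the product over $j = 1,\ldots,r$:
\[
\widetilde{T}^*_y(E) \;=\; \prod_{j=1}^{r} \widetilde{Q}_y(a_j)
\;=\; (1+y)^{r}\prod_{j=1}^{r} Q_y\!\bigl(a_j/(1+y)\bigr).
\]
By definition $T^i_y(E)$ is the degree-$2i$ part of $\prod_j Q_y(a_j)$, which is a symmetric polynomial of homogeneous degree $i$ in the Chern roots (each $a_j$ contributing cohomological degree $2$, i.e.\ weight $1$). Rescaling $a_j \mapsto a_j/(1+y)$ therefore multiplies this degree-$2i$ component precisely by $(1+y)^{-i}$, giving
\[
\bigl[\textstyle\prod_j Q_y(a_j/(1+y))\bigr]_{2i} \;=\; (1+y)^{-i}\,T^i_y(E).
\]
Multiplying by the prefactor $(1+y)^r$ yields $\widetilde{T}^i_y(E) = (1+y)^{r-i}\,T^i_y(E)$, as claimed.

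There is no real obstacle here: the only nontrivial input is the power-series identity $(1+y)Q_y(a) = \widetilde{Q}_y((1+y)a)$, and the rest is a mechanical homogeneity check in the Chern roots. Strictly speaking the Chern roots are formal, but the argument makes sense either via the splitting principle or, equivalently, by viewing both sides as universal expressions in the Chern classes of $E$, for which it suffices to verify equality after pullback to a variety on which $E$ splits as a sum of line bundles.
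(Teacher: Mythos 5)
Your proof is correct and follows the same route the paper intends: the paper simply states ``The relation $(1+y)Q_y(a) = \widetilde{Q}_y((1+y)a)$ implies'' the proposition, and your verification of that power-series identity together with the homogeneity-in-Chern-roots scaling argument is exactly the calculation being left to the reader.
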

More conceptually, we have the following formula for the
unnormalized class:
\begin{prop} \label{prop.unnormtytdtimeschlambda}
For any complex vector bundle $E$, we have
\[ \widetilde{T}^*_y (E) = \td^* (E) \cup \ch^* (\lambda_y (E^*)). \]
\end{prop}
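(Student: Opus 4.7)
The plan is to verify the identity on line bundles and then extend to arbitrary vector bundles via the splitting principle. Both sides of the proposed equation are multiplicative over direct sums: the left-hand side $\widetilde{T}^*_y(E) = \prod_i \widetilde{Q}_y(a_i)$ is multiplicative by its very definition in terms of Chern roots; on the right-hand side, the Todd class $\td^*$ is multiplicative, while $\lambda_y$ sends direct sums to products, $\lambda_y(E\oplus F) = \lambda_y(E)\cdot\lambda_y(F)$ in $K^0(X)[y]$, by the standard exterior algebra decomposition $\Lambda^p(E\oplus F)\cong \bigoplus_{k+l=p}\Lambda^k(E)\otimes\Lambda^l(F)$, and dualization together with the ring homomorphism property of $\ch^*$ preserves this multiplicativity. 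Hence it suffices to check the identity for a line bundle.

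So first I would set $E=L$ a line bundle with $c_1(L)=a$, so that $L^*$ has first Chern class $-a$. Since $\Lambda^0(L^*)=\Oo$ and $\Lambda^1(L^*)=L^*$ (and higher exterior powers vanish), one has $\lambda_y(L^*) = 1 + y[L^*]$, whence
\[ \ch^*(\lambda_y(L^*)) = 1 + y e^{-a}. \]
Combined with $\td^*(L) = a/(1-e^{-a})$, this gives
\[ \td^*(L)\cup \ch^*(\lambda_y(L^*)) = \frac{a(1+ye^{-a})}{1-e^{-a}} = \widetilde{Q}_y(a) = \widetilde{T}^*_y(L), \]
which is exactly the claim for line bundles.

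For general $E$ of rank $r$, the splitting principle provides a flag bundle $\pi: F(E)\to X$ such that $\pi^*: H^*(X;\rat)\to H^*(F(E);\rat)$ is injective and $\pi^*E = L_1\oplus\cdots\oplus L_r$ with $c_1(L_i)=a_i$ the Chern roots. Applying the multiplicativity noted above and the line bundle case, on $F(E)$ we obtain
\[ \pi^*\widetilde{T}^*_y(E) = \prod_{i=1}^r \widetilde{T}^*_y(L_i) = \prod_{i=1}^r \td^*(L_i)\cup \ch^*(\lambda_y(L_i^*)) = \pi^*\bigl(\td^*(E)\cup \ch^*(\lambda_y(E^*))\bigr), \]
and injectivity of $\pi^*$ yields the desired equality on $X$.

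The argument is essentially mechanical; the only point deserving care is the multiplicativity of $\ch^*\circ \lambda_y(-^*)$ on direct sums, which follows from the two facts that $\lambda_y$ is a homomorphism from the additive group of $K^0$ to the multiplicative monoid of $K^0[y]$ (via the exterior algebra identity recalled above, applied to $E^*\oplus F^* = (E\oplus F)^*$) and that $\ch^*$ is a ring homomorphism $K^0(X)\to H^{2*}(X;\rat)$.
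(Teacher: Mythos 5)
Your proof is correct. The paper itself does not supply a proof of Proposition \ref{prop.unnormtytdtimeschlambda}; it is stated as a standing fact, and the splitting-principle reduction to line bundles that you carry out is exactly the standard (and essentially unique) argument one would give. The line-bundle computation $\td^*(L)\cup\ch^*(\lambda_y(L^*)) = \frac{a}{1-e^{-a}}(1+ye^{-a}) = \widetilde{Q}_y(a)$ is correct, and you have verified the multiplicativity needed to split: $\widetilde{T}^*_y$ by its Chern-root definition, $\td^*$ as usual, and $\ch^*\circ\lambda_y(({-})^*)$ since $\lambda_y$ is additive-to-multiplicative and $\ch^*$ is a ring homomorphism (both facts recorded in the surrounding text of the paper). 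Nothing is missing.
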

Let $\tau_*: K_0 (X) \longrightarrow H^\BM_{2*} (X)\otimes \rat$
denote the Todd class transformation of Baum, Fulton, MacPherson.
We review, to some extent, construction and properties of this transformation.
Let 
\[ \alpha^*: K^0_\alg (X) \longrightarrow K^0_{\operatorname{top}} (X) \]
be the forget map which takes an algebraic vector bundle to its
underlying topological vector bundle.
Composing with the Chern character, one obtains a transformation
\[ \tau^* = \ch^* \circ \alpha^*: K^0_\alg (X) \longrightarrow H^{2*} (X;\rat), \]
see \cite[p. 180]{bfm2}.
Baum, Fulton and MacPherson construct a corresponding
homological version
\[ \alpha_*: K_0^\alg (X) \longrightarrow K_0^{\operatorname{top}} (X) \]
for quasi-projective varieties $X$.
Composing with the homological Chern character
\[ \ch_*: K^{\operatorname{top}}_0 (X) \longrightarrow
  H^\BM_{2*} (X;\rat), \]
where $H^\BM_*$ denotes Borel-Moore homology,  
they obtain a transformation
\[ \tau_* = \ch_* \circ \alpha_*: K_0^\alg (X) \longrightarrow H^\BM_{2*} (X;\rat). \]
This transformation is in fact available for any 
algebraic scheme over a field and generalizes the 
Grothendieck Riemann-Roch theorem to singular varieties.
\begin{remark} \label{rem.bfmtoddtochow}
Let $A_* (V)$ denote Chow homology of a variety $V$, i.e.
algebraic cycles in $V$ modulo rational equivalence.
Then there is a transformation
\[ \tau_*: K_0^\alg (X) \longrightarrow A_* (X)\otimes \rat \]
such that
\[ \xymatrix{
K_0^\alg (X) \ar[rd]^{\tau_*} \ar[d]_{\tau_*} & \\
A_* (X)\otimes \rat \ar[r]_{\cl} & H^\BM_{2*} (X;\rat)
} \]
commutes, where $\cl$ is the cycle map; see the first commutative
diagram on p. 106 of \cite[(0.8)]{bfm1}.
The construction of $\tau_*$ to Chow homology is described in
Fulton's book \cite[p. 349]{fultonintth}.
Thus Todd classes are
algebraic cycles that are well-defined up to rational equivalence over $\rat$.
\end{remark}
According to \cite[Theorem, p. 180]{bfm2}, $\tau_*$ and $\tau^*$ are compatible
with respect to cap products, i.e. the diagram
\[ \xymatrix@C=50pt{
K^0 (X) \otimes K_0 (X) \ar[r]^{\tau^* \otimes \tau_*} \ar[d]_\cap
& H^* (X;\rat) \otimes H^\BM_* (X;\rat) \ar[d]^\cap \\
K_0 (X) \ar[r]^{\tau_*} & H^\BM_* (X;\rat)
} \]
commutes. Thus, if $E$ is a vector bundle and $\Fa$ a coherent sheaf on $X$, then
\begin{equation} \label{equ.bfmtauofcap}
\tau_* ([E] \cap [\Fa]) = \ch^* (E) \cap \tau_* [\Fa].
\end{equation}
For smooth $X$, 
\[ \tau_* [\Oo_X] = \td^* (TX)\cap [X] = T^*_0 (TX)\cap [X]. \]
So if $E$ is a vector bundle on a smooth variety, then
\begin{equation} \label{taubfmecaposmoothx}
\tau_* ([E] \cap [\Oo_X]) = (\ch^* (E) \cup \td^* (TX))\cap [X].
\end{equation}
For locally complete intersection morphisms $f:X\to Y$,
Gysin maps 
\[ f^*_\BM: H^\BM_* (Y) \longrightarrow H^\BM_{*-2d} (X) \]
have been defined by Verdier \cite[\S 10]{verdierintcompl},
and Baum, Fulton and MacPherson \cite[Ch. IV, \S 4]{bfm1}, where
$d$ denotes the (complex) virtual codimension of $f$.
Thus for a regular closed embedding $g$, there is a Gysin
map $g^*_\BM$ on Borel-Moore homology, which we shall also 
write as $g^!$, and for a smooth morphism
$f$ of relative dimension $r$, there is a smooth pullback
$f^*_\BM: H^\BM_* (Y) \to H^\BM_{*+2r} (X)$.
Baum, Fulton and MacPherson show:
\begin{prop} \label{prop.bfmvrrsmpullb}
(Verdier-Riemann-Roch for smooth pullbacks.)
For a smooth morphism $f:X\to Y$ of complex algebraic varieties
and $[\Fa] \in K^\alg_0 (Y)$,
\[ \td^* (T_{X/Y}) \cap f^*_\BM \tau_* [\Fa] = \tau_* (f^*_K [\Fa]). \]
\end{prop}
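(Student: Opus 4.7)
The plan is to reduce the identity by dévissage and base change to the case $\Fa = \Oo_Y$, and then to invoke the Baum--Fulton--MacPherson machinery for smooth pullbacks. Both sides of the desired equation are additive in $[\Fa]\in K^\alg_0(Y)$, so it suffices to check the formula on a generating set. By the filtration of $K^\alg_0(Y)$ by support, such a set is given by the classes $i_*[\Oo_V]$ with $i:V\hookrightarrow Y$ a closed integral subvariety. I would form the Cartesian square
\[ \xymatrix{
V' \ar[r]^{i'} \ar[d]_{f'} & X \ar[d]^f \\
V \ar[r]^i & Y
} \]
with $V' := V\times_Y X$. Since $f$ is flat, $f^*_K i_*[\Oo_V] = i'_*[\Oo_{V'}]$ by flat base change, while the analogous smooth-proper base change identity $f^*_\BM i_* = i'_* f'^*_\BM$ on Borel--Moore homology is part of the BFM Gysin formalism. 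Using the canonical isomorphism $T_{X/Y}|_{V'}\cong T_{V'/V}$, the proper covariance $\tau_* i_* = i_* \tau_*$, and the projection formula for $i'$, the VRR identity for $(f, i_*[\Oo_V])$ becomes equivalent to the same identity for $(f', [\Oo_V])$. This reduces the problem to proving
\[ \td^*(T_{X/Y}) \cap f^*_\BM \tau_*[\Oo_Y] = \tau_*[\Oo_X]. \]

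For this core case, I would exploit the local structure of a smooth morphism: locally, $f$ factors as an étale map composed with the projection $\aff^n_Y \to Y$. Étale morphisms have vanishing relative tangent bundle, so $\td^* = 1$, and the formula reduces to compatibility of $\tau_*$ with étale pullback (which follows from the sheaf-theoretic construction of $\tau_*$, since étale pullback is exact and preserves locally free resolutions). For the projection $\aff^n_Y \to Y$, I would compactify to $\pr^n_Y \to Y$ and apply the Grothendieck--Riemann--Roch theorem to the proper projection; the multiplicativity of Todd classes together with the explicit computation of $\td^*(T_{\pr^n})$ produces the relative Todd factor, and one restricts back to the affine part via the excision/localization sequence for $K$-theory and Borel--Moore homology.

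The principal obstacle will be the singular nature of $Y$: since neither $TY$ nor $TX$ is globally a vector bundle, the naive Whitney sum relation among $TX$, $T_{X/Y}$, and $f^*TY$ is not directly available at the level of cohomological Todd classes. To circumvent this, one must realize $\tau_*$ via an embedding of $Y$ (and correspondingly of $X$) into smooth ambient varieties together with Koszul-type resolutions, and then verify that these resolutions pull back compatibly under $f$. Establishing this compatibility rigorously --- carried out in \cite[Ch.~IV]{bfm1} --- is the substantive technical content; the Todd factor $\td^*(T_{X/Y})$ then emerges naturally from tracking how the Chern characters of the Koszul-resolution terms transform under smooth pullback.
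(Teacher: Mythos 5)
The paper does not actually prove this proposition; it cites it directly (``Baum, Fulton and MacPherson show:''), so your sketch is by definition taking a different route, namely one that tries to outline a proof. Your dévissage and base-change reduction to the case $\Fa = \Oo_Y$ is sound: additivity, flat base change in $K$-theory, the proper/smooth base-change identity $f^*_\BM i_* = i'_* f'^*_\BM$ for Borel--Moore homology, $T_{X/Y}|_{V'} \cong T_{V'/V}$ for the Cartesian square, the projection formula for $i'$, and proper covariance $\tau_* i_* = i_* \tau_*$ are all standard and fit together as you describe.

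Two points in the core case need more care than the sketch suggests, and together they essentially throw the full weight back onto \cite{bfm1}. First, the local factorization ``étale $\circ$ affine projection'' gives Zariski-local control, but Borel--Moore homology is not a sheaf, so one cannot simply check the identity locally and conclude it globally; one must either work with compatible global embeddings $Y\hookrightarrow M$, $X\hookrightarrow M'$ with $M'\to M$ smooth (the route \cite[Ch.~IV]{bfm1} takes), or argue via a localization/Mayer--Vietoris principle that the class $\td^*(T_{X/Y})\cap f^*_\BM\tau_*[\Oo_Y]-\tau_*[\Oo_X]$ dies. Second, the étale compatibility of $\tau_*$ is not a formal consequence of exactness of étale pullback: $\tau_*$ is built from embeddings into smooth ambient varieties, Koszul resolutions, and localized Chern characters in topological $K$-theory, and an étale map $X\to Y$ need not lift to an étale map of ambient smooth varieties. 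So the ``easy'' half of the local factorization is itself an instance of the theorem rather than a trivial reduction. You do flag the substantive technical content as residing in the BFM construction; with that caveat made explicit, your write-up is an honest outline of what a proof would have to do, whereas the paper simply invokes the result.
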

Yokura \cite{yokura} twisted $\tau_*$ by a Hirzebruch-type variable $y$:
\begin{defn}
The \emph{twisted Todd transformation}
\[ \td_{1+y}: K_0 (X) \otimes \intg [y^{\pm 1}] \longrightarrow
                H^\BM_{2*} (X)\otimes \rat [y^{\pm 1}, (1+y)^{-1}] \]
is given by
\[ \td_{1+y} [\Fa] := \sum_{k\geq 0} \tau_k [\Fa]\cdot \frac{1}{(1+y)^k}, \]               
where the Baum-Fulton-MacPherson transformation
$\tau_*$ is extended linearly over $\intg [y^{\pm 1}]$, and
$\tau_k$ denotes the degree $2k$-component of $\tau_*$.
\end{defn}
\begin{remark} \label{rem.twistedtoddtochow}
Regarding the transformation $\tau_*$ as taking values in Chow
groups $A_* (-)\otimes \rat$ (cf. Remark \ref{rem.bfmtoddtochow}),
the above definition yields
a twisted Todd transformation
\[ \td_{1+y}: K_0 (X) \otimes \intg [y^{\pm 1}] \longrightarrow
                A_* (X)\otimes \rat [y^{\pm 1}, (1+y)^{-1}], \]
which commutes with the Borel-Moore twisted Todd transformation under the
cycle map.
\end{remark}
The definition of the motivic Hirzebruch class transformation below is due to 
Brasselet, Schürmann and Yokura \cite{bsy}, see also Sch\"urmann's expository paper
\cite{schuermannmsri}.
\begin{defn}
The \emph{motivic Hirzebruch class transformation} is
\[ MHT_{y*} := \td_{1+y} \circ MHC_y:
  K_0 (MHM(X)) \longrightarrow H^\BM_{2*} (X) \otimes 
    \rat [y^{\pm 1}, (1+y)^{-1}]. \] 
\end{defn}
For the intersection Hodge module $IC^H_X$ on a complex
purely $n$-dimensional variety $X$, we use the convention
\[ IC^H_X := j_{!*} (\rat^H_U [n]), \]
which agrees with \cite[p. 444]{schuermannmsri} and
\cite[p. 345]{peterssteenbrink}.
Here, $U \subset X$ is smooth, of pure dimension $n$,
Zariski-open and dense, and $j_{!*}$ denotes the
intermediate extension of mixed Hodge modules associated
to the open inclusion $j:U \hookrightarrow X$.
The underlying perverse sheaf is
$\qrat (IC^H_X) = IC_X,$ the intersection chain sheaf, where
$\qrat: MHM (X) \to \Per (X) = \Per (X;\rat)$ is the faithful and exact
functor that sends a mixed Hodge module to its underlying perverse sheaf.
Here, $\Per (X)$ denotes perverse sheaves on $X$ which are constructible
with respect to \emph{some} algebraic stratification of $X$.
This functor extends to a functor
$\qrat: D^b MHM (X) \to D^b_c (X) = D^b_c (X;\rat)$ between bounded derived
categories. For every object of $D^b_c (X)$ there exists \emph{some}
algebraic stratification with respect to which the object is constructible, and these
stratifications will generally vary with the object.
Recall that a functor $F$ is \emph{conservative}, if
for every morphism $\phi$ such that $F(\phi)$ is an isomorphism,
$\phi$ is already an isomorphism.
Faithful functors on balanced categories (such as abelian or
triangulated categories) are conservative.
According to \cite[p. 218, Remark (i)]{saitoextofmhm},
$\qrat: D^b MHM (X) \to D^b_c (X)$ is not faithful. But:
\begin{lemma} \label{lem.derratconservative}
The functor
$\qrat: D^b MHM (X) \to D^b_c (X)$ is conservative.
\end{lemma}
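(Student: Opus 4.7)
The plan is to reduce conservativity on the derived category to conservativity on the abelian heart, using that $\qrat: MHM(X) \to \Per(X)$ is faithful and exact (as stated in the excerpt), hence conservative between abelian categories.

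More precisely, I would first reformulate: a triangulated functor is conservative if and only if it reflects zero objects, since a morphism $\phi$ in a triangulated category is an isomorphism iff $\cone(\phi) \cong 0$, and $\qrat$ is triangulated, so $\qrat(\cone(\phi)) \cong \cone(\qrat(\phi))$. Hence it suffices to show: if $M \in D^b MHM(X)$ satisfies $\qrat(M) \cong 0$ in $D^b_c(X)$, then $M \cong 0$.

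Next, I would invoke the fundamental compatibility of t-structures: the natural t-structure on $D^b MHM(X)$ corresponds under $\qrat$ to the perverse t-structure on $D^b_c(X)$, so for every $i$,
\[ \qrat \circ \mathcal{H}^i_{MHM} \;\cong\; {}^p\!\mathcal{H}^i \circ \qrat. \]
If $\qrat(M) \cong 0$, then ${}^p\!\mathcal{H}^i \qrat(M) = 0$ for all $i$, so $\qrat(\mathcal{H}^i_{MHM} M) = 0$ in $\Per(X)$. Now the restriction $\qrat: MHM(X) \to \Per(X)$ is a faithful exact functor between abelian categories; such a functor is automatically conservative (if $\qrat(A) = 0$, then $\qrat(\id_A) = 0$, and faithfulness forces $\id_A = 0$, i.e.\ $A = 0$). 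Therefore $\mathcal{H}^i_{MHM} M = 0$ for all $i$, and since $M \in D^b MHM(X)$ is bounded, this forces $M \cong 0$.

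Finally, combining these two steps: given $\phi$ with $\qrat(\phi)$ an isomorphism, embed $\phi$ in a distinguished triangle with cone $C$, apply $\qrat$, conclude $\qrat(C) \cong 0$, and then $C \cong 0$, so $\phi$ is an isomorphism. The main (and only) subtlety is ensuring the t-structure compatibility, but this is part of Saito's construction of $MHM(X)$ — the natural t-structure on $D^b MHM(X)$ is defined precisely so that $\qrat$ is t-exact with respect to the perverse t-structure — so no further work is needed beyond citing this.
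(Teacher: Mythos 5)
Your proof is correct and takes essentially the same approach as the paper: both reduce via the t-exactness of $\qrat$ (i.e.\ $\qrat \circ H^k \cong {}^p H^k \circ \qrat$) to the faithfulness, hence conservativity, of $\qrat$ on the abelian hearts, and then conclude using boundedness. Your intermediate reduction to ``reflects zero objects'' via the cone is just an explicit unwinding of the paper's final step ``$H^k(\phi)$ is an isomorphism for all $k$, hence $\phi$ is an isomorphism,'' not a genuinely different route.
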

\begin{proof}
Let $\phi$ be a morphism in $D^b MHM (X)$ such that $\qrat (\phi)$ is an 
isomorphism in $D^b_c (X)$.
Applying the perverse cohomology functor
${}^p H^k: D^b_c (X)\to \Per (X),$
${}^p H^k (\qrat \phi)$ is an isomorphism in $\Per (X)$ for every $k$.
Now ${}^p H^k (\qrat \phi) = \qrat H^k (\phi)$, where $\qrat$ on the right
hand side is the faithful functor $\qrat: MHM (X) \to \Per (X)$.
It follows that $H^k (\phi)$ is an isomorphism in $MHM(X)$ for all $k$.
Thus $\phi$ is an isomorphism in $D^b MHM (X)$.
\end{proof}
The module $IC^H_X$ is the unique simple object in the
category $MHM (X)$ which restricts to $\rat_U [n]$ over $U$.
As $U$ is smooth and pure $n$-dimensional,
$\rat^H_U [n]$ is pure of weight $n$.
Since the intermediate extension $j_{!*}$ preserves weights, 
$IC^H_X$ is pure of weight $n$.
There is a duality isomorphism (polarization)
$\mathbb{D}^H_X IC^H_X \cong IC^H_X (n).$
Taking $\qrat$, this isomorphism induces a self-duality
isomorphism
\[  \mathbb{D}_X IC_X = \mathbb{D}_X \qrat IC^H_X \cong
\qrat \mathbb{D}^H_X IC^H_X \cong \qrat IC^H_X (n) \cong IC_X, \]
if an isomorphism $\rat_U (n) \cong \rat_U$ is chosen.
\begin{defn}
(\cite{bsy}, \cite{csgeneralattice}.)
The \emph{intersection generalized Todd class}
(or \emph{intersection Hirzebruch characteristic class}) is
\[ IT_{y*} (X) := MHT_{y*} [IC^H_X [-n]]
    \in H^\BM_{2*} (X) \otimes \rat [y^{\pm 1}, (1+y)^{-1}]. \]
\end{defn}
\begin{remark} \label{rem.itisalgebraic}
The intersection characteristic class $IT_{y*} (X)$ is represented by
an algebraic cycle by Remark \ref{rem.twistedtoddtochow}.
\end{remark}

\section{Behavior of the Hodge-Theoretic Classes 
  Under Normally Nonsingular Inclusions}
\label{sec.hodgeundernnsincl}

We embark on establishing a Verdier-Riemann-Roch type formula
$g^! IT_{1*} (X) = L^* (N) \cap IT_{1*} (Y)$ for appropriately
normally nonsingular regular algebraic embeddings 
$g: Y\hookrightarrow X$ of complex algebraic varieties.
Here, $g^!$ denotes Verdier's Gysin map on Borel-Moore homology 
for closed regular algebraic embeddings, and $N$ is the algebraic normal
bundle of $g$.
Following Verdier's construction of $g^!,$ one must first understand how
$IT_{1*} (X)$ behaves under specialization to homology of the algebraic
normal bundle. This then reduces the problem to establishing the desired formula in
the special case where $g$ is the zero section embedding into an algebraic
vector bundle. 
Philosophically, one may view the specialization map $\Sp_\BM$ as an
algebro-geometric substitute for
the simple topological operation of ``restricting a Borel-Moore cycle to an 
open tubular neighborhood of $Y$''. 
From this point of view, one expects that
$\Sp_\BM IT_{1*} (X) = IT_{1*} (N),$ and this is what we do indeed prove
(Proposition \ref{prop.spbmityisitn}).
That proof rests on three ideas:
First, in the context of deformation to the normal cone, the specialization map
to the central fiber can itself be expressed in terms of a \emph{hypersurface}
Gysin restriction.
Second, results of Cappell-Maxim-Sch\"urmann-Shaneson \cite{cmss} explain that a 
global hypersurface Gysin restriction applied to the motivic Hirzebruch class
transformation agrees with first taking Hodge nearby cycles, and then
executing the Hirzebruch transformation.
Third, we show that Saito's Hodge nearby cycle functor takes the intersection
Hodge module on the deformation space to the intersection Hodge module of the
special fiber $N$ (Proposition \ref{prop.psihpichzisichn}).
This requires in particular an analysis of the behavior of the Hodge intersection module
both under $g^!$ for topologically normally nonsingular closed algebraic embeddings
$g$ (Lemma \ref{lem.gshricyhcisicxh}), and under smooth pullbacks
(Lemma \ref{lem.smpullbic}). Vietoris-Begle techniques are being used after
careful premeditation of constructibility issues.
The remaining step is then to understand why the relation
$k^! IT_{1*} (N) = L^* (N) \cap IT_{1*} (Y)$ holds for the
zero section embedding $k:Y\hookrightarrow N$ of an algebraic vector bundle
$N\to Y$.
We achieve this in Proposition \ref{prop.ityrestrzerosect}.
In the case of a zero section embedding $k$, the Gysin restriction $k^!$ is, by the 
Thom isomorphism theorem, inverse to smooth pullback under the vector bundle
projection, and we find it easier to establish a relation for the latter
(Proposition \ref{prop.itysmpullb}). Sch\"urmann's $MHC_y$-Verdier-Riemann-Roch
theorem also enters.

Since algebraic normal bundles of regular algebraic embeddings need not
faithfully reflect the normal topology near the subvariety, 
the main result, Theorem \ref{thm.it1classgysin}, requires a tightness assumption, which
holds automatically in transverse situations (Proposition \ref{prop.topandalgtransvistight}).
Furthermore, our methods require that the exceptional divisor in the
blow-up of $X\times \cplx$ along $Y\times 0$ be normally nonsingular.
We do not know at present whether the latter condition, related to the
``clean blow-ups'' of Cheeger, Goresky and MacPherson, is necessary.
Again, it holds in transverse situations (Corollary \ref{cor.uptransvimplupnns}).\\

As regular algebraic embeddings need not be topologically normally
nonsingular, we define:
\begin{defn} \label{def.tightemb}
A closed regular algebraic embedding $Y\hookrightarrow X$
of complex algebraic varieties is called \emph{tight},
if its underlying topological embedding (in the complex topology)
is normally nonsingular 
and compatibly stratifiable (Definition \ref{def.compstrat}),
with topological normal bundle $\pi: E\to Y$
as in Definition \ref{def.snns},
and $E\to Y$ is isomorphic (as a topological vector bundle) to the 
underlying topological vector bundle
of the algebraic normal bundle $N_Y X$ of $Y$ in $X$.
\end{defn}
\begin{example}
A closed embedding $g: M\hookrightarrow W$ of smooth complex algebraic
varieties is tight because the normal bundles can be described in
terms of tangent bundles, and the smooth tubular neighborhood theorem
applies to provide normal nonsingularity (with respect to the intrinsic
stratification consisting of only the top stratum).
\end{example}

\begin{prop} \label{prop.topandalgtransvistight}
Let $M\hookrightarrow W$ be a
closed algebraic embedding
of smooth complex algebraic varieties. Let $X \subset W$
be a (possibly singular) algebraic subvariety, equipped
with an algebraic Whitney stratification
and set $Y = X \cap M$. If 
\begin{itemize}
\item each stratum of $X$ is transverse to $M$, and
\item $X$ and $M$ are Tor-independent in $W$,
\end{itemize}
then the embedding
$g:Y\hookrightarrow X$ is tight.
\end{prop}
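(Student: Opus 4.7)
The plan is to separately verify the two defining conditions of tightness for $g: Y\hookrightarrow X$: namely, that the underlying topological embedding is normally nonsingular with a compatible algebraic stratification, and that the resulting topological normal bundle agrees with the underlying topological bundle of the algebraic normal bundle $N_Y X$.

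For the first condition, I would apply Lemma \ref{lem.homgysincommsqu} with the ambient smooth manifold taken to be $W$, the smooth submanifold taken to be $M$, and the Whitney stratified subset keeping its role $X$. Orientations are canonically supplied by the complex algebraic structures, and the transversality of each stratum of $X$ with $M$ is granted by hypothesis. The lemma then yields that $g: Y\hookrightarrow X$ is topologically normally nonsingular, that $Y$ is Whitney stratified by the sets $X_\alpha \cap M$ where $\{X_\alpha\}$ denote the strata of $X$, and that the topological normal bundle $\nu$ of $g$ is isomorphic to the restriction $\nu_M|_Y$, where $\nu_M$ denotes the smooth (hence topological) normal bundle of $M$ in $W$. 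To promote this topological stratification to an algebraic stratification in the sense of Definition \ref{def.compstrat}, observe that the induced skeletal filtration $Y_{2i} = X_{2i} \cap M$ consists of closed algebraic subvarieties of $Y$, since each $X_{2i}$ is closed algebraic in $X$ and $M$ is closed algebraic in $W$. Thus the induced stratification $\Ya$ is an algebraic stratification, and compatible stratifiability is established.

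For the second condition, I would invoke the Tor-independence of $X$ and $M$ in $W$. Since $M\hookrightarrow W$ is a regular closed embedding of smooth varieties with algebraic normal bundle $N_M W$, the vanishing $\mathcal{T}or_i^{\mathcal{O}_W}(\mathcal{O}_X, \mathcal{O}_M) = 0$ for $i>0$ implies, by the standard base-change behavior of regular embeddings (cf.\ Fulton's intersection theory, Ch.~6, or EGA IV), that $Y = X\cap M$ is itself regularly embedded in $X$ and that the conormal sheaves fit into a canonical isomorphism
\[ \mathcal{I}_Y/\mathcal{I}_Y^2 \;\cong\; j^*(\mathcal{I}_M/\mathcal{I}_M^2), \]
where $j: Y\hookrightarrow M$ denotes the inclusion. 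Dualizing yields an isomorphism of algebraic vector bundles $N_Y X \cong j^* N_M W = (N_M W)|_Y$. The forgetful functor from algebraic to topological vector bundles commutes with pullback and sends $N_M W$ to $\nu_M$, so $\nu \cong \nu_M|_Y$ coincides with the underlying topological bundle of $N_Y X$, completing the verification of tightness. The main obstacle is precisely this Tor-independence step, which upgrades the regular embedding $M\hookrightarrow W$ of \emph{smooth} varieties to a regular embedding $Y\hookrightarrow X$ of (possibly singular) varieties with the conormal sheaf correctly pulled back; since $X$ need not be smooth, the $\mathcal{T}or$-vanishing hypothesis is essential and cannot be dropped.
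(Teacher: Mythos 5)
Your proof is correct and takes essentially the same approach as the paper's: the topological normal nonsingularity and the identification of the topological normal bundle come from the transversality hypothesis via the Goresky--MacPherson Stratified Morse Theory result (which you access through Lemma \ref{lem.homgysincommsqu}), while the regularity of $Y\hookrightarrow X$ and the isomorphism $N_Y X \cong j^*N_M W$ come from the Tor-independence hypothesis (the paper cites Illusie--Temkin where you cite Fulton/EGA, but these are the same standard fact). Your write-up is somewhat more explicit about the compatible-stratifiability clause of Definition \ref{def.compstrat}, which the paper's terse proof leaves implicit.
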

\begin{proof}
The topological aspects of the proof proceed along the lines of 
\cite[p. 48, Proof of Thm. 1.11]{gmsmt}.
By smoothness, the closed embedding $M\hookrightarrow W$ is regular
with algebraic normal bundle $N_M W$.
The Tor-independence of $X$ and $M$ ensures that 
the closed embedding $Y\hookrightarrow X$ is also regular
(see \cite[Lemma 1.7]{illusietemkin}), and that the excess normal bundle vanishes, i.e.
the canonical closed embedding
$N_Y X \to j^* N_M W$ is an isomorphism of algebraic vector bundles,
where $j$ is the embedding $j: Y\hookrightarrow M$. The bundle $\pi: E\to Y$
in Definition \ref{def.tightemb} may then be taken to be the underlying topological vector bundle
of the restriction of $N_M W$ to $Y$.
\end{proof}

Let $V\hookrightarrow U$ be a closed regular embedding of complex varieties.
Then $\beta: \Bl_V U \to U$ will denote the blow-up of $U$ along $V$.
The exceptional divisor $E = \beta^{-1} (V) \subset \Bl_V U$ is the
projectivization $\pr (N)$ of the algebraic normal bundle $N$ of $V$ in $U$.
\begin{defn}
Let $X\hookrightarrow W \hookleftarrow M$ be closed algebraic embeddings
of algebraic varieties with $M,W$ smooth.
We say that these embeddings are
\emph{upwardly transverse}, if $X$ and $M$ 
are Tor-independent in $W$,
there exists an algebraic Whitney stratification of $X$ 
which is transverse to $M$ in $W$,
and there exists a (possibly non-algebraic) Whitney stratification on the 
strict transform of $X\times \cplx$ in $\Bl_{M\times 0} (W\times \cplx)$ 
which is transverse to the exceptional divisor.
\end{defn}

\begin{defn} \label{def.upwardlynns}
A tight embedding $Y\hookrightarrow X$ is called
\emph{upwardly normally nonsingular} if
the inclusion $E \subset \Bl_{Y\times 0} (X\times \cplx)$
of the exceptional divisor $E$
is topologically normally nonsingular.
\end{defn}
This notion is related to the \emph{clean blow-ups} of
Cheeger, Goresky and MacPherson \cite[p. 331]{cgm}.
A monoidal transformation $\pi: \widetilde{X} \to X$
with nonsingular center $Y\subset X$ is called a clean blow-up if
$E\to Y$ is a topological fibration, where $E=\pi^{-1} (Y)$ is the
exceptional divisor, and the inclusion
$E \subset \widetilde{X}$ is normally nonsingular.
A variety $X$ has \emph{algebraically conical singularities} if
it can be desingularized by a sequence of clean blow-ups.
\begin{prop} \label{prop.uptransvimplupnns}
Let $X\hookrightarrow W \hookleftarrow M$ be Tor-independent closed algebraic embeddings
of algebraic varieties with $M,W$ smooth.
If there exists an algebraic Whitney stratification of $X$ which is transverse to $M$,
and a Whitney stratification on the 
strict transform of $X$ in $\Bl_M W$ which is transverse to the
exceptional divisor, then 
the inclusion 
$Y=X\cap M \hookrightarrow X$ is tight and 
the inclusion $E' \subset \Bl_Y X$
of the exceptional divisor
is topologically normally nonsingular.
The corresponding topological normal vector bundle of
$E' \subset \Bl_Y X$ is then isomorphic to the restriction to $E'$ of
the tautological line bundle $\Oo_E (-1)$ over the exceptional divisor
$E \subset \Bl_M W$.
\end{prop}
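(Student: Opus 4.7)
The plan is to prove the three claims in sequence. Tightness of $Y = X \cap M \hookrightarrow X$ is immediate from Proposition~\ref{prop.topandalgtransvistight}, whose hypotheses match verbatim: $M,W$ are smooth, $X$ carries an algebraic Whitney stratification transverse to $M$, and $X$ and $M$ are Tor-independent in $W$.

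For the remaining claims I would first pin down the geometry of $\Bl_Y X$ inside $\Bl_M W$. Under Tor-independence, blow-ups commute with base change and the scheme-theoretic preimage of $X$ in $\Bl_M W$ carries no embedded components along $E$. Consequently, the strict transform $\widetilde{X}$ of $X$ in $\Bl_M W$ coincides with $\Bl_Y X$, and the exceptional divisor $E'$ of $\Bl_Y X$ is $\widetilde{X} \cap E$, embedded into $E = \pr(N_M W)$ as $\pr(N_Y X)$ via the canonical isomorphism $N_Y X \cong j^{*} N_M W$ coming from the vanishing excess bundle that appeared in the proof of Proposition~\ref{prop.topandalgtransvistight}.

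The heart of the argument is then the topological normal nonsingularity of $E' \hookrightarrow \widetilde{X}$. Here $E$ and $\Bl_M W$ are smooth algebraic varieties, and by hypothesis there is a Whitney stratification of $\widetilde{X}$ transverse to $E$. This is structurally the same input as in Proposition~\ref{prop.topandalgtransvistight}, with $\widetilde{X}$ playing the role of the singular subvariety and $E \subset \Bl_M W$ playing the role of $M \subset W$. I would execute the purely topological portion of that proof, following \cite[Proof of Thm.~1.11]{gmsmt}: take a smooth tubular neighborhood of $E$ in $\Bl_M W$, upgrade it to a stratification-compatible tube via Thom's first isotopy lemma applied to the Whitney stratification on $\widetilde{X}$, and then restrict it to $\widetilde{X}$ to obtain a stratified tube of $E'$ in $\widetilde{X}$. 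Algebraicity of the stratification is never used in this construction, so the fact that the stratification on $\widetilde{X}$ is only given topologically causes no trouble. The resulting topological normal bundle of $E' \hookrightarrow \widetilde{X}$ is, by construction, the restriction to $E'$ of the normal bundle of $E$ in $\Bl_M W$, and the latter is the standard tautological line bundle $\Oo_E(-1)$ on the projectivization $E = \pr(N_M W)$.

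The main obstacle I anticipate is verifying that the stratified tube really delivers the locally cone-like structure on $E'$ and the stratum-preserving local trivialization demanded by Definition~\ref{def.snns}, rather than just ambient topological transversality. This is a technical but standard consequence of the Whitney conditions together with Thom's first isotopy lemma, and it is the reason why the topological hypothesis on the stratification of $\widetilde{X}$, rather than an algebraic one, suffices for the conclusion.
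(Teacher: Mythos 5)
Your proof is correct and follows essentially the same route as the paper: tightness of $Y\hookrightarrow X$ by Proposition~\ref{prop.topandalgtransvistight}, the identification $\Bl_Y X=\beta^{-1}(X)$ via Tor-independence (Illusie--Temkin), and normal nonsingularity of $E'\hookrightarrow\widetilde X$ from Whitney transversality of the strict transform to $E$ inside the smooth ambient $\Bl_M W$, with normal bundle $\Oo_E(-1)|_{E'}$. The paper packages your reconstruction of the Goresky--MacPherson transversality argument directly as an appeal to Lemma~\ref{lem.homgysincommsqu}, which already yields the locally cone-like, stratum-preserving tube you flag as the technical crux.
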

\begin{proof}
Let $\beta: \Bl_M W \to W$ be the blow-up of $W$ along $M$.
As $M$ is a smoothly embedded smooth subvariety of the smooth variety $W$,
$\Bl_M W$ is smooth and the exceptional divisor
$E = \beta^{-1} (M) = \pr (N)$ is a smooth variety smoothly embedded in $\Bl_M W$.
By Lemma \ref{lem.homgysincommsqu} applied to
\[ \beta^{-1} (X) \hookrightarrow \Bl_M W \hookleftarrow E, \]
where the strict transform $\beta^{-1} (X)$ has been equipped with a
suitable Whitney stratification, the intersection
$\widetilde{Y} = \beta^{-1} (X) \cap E$ is an oriented pseudomanifold Whitney stratified by its 
intersection with the strata of $\beta^{-1} (X)$,
the inclusion $\widetilde{Y} \hookrightarrow \beta^{-1} (X)$ is normally nonsingular
with oriented topological normal bundle isomorphic to the restriction of 
the topological normal bundle $\nu_E$ of $E$ in $\Bl_M W$.
Now this normal bundle is the tautological bundle $\nu_E = \Oo_E (-1)$.

Since $X$ and $M$ are Tor-independent in $W$, and $M\hookrightarrow W$ is
a regular embedding,
the embedding $Y=X\cap M \hookrightarrow X$ is regular as well, and
the blow-up $\Bl_Y X$ of $X$ along $Y=X\cap M$ is given by
\[ \Bl_Y X = X \times_W \Bl_M W = \beta^{-1} (X), \]
\cite[Lemma 1.7]{illusietemkin}.
The exceptional divisor $E'$ of $\beta|: \Bl_Y X \to X$ is
\[ E' = \beta^{-1} (Y) = \beta^{-1} (X\cap M) = \beta^{-1} (X)\cap \beta^{-1} (M)
  = \beta^{-1} (X)\cap E = \widetilde{Y}. \]
The inclusion $Y\hookrightarrow X$ is tight by Proposition \ref{prop.topandalgtransvistight}.  
\end{proof}

\begin{cor} \label{cor.uptransvimplupnns}
If $X\hookrightarrow W \hookleftarrow M$ are upwardly transverse embeddings,
then the embedding $Y=X\cap M \hookrightarrow X$ is upwardly normally nonsingular.
The corresponding topological normal vector bundle of the exceptional divisor
$E' \subset \Bl_{Y \times 0} (X\times \cplx)$ 
is then isomorphic to the restriction to $E'$ of
the tautological line bundle $\Oo_E (-1)$ over the exceptional divisor
$E \subset \Bl_{M\times 0} (W\times \cplx)$.
\end{cor}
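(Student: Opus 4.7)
The plan is to deduce the corollary by applying Proposition \ref{prop.uptransvimplupnns} not to the original triple $X\hookrightarrow W\hookleftarrow M$, but to the triple obtained by ``crossing with $\cplx$'':
\[ X\times \cplx \hookrightarrow W\times \cplx \hookleftarrow M\times 0. \]
Here $W\times \cplx$ and $M\times 0 \cong M$ are smooth, so the ambient smoothness hypothesis is automatic. What I would verify in order is: (a) Tor-independence of $X\times \cplx$ and $M\times 0$ inside $W\times \cplx$; (b) the existence of an algebraic Whitney stratification of $X\times \cplx$ transverse to $M\times 0$; (c) the existence of a (possibly non-algebraic) Whitney stratification of the strict transform of $X\times \cplx$ in $\Bl_{M\times 0}(W\times \cplx)$ that is transverse to the exceptional divisor.

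For (a), using $\Oo_{X\times \cplx} = \Oo_X \boxtimes \Oo_\cplx$ and $\Oo_{M\times 0} = \Oo_M \boxtimes \Oo_{\{0\}}$, a standard K\"unneth-type computation of sheaf $\Tor$ on the product reduces $\Tor^{\Oo_{W\times \cplx}}_i(\Oo_{X\times \cplx},\Oo_{M\times 0})$ in positive degrees to a sum of boxes of $\Tor^{\Oo_W}_p(\Oo_X,\Oo_M)$ and $\Tor^{\Oo_\cplx}_q(\Oo_\cplx,\Oo_{\{0\}})$ with $p+q=i>0$; the first factor vanishes for $p>0$ by hypothesis, while $\Oo_\cplx$ is already $\Oo_{\{0\}}$-flat in the sense that $\Tor^{\Oo_\cplx}_{>0}(\Oo_\cplx,\Oo_{\{0\}})=0$. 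For (b), given an algebraic Whitney stratification $\{S_\alpha\}$ of $X$ transverse to $M$, I take the product stratification $\{S_\alpha \times \cplx\}$ of $X\times \cplx$; at any point $(x,0)\in Y\times 0$, the tangent space decomposition
\[ T_{(x,0)}(S_\alpha \times \cplx) + T_{(x,0)}(M\times 0)
   = (T_x S_\alpha + T_x M) \times \cplx = T_x W \times \cplx = T_{(x,0)}(W\times \cplx) \]
witnesses transversality, and Whitney conditions (a) and (b) are clearly preserved under taking the product of each stratum with $\cplx$. For (c), this is literally the third clause in the definition of upwardly transverse embeddings, so nothing has to be proved.

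Once (a)--(c) are in place, Proposition \ref{prop.uptransvimplupnns} applies to the $\cplx$-crossed triple and yields: the inclusion
\[ (X\times \cplx) \cap (M\times 0) = Y\times 0 \hookrightarrow X\times \cplx \]
is tight, and the exceptional divisor $E' \subset \Bl_{Y\times 0}(X\times \cplx)$ is topologically normally nonsingular, with topological normal bundle isomorphic to the restriction to $E'$ of $\Oo_E(-1)$ on the exceptional divisor $E\subset \Bl_{M\times 0}(W\times \cplx)$. The tightness of the original inclusion $Y\hookrightarrow X$ is a separate application, namely of Proposition \ref{prop.topandalgtransvistight} to $X\hookrightarrow W\hookleftarrow M$, which is valid by the uncrossed Tor-independence and the uncrossed transverse Whitney stratification. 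Combining tightness of $Y\hookrightarrow X$ with normal nonsingularity of $E' \subset \Bl_{Y\times 0}(X\times \cplx)$ is exactly Definition \ref{def.upwardlynns}, concluding upward normal nonsingularity and simultaneously identifying the normal bundle of $E'$ as claimed.

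The most delicate point is really the K\"unneth argument for Tor-independence in step (a); everything else is either a direct quotation of an earlier result or a short tangent-space computation. I do not expect surprises, but some care is needed to invoke K\"unneth for sheaf Tor on the algebraic product (as opposed to its more familiar form for modules over a tensor product of rings), which is why I would state it as the only step requiring a real verification.
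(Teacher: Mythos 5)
Your proof plan follows exactly the same route as the paper: apply Proposition~\ref{prop.topandalgtransvistight} to the original triple $X\hookrightarrow W\hookleftarrow M$ to get tightness of $Y\hookrightarrow X$, and apply Proposition~\ref{prop.uptransvimplupnns} to the $\cplx$-crossed triple $X\times\cplx \hookrightarrow W\times\cplx \hookleftarrow M\times 0$ to handle the exceptional divisor and its normal bundle. The one place where you work harder than necessary is the Tor-independence check: the paper bypasses the sheaf-K\"unneth formula entirely and simply invokes the flat base change identity $\Tor^{R[t]}_n(A[t],B)=\Tor^R_n(A,B)$ for $R$-modules $A,B$, which in local coordinates over $W\times\cplx$ is exactly what your K\"unneth computation reduces to after you observe that $\Oo_\cplx$ contributes nothing in positive degrees. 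Both are valid; the paper's version is shorter and avoids the technical caveat you flag yourself about invoking K\"unneth for sheaf $\Tor$ on an algebraic product.
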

\begin{proof}
There exists an algebraic Whitney stratification $\Xa$
of $X$ such that
each stratum of $X$ is topologically transverse to $M$.
Since $X$ and $M$ are Tor-independent in $W$, the embedding
$Y=X\cap M \hookrightarrow X$ is regular, \cite[Lemma 1.7]{illusietemkin}.
Thus by Proposition \ref{prop.topandalgtransvistight},
the embedding $Y \hookrightarrow X$ is tight.
Equip $X' := X\times \cplx$ with the product stratification
$\Xa' = \Xa \times \cplx$. Then $\Xa'$ is an algebraic Whitney stratification
of $X'$ in $W' := W \times \cplx$.
Since $X$ is Whitney transverse to $M$ in $W$,
$X'$ is Whitney transverse to $M' := M\times 0$ in $W'$.
The Tor-independence of $X$ and $M$ in $W$ implies Tor-independence of
$X'$ and $M'$ in $W'$, since
\[ \Tor^{R[t]}_n (A[t],B) = \Tor^R_n (A, B) \]
for $R$-modules $A,B$; $R$ a $\cplx$-algebra.
Since $X\hookrightarrow W \hookleftarrow M$ are upwardly transverse embeddings,
there exists a Whitney stratification on the 
strict transform of $X'$ in $\Bl_{M'} (W')$ 
which is transverse to the exceptional divisor.
Hence, we may apply Proposition \ref{prop.uptransvimplupnns} to
$X'\hookrightarrow W' \hookleftarrow M'$.
It follows that the inclusion 
\[ Y'=X'\cap M' = (X\times \cplx)\cap (M\times 0) = (X\cap M)\times 0 = Y\times 0
  \hookrightarrow X'=X\times \cplx \]
is tight and 
the inclusion $E' \subset \Bl_{Y'} X'$
of the exceptional divisor
is topologically normally nonsingular.
In fact, the corresponding topological normal vector bundle of
$E' \subset \Bl_{Y'} X'$ is then isomorphic to the restriction to $E'$ of
the tautological line bundle $\Oo_E (-1)$ over the exceptional divisor
$E \subset \Bl_{M'} W'$.
\end{proof}

Let $Y\hookrightarrow X$ be a regular embedding with
normal bundle $N=N_Y X$. We recall briefly the technique of
\emph{deformation to the (algebraic) normal bundle}.
The embedding of $Y$ in $X$ gives rise to an embedding
$Y\times 0 \hookrightarrow X\times 0 \hookrightarrow X\times \cplx$
of $Y\times 0$ in $X\times \cplx$.
Let $Z = \Bl_{Y\times 0} (X\times \cplx)$
be the blow-up of $X\times \cplx$ along $Y\times 0,$ with
exceptional divisor $\pr (N\oplus 1)$.
The second factor projection $X\times \cplx \to \cplx$
induces a flat morphism $p_Z: Z\to \cplx$, whose
special fiber $p_Z^{-1} (0)$ is given by
\[ p_Z^{-1} (0) = \Bl_Y X \cup_{\pr (N)} \pr (N\oplus 1). \]
Let $Z^\circ = Z - \Bl_Y X.$
Then $p_Z$ restricts to a morphism
$p: Z^\circ \to \cplx,$
whose special fiber is
$p^{-1} (0) = N$
and whose general fiber is
$p^{-1} (t) \cong X\times \{ t \},~ t\in \cplx^*$.
\begin{prop} \label{prop.nbhdoftightembistautbndle}
Let $g:Y\hookrightarrow X$ be an upwardly normally nonsingular 
embedding of compact complex algebraic
varieties with associated deformation $p: Z^\circ \to \cplx$
to the (algebraic) normal bundle $N=N_Y X=p^{-1} (0)$.
Then there exists an open neighborhood 
(in the complex topology)
of $p^{-1} (0)$
in $Z^\circ$ which is homeomorphic to
$F|_N$, where $F\to \pr (N\oplus 1)$ is the topological
normal bundle to the exceptional divisor in $Z$.
\end{prop}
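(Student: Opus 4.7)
The plan is to realize the required neighborhood as the image, under the topological tubular neighborhood map provided by upward normal nonsingularity, of a suitably rescaled open sub-disk-bundle of $F|_N$.

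First, I would identify the special fiber: since $\Bl_Y X \cap E = \pr(N)$ inside $Z$, we have $E \cap Z^\circ = E - \pr(N)$, which coincides with $p^{-1}(0) = N$ under the standard affine chart of $\pr(N \oplus 1)$. Upward normal nonsingularity (Definition \ref{def.upwardlynns}), unpacked via Definition \ref{def.snns}, supplies a topological open embedding $j: F \hookrightarrow Z$ of the total space of the topological normal bundle $\pi: F \to E = \pr(N \oplus 1)$ which restricts to the identity on the zero section $E$. Writing $F|_N := \pi^{-1}(N)$, which is open in $F$, the set $W := F|_N \cap j^{-1}(Z^\circ)$ is open in $F|_N$ and contains the zero section of $F|_N$, since $j$ carries that zero section to $N \subset Z^\circ$. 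However $W$ is in general a proper subset of $F|_N$: for $x \in N$ sufficiently close to $\pr(N)$, the fiber $F_x$ may still meet $\Bl_Y X$.

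The remaining task is to shrink $F|_N$ inside itself into $W$. Since $Y$ is compact and $N$ is the total space of a topological vector bundle over $Y$, $N$ is paracompact, so $F|_N$ admits a continuous fiber metric $\|\cdot\|$. A standard partition of unity argument, using a locally finite trivializing open cover of $N$ and the existence, for each $x \in N$, of a positive $r_x$ such that the open $r_x$-ball in $F_x$ is contained in $W$, yields a continuous positive function $r: N \to (0, \infty)$ such that the open disk bundle
\[ D_r \; := \; \{\, v \in F|_N : \|v\| < r(\pi(v)) \,\} \]
is contained in $W$. The fiberwise rescaling
\[ \phi: F|_N \longrightarrow D_r, \qquad \phi(v) \; = \; \frac{r(\pi(v))}{1+\|v\|}\, v, \]
is then a homeomorphism with continuous inverse, and $j \circ \phi$ carries $F|_N$ homeomorphically onto the open subset $j(D_r) \subset Z^\circ$, which contains $N = p^{-1}(0)$ as the image of the zero section.

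The only substantive step I anticipate is the construction of the continuous radius function $r$ with $D_r \subset W$. The essential input is that $\Bl_Y X$ is closed in $Z$ and disjoint from the zero section of $j|_{F|_N}$, making $W$ an open neighborhood of that zero section; combined with the paracompactness of $N$ this is a routine but crucial partition of unity argument. The remaining assertions follow formally from the definition of a topological tubular neighborhood and the explicit description of the deformation to the normal bundle.
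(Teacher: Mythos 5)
Your proof is correct and follows essentially the same strategy as the paper: extract the topological tubular neighborhood of the exceptional divisor from upward normal nonsingularity, restrict over $N$, shrink to an open disc bundle contained in $Z^\circ$, and rescale fiberwise. The paper constructs the radius function explicitly as $r(x)=\frac{1}{2}\,d(x,\Bl_Y X)$ for a metric $d$ on $Z$ pulled back through the tubular neighborhood map (using compactness of $\Bl_Y X$), whereas you obtain it by a partition-of-unity argument over the paracompact base $N$; your rescaling formula $\phi(v)=r(\pi(v))\,v/(1+\|v\|)$ usefully makes concrete the fiber-preserving homeomorphism $F|_N\cong F'$ that the paper merely asserts.
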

\begin{proof}
As $g$ is upwardly normally nonsingular,
$g$ is tight and the inclusion 
$\pr (N\oplus 1) \subset \Bl_{Y\times 0} (X\times \cplx)=Z$ of
the exceptional divisor is normally nonsingular.
In particular, $g$ is normally nonsingular and thus there is a
locally cone-like topological stratification
$\Xa = \{ X_\alpha \}$ of $X$ such that
$\Ya := \{ Y_\alpha := X_\alpha \cap Y \}$ is a locally
cone-like topological stratification of $Y$, and
there exists a topological vector bundle $\pi: E\to Y$ together with 
a topological embedding $j:E \to X$ such that
$j(E)$ is open in $X$, $j|_Y =g,$ and 
the homeomorphism $j:E\stackrel{\cong}{\longrightarrow} j(E)$
is stratum preserving, where the open set $j(E)$ is endowed with
the stratification $\{ X_\alpha \cap j(E) \}$ and $E$ is endowed
with the stratification $\Ea = \{ \pi^{-1} Y_\alpha \}$.

As the inclusion 
$\pr (N\oplus 1) \subset \Bl_{Y\times 0} (X\times \cplx)=Z$ 
is normally nonsingular,
there exists a topological vector bundle $\pi_F: F\to \pr (N\oplus 1)$ together with 
a topological embedding $J:F \to Z$ such that
$J(F)$ is open in $Z$ and $J|_{\pr (N\oplus 1)}$ is the inclusion
$\pr (N\oplus 1) \subset Z$.
As $N$ is open in $\pr (N\oplus 1)$,
the total space $F|_N$ is an open subset of $F$, and hence
$J(F|_N)$ is open in $J(F)$, which is open in $Z$.
Thus $J(F|_N)$ is open in $Z$.

Let $d$ be a metric on $Z$, whose metric topology agrees with the complex
topology on $Z$. Let
$r: \pr (N\oplus 1) \to \real_{\geq 0}$
be the continuous function defined by
$r(x) = \smlhf d(x, \Bl_Y X).$
If $x\in N\subset \pr (N\oplus 1)$, then $r(x)>0$ since $\Bl_Y X$ is compact.
If $x\in \pr (N) = \pr (N\oplus 1) - N$, then $r(x)=0$ since $\pr (N)\subset \Bl_Y X$.
Endow $F$ with the unique metric such that $J$ becomes an isometry.
Let $F_r \subset F$ be the open subset given by all vectors $v\in F$
of length $|v|:=d(0,v) <r (\pi_F (v))$.
Given a vector $v$ in $F_r \cap \pi^{-1}_F (N) = F_r$,
a triangle inequality argument shows that 
$v$ has positive distance from every point in $\Bl_Y X$,
from which we conclude that
$J(F_r|_N)$ and $\Bl_Y X$ are disjoint.
Hence $J(F_r|_N)\subset Z^\circ$.
As $F_r|_N$ is open in $F|_N$, we can find an open disc bundle
$F' \subset F_r|_N$ over $N$.
Then $J(F')$ is an open neighborhood of $N=p^{-1}(0)$ in $Z^\circ$
and the composition of $J$ with a fiber-preserving homeomorphism 
$F|_N \cong F'$ over $N$ yields a homeomorphism
$F|_N \cong F' \cong J(F')$.
\end{proof}
We shall next stratify $F|_N$ in a 
topologically locally cone-like fashion:
\begin{prop} \label{prop.lclstratnofoom1}
Assumptions and notation as in Proposition \ref{prop.nbhdoftightembistautbndle}.
Let $\widetilde{\pi}: F|_N \to N$ denote
the bundle projection. 
Let $\Ya = \{ Y_\alpha = X_\alpha \cap Y \}$ be the locally cone-like
topological stratification of $Y$
guaranteed by the normal nonsingularity of $Y$ in $X$.
Then the strata
\[ S_\alpha := \widetilde{\pi}^{-1} (\pi^{-1}_N Y_\alpha) \]
form a locally cone-like topological stratification
$\Sa = \{ S_\alpha \}$ of $F|_N$, where
$\pi_N$ denotes the vector bundle projection $\pi_N: N\to Y$.
\end{prop}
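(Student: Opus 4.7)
The plan is to verify the defining conditions of a locally cone-like topological stratification for $F|_N$ with filtration pulled back from the filtration of $Y$ underlying $\Ya$ along the composition $q := \pi_N \circ \widetilde{\pi} : F|_N \to Y$; the strata are then automatically the $S_\alpha$.

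Let $r,s$ denote the real fiber dimensions of $\pi_N: N\to Y$ and $\widetilde{\pi}: F|_N \to N$. Writing $\{Y_i\}$ for the closed filtration of $Y$ underlying $\Ya$ (with $Y_i = \varnothing$ for $i<0$), I would set $(F|_N)_k := q^{-1}(Y_{k-r-s})$. Each $(F|_N)_k$ is closed by continuity of $q$. Because $q$ restricts to a rank-$(r+s)$ topological fiber bundle over each connected topological manifold stratum of $Y$ (being a composition of two vector bundles, locally trivializable), the difference set $(F|_N)_k - (F|_N)_{k-1}$ is a topological manifold of pure dimension $k$, and its connected components are precisely the preimages $S_\alpha = q^{-1}(Y_\alpha)$ of the connected components $Y_\alpha$ of $Y_{k-r-s} - Y_{k-r-s-1}$, since $q$ has connected fibers.

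For the locally cone-like condition, fix $x \in F|_N$ and let $y = q(x) \in Y_\alpha$. Choose a filtration-preserving homeomorphism $\varphi: U \stackrel{\cong}{\longrightarrow} \real^{\dim Y_\alpha} \times \cone^\circ(L)$ of an open neighborhood $U$ of $y$ in $Y$, guaranteed by the locally cone-like nature of $\Ya$. Shrinking $U$ so that $\pi_N$ trivializes over $U$, and then shrinking $\pi_N^{-1}(U)$ so that $\widetilde{\pi}$ trivializes over it, we obtain a homeomorphism
\[ q^{-1}(U) \;\cong\; U \times \real^r \times \real^s \]
carrying $S_\beta \cap q^{-1}(U)$ to $(Y_\beta \cap U) \times \real^r \times \real^s$ for each $\beta$. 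Combining with $\varphi$ and rearranging Euclidean factors produces a filtration-preserving homeomorphism of a neighborhood of $x$ in $F|_N$ onto
\[ \real^{\dim Y_\alpha + r + s} \times \cone^\circ(L), \]
with the \emph{same} link $L$ (and its induced filtration) as at $y$. This is precisely the locally cone-like condition at $x$ with index $\dim S_\alpha = \dim Y_\alpha + r + s$.

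The main conceptual observation is that both $\widetilde{\pi}$ and $\pi_N$ are vector bundles and the stratification on $F|_N$ is pulled back from $\Ya$, so local trivializations automatically respect strata; no serious obstacles are anticipated beyond careful bookkeeping of dimension indices and verifying that the fiber connectedness of $q$ preserves the stratum-versus-component distinction.
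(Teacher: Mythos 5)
Your proposal is correct and follows the same route the paper sketches: pull the filtration of $Y$ back along the composite vector-bundle projection $q = \pi_N \circ \widetilde{\pi}$ and use local triviality of the two bundles together with the locally cone-like structure of $\Ya$ to produce the required filtration-preserving local homeomorphisms. One small point worth making explicit is that to trivialize $\widetilde{\pi}$ over all of $\pi_N^{-1}(U)$ (a noncompact set) you should take $U$ to be a distinguished neighborhood, which is automatically contractible, so that $\pi_N^{-1}(U)$ is contractible and the vector bundle $\widetilde{\pi}$ over it is trivial; with that observation your argument is complete and matches the paper's (more terse) proof.
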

\begin{proof}
The strata $S_\alpha$ are topological
manifolds since the $Y_\alpha$ are topological manifolds and the total space of a 
(locally trivial) vector bundle over a topological manifold
is again a topological manifold.
Using local triviality of vector bundles and the fact that $\Ya$ is
locally cone-like, one constructs filtration preserving homeomorphisms
that show that $\Sa$ is locally cone-like.
\end{proof}
In order for Lemma \ref{lem.nearbycyclecplxconstructible},
concerning the constructibility of nearby cycle complexes,
to become applicable, we must refine the stratification
so that the central fiber becomes a union of strata:
\begin{lemma} \label{prop.refstratnofoom1}
The refinement of the locally cone-like topological stratification 
$\Sa$ of Proposition \ref{prop.lclstratnofoom1} given by
$\Sa' := \{ S_\alpha -N \} \cup \{ S_\alpha \cap N \}$
is again a locally cone-like topological stratification of
$F|_N$.
(Here we have identified $N$ with the zero section of
$F|_N$.)
\end{lemma}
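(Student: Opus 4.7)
The plan is to verify that $\Sa'$ constitutes a filtration by closed subsets with pure-dimensional topological-manifold difference sets, and that every point of $F|_N$ admits a filtration-preserving homeomorphism onto a cone-like local model of the required form.

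First, since $N \subset F|_N$ is the zero section of the topological vector bundle $\widetilde{\pi}: F|_N \to N$, it is closed, and for each stratum $S_\alpha \in \Sa$ of real dimension $k$, the refinement pieces $S_\alpha \cap N = \pi_N^{-1}(Y_\alpha)$ (viewed in the zero section) and $S_\alpha - N$ are topological manifolds of dimensions $\dim Y_\alpha + 2\rk_{\real} N$ and $k$, respectively: the former because it is the total space of a topological vector bundle over the manifold $Y_\alpha$, the latter because it is open in the manifold $S_\alpha$. Assembling these pieces with the appropriate (shifted) indexing gives a filtration of $F|_N$ by closed subsets whose difference sets are precisely the disjoint unions of the new strata of $\Sa'$ in each real dimension.

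For the locally cone-like condition, at any point $x \in F|_N - N$ the stratifications $\Sa$ and $\Sa'$ agree on some open neighborhood of $x$, so Proposition \ref{prop.lclstratnofoom1} applies directly. At a point $x \in S_\alpha \cap N$ with $y := \pi_N(x) \in Y_\alpha$, I would combine the locally cone-like chart $U \cong \real^{\dim Y_\alpha} \times \cone^\circ(L)$ for $(Y,\Ya)$ at $y$ with local trivializations of the bundles $\pi_N: N \to Y$ and $\widetilde{\pi}: F|_N \to N$ to produce a stratum-preserving homeomorphism from some open neighborhood $W$ of $x$ in $F|_N$ onto
\[ \real^{\dim Y_\alpha} \times \cone^\circ(L) \times \real^{2\rk_{\real} N} \times \real^{2\rk_{\real} F}, \]
under which each $S_\beta \cap W$ maps to the evident product of the $Y_\beta$-pattern inside the cone chart with the two Euclidean fiber factors, and $N \cap W$ maps to the locus where the last $\real^{2\rk_{\real} F}$-factor vanishes.

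To recast this local model in the required form, I would identify $\real^{2\rk_{\real} F} \cong \cone^\circ(S^{2\rk_{\real} F - 1})$ with $S^{2\rk_{\real} F - 1}$ trivially filtered, and invoke the standard filtered homeomorphism $\cone^\circ(A) \times \cone^\circ(B) \cong \cone^\circ(A \star B)$ to obtain $\cone^\circ(L) \times \real^{2\rk_{\real} F} \cong \cone^\circ(L')$ with $L' := L \star S^{2\rk_{\real} F - 1}$ carrying the natural join filtration. Absorbing $\real^{\dim Y_\alpha} \times \real^{2\rk_{\real} N}$ into a single $\real^i$ with $i = \dim(S_\alpha \cap N)$ then yields a filtration-preserving homeomorphism $W \cong \real^i \times \cone^\circ(L')$ of the required type, thereby establishing the locally cone-like property of $\Sa'$ at $x$. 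The main obstacle I foresee is the bookkeeping in this last step: verifying that the join filtration on $L'$, composed with the open-cone functor and the $\real^i$-factor, matches the refined stratification $\Sa'$ stratum by stratum --- in particular that splitting the cone vertex off from its complement in the $\real^{2\rk_{\real} F}$-direction corresponds exactly to joining $L$ with the trivially-filtered sphere $S^{2\rk_{\real} F - 1}$, so that neighboring strata $S_\beta \cap N$ and $S_\beta - N$ are correctly produced by the cone-on-link data.
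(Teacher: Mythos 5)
Your proposal follows essentially the same route as the paper's proof: away from the zero section the two stratifications agree, and at a point of $N$ one splits the local cone chart so as to separate the $F$-fiber coordinates, identifies $\real^{d} \cong \cone^\circ(S^{d-1})$, and uses $\cone^\circ(L) \times \cone^\circ(S^{d-1}) \cong \cone^\circ(L * S^{d-1})$ with $S^{d-1}$ trivially stratified so that peeling off the cone vertex in the $F$-direction produces exactly the $\Sa'$-strata $S_\beta \cap N$ and $S_\beta - N$. Two remarks: your exponents are off by a factor of two — $F$ is the normal bundle of a divisor, so its real fiber dimension is $\rk_\real F = 2$ and the link sphere is $S^1$, not $S^{2\rk_\real F - 1}$ — and the stratum-by-stratum comparison of the product stratification on $\cone^\circ(S^1) \times \cone^\circ(L)$ with the cone on the join stratification of $S^1 * L$, which you defer as ``bookkeeping,'' is precisely where the paper's proof invests most of its effort, so it should be written out rather than left as a remark.
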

\begin{proof}
Away from the zero section $N$, the strata of $\Sa'$ agree
with the strata of $\Sa$. So it suffices to prove that
points $v\in N$ on the zero section have cone-like neighborhoods
in a stratum preserving fashion.
According to Proposition \ref{prop.lclstratnofoom1}, $v$ has
an open neighborhood $W$ together with a homeomorphism
$W\cong \real^{2+2r+i} \times cL$, where $cL = \operatorname{cone}^\circ L$ denotes
the open cone on $L$.
This homeomorphism is stratum preserving if we endow $W$ with
the stratification induced from $\Sa$. It is, however, \emph{not} 
stratum preserving
if we endow $W$ with the stratification induced from $\Sa'$.
Let $L'$ be the join $L' = S^1 * L,$ a compact space.
Composing the homeomorphism
\begin{align*}
\real^{2+2r+i} \times cL
&= \real^{2r+i} \times (\real^2 \times cL) 
\cong \real^{2r+i} \times (cS^1 \times cL) \\
&\cong \real^{2r+i} \times c(S^1 * L) 
 \cong \real^{2r+i} \times cL'
\end{align*}
with above homeomorphism, we obtain a homeomorphism
$W \cong \real^{2r+i} \times cL'.$
We shall now stratify $L'$ in such a way that this homeomorphism
is stratum preserving if $W$ is equipped
with the stratification induced from $\Sa'$. This will finish the proof.
Let $A$ and $B$ be compact spaces with stratifications
$\Aa = \{ A_\alpha \},$ $\Ba = \{ B_\beta \},$ respectively.
The product stratification of $cA \times cB$ is given by
\begin{align*}
\Ca \Aa \times \Ca \Ba
&= \{ (0,1)\times A_\alpha \times (0,1) \times B_\beta \} 
 \cup \{ (0,1)\times A_\alpha \times \{ c_B \} \} \\
&\hspace{.5cm} \cup \{ \{ c_A \} \times (0,1) \times B_\beta \} 
 \cup \{ (c_A, c_B) \}.
\end{align*}
The join 
$A*B = cA \times B \cup_{A\times B} A \times cB$
is canonically stratified by
\[ \Ja =
   \{ (0,1)\times A_\alpha \times B_\beta \} \cup
   \{ \{ c_A \} \times B_\beta \} \cup
   \{ A_\alpha \times \{ c_B \} \}. \]
Therefore, the cone $c(A*B)$ has the canonical stratification
\begin{align*}
\Ca \Ja
&= \{ (0,1)\times (0,1) \times A_\alpha \times B_\beta \} 
 \cup \{ (0,1)\times A_\alpha \times \{ c_B \} \} \\
&\hspace{.5cm} \cup \{ (0,1)\times \{ c_A \} \times B_\beta \} 
 \cup \{ (c_A, c_B) \},
\end{align*}
which agrees with $\Ca \Aa \times \Ca \Ba$ under the homeomorphism
$cA \times cB \cong c(A*B)$. So this homeomorphism is stratum preserving
if we stratify as indicated.
We apply this with $A=S^1$, $B=L$, and $\Aa = \{ S^1 \}$ (one stratum).
Then, using the above method, the open disc $cA=cS^1=D^{\circ 2}$ receives
the stratification
\[ \Ca \Aa = \{ (0,1) \times S^1, c_A =0 \}, \]
where $0\in D^{\circ 2}$ denotes the center of the disc.
In the stratification $\Sa$ this disc is stratified with precisely
one stratum (the entire disc), while in the refined stratification
$\Sa'$, the disc must be stratified with two strata, namely the
center and its complement. As we have seen, this is achieved
automatically by the above cone stratification procedure.
Thus if we endow $L' = S^1 * L$ with the canonical join stratification
$\Ja$ described above, then the stratification $\Ca \Ja$
will agree with $\Ca \Aa \times \Ca \Ba$ under the homeomorphism
$cL' \cong cS^1 \times cL = D^{\circ 2} \times cL$ and
$D^{\circ 2} \times cL$ contains $\{ 0 \} \times cL$ as a union of strata,
as required by $\Sa'$.
\end{proof}

Via the homeomorphism of Proposition \ref{prop.nbhdoftightembistautbndle},
the locally cone-like topological stratification $\Sa'$ of
Lemma \ref{prop.refstratnofoom1} induces a
locally cone-like topological stratification $\Sa_U$ of a neighborhood
$U$ of $p^{-1} (0)=N$ in $Z^\circ$. In $\Sa_U$, the central fiber
$N$ is a union of strata. Hence, 
Lemma \ref{lem.nearbycyclecplxconstructible} below is applicable to the
stratification $\Sa_U$. We will apply the Lemma in this manner
in proving Proposition \ref{prop.psihpichzisichn} on the Hodge
nearby cycle functor applied to the intersection Hodge module
of the deformation space.
Saito's Hodge nearby cycle functor
$\psi^H_f$ is a functor
\[ \psi^H_f: MHM (V) \longrightarrow MHM (F), \]
where $f:V\to \cplx$ is an algebraic function with
central fiber $F=f^{-1} (0)$.
Deligne's nearby cycle functor $\psi_f$ 
does not preserve perverse sheaves, but the shifted functor
$\psi_f [-1]: \Per (V)\to \Per (F)$
does. Saito thus often uses the notation
${}^p \psi_f := \psi_f [-1]$ for the shifted functor.
Then the diagram
\[ \xymatrix{
MHM (V) \ar[r]^{\psi^H_f} \ar[d]_{\qrat} & MHM (F) \ar[d]^{\qrat} \\
\Per (V) \ar[r]^{\psi_f [-1]} & \Per (F)
} \]
commutes. It is also customary to write
$\psi'^H_f := \psi^H_f [1].$
Then one gets a commutative diagram
\begin{equation} \label{equ.nearbyandratshift} 
\xymatrix{
D^b MHM (V) \ar[r]^{\psi'^H_f} \ar[d]_{\qrat} & D^b MHM (F) \ar[d]^{\qrat} \\
D^b_c (V) \ar[r]^{\psi_f} & D^b_c (F)
} 
\end{equation}
So $\psi'^H_f$ lifts $\psi_f$ to the
derived category of mixed Hodge modules.
In proving Proposition \ref{prop.psihpichzisichn} below, we shall use
Schürmann's \cite[Lemma 4.2.1, p. 247]{schuermanntsscs}
in the following form:
\begin{lemma} \label{lem.nearbycyclecplxconstructible}
(Schürmann.)
Let $V$ be a topological space endowed with a locally cone-like
topological stratification and
let $p:V\to \cplx$ be a continuous function such that the subspace
$F = p^{-1} (0)$ is a union of strata.
If $\Fa$ is a constructible complex of sheaves on $V$,
then $\psi_p \Fa$ is constructible with respect to the induced
stratification of $F$, i.e. the restrictions of the cohomology
sheaves to strata are locally constant.
\end{lemma}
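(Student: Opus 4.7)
The plan is to reduce constructibility of $\psi_p \Fa$ to a local statement about stalks on strata of $F$, and then exploit the locally cone-like structure of $V$ together with the hypothesis that $F = p^{-1}(0)$ is stratum-saturated to produce a stratified trivialization along each stratum. First, recall that $\psi_p \Fa = i^* R k_* k^* \Fa$, where $i : F \hookrightarrow V$ is the closed inclusion and $k$ denotes the map to $V$ from the fibre product of $V^* = V - F$ with the universal cover of a punctured disk around $0 \in \cplx$. Because constructibility is local on $F$, it suffices to show that for each stratum $S$ of $V$ contained in $F$ and each $x \in S$, the cohomology sheaves $\Ha^q \psi_p \Fa$ are locally constant on a neighborhood of $x$ in $S$.

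The key step is a local normal form. Fix $x \in S$ and invoke the locally cone-like hypothesis to obtain a neighborhood $U$ of $x$ in $V$ together with a filtration-preserving homeomorphism $U \cong \real^i \times cL$, where $i = \dim S$, $L$ is the link of $x$, and $cL$ is the open cone. The assumption that $F$ is a union of strata implies that $U \cap F$ is a union of strata of $U$, and in particular contains the ``core'' $\real^i \times \{c\} \cong S \cap U$. Translations in the $\real^i$ factor yield stratum-preserving self-homeomorphisms of $U$ that slide points along $S \cap U$; since $\Fa$ is constructible, these translations canonically identify $\Fa|_U$ with its translates up to quasi-isomorphism.

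To pass from the local normal form to nearby cycles, I would use the Milnor-fibre description of stalks: for $y \in U \cap F$, the stalk $(\Ha^q \psi_p \Fa)_y$ is computed as $\mathbf{H}^q(B_\epsilon(y) \cap p^{-1}(\delta); \Fa)$ for a sufficiently small open ball $B_\epsilon(y)$ in $U$ and a sufficiently small $0 \neq \delta \in \cplx$. Composing the translations in the $\real^i$ factor with the standard radial retraction of $cL$ toward the cone vertex then produces, for $y \in S \cap U$ sufficiently close to $x$, a controlled identification of the triple $\bigl(B_\epsilon(y), p|_{B_\epsilon(y)}, \Fa|_{B_\epsilon(y)}\bigr)$ with the corresponding triple at $x$. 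This identification endows $\Ha^q \psi_p \Fa |_{S \cap U}$ with a local system structure, which is exactly local constancy on $S$.

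The main obstacle I anticipate is the purely topological hypothesis on $p$: since $p$ is merely continuous, one cannot invoke Thom's first isotopy lemma nor any real- or complex-analytic trivialization of $p$ along $S$. The leverage provided by the stratum-saturation of $F = p^{-1}(0)$ is precisely that the conic normal form of $V$ at each point of $F$ is compatible with the vanishing locus of $p$, so that the radial retraction in the cone factor $cL$ can be used to handle nearby fibres of $p$ uniformly as one moves along $S$. Verifying that nearby fibres $p^{-1}(\delta)$ for small $\delta$ sit inside the conic neighborhood $U$ in a fashion compatible with the cone structure --- and hence that the Milnor-fibre identification above is honest --- is the technical heart of the lemma, and is where Schürmann's framework of cs-stratifications adapted to $p$ earns its keep.
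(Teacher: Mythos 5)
The paper gives no proof of this lemma: it explicitly states it as an application of Schürmann's Lemma 4.2.1 (\cite[Lemma 4.2.1, p.~247]{schuermanntsscs}) and cites that result rather than reproving it. So there is no ``paper's proof'' to compare against; what follows is an assessment of your sketch on its own terms.

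Your plan runs into two genuine obstructions, only one of which you flag. First, for a merely continuous $p$ the stalk $(\Ha^q \psi_p \Fa)_y$ is \emph{not} computed by a Milnor fibre $B_\epsilon(y)\cap p^{-1}(\delta)$: that description relies on a local Milnor fibration theorem and hence on some tameness (analyticity, semialgebraicity, or the like) of $p$, which is unavailable here. The correct topological stalk formula passes through the infinite cyclic cover: writing $L_y^*$ for the part of the link of $y$ in $V$ lying outside $F$, one has $(\psi_p\Fa)_y \cong R\Gamma(\widetilde{L_y^*};\Fa)$, where $\widetilde{L_y^*}\to L_y^*$ is the pullback of $\exp:\cplx\to\cplx^*$ along $p|_{L_y^*}$. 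Second --- and this is the gap you yourself acknowledge --- the translations in the $\real^i$ factor of the cone chart, while stratum-preserving and compatible with $\Fa$, do \emph{not} intertwine $p$ with itself. They therefore do not identify the triple $\bigl(B_\epsilon(y),p,\Fa\bigr)$ at $y$ with the one at $x$, and composing with a radial retraction of $cL$ does nothing to repair this. As written, the ``controlled identification'' is unjustified, and you offer no argument to produce it.

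The way around both problems is to argue with homotopy-invariant data only. Over a small contractible $U'\subset S$ the family of punctured links $\{L_y^*\}_{y\in U'}$ is a locally trivial stratified fibration, and $\Fa$ restricted to it is constructible. The cover $\widetilde{L_y^*}$ is determined by the induced homomorphism $\pi_1(L_y^*)\to\pi_1(\cplx^*)=\intg$; since $U'$ is simply connected, this homomorphism factors uniformly through $\pi_1(L_y^*)$ over $U'$, so the family of covers is itself locally trivial. Local constancy of $\Ha^q\psi_p\Fa|_S$ then follows from constructibility of derived pushforwards along this controlled fibration, which is essentially the content of the cited Schürmann lemma. In short: you should replace the Milnor-fibre picture by the cyclic-cover picture, and replace ``$p$ is preserved by translations'' by ``the homotopy class of $p$ on the punctured link is locally constant along $S$''. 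As it stands, your argument has a real gap exactly where you suspect it does, and the Milnor-fibre framing compounds it.
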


Let $g:Y\hookrightarrow X$ be a regular closed algebraic embedding with algebraic 
normal bundle
$N = N_Y X$. The associated deformation to the normal bundle
$p: Z^\circ \to \cplx$
comes with the commutative diagram
\[ \xymatrix{
& \{ 0 \} \ar@{^{(}->}[r] & \cplx & \cplx^* \ar@{_{(}->}[l] & \\
 & Y\times 0 \ar@{=}[ld] \ar[u] \ar@{^{(}->}[r]^{i_Y} \ar@{^{(}->}[d]^k 
   & Y\times \cplx \ar[u]_{p_Y} \ar@{^{(}->}[d]^G
    & Y\times \cplx^* \ar[u]_{\pro_2} \ar@{^{(}->}[d]^{g\times \id} \ar@{_{(}->}[l]_{j_Y} 
      \ar[r]^{\operatorname{pr}_1} & Y \ar@{^{(}->}[d]^g \\
Y & N \ar[l]^{\pi_N} \ar@{^{(}->}[r]^i \ar[d] & Z^\circ \ar[d]^p 
    & X\times \cplx^* \ar[d]^{\operatorname{pr}_2} \ar@{_{(}->}[l]_j 
     \ar[r]^{\operatorname{pr}_1} & X \\
& \{ 0 \} \ar@{^{(}->}[r] & \cplx & \cplx^*. \ar@{_{(}->}[l] &
} \]
The map $\pi_N:N\to Y$ is the bundle projection and
$k: Y\times 0 \hookrightarrow N$ its zero section.
The inclusions $i, i_Y$ are closed, while the embeddings
$j, j_Y$ are open. The map $G$ is the closed embedding of
$Y\times \cplx$ into the deformation space $Z^\circ$.

According to Saito \cite[p. 269]{saitomhmrims90}, the specialization
functor $\psi_p^H j_! (- \boxtimes \rat^H_{\cplx^*}[1])$
from mixed Hodge modules on $X$ to mixed Hodge modules on $N$
induces the identity on $MHM (Y)$, that is, the canonical morphism
\begin{equation} \label{equ.mhmrestrauxsommets}
k^* \psi^H_p [1] (j_! \pro^*_1 M) \longrightarrow
   g^* M
\end{equation}
for $M \in D^b MHM (X)$ is an isomorphism 
in $D^b MHM (Y)$.
Indeed, Verdier's property
``(SP5) Restrictions aux sommets''
(\cite[p. 353]{verdierspecfaiscmono}) asserts that
upon applying the functor $\qrat$ to (\ref{equ.mhmrestrauxsommets}),
the underlying morphism is an isomorphism in $D^b_c (Y)$.
Since the functor $\qrat$ is conservative by Lemma \ref{lem.derratconservative}, 
it follows that
(\ref{equ.mhmrestrauxsommets}) is an isomorphism in $D^b MHM (Y)$.

The behavior of the intersection Hodge module under normally nonsingular pullback
and normally nonsingular restriction is treated in the next lemmas.
We recall \cite[p. 443, Prop. 4.5]{schuermannmsri}:
\begin{lemma} \label{lem.pishrpidagvbproj}
Let $\pi:X\to Y$ be a morphism of algebraic varieties.
If $\pi$ is a smooth morphism of pure fiber dimension $r$, then
there is a natural isomorphism of functors
\[ \pi^! = \pi^* [2r](r): D^b MHM (Y) \longrightarrow D^b MHM (X). \]
\end{lemma}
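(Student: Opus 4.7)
The plan is to construct a canonical natural transformation $\pi^*[2r](r) \to \pi^!$ of functors $D^b MHM(Y) \to D^b MHM(X)$ and then to verify it is an isomorphism by applying the conservative functor $\qrat$ from Lemma~\ref{lem.derratconservative}.

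The first ingredient, which carries the main content of the lemma, is the relative orientation in Saito's theory: for a smooth morphism $\pi$ of pure relative dimension $r$ there is a canonical isomorphism
\[ \pi^! \rat^H_Y \;\cong\; \rat^H_X[2r](r) \]
in $D^b MHM(X)$, where the shift $[2r]$ records the real relative dimension and the Tate twist $(r)$ encodes the accompanying shift of the Hodge filtration (on underlying filtered $D$-modules, $\pi^!$ differs from $\pi^*$ by the top exterior power of the relative cotangent sheaf placed in cohomological degree $-2r$). The second ingredient is the projection formula for smooth morphisms,
\[ \pi^!(A) \;\cong\; \pi^*(A) \otimes \pi^! \rat^H_Y, \qquad A \in D^b MHM(Y), \]
which is a formal consequence of the six-functor formalism on $D^b MHM(-)$ together with the exactness of $\pi^*$ (up to the appropriate shift) on mixed Hodge modules for smooth $\pi$. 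Combining the two yields the desired natural transformation $\pi^*[2r](r) \to \pi^!$.

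To verify this transformation is an isomorphism in $D^b MHM(X)$, the plan is to apply $\qrat$. Since $\qrat$ intertwines $\pi^*$ and $\pi^!$ on mixed Hodge modules with the corresponding sheaf-theoretic functors on constructible complexes, and the Tate twist $(r)$ is invisible at the level of $\qrat$, it suffices to verify the classical identity $\pi^! \cong \pi^*[2r]$ on $D^b_c(Y)$ for smooth morphisms of pure relative dimension $r$. This is standard, resting on the computation $\omega_{X/Y} = \pi^! \rat_Y \cong \rat_X[2r]$ together with the smooth projection formula $\pi^!\mathcal{F} \cong \pi^*\mathcal{F} \otimes^L \pi^!\rat_Y$. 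The conservativity assertion of Lemma~\ref{lem.derratconservative} then upgrades the underlying isomorphism to an isomorphism in $D^b MHM(X)$.

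The hard part is constructing the relative orientation isomorphism $\pi^! \rat^H_Y \cong \rat^H_X[2r](r)$ within the MHM category itself, rather than merely at the level of $\qrat$: because $\qrat$ is only conservative and not faithful, one cannot simply promote the constructible-level identification. The construction must instead be carried out inside Saito's explicit definition of $\pi^!$ on smooth morphisms in terms of the filtered relative de Rham complex, in which the Tate twist appears as built-in bookkeeping for the relative Hodge-filtration shift. Once this orientation and the smooth projection formula are in place, the remainder of the argument is formal.
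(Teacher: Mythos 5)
The paper does not prove this lemma; it recalls it from Sch\"urmann's expository article (\cite[p. 443, Prop. 4.5]{schuermannmsri}), which in turn rests on Saito's construction of the six operations on $D^b MHM$. So there is no paper argument to compare against, and the question is whether your sketch could be completed.

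Your decomposition into a relative orientation $\pi^!\rat^H_Y \cong \rat^H_X[2r](r)$ and a smooth projection formula $\pi^! M \cong \pi^* M \otimes \pi^!\rat^H_Y$ has the right shape, and the remark that conservativity of $\qrat$ lets you test whether a given morphism in $D^b MHM$ is an isomorphism but cannot be used to manufacture one is exactly the correct subtlety to flag. But the write-up is structurally inconsistent: if you already have both ingredients as isomorphisms in $D^b MHM$, their composite is the desired isomorphism and the $\qrat$-conservativity paragraph is redundant; if instead you only have a candidate natural transformation, you must say where it comes from, and you explicitly acknowledge that you have not done this --- the relative orientation isomorphism is precisely where the entire content of the lemma lives, and the sketch stops there. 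Within Saito's theory the route is: $\pi^*[r]$ is $t$-exact on $MHM$ for smooth $\pi$ (the fact from \cite[p. 257]{saitomhmrims90} the paper already uses in the proof of Lemma~\ref{lem.smpullbic}), $\pi^!$ is defined as $\mathbb{D}_X \pi^* \mathbb{D}_Y$, and the local Verdier-duality computation on the filtered $D$-module and perverse-sheaf side --- where the Tate twist $(r)$ records the Hodge-filtration shift --- then produces the claimed isomorphism. Carrying out that computation, rather than asserting the existence of a relative orientation in $D^b MHM$, is what the argument would actually need.
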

The following result will be applied later in the case where the smooth
morphism is the projection of an algebraic vector bundle.

\begin{lemma} \label{lem.smpullbic}
Let $X$ and $Y$ be pure-dimensional complex algebraic varieties and let
$\pi: X\to Y$ be a smooth algebraic morphism 
of pure fiber dimension $r$.
Then
\[ \pi^* [IC^H_Y [r]] = [IC^H_X]  \]
under the smooth pullback 
$\pi^*: K_0 (MHM(Y)) \to K_0 (MHM(X)).$
\end{lemma}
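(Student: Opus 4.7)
The plan is to reduce the Grothendieck-class identity to an abelian-category equality $\pi^*[r]\, IC^H_Y = IC^H_X$ in $MHM(X)$, and then translate through the derived pullback $\pi^*$.

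First, I choose a smooth Zariski-open dense $V \subset Y$ of pure dimension $n := \dim Y$ (for instance $V = Y_{\mathrm{reg}}$). Because $\pi$ is smooth of pure fiber dimension $r$, the preimage $U := \pi^{-1}(V) \subset X$ is Zariski-open, dense, and smooth (the restriction $\pi|_U : U \to V$ is a smooth morphism onto a smooth base), of pure dimension $m := n + r = \dim X$. Let $j: V \hookrightarrow Y$ and $j': U \hookrightarrow X$ denote the open inclusions and set $\pi_V := \pi|_U$. By the intermediate-extension convention recalled just before the definition of $IT_{y*}(X)$, $IC^H_Y = j_{!*}(\rat^H_V[n])$ and $IC^H_X = j'_{!*}(\rat^H_U[m])$.

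Next, two standard facts in Saito's formalism enter. (i) Because $\pi$ is smooth of relative dimension $r$, the functor $\pi^*[r]: D^b MHM(Y) \to D^b MHM(X)$ is t-exact for the natural t-structures; this is the Hodge-module lift, via $\qrat$, of the well-known t-exactness of $\pi^*[r]$ on perverse sheaves for smooth $\pi$. (ii) Smooth pullback commutes with intermediate extension, $\pi^*[r] \circ j_{!*} = j'_{!*} \circ \pi_V^*[r]$, classical in BBD for perverse sheaves and lifted to mixed Hodge modules by Saito. Since $\pi_V$ is a smooth morphism between smooth varieties, the identification $\pi_V^*\, \rat^H_V = \rat^H_U$ gives $\pi_V^*[r]\, \rat^H_V[n] = \rat^H_U[m]$ at once. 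Combining,
\[ \pi^*[r]\, IC^H_Y
 = \pi^*[r]\, j_{!*}(\rat^H_V[n])
 = j'_{!*}\, \pi_V^*[r]\, \rat^H_V[n]
 = j'_{!*}(\rat^H_U[m])
 = IC^H_X \]
as objects of $MHM(X)$.

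Finally, I descend to $K_0$. Since $\pi^*: D^b MHM(Y) \to D^b MHM(X)$ commutes with shifts, $\pi^*(IC^H_Y[r]) = (\pi^* IC^H_Y)[r] = \pi^*[r]\, IC^H_Y$, which by the previous paragraph equals $IC^H_X$ as an object of $MHM(X) \subset D^b MHM(X)$. Taking Grothendieck classes yields $\pi^*[IC^H_Y[r]] = [\pi^*[r]\, IC^H_Y] = [IC^H_X]$ in $K_0(MHM(X))$, as claimed. The main obstacle is justifying the commutation identity $\pi^*[r] \circ j_{!*} = j'_{!*} \circ \pi_V^*[r]$ for smooth $\pi$ within Saito's framework; should one wish to avoid invoking it as a black box, the same conclusion can be extracted from the characterization of $IC^H_X$ as the unique simple pure object of $MHM(X)$ with strict support $X$ whose restriction to $U$ is $\rat^H_U[m]$, verifying these properties for $\pi^*[r]\, IC^H_Y$ by t-exactness together with Lemma \ref{lem.derratconservative} to lift the identification from the underlying perverse sheaves.
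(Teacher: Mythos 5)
Your proof is correct and matches the paper's approach in substance: reduce to a smooth dense Zariski-open $V\subset Y$ and its preimage $U$, use exactness of $\pi^*[r]:MHM(Y)\to MHM(X)$, and push the pullback through the open inclusions to identify $\pi^*[r]\,IC^H_Y$ with $IC^H_X$. The only difference is that you invoke the commutation $\pi^*[r]\circ j_{!*}=j'_{!*}\circ\pi_V^*[r]$ as a packaged result, whereas the paper derives it in place from Saito's base-change isomorphisms $j_{U!}\pi_U^*\cong\pi^*j_!$, $j_{U*}\pi_U^!\cong\pi^!j_*$ applied to the $\im(H^0 j_!\to H^0 j_*)$ description of $IC^H$.
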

\begin{proof}
According to Saito \cite[p. 257]{saitomhmrims90},
$MHM (-)$ is stable under smooth pullbacks. There is thus a functor
$\pi^* [r]: MHM (Y) \rightarrow MHM (X)$
and by Lemma \ref{lem.pishrpidagvbproj},
$\pi^! [-r] = \pi^* [r](r)$.
This functor is exact, which can be shown by an argument similar to the one
used to prove Lemma \ref{lem.gshrcrestrtomhm} below.
Let $V\subset Y$ be a Zariski-open, smooth, dense subset with inclusion 
$j: V\hookrightarrow Y$.
The preimage $j_U: U=\pi^{-1} (V) \hookrightarrow X$ is again
Zariski-open, smooth, and dense.
The restriction $\pi_U: U\to V$ of $\pi$ is again smooth of pure fiber dimension $r$,
so that in particular $\pi_U^! [-r] = \pi_U^* [r](r)$.
By \cite[p. 323, (4.4.3)]{saitomhmrims90}, the cartesian square
\[ \xymatrix{
U \ar[d]_{\pi_U} \ar[r]^{j_U} & X \ar[d]^\pi \\
V \ar[r]_j & Y
} \]
has associated base change natural isomorphisms 
$j_{U*} \pi_U^! \cong \pi^! j_*$ and
$j_{U!} \pi_U^* \cong \pi^* j_!$.
Using these, and the exactness of $\pi^* [r]$, we obtain isomorphisms
\[
\pi^* [r] (H^0 j_! \to H^0 j_*)
= H^0 (j_{U!} \pi_U^* [r]) \to H^0 (j_{U*} \pi_U^* [r]).
\]
Substitution of $\rat^H_V [n],$ where $n=\dim_\cplx Y$, gives
\begin{align*}
\pi^* [r] IC^H_Y
&= \pi^* [r] \im (H^0 j_! \rat^H_V [n] \to H^0 j_* \rat^H_V [n]) \\
&= \im \pi^* [r] (H^0 j_! \rat^H_V [n] \to H^0 j_* \rat^H_V [n]) \\
&= \im (H^0 (j_{U!} \pi_U^* [r] \rat^H_V [n]) 
        \to H^0 (j_{U*} \pi_U^* [r] \rat^H_V [n])) \\
&= \im (H^0 (j_{U!} \rat^H_U [n+r]) 
        \to H^0 (j_{U*} \rat^H_U [n+r])) \\
&= IC^H_X.
\end{align*}
\end{proof}

Given an algebraic stratification $\Sa$ of a complex algebraic variety $X$,
let $D^b_c (X,\Sa)$ denote the full subcategory of $D^b_c (X)$ consisting of all complexes
on $X$ which are constructible with respect to $\Sa$.
Similarly, we define $\Per (X,\Sa)$ to be
the full subcategory of $\Per (X)$ consisting of all perverse
sheaves on $X$ which are constructible with respect to $\Sa$.
The category $\Per (X,\Sa)$ is abelian and the inclusion functor
$\Per (X,\Sa) \to \Per (X)$ is exact. (For example, a kernel in $\Per (X)$ of a morphism
of $\Sa$-constructible perverse sheaves is itself $\Sa$-constructible and a kernel
in $\Per (X,\Sa)$.) Perverse truncation and cotruncation, and hence
perverse cohomology, restricts to $\Sa$-constructible objects:
\begin{equation} \label{equ.percohomrestricts}
\xymatrix{
D^b_c (X) \ar[r]^{{}^p H^k} & \Per (X) \\ 
D^b_c (X,\Sa) \ar[u] \ar[r]^{{}^p H^k} & \Per (X,\Sa). \ar[u] 
} \end{equation}
More generally, we may consider $D^b_c (X,\Sa)$ and $\Per (X,\Sa)$ on any space $X$
equipped with a locally cone-like topological stratification $\Sa$.
\begin{lemma} \label{lem.gshrcpervexact}
Let $X$ be an even-dimensional space equipped with a locally cone-like
topological stratification $\Sa$ whose strata are all even-dimensional.
Let $g:Y\hookrightarrow X$ be a
normally nonsingular topological
embedding of even real codimension $2c$ with respect to $\Sa$.
Let $\Ta$ be the locally cone-like stratification of $Y$ induced by $\Sa$.
Then the functor
$g^! [c]= g^* [-c]: D^b_c (X,\Sa) \to D^b_c (Y,\Ta)$ restricts to a functor
$g^! [c]= g^* [-c]: \Per (X,\Sa) \to \Per (Y,\Ta),$
which is exact.
\end{lemma}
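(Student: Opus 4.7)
The plan is to reduce the lemma to the zero-section embedding of the topological normal bundle. By normal nonsingularity, factor $g = j\circ s$, where $\pi\colon E\to Y$ is the topological normal vector bundle, $j\colon E\hookrightarrow X$ is an open stratum-preserving embedding onto a tubular neighborhood of $g(Y)$, and $s\colon Y\hookrightarrow E$ is the zero section; then $\pi\circ s = \id_Y$, and the stratification $\Sa$ transported from $j(E)$ to $E$ agrees with $\Sa_E := \{\pi^{-1}(Y_\alpha)\}$. Because $j$ is an open embedding, $j^* = j^!$ is a perversely t-exact restriction $D^b_c(X,\Sa)\to D^b_c(E,\Sa_E)$, so the lemma reduces to the analogous claim for $s^*[-c]\colon D^b_c(E,\Sa_E)\to D^b_c(Y,\Ta)$.

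The core step is to show that $\pi^*[c]\colon D^b_c(Y,\Ta)\to D^b_c(E,\Sa_E)$ is a perversely t-exact equivalence of triangulated categories. Perverse t-exactness is the standard smooth-pullback fact: $\pi$ is locally trivial with $\real^{2c}$-fibers, each stratum $\pi^{-1}(Y_\alpha)$ lies above $Y_\alpha$ with constant relative (complex) dimension $c$, so the support and cosupport conditions defining the middle-perversity t-structure are preserved after shifting by $c$. Equivalence of categories is a Vietoris--Begle type argument executed stratum-wise: the fibers of $\pi$ are contractible and each cohomology sheaf of an $\Sa_E$-constructible complex is locally constant along $\pi^{-1}(Y_\alpha)$, forcing the adjunction morphisms $\id\to \pi_*\pi^*$ and $\pi^*\pi_*\to \id$ to be isomorphisms on $\Ta$- and $\Sa_E$-constructible complexes respectively.

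Given this, the identity $s^*\pi^* = \id$ together with the shift computation $s^*[-c]\circ \pi^*[c] = \id$ identifies $s^*[-c]$ with the inverse of the exact equivalence $\pi^*[c]\colon \Per(Y,\Ta)\stackrel{\sim}{\to}\Per(E,\Sa_E)$, so $s^*[-c]$ is itself exact on the perverse hearts. Composing, $g^*[-c] = s^*[-c]\circ j^*$ is the desired exact functor $\Per(X,\Sa)\to \Per(Y,\Ta)$. The announced identification $g^![c] = g^*[-c]$ follows from $g^! = s^! j^!$, the equality $j^! = j^*$ for open embeddings, and the topological Thom isomorphism $s^!\cong s^*[-2c]$ for the zero section of the canonically oriented real rank $2c$ bundle $E$.

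The main technical obstacle is the Vietoris--Begle reduction across strata: one must verify that $\Sa_E$-constructibility really makes the pullback $\pi^*$ locally split as a product over each trivializing open of $E$, and that the middle-perversity t-structure is compatible with this local product decomposition. This is where the local product form $U\cong \real^i\times \cone^\circ(L)$ from the locally cone-like stratifications combines with local triviality of $E\to Y$; once set up, the verification is essentially bookkeeping.
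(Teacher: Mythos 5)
Your proof is correct but takes a genuinely different route from the paper's. The paper verifies perverse t-exactness directly: it records the dimension shift $\dim_\real T_\alpha = \dim_\real S_\alpha - 2c$ between corresponding strata, observes $g^! = g^*[-2c]$ from normal nonsingularity, and then checks the support and cosupport conditions defining ${}^p D^{\leq 0}$ and ${}^p D^{\geq 0}$ stratum by stratum, citing \cite[Prop.~7.1.15]{banagltiss} for exactness of a t-exact functor on hearts. You instead factor $g = j\circ s$ through the open tubular neighborhood and the zero section, reduce to showing $\pi^*[c]\colon D^b_c(Y,\Ta)\to D^b_c(E,\Sa_E)$ is a t-exact \emph{equivalence}, and identify $s^*[-c]$ as its quasi-inverse via $s^*\pi^* = \id$. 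Your argument requires the extra Vietoris--Begle input (contractible fibers, local constancy along fibers of $\pi^{-1}(Y_\alpha)$, adjunction units/counits being isomorphisms on $\Sa_E$- resp.\ $\Ta$-constructible objects), which is more machinery than the paper's dimension bookkeeping; but in exchange you prove a strictly stronger statement, namely that $g^*[-c]$ is the composite of an open restriction with an equivalence of constructible derived categories, which makes the vector-bundle structure and Thom-isomorphism origin of the shift by $c$ more conceptually transparent. Both are sound; the paper's is the more elementary and self-contained.
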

\begin{proof}
First, as $g$ is normally nonsingular with respect to $\Sa$
(see Definition \ref{def.snns})
of real codimension $2c$, we have
$g^! = g^* [-2c]: D^b_c (X,\Sa) \to D^b_c (Y,\Ta),$
see e.g. \cite[p. 163, proof of Lemma 8.1.6]{banagltiss}.
Let $S_\alpha$ be the strata of $\Sa$ and
$s_\alpha: S_\alpha \hookrightarrow X$ the corresponding stratum inclusions.
By (1) of Definition \ref{def.snns}, a 
locally cone-like topological stratification of $Y$
is given by $\Ta = \{ T_\alpha \}$ with 
$T_\alpha = S_\alpha \cap Y$ and inclusions
$t_\alpha: T_\alpha \hookrightarrow Y$.
By (3) of Definition \ref{def.snns}, there is a topological vector bundle
$\pi: E\to Y$ and a topological embedding $j:E\hookrightarrow X$
onto an open subset $j(E)$ of $X$. The restricted homeomorphism $j:E\cong j(E)$
is stratum preserving, i.e. restricts further to a homeomorphism
$j|: \pi^{-1} (T_\alpha) \cong S_\alpha \cap j(E).$
Therefore,
\begin{equation} \label{equ.dimtadimsamc}
\dim_\real T_\alpha = \dim_\real S_\alpha -2c. 
\end{equation}
The category $\Per (X,\Sa)$ of $\Sa$-constructible perverse sheaves on $X$ is the
heart of $({}^p D^{\leq 0} (X,\Sa),$ ${}^p D^{\geq 0} (X,\Sa))$, the perverse t-structure
with respect to $\Sa$.
For $A^\bullet \in {}^p D^{\geq 0} (X,\Sa)$, one uses (\ref{equ.dimtadimsamc})
to verify that $g^! [c] A^\bullet \in {}^p D^{\geq 0} (Y,\Ta)$.
In particular, the functor
$g^! [c]= g^* [-c]: D^b_c (X,\Sa) \to D^b_c (Y,\Ta)$ is left-t-exact
with respect to the perverse t-structure.
Similarly, $A^\bullet \in {}^p D^{\leq 0} (X,\Sa)$ implies that
$g^! [c] A^\bullet \in {}^p D^{\leq 0} (Y,\Ta)$.
Hence, $g^! [c]=g^* [-c]$ is also
right-t-exact, and thus t-exact.
It follows that $g^! [c]= g^* [-c]: D^b_c (X,\Sa) \to D^b_c (Y,\Ta)$
preserves hearts. Moreover,
${}^p H^0 (g^! [c]) = g^! [c],$
and this functor is exact on the category of perverse sheaves, for example
by \cite[Prop. 7.1.15, p. 151]{banagltiss}.
\end{proof}

For a complex algebraic variety $X$ endowed with an algebraic stratification $\Sa$,
$MHM (X,\Sa)$ denotes the full subcategory of $MHM (X)$ whose objects are
those mixed Hodge modules $M$ on $X$ such that
$\qrat (M) \in \Ob \Per (X,\Sa).$
\begin{lemma}
The category $MHM (X,\Sa)$ is abelian and the inclusion functor
$MHM (X,\Sa) \to MHM (X)$ is exact.
\end{lemma}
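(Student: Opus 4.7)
The plan is to exploit that $\qrat : MHM(X) \to \Per(X)$ is faithful, exact, and (by Lemma \ref{lem.derratconservative}) conservative, together with the abelian subcategory structure on $\Per(X,\Sa) \subset \Per(X)$ already recorded in the text. The strategy reduces the abelianness of $MHM(X,\Sa)$ to a Serre-type closure property for $\Per(X,\Sa)$ inside $\Per(X)$, which then lifts via $\qrat$.

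First I would verify that $\Per(X,\Sa)$ is closed under subobjects, quotients, and extensions in $\Per(X)$. A short exact sequence $0 \to P' \to P \to P'' \to 0$ of perverse sheaves yields a distinguished triangle in $D^b_c(X)$ and hence a long exact sequence of ordinary cohomology sheaves. Since $\Sa$-constructible sheaves form a Serre subcategory of all sheaves (sub- and quotient sheaves of an $\Sa$-constructible sheaf are $\Sa$-constructible, and $\Sa$-constructibility is stable under extension), all cohomology sheaves of $P$ are $\Sa$-constructible if and only if those of $P'$ and $P''$ are. Thus $\Per(X,\Sa)$ is a Serre subcategory of $\Per(X)$, which strengthens the exactness of the inclusion recorded in (\ref{equ.percohomrestricts}).

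Next I would transport this to $MHM(X,\Sa)$. Given a morphism $\phi : M \to M'$ in $MHM(X,\Sa)$, form $\ker \phi$, $\operatorname{coker} \phi$, and $\operatorname{im} \phi$ in the abelian category $MHM(X)$. Applying the exact functor $\qrat$ identifies them with the corresponding kernel, cokernel, and image of $\qrat(\phi)$ in $\Per(X)$. Since $\qrat(M)$ and $\qrat(M')$ lie in $\Per(X,\Sa)$, the Serre property from the previous step forces $\qrat(\ker \phi)$, $\qrat(\operatorname{coker} \phi)$, and $\qrat(\operatorname{im} \phi)$ to lie in $\Per(X,\Sa)$, whence $\ker \phi$, $\operatorname{coker} \phi$, $\operatorname{im} \phi \in \Ob MHM(X,\Sa)$. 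Closure under finite direct sums follows from additivity of $\qrat$ and the evident stability of $\Per(X,\Sa)$ under finite direct sums. The canonical morphism from coimage to image in $MHM(X,\Sa)$ coincides with the one in $MHM(X)$ and is therefore an isomorphism, so $MHM(X,\Sa)$ is abelian. Because kernels and cokernels in $MHM(X,\Sa)$ are computed in $MHM(X)$, the inclusion $MHM(X,\Sa) \hookrightarrow MHM(X)$ is exact.

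The main step requiring care is the Serre property for $\Per(X,\Sa)$; everything else is a mechanical transfer through the exact conservative functor $\qrat$. No genuine obstacle is anticipated, since the Serre property is a standard consequence of the long exact sequence of cohomology sheaves associated to a short exact sequence of perverse sheaves.
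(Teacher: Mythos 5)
Your overall strategy — transporting the abelian structure from $\Per(X,\Sa)$ to $MHM(X,\Sa)$ through the exact, faithful functor $\qrat$ — is exactly the route the paper takes (the paper packages it as a general category-theoretic lemma about full subcategories $A'=\{X : F(X)\in\Ob B'\}$ for $F$ exact and $B'$ an isomorphism-closed abelian subcategory with exact inclusion). So the main logic of your second paragraph is sound, modulo explicitly invoking that $\Per(X,\Sa)$ is isomorphism-closed in $\Per(X)$, which the paper makes a point of recording and which is needed because $\qrat(\ker\phi)$ is only canonically isomorphic to $\ker(\qrat\phi)$.

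The gap is in your first paragraph. The parenthetical claim that ``$\Sa$-constructible sheaves form a Serre subcategory of all sheaves'' is false: a subsheaf of a local system need not be locally constant (e.g.\ $j_!\rat_U \subset \rat_X$ for a proper open $U\subset X$ fails to be locally constant near $\partial U$ when $X$ is a single stratum). What is true, and what you actually need, is that the category of $\Sa$-constructible sheaves is closed under kernels, cokernels, and extensions; closure under arbitrary subobjects fails. Consequently your ``if and only if'' via the long exact sequence does not hold in the ``only if'' direction: knowing that the cohomology sheaves of $P$ are $\Sa$-constructible gives no control on those of a perverse subobject $P'\subset P$ from the long exact sequence alone, because the relevant terms of that sequence involve the unknown $H^{*}(P'')$ as well. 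The Serre property of $\Per(X,\Sa)$ in $\Per(X)$ is in fact true, but the proof goes through finite length and Jordan--H\"older (the composition factors of $P'$ are among those of $P$, hence of the form $IC_{\bar S}(L)$ with $S\in\Sa$, and $\Per(X,\Sa)$ is extension-closed), not through the cohomology-sheaf long exact sequence. Fortunately, the lemma only requires the weaker statement that $\Per(X,\Sa)$ is closed under kernels and cokernels of its own morphisms — which the paper records in the parenthetical preceding the lemma and in diagram~(\ref{equ.percohomrestricts}), and which follows because the cone of a morphism of $\Sa$-constructible complexes is $\Sa$-constructible and perverse truncation preserves $\Sa$-constructibility. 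If you replace your Serre claim with that weaker, already-recorded fact, your transport argument goes through.
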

\begin{proof}
We use the following general category-theoretic fact:
Let $F:A\to B$ be an exact functor between abelian categories.
Let $B'$ be a full subcategory of $B$ such that $B'$ is abelian and the inclusion 
functor $B' \to B$ is exact. Then the full subcategory $A'$ of $A$
given by 
\[ \Ob A' = \{ X\in \Ob A ~|~ \exists X' \in \Ob B' : F(X)\cong X' \} \]
is an abelian category and the inclusion functor
$A' \to A$ is exact.
In particular, if $B'$ is in addition isomorphism-closed in $B$, then
$A'$ with 
\[ \Ob A' = \{ X\in \Ob A ~|~ F(X)\in \Ob B' \} \]
is abelian with $A' \to A$ exact.
We apply this to the exact functor
$F=\qrat: MHM (X)\to \Per (X)$
and $B' = \Per (X,\Sa)$.
We noted earlier that $\Per (X,\Sa)$ is abelian and $\Per (X,\Sa)\to \Per (X)$
is exact.
Quasi-isomorphisms of complexes of sheaves preserve $\Sa$-constructibility.
Thus $\Per (X,\Sa)$ is isomorphism-closed in $\Per (X)$.
The statement of the lemma follows since $A'$ as described above agrees
in the application $F=\qrat$ with $MHM (X,\Sa)$.
\end{proof}
By definition, the functor $\qrat: MHM (X)\to \Per (X)$ restricts to
a functor $\qrat: MHM (X,\Sa)\to \Per (X,\Sa)$.
Since $\Per (X,\Sa)$ is isomorphism-closed in $\Per (X)$,
the subcategory $MHM (X,\Sa)$ is isomorphism-closed in $MHM (X)$.
The functor $\qrat: MHM (X,\Sa)\to \Per (X,\Sa)$ is exact and faithful.
Let $DM (X,\Sa)$ denote the full subcategory of $D^b MHM(X)$ whose objects
$M^\bullet$ satisfy $\qrat M^\bullet \in \Ob D^b_c (X,\Sa)$.
Thus by definition, $\qrat: D^b MHM (X)\to D^b_c (X)$ restricts to
\[ \qrat: DM (X,\Sa)\to D^b_c (X,\Sa), \]
which is still conservative.
We shall momentarily give an alternative description of $DM (X,\Sa)$ via cohomological
restrictions. We will use the following constructibility principle:
If $C^\bullet \in \Ob D^b_c (X)$ is a complex such that
${}^p H^k (C^\bullet)$ is $\Sa$-constructible for every $k$,
then $C^\bullet$ is $\Sa$-constructible.
\begin{lemma} \label{lem.altcharofdmxs}
The subcategory $DM (X,\Sa) \subset D^b MHM (X)$ equals the full subcategory
of $D^b MHM (X)$ whose objects $M^\bullet$ satisfy $H^k (M^\bullet) \in \Ob MHM (X,\Sa)$
for all $k$.
\end{lemma}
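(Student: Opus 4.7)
The plan is to prove both inclusions using the key identification of the perverse cohomology functor on the sheaf side with the cohomology functor of the t-structure on $D^b MHM(X)$ whose heart is $MHM(X)$, via the functor $\qrat$. Concretely, since $MHM(X)$ is the heart of the natural t-structure on $D^b MHM(X)$, and $\qrat: D^b MHM(X) \to D^b_c(X)$ is t-exact when the target is equipped with the perverse t-structure (this is how $MHM(X)$ is set up, with $\qrat: MHM(X) \to \Per(X)$), we have a natural isomorphism
\[ \qrat \circ H^k \;\cong\; {}^p H^k \circ \qrat \]
for every $k \in \intg$.

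For the inclusion ($\supseteq$), I would start with $M^\bullet \in D^b MHM(X)$ such that $H^k(M^\bullet) \in MHM(X,\Sa)$ for all $k$. Then by definition of $MHM(X,\Sa)$ we have $\qrat H^k(M^\bullet) \in \Per(X,\Sa)$. Using the above isomorphism, ${}^p H^k(\qrat M^\bullet) \in \Per(X,\Sa)$ for all $k$. Applying the constructibility principle stated immediately before the lemma, we conclude that $\qrat M^\bullet \in D^b_c(X,\Sa)$, hence $M^\bullet \in DM(X,\Sa)$.

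For the inclusion ($\subseteq$), I would take $M^\bullet \in DM(X,\Sa)$, so by definition $\qrat M^\bullet \in D^b_c(X,\Sa)$. Then diagram (\ref{equ.percohomrestricts}) yields ${}^p H^k(\qrat M^\bullet) \in \Per(X,\Sa)$ for every $k$. Via the t-exactness isomorphism, this rewrites as $\qrat H^k(M^\bullet) \in \Per(X,\Sa)$, which is precisely the defining condition for $H^k(M^\bullet)$ to lie in $MHM(X,\Sa)$.

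The only non-routine point is to justify the identification $\qrat \circ H^k \cong {}^p H^k \circ \qrat$; this is essentially built into Saito's construction, since by design $\qrat$ sends the natural t-structure on $D^b MHM(X)$ to the perverse t-structure on $D^b_c(X)$, with heart mapped into $\Per(X)$. Given that identification, the two implications are immediate from (\ref{equ.percohomrestricts}) and the constructibility principle, so there is no genuine obstacle beyond recording these compatibilities.
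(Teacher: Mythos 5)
Your proof is correct and follows essentially the same route as the paper: both directions rest on the compatibility $\qrat \circ H^k \cong {}^p H^k \circ \qrat$, combined with diagram (\ref{equ.percohomrestricts}) in one direction and the stated constructibility principle in the other. The paper cites this compatibility via diagram (\ref{equ.rathiispervhirat}) (see also the reference to Peters--Steenbrink just before it), exactly as you invoke it.
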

\begin{proof}
Let $M^\bullet$ be an object of $DM (X,\Sa)$.
Thus $\qrat M^\bullet$ is an object of $D^b_c (X,\Sa)$.
It follows that ${}^p H^k (\qrat M^\bullet) \in \Ob \Per (X,\Sa)$,
by (\ref{equ.percohomrestricts}).
Since ${}^p H^k (\qrat M^\bullet) = \qrat H^k (M^\bullet),$
the latter is an object of $\Per (X,\Sa)$. 
By the definition of $MHM (X,\Sa),$ $H^k (M^\bullet)$ is in $MHM (X,\Sa)$.

Conversely, let $M^\bullet$ be an object of $D^b MHM (X)$ such that
$H^k (M^\bullet) \in \Ob MHM (X,\Sa)$ for all $k$.
Then $\qrat H^k (M^\bullet) \in \Ob \Per (X,\Sa)$ for all $k$.
So ${}^p H^k (\qrat M^\bullet) \in \Ob \Per (X,\Sa)$ for all $k$, and this
implies, by the remark preceding the lemma, that
$\qrat M^\bullet \in \Ob D^b_c (X,\Sa)$.
Hence $M^\bullet \in \Ob DM (X,\Sa)$.
\end{proof}
Lemma \ref{lem.altcharofdmxs} implies:
\begin{lemma} 
The $MHM$-cohomology functor 
$H^k: D^b MHM (X) \to MHM (X)$ restricts to a functor
$H^k: DM (X,\Sa) \to MHM (X,\Sa)$,
\[ \xymatrix{
D^b MHM (X) \ar[r]^{H^k} & MHM (X) \\ 
DM (X,\Sa) \ar[u] \ar[r]^{H^k} & MHM (X,\Sa). \ar[u] 
} \]
\end{lemma}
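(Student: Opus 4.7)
The plan is to observe that this lemma is a direct consequence of the immediately preceding Lemma \ref{lem.altcharofdmxs}, together with the fullness of $MHM(X,\Sa)$ as a subcategory of $MHM(X)$. There is essentially no new content to prove, only a bookkeeping step: one must verify that $H^k$, viewed as a functor with domain $DM(X,\Sa)$, factors through the inclusion $MHM(X,\Sa) \hookrightarrow MHM(X)$ both on objects and on morphisms.

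On objects, I would argue as follows. Fix an integer $k$ and let $M^\bullet$ be any object of $DM(X,\Sa)$. By Lemma \ref{lem.altcharofdmxs}, the alternative characterization of $DM(X,\Sa)$ says that $H^j(M^\bullet) \in \Ob MHM(X,\Sa)$ for every integer $j$; specializing to $j=k$ gives $H^k(M^\bullet) \in \Ob MHM(X,\Sa)$, which is exactly the required statement on objects.

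On morphisms, let $f: M^\bullet \to N^\bullet$ be a morphism in $DM(X,\Sa)$, i.e. a morphism in $D^b MHM(X)$ between two objects of $DM(X,\Sa)$. Then $H^k(f): H^k(M^\bullet) \to H^k(N^\bullet)$ is a morphism in $MHM(X)$ whose source and target lie in $MHM(X,\Sa)$. Since $MHM(X,\Sa)$ was defined as a \emph{full} subcategory of $MHM(X)$, every morphism in $MHM(X)$ between objects of $MHM(X,\Sa)$ is already a morphism in $MHM(X,\Sa)$. Hence $H^k(f)$ is a morphism in $MHM(X,\Sa)$, and functoriality (preservation of identities and composition) is inherited from the original $H^k$.

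There is no real obstacle here; the only subtle point is the one invoking fullness of $MHM(X,\Sa) \subset MHM(X)$, which was already recorded when that subcategory was introduced. Once this is in place, the commutativity of the displayed diagram is automatic, since the two arrows labelled $H^k$ agree on objects and morphisms of $DM(X,\Sa)$ after composing with the inclusions, by construction.
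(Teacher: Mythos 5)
Your proof is correct and follows exactly the paper's route: the paper simply notes that this lemma is an immediate consequence of Lemma~\ref{lem.altcharofdmxs}, and your write-up spells out that deduction, including the (implicit) check on morphisms via fullness of $MHM(X,\Sa)$.
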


The diagram
\[ \xymatrix{
D^b MHM (X) \ar[r]^{H^k} \ar[d]_{\qrat} & MHM (X) \ar[d]^\qrat \\
D^b_c (X) \ar[r]^{{}^p H^k} & \Per (X)
} \]
commutes (\cite[Lemma 14.5, p. 341]{peterssteenbrink}), whence the restricted diagram
\begin{equation} \label{equ.rathiispervhirat} 
\xymatrix{
DM (X,\Sa) \ar[r]^{H^k} \ar[d]_{\qrat} & MHM (X,\Sa) \ar[d]^\qrat \\
D^b_c (X,\Sa) \ar[r]^{{}^p H^k} & \Per (X,\Sa)
} \end{equation}
commutes as well.

\begin{lemma} \label{lem.gshrcrestrtomhm}
Let $X$ be a complex algebraic variety and
let $g:Y\hookrightarrow X$ be a closed algebraic embedding of
complex codimension $c$, whose underlying topological embedding is
normally nonsingular and compatibly stratifiable.
Let $\Sa$ be an algebraic stratification of $X$ compatible with the
normal nonsingularity of the embedding and such that the induced stratification
$\Ta$ on $Y$ is again algebraic.
Then the functor $g^* [-c]: D^b MHM (X) \to D^b MHM (Y)$ restricts to
a functor $g^* [-c]: MHM (X, \Sa) \to MHM (Y, \Ta),$ which is exact.
A similar statement applies to $g^! [c]$.
\end{lemma}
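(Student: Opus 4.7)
The plan is to reduce the mixed Hodge module statement to the perverse sheaf statement already proved in Lemma \ref{lem.gshrcpervexact}, using that the rationalization functor $\qrat$ is exact, faithful, and commutes with $g^*$ and $g^!$ as part of Saito's six-functor formalism.

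First I would show that $g^*[-c]$ carries each $M \in \Ob MHM(X,\Sa)$ into $MHM(Y,\Ta)$, viewed as subcategories of the respective derived categories of mixed Hodge modules. The standard compatibility $\qrat \, g^* \cong g^* \, \qrat$ (together with commutation with shifts) yields
\[
\qrat (g^*[-c] M) \;\cong\; g^*[-c] (\qrat M).
\]
By hypothesis $\qrat M \in \Per(X,\Sa)$, and by Lemma \ref{lem.gshrcpervexact} the functor $g^*[-c]$ sends $\Per(X,\Sa)$ into $\Per(Y,\Ta)$; in particular $g^*[-c]\,\qrat M$ is concentrated in perverse degree zero. Combining this with the commutative diagram (\ref{equ.rathiispervhirat}) gives
\[
\qrat \, H^k(g^*[-c] M) \;=\; {}^p H^k(\qrat\, g^*[-c] M) \;=\; {}^p H^k(g^*[-c]\,\qrat M) \;=\; 0 \quad (k \ne 0).
\]
Since $\qrat : MHM(Y)\to \Per(Y)$ is faithful and exact on an abelian category, it is conservative; therefore $H^k(g^*[-c] M)=0$ for $k\ne 0$, so $g^*[-c] M \in MHM(Y)$. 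As $\qrat(g^*[-c] M) \cong g^*[-c]\,\qrat M \in \Per(Y,\Ta)$, the definition of $MHM(Y,\Ta)$ forces $g^*[-c] M \in MHM(Y,\Ta)$.

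Exactness of the restricted functor is then a standard triangulated consequence. A short exact sequence $0 \to M_1 \to M_2 \to M_3 \to 0$ in $MHM(X,\Sa)$ fits into a distinguished triangle in $D^b MHM(X)$, which the triangulated functor $g^*[-c]$ sends to a distinguished triangle in $D^b MHM(Y)$ whose three vertices all lie in $MHM(Y,\Ta)$ by the previous step. Its long exact sequence of $MHM$-cohomology collapses to
\[
0 \to g^*[-c] M_1 \to g^*[-c] M_2 \to g^*[-c] M_3 \to 0
\]
in $MHM(Y,\Ta)$. The case of $g^![c]$ is entirely parallel, since $\qrat$ also intertwines $g^!$, and Lemma \ref{lem.gshrcpervexact} treats $g^![c]$ and $g^*[-c]$ in the same breath.

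The only real obstacle is the interchange of $\qrat$ with $g^*$ and $g^!$, which is a structural piece of Saito's theory of mixed Hodge modules (used elsewhere in the excerpt, e.g.\ in diagram (\ref{equ.nearbyandratshift})) and can simply be invoked; granted that, the argument is essentially an unwinding of the perverse-sheaf analogue through the correspondence between perverse cohomology and $MHM$-cohomology.
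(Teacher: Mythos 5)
Your proof is correct and follows essentially the same strategy as the paper: reduce to the perverse-sheaf statement of Lemma \ref{lem.gshrcpervexact} via the conservative, faithful, exact functor $\qrat$, using diagram (\ref{equ.rathiispervhirat}) to convert degree-zero concentration of ${}^p H^k$ into degree-zero concentration of $H^k$. The only divergence is in the final exactness step, where you use the collapsed long exact sequence of $MHM$-cohomology of the distinguished triangle arising from a short exact sequence, whereas the paper invokes a general category-theoretic lemma (a faithful exact functor reflects exactness through a commuting square); these are interchangeable and equally standard.
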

\begin{proof}
We start out by showing that $g^* [-c]: D^b MHM (X)\to D^b MHM (Y)$ restricts to a functor
$g^* [-c]: DM (X,\Sa)\to DM (Y,\Ta)$.
If $M^\bullet$ is an object of $DM (X,\Sa)$,
then $\qrat M^\bullet \in \Ob D^b_c (X,\Sa)$ and thus
$g^* [-c] (\qrat M^\bullet) \in \Ob D^b_c (Y,\Ta)$.
Now $g^* [-c] (\qrat M^\bullet) = \qrat (g^* [-c] M^\bullet)$, from which we
conclude that $g^* [-c] M^\bullet \in \Ob DM (Y,\Ta)$.

Let $P \in \Per (X, \Sa)$ be a perverse sheaf on $X$.
By Lemma \ref{lem.gshrcpervexact}, $g^* [-c] P \in \Per (Y, \Ta)$ and
hence
$g^* [-c] P = {}^p H^0 g^* [-c] P,$ while
${}^p H^k (g^* [-c] P)=0$ for $k\not= 0.$

The exact functor $\qrat: MHM (X)\to \Per (X)$
induces degreewise a functor $D^b MHM (X) \to D^b \Per (X).$
The ``realization'' functor
$\operatorname{real}: D^b \Per (X) \to D^b_c (X)$
of BBD \cite[p. 82, 3.1.9 and Prop. 3.1.10]{bbd} 
satisfies $\operatorname{real} \circ [-c] = [-c] \circ \operatorname{real},$
see \cite[p. 82, (3.1.9.3)]{bbd}.
Saito defines
$\qrat: D^b MHM (X) \to D^b_c (X)$
as the composition
\[ D^b MHM (X) \longrightarrow D^b \Per (X)
    \stackrel{\operatorname{real}}{\longrightarrow} D^b_c (X). \]
(See \cite[p. 222, Theorem 0.1]{saitomhmrims90}.)
Thus the diagram
\begin{equation} \label{equ.ratcommshift}
\xymatrix@R=15pt{
D^b MHM (X) \ar[r]^{[-c]} \ar[d]_{\qrat} & D^b MHM (X) \ar[d]^{\qrat} \\
D^b_c (X) \ar[r]^{[-c]} & D^b_c (X)
} 
\end{equation}
commutes.
Let $M \in MHM (X,\Sa)$ be a single mixed Hodge module, thought of as
an object in $DM (X,\Sa) \subset D^b MHM (X)$ concentrated in degree $0$.
Applying the functor $g^* [-c]$, we obtain an object
$g^* [-c] M \in DM (Y,\Ta)$.
By (\ref{equ.rathiispervhirat}),
$\qrat H^k (g^* [-c] M) = {}^p H^k (\qrat (g^* [-c] M)).$
Since $g^*$ on $D^b MHM$ lifts $g^*$ on $D^b_c$, 
we have
${}^p H^k (\qrat (g^* [-c] M)) = {}^p H^k (g^* \qrat ([-c] M)).$
By the commutativity of diagram (\ref{equ.ratcommshift}),
${}^p H^k (g^* \qrat ([-c] M)) =
   {}^p H^k (g^* [-c] \qrat M).$
Now $P = \qrat M$ is a perverse ($\Sa$-constructible) sheaf on $X$ and hence, as
observed above,
${}^p H^k (g^* [-c] \qrat M) =0$ for $k\not=0.$
We conclude that
\[ \qrat H^k (g^* [-c] M) =0 \text{ for } k\not=0. \]
Since $\qrat: MHM (Y) \to \Per (Y)$ is faithful,
$H^k (g^* [-c] M) =0$ for $k\not=0$.
So in $DM (Y,\Ta),$ there is a natural isomorphism
$H^0 (g^* [-c] M) = g^* [-c] M,$
given by composing the natural quasi-isomorphisms
\[ \tau_{\geq 0} \tau_{\leq 0} g^* [-c] M \longrightarrow
   \tau_{\geq 0} g^* [-c] M \longleftarrow
       g^* [-c] M. \]
This shows that $g^* [-c] M$ is canonically quasi-isomorphic to the
single mixed Hodge module $H^0 (g^* [-c] M) \in MHM (Y,\Ta)$.

Let 
\[ \xymatrix{
\Aa \ar[r]^A \ar[d]_F & \Aa' \ar[d]^{F'} \\
\Ba \ar[r]^B & \Ba'
} \]
be a commutative diagram of additive functors between abelian categories
with $F,F'$ exact and $F'$ faithful. If $B$ is exact, then
$A$ is exact.
Applying this to the commutative diagram of functors
\[ \xymatrix{
MHM (X,\Sa) \ar[r]^{g^* [-c]} \ar[d]_{\operatorname{rat}_X} 
   & MHM (Y,\Ta) \ar[d]^{\operatorname{rat}_Y} \\
\Per (X,\Sa) \ar[r]^{g^* [-c]} & \Per (Y,\Ta),
} \]
with $\operatorname{rat}_Y, \operatorname{rat}_X$
faithful and exact, and
$g^* [-c]$ on perverse sheaves exact, we conclude that
$g^* [-c]: MHM (X,\Sa) \to MHM (Y,\Ta)$ is exact.
The argument for $g^! [c]$ is entirely analogous.
\end{proof}

\begin{lemma} \label{lem.gshricyhcisicxh}
Let $X,Y$ be pure-dimensional complex algebraic varieties.
Let $g:Y \hookrightarrow X$ be a closed algebraic (not necessarily regular)
embedding of complex codimension $c$, whose underlying topological embedding
is normally nonsingular and compatibly stratifiable.
Then there is an isomorphism 
$g^* IC^H_X [-c] = IC^H_Y.$
\end{lemma}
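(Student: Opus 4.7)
The plan is to realise $IC^H_Y$ as the intermediate extension of the pure Hodge module $\rat^H_V[n-c]$ (where $n := \dim_\cplx X$) along an appropriately chosen open inclusion $j_V\colon V\hookrightarrow Y$, and then to verify that $g^*[-c]\,IC^H_X$ satisfies the defining universal property of this intermediate extension in the category of mixed Hodge modules: it extends $\rat^H_V[n-c]$ and admits no non-zero sub- or quotient object supported on $Y-V$.

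To set things up, I would fix an algebraic Whitney stratification $\Sa$ of $X$ compatible with the normal nonsingularity (Definition \ref{def.compstrat}), inducing an algebraic stratification $\Ta$ of $Y$. Let $U\subset X$ be the union of the top-dimensional (thus open and smooth) strata of $\Sa$ and set $V := g^{-1}(U)$. By the dimension count built into Definition \ref{def.snns}, $V$ is precisely the union of top-dimensional strata of $\Ta$, and so smooth and Zariski-open dense in $Y$. Denote by $j\colon U\hookrightarrow X$, $j_V\colon V\hookrightarrow Y$, $g_V\colon V\hookrightarrow U$ the natural inclusions (with $g_V$ a closed immersion of smooth complex varieties of codimension $c$), by $i\colon X-U\hookrightarrow X$, $i_Y\colon Y-V\hookrightarrow Y$ the closed complements, and by $\bar g\colon Y-V\hookrightarrow X-U$ the restriction of $g$, which remains topologically normally nonsingular of complex codimension $c$ with respect to the induced stratifications. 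Lemma \ref{lem.gshrcrestrtomhm} guarantees that $g^*[-c]\,IC^H_X$ is already a single object of $\MHM(Y,\Ta)$, and from the cartesian identity $g\circ j_V = j\circ g_V$ together with the defining property $j^*\,IC^H_X = \rat^H_U[n]$ of the intermediate extension,
\[
j_V^*\bigl(g^*[-c]\,IC^H_X\bigr) = g_V^*[-c]\,\rat^H_U[n] = \rat^H_V[n-c],
\]
using the evident identity $g_V^*\rat^H_U = \rat^H_V$ for the closed immersion of smooth varieties.

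It remains to verify that $g^*[-c]\,IC^H_X$ has no non-zero sub- or quotient mixed Hodge module supported on $Y-V$. Since $\qrat\colon\MHM(Y,\Ta)\to\Per(Y,\Ta)$ is exact and faithful and commutes with $g^*[-c]$, any such putative sub- or quotient would produce, after applying $\qrat$, a corresponding non-zero sub- or quotient perverse sheaf of $g^*[-c]\,IC_X$ supported on $Y-V$. It thus suffices to establish the BBD support and cosupport conditions at the perverse sheaf level: ${}^pH^k\bigl(i_Y^*g^*[-c]\,IC_X\bigr) = 0$ for $k\ge 0$ and ${}^pH^k\bigl(i_Y^!g^*[-c]\,IC_X\bigr) = 0$ for $k\le 0$. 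For the support condition I would invoke functoriality to write $i_Y^*g^*[-c]\,IC_X \cong \bar g^*[-c]\,i^*IC_X$ and then appeal to the perverse t-exactness of $\bar g^*[-c]$ (Lemma \ref{lem.gshrcpervexact}) to transfer ${}^pH^k(i^*IC_X) = 0$ for $k\ge 0$ down to $Y-V$. For the cosupport condition I would use that the topological normal nonsingularity of $g$ and of $\bar g$ of complex codimension $c$ gives $g^!\cong g^*[-2c]$ and $\bar g^!\cong \bar g^*[-2c]$ on constructible complexes; combining these produces $i_Y^!g^*[-c]\,IC_X \cong \bar g^*[-c]\,i^!IC_X$, whereupon the same t-exactness transports $i^!IC_X\in{}^pD^{>0}(X-U)$ into ${}^pD^{>0}(Y-V)$.

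Once both conditions are verified at the perverse sheaf level, conservativity and faithfulness of $\qrat$ (Lemma \ref{lem.derratconservative}) promote the absence of sub- or quotient perverse subobjects supported on $Y-V$ to the same statement in $\MHM(Y,\Ta)$, and the universal characterisation of the intermediate extension then yields the desired isomorphism $g^*[-c]\,IC^H_X \cong IC^H_Y$. The hard part of the argument is the tension between the algebraic nature of the embedding $g$ (which is not assumed to be regular) and the essentially topological tool $g^!\cong g^*[-2c]$ needed for the cosupport verification; this identity is only available on constructible complexes, which forces the support and cosupport arguments to be carried out in $\Per(Y,\Ta)$ and then transferred back to $\MHM(Y,\Ta)$ via the faithfulness of $\qrat$.
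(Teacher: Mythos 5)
Your proof is correct, but it takes a genuinely different route from the paper's. The paper applies the exact functor $g^*[-c]\colon \MHM(X,\Sa)\to\MHM(Y,\Ta)$ (Lemma \ref{lem.gshrcrestrtomhm}) directly to the image presentation $IC^H_X = \im\bigl(H^0 j_!\rat^H_U[m]\to H^0 j_*\rat^H_U[m]\bigr)$, uses the base change $j_{V!}g_V^*\cong g^*j_!$ on the $j_!$-leg, and then establishes the key isomorphism $g^*j_*\rat^H_U \cong j_{V*}g_V^*\rat^H_U$ by descending to constructible complexes via conservativity of $\qrat$ and exploiting $g^!\cong g^*[-2c]$; once that is in place, the image is visibly $IC^H_Y$. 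You instead invoke the BBD support/cosupport characterization of the intermediate extension over the complement of $V$, verify those conditions at the perverse-sheaf level by transporting the known support/cosupport vanishing for $IC_X$ through $\bar g^*[-c]$ (the support half via ordinary $*$-base change, the cosupport half via $g^!\cong g^*[-2c]$ together with $\bar g^!\cong \bar g^*[-2c]$), and then lift back to $\MHM(Y,\Ta)$ using faithfulness and conservativity of $\qrat$. The geometric input --- normal nonsingularity giving $g^!\cong g^*[-2c]$ and the perverse t-exactness from Lemma \ref{lem.gshrcpervexact} --- is the same in both arguments; the paper's image-based version is somewhat shorter because it leverages exactness of $g^*[-c]$ once and avoids splitting into the two dual halves, whereas your route is more transparently parallel to the standard BBD treatment of $j_{!*}$. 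One step you should make explicit: that $\bar g\colon Y-V\hookrightarrow X-U$ is again normally nonsingular with respect to the induced stratifications (so that Lemma \ref{lem.gshrcpervexact} and the identity $\bar g^!\cong \bar g^*[-2c]$ are available for $\bar g$). This holds because the stratum-preserving tube $j\colon E\to X$ of Definition \ref{def.snns} restricts to a tube for $Y-V$ inside $X-U$, but since it is the linchpin of your cosupport step it deserves a sentence rather than being assumed silently.
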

\begin{proof}
By compatible stratifiability, there exists an algebraic stratification
$\Sa$ of $X$ such that $g$ is normally
nonsingular with respect to $\Sa$, and the induced stratification $\Ta$ of $Y$ 
is again algebraic.
Let $U\subset X$ be the top stratum of $\Sa$. Since $\Sa$ is algebraic
and $X$ is pure-dimensional,
$U$ is a Zariski-open, smooth, dense subset of $X$.
Let $j: U\hookrightarrow X$ be the corresponding inclusion.
The intersection $V=U\cap Y \hookrightarrow Y$ is 
the top stratum of $\Ta$ and hence also
Zariski-open, smooth (as a variety), and dense in $Y$.
Let $j_V: V\hookrightarrow Y$ be the corresponding inclusion.
By (3)(c) of Definition \ref{def.snns},
the restriction $g_V: V\to U$ of $g$ is again (algebraic and)
normally nonsingular (with respect to the intrinsic stratification
consisting of one stratum) of codimension $c$.
By \cite[p. 323, (4.4.3)]{saitomhmrims90}, the cartesian square
\[ \xymatrix{
V \ar[d]_{g_V} \ar[r]^{j_V} & Y \ar[d]^g \\
U \ar[r]_j & X
} \]
has associated base change natural isomorphisms 
$j_{V*} g_V^! \cong g^! j_*$ and
$j_{V!} g_V^* \cong g^* j_!$.
Let $m=\dim_\cplx X$ and $n=\dim_\cplx Y$ so that $c=m-n$.
The complexes
$j_! \rat_U [m]$ and $j_* \rat_U [m]$
are $\Sa$-constructible, e.g. by
\cite[Cor. 3.11.(iii), p. 79]{borel}.
Thus the objects 
$j_! \rat^H_U [m]$ and $j_* \rat^H_U [m]$ of $D^b MHM (X)$
belong in fact to $DM (X,\Sa)$.
Consequently, the canonical morphism
\[ H^0 j_! \rat^H_U [m] \longrightarrow H^0 j_* \rat^H_U [m] \]
is in the abelian category $MHM (X,\Sa)$.
Its image $IC^H_X \in MHM (X,\Sa)$ is the intersection Hodge module
on $X$. The exactness of the functor
$g^* [-c]: MHM (X,\Sa) \to MHM (Y,\Ta)$
provided by Lemma \ref{lem.gshrcrestrtomhm}
ensures that in $MHM (Y,\Ta),$
\begin{align*}
g^* [-c] IC^H_X 
&= \im g^* [-c] 
  (H^0 j_! \rat^H_U [m] \longrightarrow H^0 j_* \rat^H_U [m]) \\
&= \im 
  (H^0 g^* j_! \rat^H_U [m-c] \longrightarrow H^0 g^* j_* \rat^H_U [m-c]).
\end{align*}
We shall show that the normal nonsingularity of $g$ implies that the
natural morphism $g^* j_* \rat^H_U \to j_{V*} g^*_V \rat^H_U$ in
$DM(Y,\Ta)$ is an isomorphism.
As $\qrat: DM (Y,\Ta)\to D^b_c (Y,\Ta)$ is conservative, it suffices to prove
that the underlying morphism
$g^* j_* \rat_U \to j_{V*} g^*_V \rat_U$ is an isomorphism in $D^b_c (Y,\Ta)$.
As $g$ is normally nonsingular,
$g^! [c] = g^* [-c]$ on $D^b_c (X,\Sa)$ and, as
$g_V$ is normally nonsingular,
$g^!_V [c] = g^*_V [-c]$ on $D^b_c (U,\Sa \cap U)$.
Using the above base change isomorphism
$g^! j_* \cong j_{V*} g^!_V,$ we get a composition of isomorphisms
\[ g^* j_* \rat_U = g^! j_* \rat_U [2c] \cong j_{V*} g^!_V \rat_U [2c]  
         = j_{V*} g^*_V \rat_U \]
which factors 
$g^* j_* \rat_U \to j_{V*} g^*_V \rat_U$.
This establishes the claim. 
We deduce that the image above can be written as
\[ \im (H^0 j_{V!} g^*_V \rat^H_U [n] 
    \to H^0 j_{V*} g^*_V \rat^H_U [n]), \]
which, as $g^*_V \rat^H_U = \rat^H_V,$ is 
\[ 
\im (H^0 j_{V!} \rat^H_V [n] 
    \to H^0 j_{V*} \rat^H_V [n]) = IC^H_Y.
\]
\end{proof}

Let $g:Y\hookrightarrow X$ be a closed regular algebraic
embedding of pure-dimensional varieties
whose underlying topological embedding is
normally nonsingular
and compatibly stratifiable.
Take $M = IC^H_X [1]$ in the isomorphism
(\ref{equ.mhmrestrauxsommets}), shifted by $[-c]$,
to obtain an isomorphism
\begin{equation} \label{equ.knearbphicyisicx}
k^* \psi^H_p [1] (\pro^*_1 IC^H_X [1]) [-c] \cong
   g^* [-c] IC^H_X [1] \cong IC^H_Y [1], 
\end{equation}   
using Lemma \ref{lem.gshricyhcisicxh}.
\begin{prop} \label{prop.psihpichzisichn}
Let $X,Y$ be pure-dimensional
compact complex algebraic varieties.
Let $Y\hookrightarrow X$
be an upwardly normally nonsingular embedding
(Definition \ref{def.upwardlynns}) 
with algebraic normal bundle
$N = N_Y X$ and associated deformation to the normal bundle
$p: Z^\circ \to \cplx$. Then
\[ \psi^H_p IC^H_{Z^\circ} = IC^H_N,~ 
   \psi'^H_p IC^H_{Z^\circ} = IC^H_N [1]. \]
\end{prop}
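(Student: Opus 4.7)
The plan is to reduce the claim to a perverse-sheaf identification via conservativity of $\qrat$ and then to establish that identification using the local topological model for $Z^\circ$ near $N$. First, since $\psi'^H_p = \psi^H_p[1]$ by definition, the second equality is a formal consequence of the first. For the first, combine the commutative diagram (\ref{equ.nearbyandratshift}) applied to $\psi'^H_p$ with $\psi^H_p = \psi'^H_p[-1]$ to obtain $\qrat \psi^H_p IC^H_{Z^\circ} = \psi_p IC_{Z^\circ}[-1]$, to be compared with $\qrat IC^H_N = IC_N$. The product structure $Z^\circ|_{\cplx^*} \cong X \times \cplx^*$ makes the monodromy of the nearby cycles around $0$ trivial, so $\psi^H_p IC^H_{Z^\circ}$ is a pure Hodge module concentrated in degree $0$. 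Once the perverse-sheaf identity $\psi_p IC_{Z^\circ}[-1] = IC_N$ is known, this Hodge module has the same simple underlying perverse sheaf as $IC^H_N$; by the characterization of $IC^H_N$ as the unique simple Hodge module restricting to $\rat^H_V[\dim_\cplx N]$ over a smooth dense open $V \subset N$, together with conservativity of $\qrat$ (Lemma \ref{lem.derratconservative}) applied to a candidate comparison morphism, the Hodge-module identification follows.

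The topological identity $\psi_p IC_{Z^\circ}[-1] = IC_N$ is established as follows. Because nearby cycles are local to a neighborhood of the central fiber, we may replace $Z^\circ$ by an open neighborhood $U$ of $N$ that is homeomorphic to $F|_N$, where $F \to \pr(N\oplus 1)$ is the topological normal bundle of the exceptional divisor supplied by Proposition \ref{prop.nbhdoftightembistautbndle}. Equip $U$ with the locally cone-like stratification $\Sa'$ of Lemma \ref{prop.refstratnofoom1}, for which $N$ is a union of strata, and whose induced stratification of $N$ is the one in Proposition \ref{prop.lclstratnofoom1}, namely $\pi_N^{-1}(Y_\alpha)$. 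Lemma \ref{lem.nearbycyclecplxconstructible} guarantees that $\psi_p IC_{Z^\circ}$ is constructible with respect to this stratification of $N$. Over the smooth Zariski-open dense top stratum $V$ of $N$, an explicit computation of nearby cycles on the local model $F|_V \cong V \times \real^{2r}$, with $p$ given by the natural fiber-norm-like function vanishing on $V$, identifies $(\psi_p IC_{Z^\circ}[-1])|_V$ with $\rat_V[\dim_\cplx N]$. Combining this with the stratumwise support bounds that characterize the intermediate extension then forces $\psi_p IC_{Z^\circ}[-1]$ to coincide with $IC_N$.

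The main obstacle is verifying those stratumwise stalk bounds: mere constructibility exhibits a perverse sheaf on $N$ with the correct restriction to $V$, but one must further rule out simple subquotients supported on strata of $N - V$. Here the upward normal nonsingularity hypothesis is essential, for it produces the topological vector bundle structure near the exceptional divisor that underlies Proposition \ref{prop.nbhdoftightembistautbndle}; this in turn reduces the stalk computation on a stratum of $N - V$ to a Vietoris--Begle calculation along the bundle fibers, where the intermediate-extension support conditions can be read off directly from the codimension of the stratum in the local model.
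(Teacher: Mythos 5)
Your outline departs substantially from the paper's proof, and it leaves a genuine gap at its load-bearing step. The paper never attempts to verify the intersection-complex support and cosupport bounds for $\psi_p[-1]\,IC_{Z^\circ}$ stratum by stratum. Instead it uses Verdier's property (SP5), ``restrictions aux sommets'', in the form of the isomorphism (\ref{equ.knearbphicyisicx}): $k^*\psi^H_p(IC^H_{X\times\cplx^*})[-c]\cong IC^H_Y$, valid because the embedding is tight. The Vietoris--Begle argument then serves a different purpose than in your sketch: constructibility with respect to the strata $\pi_N^{-1}(Y_\alpha)$ together with Vietoris--Begle shows that the adjunction $\pi_N^*\pi_{N*}\psi^H_p(\cdot)\to\psi^H_p(\cdot)$ is an isomorphism, i.e.\ the nearby cycle is \emph{pulled back} from $Y$. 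Applying $k^*[-c]$ and the SP5 identity identifies $\pi_{N*}\psi^H_p(\cdot)$ as $IC^H_Y[c]$, and then Lemma \ref{lem.smpullbic} on smooth pullbacks converts $\pi_N^*IC^H_Y[c]$ into $IC^H_N$. Your proposal, by contrast, tries to nail down $\psi_p[-1]\,IC_{Z^\circ}$ as $IC_N$ directly via the intermediate-extension characterization. But knowing that the nearby cycle is a perverse sheaf constructible on $\{\pi_N^{-1}(Y_\alpha)\}$, constant along $\pi_N$-fibers, and equal to $\rat_V[n]$ over the top stratum $V$ does \emph{not} pin it down as $IC_N$: it could still have simple subquotients supported over $Y-Y^\circ$. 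Excluding those is exactly the content of the support/cosupport bounds, and your claim that they ``can be read off directly from the codimension of the stratum in the local model'' elides the actual difficulty. The Milnor-fiber computation of a nearby-cycle stalk at a point of a deeper stratum involves the local structure of $IC_{Z^\circ}$ \emph{in the ambient $Z^\circ$}, not just the topological normal bundle $F|_N$, and there is no obvious codimension count that delivers the bound. The thing that would settle it is exactly the SP5 identification of the restriction to the zero section --- the ingredient your argument omits.

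Two smaller problems. First, your claim that trivial monodromy of the family $X\times\cplx^*\to\cplx^*$ forces $\psi^H_p IC^H_{Z^\circ}$ to be \emph{pure} is not justified: triviality of the monodromy gives $N=0$ on the nearby cycle, but purity is a weight statement about the mixed Hodge module that does not follow from $N=0$ alone. (In the end it is true, because the conclusion of the proposition is pure --- but you cannot assume it while trying to prove the proposition.) Fortunately, you do not really need purity: once the underlying perverse sheaf is shown simple, faithfulness and exactness of $\qrat$ already force $\psi^H_p IC^H_{Z^\circ}$ to be a simple object of $MHM(N)$, so the uniqueness characterization of $IC^H_N$ as the unique simple Hodge module restricting to $\rat^H_V[n]$ over $V$ would apply. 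Second, that uniqueness characterization requires the restriction to $V$ to equal $\rat^H_V[n]$ \emph{as a Hodge module}, not merely that the underlying local system is $\rat_V$; you would need to add the short local computation over the smooth locus showing that the limit Hodge structure there is the trivial one. Neither of these two points is fatal, but the missing SP5-type identification (or some replacement for it) is.
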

\begin{proof}
By Lemma \ref{lem.smpullbic},  
$\pro^*_1 IC^H_X [1]= IC^H_{X\times \cplx^*}.$
Using the isomorphism (\ref{equ.knearbphicyisicx}), 
which is applicable here as $Y\hookrightarrow X$ is tight
(and $X,Y$ pure-dimensional), and
tight embeddings are regular and
topologically normally nonsingular
in a compatibly stratifiable manner,
we obtain an isomorphism
\[ k^* \psi^H_p (IC^H_{X\times \cplx^*}) [-c] \cong
   IC^H_Y. \]
In $D^b MHM (N),$ we have the adjoint relation
\[ \Hom_{D^b MHM (N)} (\pi^*_N M_1, M_2) = 
    \Hom_{D^b MHM (X)} (M_1, \pi_{N*} M_2), \]
see \cite[p. 441, Thm. 4.1]{schuermannmsri}.
Thus there is an adjunction morphism
\begin{equation} \label{equ.piadjpsi} 
\pi^*_N \pi_{N*} \psi^H_p (IC^H_{X \times \cplx^*})
 \stackrel{\operatorname{adj}}{\longrightarrow}
   \psi^H_p (IC^H_{X \times \cplx^*}) 
\end{equation}
in $D^b MHM (N)$.
Taking $\qrat$, one obtains the adjunction morphism
\begin{equation} \label{equ.ratpiadjpsi} 
\pi^*_N \pi_{N*} \psi_p [-1] (IC_{X \times \cplx^*})
 \stackrel{\operatorname{adj}}{\longrightarrow}
   \psi_p [-1] (IC_{X \times \cplx^*}) 
\end{equation}   
in $D^b_c (N)$.
As $Y\hookrightarrow X$ is upwardly normally nonsingular,
and $X,Y$ compact,
Propositions \ref{prop.nbhdoftightembistautbndle},
\ref{prop.lclstratnofoom1}, and Lemma
\ref{prop.refstratnofoom1} all apply.
We obtain an open neighborhood $U$ of $N=p^{-1} (0)$ in $Z^\circ$
together with a locally cone-like topological
stratification $\Sa_U$ of $U$ such that
the central fiber $N\subset U$ is a union of strata,
and those strata are given by
$\Sa_\alpha \cap N = \pi^{-1}_N Y_\alpha$.
Taking nearby cycles is a local operation:
if $p': U\to \cplx$ denotes the restriction of $p$ to $U$,
then $\psi_p (-) = \psi_{p'} (-|_U)$.
In particular, 
$\psi_p [-1] (IC_{X \times \cplx^*}) =\psi_{p'} [-1] (IC_U)$.
 The complex $IC_U$ is constructible
with respect to the locally cone-like topological stratification $\Sa_U$, 
by topological invariance of intersection homology, see also 
\cite[V, Cor. 4.18, p. 95]{borel}.
Thus by Lemma \ref{lem.nearbycyclecplxconstructible},
$\psi_{p'} [-1] (IC_U)$ (and hence also $\psi_p [-1] (IC_{X \times \cplx^*})$)
is constructible with respect to the strata
$\Sa_\alpha \cap N = \pi^{-1}_N Y_\alpha$.
In particular, $\psi_p [-1] (IC_{X \times \cplx^*})$ is
cohomologically locally constant with respect to the 
strata $\pi^{-1}_N Y_\alpha$ so that Vietoris-Begle implies
that (\ref{equ.ratpiadjpsi}) is an isomorphism,
\cite[p. 164, Lemma 10.14(i)]{borel}.
Since $\qrat$ is conservative on $D^b MHM (N)$ by Lemma \ref{lem.derratconservative}, 
the adjunction morphism (\ref{equ.piadjpsi}) is an isomorphism
\[ \pi^*_N \pi_{N*} \psi^H_p (IC^H_{X \times \cplx^*})
 \cong \psi^H_p (IC^H_{X \times \cplx^*}). \]
Applying $k^* [-c]$ yields isomorphisms
\[
k^* \pi^*_N \pi_{N*} \psi^H_p (IC^H_{X \times \cplx^*}) [-c]
 \cong k^* \psi^H_p (IC^H_{X \times \cplx^*}) [-c] \cong IC^H_Y. \]
Since $k^* \pi^*_N$ is the identity, this is an isomorphism
$\pi_{N*} \psi^H_p (IC^H_{X \times \cplx^*}) 
 \cong IC^H_Y [c].$
Applying $\pi^*_N$, we get an isomorphism
\[
\pi^*_N \pi_{N*} \psi^H_p (IC^H_{X \times \cplx^*})
 \cong \pi^*_N IC^H_Y [c]. \]
By Lemma \ref{lem.smpullbic}, $\pi^*_N IC^H_Y [c] = IC^H_N$.
Hence
$\psi^H_p (IC^H_{X \times \cplx^*}) \cong IC^H_N.$
\end{proof}

Recall that a flat morphism $f:X\to Y$ gives rise to a flat pullback
$f^*_K: K^\alg_0 (Y)\to K^\alg_0 (X)$. 
\begin{prop} \label{prop.twistedtdsmpullb}
Let $Y$ be a complex algebraic variety and
$\pi: N\to Y$ an algebraic vector bundle over $Y$.
For any coherent sheaf $\Fa$ on $Y$,
\[ T^*_y (T_\pi) \cap \pi^*_\BM \td_{1+y} [\Fa] =
   \td_{1+y} (\lambda_y (T^*_\pi) \cap \pi^*_K [\Fa]). \]
\end{prop}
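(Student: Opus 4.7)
My plan is to reduce the statement to the untwisted Baum-Fulton-MacPherson Verdier-Riemann-Roch formula for smooth pullbacks (Proposition \ref{prop.bfmvrrsmpullb}) and then track how the $y$-twisting interacts with Chern character of exterior powers and with the degree-dependent $(1+y)$-normalization of the twisted Todd transformation.

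First I would apply Proposition \ref{prop.bfmvrrsmpullb} to the smooth morphism $\pi:N\to Y$ of pure relative dimension $r=\rk N$ and the sheaf $\lambda_y(T^*_\pi)\cap \pi^*_K[\Fa]$. Applying (\ref{equ.bfmtauofcap}) to move $\lambda_y(T^*_\pi)$ across the transformation $\tau_*$ as the Chern character $\ch^*(\lambda_y(T^*_\pi))$, and combining with Proposition \ref{prop.unnormtytdtimeschlambda} so that $\td^*(T_\pi)\cup \ch^*(\lambda_y(T^*_\pi))=\widetilde T_y^*(T_\pi)$, one obtains the intermediate identity
\[
\tau_*\bigl(\lambda_y(T^*_\pi)\cap \pi^*_K[\Fa]\bigr)
=\widetilde T_y^*(T_\pi)\cap \pi^*_\BM\tau_*[\Fa]
\]
in $H^\BM_{2*}(N)\otimes \rat[y]$. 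This is the ``unnormalized'' version of the desired formula and involves no rational powers of $(1+y)$.

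Next I would decompose both sides of this intermediate identity into homogeneous components. Writing $\tau_k[\Fa]\in H^\BM_{2k}(Y)$, the degree $2j$ part in Borel-Moore homology of $N$ is indexed by pairs $(i,k)$ with $k+r-i=j$, so
\[
\tau_j\bigl(\lambda_y(T^*_\pi)\cap \pi^*_K[\Fa]\bigr)
=\sum_k \widetilde T^{\,k+r-j}_y(T_\pi)\cap \pi^*_\BM\tau_k[\Fa].
\]
Multiplying by $(1+y)^{-j}$ and summing over $j$ produces $\td_{1+y}$ on the left. On the right I substitute the normalization relation of Proposition \ref{prop.relunnormtynormty}, namely $\widetilde T^{\,k+r-j}_y(T_\pi)=(1+y)^{\,j-k}\,T^{\,k+r-j}_y(T_\pi)$, so the factor $(1+y)^{-j}$ collapses with $(1+y)^{j-k}$ into $(1+y)^{-k}$.

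At this point the bookkeeping gives
\[
\td_{1+y}\bigl(\lambda_y(T^*_\pi)\cap \pi^*_K[\Fa]\bigr)
=\sum_{j,k}\frac{1}{(1+y)^k}\,T^{\,k+r-j}_y(T_\pi)\cap \pi^*_\BM\tau_k[\Fa],
\]
which is manifestly the same as $T_y^*(T_\pi)\cap \pi^*_\BM\td_{1+y}[\Fa]$ after reindexing by $i=k+r-j$. There is no conceptual obstacle here; the only subtlety — and the step most likely to cause a sign or indexing error — is keeping the degree shifts by $2r$ consistent between the $(1+y)^{-k}$ bookkeeping on the base $Y$ and on the total space $N$, which is exactly what Proposition \ref{prop.relunnormtynormty} is designed to absorb.
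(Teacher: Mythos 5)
Your proof is correct and follows essentially the same route as the paper: apply the untwisted BFM-VRR together with (\ref{equ.bfmtauofcap}) and Proposition \ref{prop.unnormtytdtimeschlambda} to get the unnormalized identity $\tau_*(\lambda_y(T^*_\pi)\cap\pi^*_K[\Fa])=\widetilde T^*_y(T_\pi)\cap\pi^*_\BM\tau_*[\Fa]$, then feed in Proposition \ref{prop.relunnormtynormty} to trade $\widetilde T^*_y$ for $T^*_y$ and absorb the rank-shift $r$ into the $(1+y)$-powers. The only cosmetic difference is that the paper packages the $(1+y)^{\pm k}$ bookkeeping into Adams-type operations $\psi^j$, $\psi_k$, whereas you carry the powers explicitly; the underlying computation is identical.
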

\begin{proof}
We define an Adams-type operation $\psi^j$, which operates on a
cohomology class $\xi$ of degree $2j$ by
$\psi^j (\xi) = (1+y)^j \cdot \xi$. Similarly, a homological 
Adams-type operation is given by
$\psi_k (x) = (1+y)^{-k} \cdot x$ on a degree-$2k$ homology class $x$.
The behavior of these operations in a cap product of a degree-$2(j-k)$ class $\xi$
and a degree $2j$-class $x$ is described by the formula
\[ \psi_k (\xi \cap x) = \psi^{j-k} (\xi) \cap \psi_j (x). \]
Let $r$ be the complex rank of $N$.
Note that if $x$ has degree $2(j-r)$, then $\pi^*_\BM (x)$ has degree $2j$.
Under smooth pullback,  one then has 
\[ (1+y)^r \psi_j \pi^*_\BM (x) = \pi^*_\BM (\psi_{j-r} x). \]
By the definition of $\td_{1+y}$ and (\ref{equ.bfmtauofcap}),
\begin{align*}
\td_{1+y} (\lambda_y (T^*_\pi) \cap \pi^*_K [\Fa])
&= \sum_{k\geq 0} \psi_k
  (\tau_* (\lambda_y (T^*_\pi) \cap \pi^*_K [\Fa]))_k \\
&= \sum_{k\geq 0} \psi_k
  (\ch^* (\lambda_y (T^*_\pi)) \cap \tau_* \pi^*_K [\Fa])_k.
\end{align*}
By BFM-VRR for smooth pullbacks (Proposition \ref{prop.bfmvrrsmpullb}), 
this equals
\[  \sum_{k\geq 0} \psi_k
  (\ch^* (\lambda_y (T^*_\pi)) \cup
    \td^* (T_\pi) \cap \pi^*_\BM \tau_* [\Fa])_k, \]
which by Proposition \ref{prop.unnormtytdtimeschlambda} is
\[  \sum_{k\geq 0} \psi_k
  (\widetilde{T}^*_y (T_\pi) \cap \pi^*_\BM \tau_* [\Fa])_k. \]
Computing the degree-$2k$ component in this expression, we get
\[  \sum_{k,j\geq 0} 
   \psi_k \left( \widetilde{T}^{j-k}_y (T_\pi) \cap (\pi^*_\BM \tau_* [\Fa])_j \right)
   = \sum_{k,j\geq 0} 
   \psi^{j-k} \widetilde{T}^{j-k}_y (T_\pi) \cap \psi_j (\pi^*_\BM \tau_* [\Fa])_j. \]
According to Proposition \ref{prop.relunnormtynormty}, this can be
written in terms of $T^*_y$ as
\begin{eqnarray*}
\lefteqn{\sum_{k,j\geq 0} \psi^{j-k} (1+y)^{r-(j-k)}   
   T^{j-k}_y (T_\pi) \cap \psi_j (\pi^*_\BM \tau_* [\Fa])_j} & & \\
& & = \sum_{k,j\geq 0} (1+y)^r   
   T^{j-k}_y (T_\pi) \cap \psi_j (\pi^*_\BM \tau_* [\Fa])_j \\
& & = \sum_{k,j\geq 0}
   T^{j-k}_y (T_\pi) \cap (1+y)^r \psi_j (\pi^*_\BM \tau_* [\Fa])_j \\
& & = \sum_{k,j\geq 0}
   T^{j-k}_y (T_\pi) \cap \pi^*_\BM (\psi_{j-r} \tau_{j-r} [\Fa]) \\   
& & = \sum_{i\geq 0}
   T^{i}_y (T_\pi) \cap \pi^*_\BM \sum_{k\geq 0} \psi_k \tau_k [\Fa] \\ 
& & = T^*_y (T_\pi) \cap \pi^*_\BM \td_{1+y} [\Fa].
\end{eqnarray*}
\end{proof}

\begin{thm} \label{thm.mhtsmpullb}
Let $Y$ be a complex algebraic variety and
$\pi: N\to Y$ an algebraic vector bundle over $Y$.
For $M \in D^b MHM (Y)$,
\[ T^*_y (T_\pi) \cap \pi^*_\BM MHT_{y*} [M] =
  MHT_{y*} (\pi^*_\MHM M). \]
\end{thm}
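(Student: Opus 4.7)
The plan is to reduce the theorem to the combination of two results already established in the paper: Sch\"urmann's Verdier-Riemann-Roch formula for $MHC_y$ under smooth pullbacks (Proposition \ref{prop.mhcvrrsmpullb}) and the twisted Todd Verdier-Riemann-Roch formula (Proposition \ref{prop.twistedtdsmpullb}). Everything follows from unwinding the definition $MHT_{y*} = \td_{1+y} \circ MHC_y$ and chaining these together. No new conceptual input is needed; the argument is essentially a bookkeeping exercise, and the main (mild) care is in extending the scalar-coefficient statements to the $\intg[y^{\pm 1}]$-linear setting, which is automatic since both VRR formulae are $\rat[y^{\pm 1}]$-linear by construction.

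Concretely, I would proceed as follows. First I would apply Proposition \ref{prop.mhcvrrsmpullb} to the smooth morphism $\pi: N\to Y$ (of pure fiber dimension equal to the rank of the bundle), obtaining
\[ \lambda_y(T^*_\pi) \cap \pi^*_K MHC_y[M] = MHC_y(\pi^*_\MHM M) \]
in $K_0^\alg(N)\otimes \intg[y^{\pm 1}]$; here $T_{N/Y} = T_\pi$ since $\pi$ is a vector bundle projection and $\pi^*_\MHM$ on $D^b MHM(Y)$ corresponds to flat/smooth pullback on the $K_0$-level, which I will write simply as $\pi^*$.

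Next I would apply Proposition \ref{prop.twistedtdsmpullb} to the coherent sheaf (more precisely, the $K_0$-class, extended $\intg[y^{\pm 1}]$-linearly) $\Fa := MHC_y[M]$, yielding
\[ T^*_y(T_\pi) \cap \pi^*_\BM \td_{1+y} MHC_y[M] = \td_{1+y}\!\bigl(\lambda_y(T^*_\pi) \cap \pi^*_K MHC_y[M]\bigr). \]
Combining this with the first displayed equation and then applying $\td_{1+y}$ to both sides of that equation gives
\[ T^*_y(T_\pi) \cap \pi^*_\BM MHT_{y*}[M] = \td_{1+y}\!\bigl(MHC_y(\pi^*_\MHM M)\bigr) = MHT_{y*}(\pi^*_\MHM M), \]
which is the desired identity.

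The only subtlety worth flagging is the passage from $\lambda_y(T^*_\pi) \cap \pi^*_K$ on the $K$-theory side to $T^*_y(T_\pi) \cap \pi^*_\BM$ on the Borel-Moore side. This is exactly the content of Proposition \ref{prop.twistedtdsmpullb}, whose proof encoded the ``unnormalization'' identity $\widetilde{T}^*_y = \td^* \cup \ch^*\!\lambda_y$ (Proposition \ref{prop.unnormtytdtimeschlambda}) together with the rescaling relation $\widetilde{T}^i_y(E) = (1+y)^{r-i} T^i_y(E)$ of Proposition \ref{prop.relunnormtynormty}, absorbed into the $(1+y)$-twist in $\td_{1+y}$. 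Since this has already been handled, the proof of Theorem \ref{thm.mhtsmpullb} is a two-line diagram chase, and I do not anticipate a genuine obstacle beyond verifying $\intg[y^{\pm 1}]$-linearity, which holds degree-wise.
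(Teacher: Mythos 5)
Your proof is correct and follows exactly the same route as the paper: chain Proposition \ref{prop.mhcvrrsmpullb} with Proposition \ref{prop.twistedtdsmpullb}, using $MHT_{y*} = \td_{1+y}\circ MHC_y$, with the only cosmetic difference being that you work forward from the $MHC_y$-VRR while the paper's displayed chain starts from $MHT_{y*}(\pi^*_\MHM M)$ and unwinds. Your flagging of the $\intg[y^{\pm 1}]$-linearity extension is a reasonable remark but, as you note, is automatic and the paper does not belabor it.
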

\begin{proof}
This follows readily from Proposition \ref{prop.twistedtdsmpullb}
together with Schürmann's $MHC_y$-Verdier-Riemann-Roch
(Proposition \ref{prop.mhcvrrsmpullb}):
\begin{align*}
MHT_{y*} (\pi^*_\MHM M)
&= \td_{1+y} MHC_y (\pi^*_\MHM M) 
  = \td_{1+y} (\lambda_y (T^*_\pi) \cap \pi^*_K MHC_y [M]) \\
&= T^*_y (T_\pi) \cap \pi^*_\BM \td_{1+y} MHC_y [M] 
  = T^*_y (T_\pi) \cap \pi^*_\BM MHT_{y*} [M].
\end{align*}
\end{proof}

\begin{prop} \label{prop.itysmpullb}
If $Y$ is a pure-dimensional complex algebraic variety and
$\pi: N\to Y$ an algebraic vector bundle over $Y$, then
\[ T^*_y (T_\pi) \cap \pi^*_\BM IT_{y*} (Y) = IT_{y*} (N). \]
\end{prop}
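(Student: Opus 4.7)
The plan is to reduce the statement to Theorem \ref{thm.mhtsmpullb} applied to the intersection Hodge module on $Y$, using Lemma \ref{lem.smpullbic} to identify the smooth pullback of $IC^H_Y$ with $IC^H_N$ (up to shift). Since $IT_{y*}$ is defined via $MHT_{y*}$ applied to a particular choice of object in $D^b MHM(-)$, the formula for $IT_{y*}$ should follow from the formula for $MHT_{y*}$ by inserting the correct object.

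More precisely, let $n = \dim_\cplx Y$ and $r$ be the complex rank of $\pi: N\to Y$, so that $N$ is pure-dimensional of complex dimension $n+r$. By definition,
\[ IT_{y*}(Y) = MHT_{y*}[IC^H_Y[-n]], \qquad IT_{y*}(N) = MHT_{y*}[IC^H_N[-(n+r)]]. \]
First I would apply Lemma \ref{lem.smpullbic} to the smooth morphism $\pi: N\to Y$ of pure fiber dimension $r$, obtaining the identity $\pi^*[IC^H_Y[r]] = [IC^H_N]$ in $K_0(MHM(N))$, equivalently
\[ \pi^*_{\MHM}[IC^H_Y[-n]] = [IC^H_N[-(n+r)]]. \]

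Next, I would apply Theorem \ref{thm.mhtsmpullb} with $M = IC^H_Y[-n]$ viewed as an element of $D^b MHM(Y)$, yielding
\[ T^*_y(T_\pi) \cap \pi^*_\BM MHT_{y*}[IC^H_Y[-n]] = MHT_{y*}(\pi^*_{\MHM}[IC^H_Y[-n]]). \]
Substituting the identification from the previous step into the right-hand side gives
\[ T^*_y(T_\pi) \cap \pi^*_\BM IT_{y*}(Y) = MHT_{y*}[IC^H_N[-(n+r)]] = IT_{y*}(N), \]
which is the asserted equality.

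There is no real obstacle here; the entire content is bundled into Theorem \ref{thm.mhtsmpullb} (which encodes Verdier-Riemann-Roch for $MHC_y$ combined with the twisted Todd transformation) and Lemma \ref{lem.smpullbic} (which encodes that smooth pullback takes intersection Hodge modules to intersection Hodge modules, up to the appropriate shift). The only step deserving care is bookkeeping of the shift $[-n]$ versus $[-(n+r)]$ in the definition of $IT_{y*}$, which must match up with the shift $[r]$ in Lemma \ref{lem.smpullbic}; this is exactly how it works out.
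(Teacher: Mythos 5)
Your proof is correct and follows essentially the same route as the paper: apply Theorem \ref{thm.mhtsmpullb} to $M = IC^H_Y[-n]$ and then identify $\pi^*_\MHM[IC^H_Y[-n]]$ with $[IC^H_N[-(n+r)]]$ via Lemma \ref{lem.smpullbic}. The shift bookkeeping you carry out explicitly is exactly the implicit content of the paper's two-line computation.
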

\begin{proof}
Using Theorem \ref{thm.mhtsmpullb} and
Lemma \ref{lem.smpullbic},
\begin{align*}
T^*_y (T_\pi) \cap \pi^*_\BM IT_{y*} (Y)
&= T^*_y (T_\pi) \cap \pi^*_\BM MHT_{y*} [IC^H_Y [-n]] 
 = MHT_{y*} (\pi^*_\MHM [IC^H_Y [-n]]) \\ 
&= MHT_{y*} [IC^H_N [-m]] 
  = IT_{y*} (N).
\end{align*}
\end{proof}

Given a closed algebraic embedding $Y\hookrightarrow X$,
a specialization map 
\begin{equation} \label{equ.spbmschuer}
\Sp_\BM: H^\BM_* (X) \longrightarrow H^\BM_* (N_Y X), 
\end{equation}
on Borel-Moore homology, where $N = N_Y X$ is the normal cone of $Y$ in $X$,
has been constructed by Verdier in \cite[\S 8]{verdierintcompl}.
As before, let $p:Z^\circ \to \cplx$ be the deformation to the
normal cone, obtained by restricting $p_Z: Z\to \cplx$.
It will be convenient to embed
the family $Z$ as a Zariski open dense subset into the
following family $W$:
The embedding of $Y$ in $X$ gives rise to an embedding
$Y\times 0 \hookrightarrow X\times 0 \hookrightarrow X\times \pr^1$.
Let $W = \Bl_{Y\times 0} (X\times \pr^1)$.
There is a flat morphism $p_W: W\to \pr^1$, whose
special fiber is given by
\[ p_W^{-1} (0) = p_Z^{-1} (0) = \Bl_Y X \cup_{\pr (N)} \pr (N\oplus 1). \]
Let $W^\circ = W - \Bl_Y X.$
Then $p_W$ restricts to a morphism
$p: W^\circ \to \pr^1,$
whose special fiber is
$p^{-1} (0) = N$.
The open complement $\pr^1 - \{ 0 \} \cong \cplx$ has preimage
$p^{-1} (\cplx) \cong X\times \cplx$.
As blow-ups are determined locally, 
the open dense embedding $X\times \cplx \subset X\times \pr^1$
induces an open dense embedding
$Z \subset W$ and an open dense embedding $Z^\circ \subset W^\circ$.
The advantage of $W$ over $Z$ is that the open complement
$\cplx$ of $\{ 0 \}$ in $\pr^1$ is contractible and has the structure
of a complex vector space, while neither is true for $\cplx^*$.
The factor projection
$\operatorname{pr}_1: X\times \cplx \to X$ induces a 
smooth pullback 
$\operatorname{pr}^*_{1,\BM}: H^\BM_* (X) \to
  H^\BM_{*+2} (X\times \cplx)$
on Borel-Moore homology.
(We continue to use real, not complex, indexing for Borel-Moore
homology.)
By the Thom isomorphism theorem, 
this suspension map is an isomorphism. Thus we may invert it and define
\begin{equation} \label{equ.deflimsto0}
\lim_{t\to 0} := \Sp_\BM \circ (\operatorname{pr}^*_{1,\BM})^{-1}: 
   H^\BM_{*+2} (X\times \cplx) 
  \longrightarrow H^\BM_* (N).  
\end{equation}

The closed embedding $i: N\hookrightarrow W^\circ$ is regular (with trivial
algebraic normal bundle pulled up from the trivial normal bundle
of $\{ 0 \}$ in $\pr^1$).
Thus there is a Gysin homomorphism
\[ i^! = i^*_\BM: H^\BM_{*+2} (W^\circ) \longrightarrow
   H^\BM_* (N). \]
As $N$ is a
hypersurface in $W^\circ$ defined globally as the zero set
$N = \{ p = 0 \},$ Theorem 1.5 of
Cappell-Maxim-Schürmann-Shaneson \cite{cmss}, applies and asserts:
\begin{prop}(Cappell, Maxim, Schürmann, Shaneson.) \label{prop.thm15cmss}
Let $Y\hookrightarrow X$ be a closed algebraic embedding with 
normal cone $N=N_Y X$ and associated deformation to the normal cone
$p: W^\circ \to \pr^1$.
Then the diagram 
\[ \xymatrix{
K_0 MHM (W^\circ) \ar[r]^{\psi'^H_p} \ar[d]_{MHT_{1*}} &
  K_0 MHM (N) \ar[d]^{MHT_{1*}} \\
H^\BM_{*+2} (W^\circ;\rat) \ar[r]^{i^!} & H^\BM_* (N;\rat)
} \]
commutes. (Actually, this holds more generally for $MHT_{y*}$, but we use
it only for $y=1$.)
\end{prop}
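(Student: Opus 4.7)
The plan is to invoke Theorem 1.5 of Cappell--Maxim--Sch\"urmann--Shaneson \cite{cmss}, verifying that its hypotheses are met in the present setting. The decisive input is that $N = p^{-1}(0)$ is a globally defined hypersurface in $W^\circ$, being the zero locus of the regular function obtained by restricting $p$ to the chart $\cplx = \pr^1 \setminus \{\infty\}$ (the part that matters for the Gysin restriction at the central fiber). The zero section $\{0\} \hookrightarrow \pr^1$ is a regular Cartier divisor with trivial normal bundle, which pulls back to the regular embedding $i: N \hookrightarrow W^\circ$ with trivial algebraic normal bundle.

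The proof strategy proceeds by lifting the statement to the refined level of the motivic Chern class transformation $MHC_y: K_0(MHM(-)) \to K^{\alg}_0(-)\otimes \intg[y^{\pm 1}]$. The core Hodge-theoretic input (established by Saito and exploited by Sch\"urmann) is that for a globally defined hypersurface $N = \{p = 0\}$ in a variety $W^\circ$, the K-theoretic hypersurface Gysin restriction $i^*_K$ intertwines with the shifted Hodge nearby cycle functor $\psi'^H_p$ on $K_0(MHM)$: schematically,
\[ i^*_K \circ MHC_y \;=\; MHC_y \circ \psi'^H_p, \]
after accounting for the (trivial) normal bundle contribution $\lambda_y(T^*_i) = 1$ via Sch\"urmann's $MHC_y$-Verdier-Riemann-Roch (Proposition \ref{prop.mhcvrrsmpullb} applied in the regular embedding variant). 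This encodes the compatibility of the Hodge and weight filtrations on $\psi'^H$ with the specialization to the central fiber.

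One then descends from the K-theoretic level to Borel-Moore homology by applying the twisted Todd class transformation $\td_{1+y}$. Using the Baum--Fulton--MacPherson compatibility of $\tau_*$ with Gysin restrictions for regular closed embeddings --- which, because the normal bundle of $i$ is trivial, forces the correction class $T^*_y(N_i) = 1$ to disappear --- one converts the K-theoretic identity into the stated commutativity
\[ i^! \circ MHT_{1*} \;=\; MHT_{1*} \circ \psi'^H_p \]
at the Borel-Moore level, specialized to $y = 1$.

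The main obstacle is the K-theoretic identification $i^*_K \circ MHC_y = MHC_y \circ \psi'^H_p$, which requires Saito's analysis of the Hodge filtration on nearby cycles together with Sch\"urmann's VRR-type extension of the Baum--Fulton--MacPherson framework to mixed Hodge modules; this is precisely the substantive content of \cite{cmss}. The remaining steps --- verifying that $N$ is a global hypersurface in $W^\circ$, that $i$ has trivial algebraic normal bundle, and that the descent via $\td_{1+y}$ goes through without extra correction factors --- are comparatively routine within the formalism set up earlier in the paper.
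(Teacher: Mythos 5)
Your proposal takes essentially the same approach as the paper: the paper proves this proposition simply by observing that $N = \{p = 0\}$ is a globally defined hypersurface in $W^\circ$ and then citing Theorem 1.5 of \cite{cmss}, which is exactly your opening step. The remainder of your write-up is a speculative sketch of the internals of the CMSS proof (and is somewhat loose — there is no ``regular embedding variant'' of Proposition \ref{prop.mhcvrrsmpullb} established in the paper), but since the paper treats the result as a black-box citation, that extra material is neither needed nor used.
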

For a complex variety $V$, $A_k (V)$ denotes the Chow group of
algebraic $k$-cycles in $V$ modulo rational equivalence.
\begin{lemma} \label{lem.restropenlims0isishriek}
Let $Y\hookrightarrow X$ be a closed algebraic embedding with 
normal cone $N=N_Y X$ and associated deformation
$p: W^\circ \to \pr^1$ to the normal cone.
Then the diagram
\[ \xymatrix{
H^\BM_{*+2} (W^\circ) \ar[rr]^{i^!} \ar[rd]_{j^*_\BM} & &
   H^\BM_* (N) \\
 & H^\BM_{*+2} (X\times \cplx) \ar[ru]_{\lim_{t\to 0}}
} \]
commutes on algebraic cycles, where $j^*_\BM$ denotes restriction of a Borel-Moore
cycle to an open subset, i.e. the diagram commutes on the image
of the cycle map $\cl: A_{*+1} (W^\circ) \to H^\BM_{2*+2} (W^\circ)$.
\end{lemma}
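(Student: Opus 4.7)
The strategy is to reduce the identity to a statement about Chow-group operations via the cycle map $\cl$, exploiting the compatibilities between each topological operation ($j^*_\BM$, $\pro^*_{1,\BM}$, $\Sp_\BM$, $i^!$) and its algebraic counterpart. Once these compatibilities are collected, the proof is a short formal chase.

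Suppose $\beta = \cl(V)$ for an algebraic cycle $V \in A_{*+1}(W^\circ)$. The inclusion $j: X \times \cplx \hookrightarrow W^\circ$ is open and hence flat, so $j^*_\BM$ commutes with the cycle map and $j^*_\BM \beta = \cl([V|_{X \times \cplx}])$. The flat pullback $\pro_1^*: A_*(X) \to A_{*+1}(X \times \cplx)$ is an isomorphism (homotopy invariance of Chow groups for the affine line bundle $\pro_1$), so there is a unique $V_0 \in A_*(X)$ with $[V|_{X \times \cplx}] = \pro_1^* V_0$. Using $\cl \circ \pro_1^* = \pro^*_{1,\BM} \circ \cl$, this gives $j^*_\BM \beta = \pro^*_{1,\BM} \cl(V_0)$, and from the definition (\ref{equ.deflimsto0}) we obtain
\[ \lim_{t \to 0} j^*_\BM \beta = \Sp_\BM \cl(V_0). \]

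The key algebro-topological input is that Verdier's $\Sp_\BM$ agrees on algebraic cycles with the Fulton--MacPherson algebraic specialization $\sigma_{Y/X}: A_*(X) \to A_*(N)$, i.e.\ $\Sp_\BM \circ \cl = \cl \circ \sigma_{Y/X}$; this is part of the construction in Verdier \cite{verdierintcompl} and is also recorded in Fulton's book \cite{fultonintth}. By definition, $\sigma_{Y/X}(V_0) = i^!_\alg \tilde V$ for \emph{any} lift $\tilde V \in A_{*+1}(W^\circ)$ of $\pro_1^* V_0$, the well-definedness following from the triviality of the normal bundle $\mathcal{N}_{N/W^\circ} = p^* \mathcal{N}_{\{0\}/\cplx}$, which forces $i^!_\alg$ to vanish on cycles supported inside $N$. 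Since $V$ itself restricts to $\pro_1^* V_0$ on $X \times \cplx$, it is a valid lift, and therefore $\sigma_{Y/X}(V_0) = [i^!_\alg V]$ in $A_*(N)$.

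Finally, the algebraic Gysin map for the regular embedding $i: N \hookrightarrow W^\circ$ is compatible with Verdier's topological Gysin via the cycle map: $i^! \circ \cl = \cl \circ i^!_\alg$ (Fulton--MacPherson; see \cite[Ch.~IV]{bfm1} and \cite[\S 10]{verdierintcompl}). Combining the steps,
\[ \lim_{t \to 0} j^*_\BM \beta = \Sp_\BM \cl(V_0) = \cl(\sigma_{Y/X}(V_0)) = \cl(i^!_\alg V) = i^! \cl(V) = i^! \beta, \]
as required.

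The main obstacle is verifying the two nontrivial algebro-topological compatibilities (that $\Sp_\BM$ agrees with $\sigma_{Y/X}$ on algebraic cycles, and that the topological $i^!$ agrees with $i^!_\alg$ after cycling). Both are in the literature, but a careful reader will want them stated precisely in the sign/shift conventions used here; the other compatibilities (open restriction, flat pullback) are standard properties of the cycle map. Once in place, the fact that $V$ itself serves as an algebraic lift eliminates any choice-of-lift ambiguity in Fulton's construction and makes the diagram commute cleanly.
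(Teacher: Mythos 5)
Your proof is correct and follows essentially the same route as the paper: both hinge on the lift-independence of the Chow-level specialization (stemming from the triviality of the algebraic normal bundle of $N$ in $W^\circ$, which kills $i^!_A \circ i_*$) and on the compatibility of open restriction, smooth pullback, specialization, and the divisor Gysin map with the cycle map. The only cosmetic difference is that you run an element-wise chase through Fulton's characterization of $\sigma_{Y/X}$ as ``apply $i^!_A$ to any lift,'' whereas the paper first constructs the Chow-level map $\lim^A_{t\to 0}$ from the localization exact sequence $A_{*+1}(N) \to A_{*+1}(W^\circ) \to A_{*+1}(X\times\cplx) \to 0$ and then transfers the whole commutative triangle through $\cl$.
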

\begin{proof}
There is a short exact sequence
\[ A_{*+1} (N) \stackrel{i_*}{\longrightarrow} 
   A_{*+1} (W^\circ) \stackrel{j^*_A}{\longrightarrow} 
   A_{*+1} (X\times \cplx) \longrightarrow 0, \]
where the map $i_*$ is proper pushforward under the proper map
$i: N\hookrightarrow W^\circ$, and $j^*_A$ is restriction to an open subset.
Let $i_A^!: A_{*+1} (W^\circ) \to A_* (N)$ denote the
Gysin map for divisors. Then the composition
$i_A^! \circ i_*$ is zero, since the algebraic normal bundle of $N$ in $W^\circ$
is trivial.
(Intuitively, the triviality of the normal bundle implies
that any cycle in $N$ can be pushed off of $N$ in $W^\circ$ and thus its
transverse intersection with $N$ is zero.)
By exactness, we may identify $A_{*+1} (X\times \cplx)$ with the
cokernel of $i_*$. Then $\im i_* \subset \ker i_A^!$ implies that
$i_A^!$ induces uniquely a map
\[ \lim_{t\to 0}^A: A_{*+1} (X\times \cplx) \longrightarrow
  A_* (N) \]
such that
\begin{equation}
\xymatrix{
A_{*+1} (W^\circ) \ar[rr]^{i_A^!} \ar[rd]_{j^*_A} & &
   A_* (N) \\
 & A_{*+1} (X\times \cplx) \ar[ru]_{\lim^A_{t\to 0}}
} 
\end{equation}
commutes.
Note that this is the diagram in the statement of the lemma, 
only on Chow instead of Borel-Moore.
To finish the proof, one uses that
the Gysin map of a regular embedding, as well as smooth pullback,
commute with the cycle map from Chow to Borel-Moore.
The Chow level specialization map
$\Sp_A: A_* (X) \longrightarrow A_* (N)$
is defined to be the composition
\[ A_* (X) \stackrel{\operatorname{pr}^*_{1,A}}{\longrightarrow}
   A_{*+1} (X\times \cplx) \stackrel{\lim_{t\to 0}^A}{\longrightarrow}
   A_* (N), \]
see \cite[p. 89, Proof of Prop. 5.2]{fultonintth} or
\cite[p. 15, (25)]{schuermannvrrcsmacp}, or
\cite[p. 198]{verdierintcompl}, and is known to
commute with the cycle map.
\end{proof}

\begin{prop} \label{prop.spbmityisitn}
Let $X,Y$ be pure-dimensional
compact complex algebraic varieties.
If $g:Y\hookrightarrow X$ is an upwardly normally nonsingular embedding
with algebraic normal bundle $N_Y X$, then
\[ \Sp_\BM IT_{1*} (X) = IT_{1*} (N_Y X). \]
\end{prop}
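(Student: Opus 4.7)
The plan is to realize $\Sp_\BM$ as a Gysin restriction to a hypersurface inside the compactified deformation $W^\circ$, and then invoke the Cappell--Maxim--Sch\"urmann--Shaneson hypersurface formula (Proposition \ref{prop.thm15cmss}) together with Proposition \ref{prop.psihpichzisichn} to swap that Gysin restriction for $\psi'^H_p$ applied to the intersection Hodge module. Concretely, set $m = \dim_\cplx X$ so that $\dim_\cplx W^\circ = m+1$, and consider the chain
\[
\Sp_\BM IT_{1*}(X) \;=\; \lim_{t\to 0}\pro_{1,\BM}^{*}\,IT_{1*}(X)
\;=\; \lim_{t\to 0} j^{*}_\BM IT_{1*}(W^\circ)
\;=\; i^{!}\,IT_{1*}(W^\circ),
\]
where $j:X\times \cplx \hookrightarrow W^\circ$ denotes the Zariski-open inclusion of the complement of $N$ and $i:N\hookrightarrow W^\circ$ the closed hypersurface inclusion.

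First, I would verify the two equalities involving smooth pullback. For the first, Proposition~\ref{prop.itysmpullb} applied to the trivial line bundle $\pro_1:X\times\cplx\to X$ gives $\pro^{*}_{1,\BM} IT_{1*}(X) = IT_{1*}(X\times\cplx)$, since its cohomological $L$-class is $1$. For the second, $j$ is an open immersion, hence smooth of relative dimension $0$ with trivial relative tangent bundle; moreover $j^{*}IC^H_{W^\circ} = IC^H_{X\times\cplx}$ by Lemma~\ref{lem.smpullbic}, so again Proposition~\ref{prop.itysmpullb} yields $j^{*}_\BM IT_{1*}(W^\circ)=IT_{1*}(X\times\cplx)$. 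The passage from $\lim_{t\to 0}j^{*}_\BM$ to $i^{!}$ is then precisely the content of Lemma~\ref{lem.restropenlims0isishriek}, whose hypothesis that one works with algebraic cycle representatives is supplied by Remark~\ref{rem.itisalgebraic}.

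Next I would compute $i^{!}IT_{1*}(W^\circ)$. Writing $IT_{1*}(W^\circ) = MHT_{1*}[IC^H_{W^\circ}[-(m+1)]]$ and applying Proposition~\ref{prop.thm15cmss} (with $y=1$) turns this into
\[
i^{!}IT_{1*}(W^\circ) \;=\; MHT_{1*}\,\psi'^{H}_{p}\bigl[IC^H_{W^\circ}[-(m+1)]\bigr].
\]
Proposition~\ref{prop.psihpichzisichn} was stated for $Z^\circ$, but since $Z^\circ\subset W^\circ$ is Zariski-open of the same pure dimension we have $IC^H_{W^\circ}|_{Z^\circ} = IC^H_{Z^\circ}$, and since $\psi'^{H}_{p}$ depends only on a neighborhood of $N=p^{-1}(0)\subset Z^\circ$, it follows that $\psi'^{H}_{p}IC^H_{W^\circ}=\psi'^{H}_{p}IC^H_{Z^\circ}=IC^H_N[1]$. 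Substituting gives $MHT_{1*}[IC^H_N[-m]]=IT_{1*}(N)$, which completes the argument.

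The main obstacle is a conceptual one rather than a technical one: one must be careful that the Zariski-open extension from $Z^\circ$ to $W^\circ$ really does not alter either the intersection Hodge module near $N$ or the nearby cycle functor, since the CMSS formula requires a \emph{global} defining function for the hypersurface and is therefore naturally formulated on $W^\circ$, whereas Proposition~\ref{prop.psihpichzisichn} was proved on $Z^\circ$. Once this locality remark is in place, all other steps reduce to bookkeeping with smooth pullback Verdier--Riemann--Roch (Proposition~\ref{prop.itysmpullb}) and the cycle-level compatibility of $i^{!}$ with open restriction and specialization.
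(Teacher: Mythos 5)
Your proof is correct and follows essentially the same route as the paper: define $\lim_{t\to 0}$, use the smooth-pullback Verdier--Riemann--Roch to reduce to $IT_{1*}(X\times\cplx)=j^*_\BM IT_{1*}(W^\circ)$, invoke Lemma~\ref{lem.restropenlims0isishriek} via Remark~\ref{rem.itisalgebraic} to replace $\lim_{t\to 0}j^*_\BM$ by the hypersurface Gysin $i^!$, apply the CMSS Proposition~\ref{prop.thm15cmss}, and then apply Proposition~\ref{prop.psihpichzisichn}. Your explicit locality remark justifying the substitution of $W^\circ$ for $Z^\circ$ in Proposition~\ref{prop.psihpichzisichn} is actually spelled out more carefully than in the paper, which handles this tacitly; the only small imprecision is that you cite Proposition~\ref{prop.itysmpullb} for the open immersion $j$ even though that proposition is stated for vector bundle projections --- the paper instead appeals directly to the general fact that $MHT_{y*}$ commutes with restriction to open subsets, which is the cleaner reference here.
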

\begin{proof}
By Definition (\ref{equ.deflimsto0}),
$\Sp_\BM IT_{1*} (X) 
  = \lim_{t\to 0}~ \operatorname{pr}^*_{1,\BM} IT_{1*} (X).$
We regard $\pi := \operatorname{pr}_1: X\times \cplx \to X$ as the projection of
the trivial line bundle $1_X$ over $X$.
Then $T_\pi = \pi^* (1_X) = 1_{X\times \cplx}$ and
hence, using (\ref{equ.todd1ishirzelcohom}),
$T_1^* (T_\pi) = T_1^* (1_{X\times \cplx}) =
  L^* (1_{X\times \cplx}) = 1.$
By Proposition \ref{prop.itysmpullb},
\begin{align*}
\pi^*_\BM IT_{1*} (X)
= 1 \cap \pi^*_\BM IT_{1*} (X) 
= T_1^* (T_\pi) \cap \pi^*_\BM IT_{1*} (X) 
= IT_{1*} (X\times \cplx).
\end{align*}
With $n=\dim_\cplx X$, we thus have
\[ \Sp_\BM IT_{1*} (X) = 
 \lim_{t\to 0} IT_{1*} (X\times \cplx) =
  \lim_{t\to 0}
 MHT_{1*} (IC^H_{X\times \cplx} [-n-1]). \]
Let $j$ denote the open embedding
$j: X\times \cplx \hookrightarrow W^\circ$ associated to the
deformation to the normal bundle;
$j^{-1} IC^H_{W^\circ} = IC^H_{X\times \cplx}.$
Since the transformation $MHT_{y*}$ commutes with restriction
to open subsets,
\[ \Sp_\BM IT_{1*} (X) =
 \lim_{t\to 0} j^*_\BM MHT_{1*} (IC^H_{W^\circ} [-n-1]). \]
By Remark \ref{rem.itisalgebraic}, 
the class $MHT_{1*} (IC^H_{W^\circ} [-n-1]) = IT_{1*} (W^\circ)$
is algebraic.
Hence, by Lemma \ref{lem.restropenlims0isishriek},
\[ \lim_{t\to 0} j^*_\BM MHT_{1*} (IC^H_{W^\circ} [-n-1])
 = i^! MHT_{1*} (IC^H_{W^\circ} [-n-1]). \]
By the CMSS Proposition \ref{prop.thm15cmss},
\[ i^! MHT_{1*} (IC^H_{W^\circ} [-n-1]) =
  MHT_{1*} \psi'^H_p (IC^H_{W^\circ} [-n-1]). \]
Finally, by Proposition \ref{prop.psihpichzisichn} (which
requires upward normal nonsingularity of the embedding, 
pure-dimensionality and compactness),
\[ \psi'^H_p (IC^H_{W^\circ} [-n-1]) 
= \psi^H_p [1] (IC^H_{W^\circ} [-n-1])
= \psi^H_p (IC^H_{W^\circ} [-n])
= IC^H_N [-n]. \]
We conclude that
$\Sp_\BM IT_{1*} (X) = MHT_{1*} (IC^H_N [-n]) = IT_{1*} (N),$
since $\dim_\cplx N =n$.
\end{proof}
The following cap product formula for homological Gysin maps is standard, see e.g.
\cite[Ch. V, \S 6.2 (c), p. 35]{boardman}) or
\cite[Lemma 5, p. 613]{banaglcappshan}.
\begin{lemma} \label{lem.smpullbbmcap}
Let $Y$ be a complex algebraic variety and
let $\pi:N\to Y$ be an
algebraic vector bundle projection.
If $\eta \in H^* (Y)$ and $a\in H^\BM_* (Y)$
are classes in even degrees, then
\[ \pi^*_\BM (\eta \cap a) = \pi^* (\eta) \cap \pi^*_\BM (a). \]
\end{lemma}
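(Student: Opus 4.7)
The plan is to deduce the formula from the standard projection formula for smooth pullback on Borel--Moore homology, applied to the vector bundle projection $\pi: N \to Y$, which is a smooth morphism of complex relative dimension $r = \operatorname{rk}_\cplx N$. Two equivalent viewpoints are available, and I would make one of them explicit.

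First, I would use the sheaf-theoretic description: for a smooth morphism $\pi$ of relative complex dimension $r$ there is a canonical isomorphism $\pi^![-2r](-r) \cong \pi^*$ on $D^b_c(Y)$, so that the smooth pullback $\pi^*_\BM: H^\BM_k(Y) \to H^\BM_{k+2r}(N)$ is induced on dualizing complexes in a manner compatible with the Yoneda-style cap pairing between $H^*$ and $H^\BM_*$. The general projection formula $\pi^*_\BM(\eta \cap a) = \pi^*(\eta) \cap \pi^*_\BM(a)$ for a smooth morphism then follows from the fact that ordinary cohomology pullback $\pi^*$ is a ring homomorphism compatible with cup product and from the naturality of the cap pairing. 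This is precisely the content of \cite[Ch. V, \S 6.2 (c), p. 35]{boardman}. The even-degree hypothesis is only needed to avoid potentially awkward Koszul signs in the formula; with all classes concentrated in even total degree, no signs appear.

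Alternatively, and equivalently, since $\pi$ is a vector bundle, $\pi^*_\BM$ is the Thom isomorphism given by cap product with the Borel--Moore Thom class $u^\BM \in H^\BM_{*+2r}(N, N\setminus Y)$ of $\pi$ (pulled back appropriately along the zero section). The formula then reduces to the associativity and compatibility of cap products and the multiplicativity of the Thom class, in the form used in \cite[Lemma 5, p. 613]{banaglcappshan}. The main (minor) obstacle is a bookkeeping one: verifying that the sign conventions in either reference match those used in the rest of the paper, a check facilitated by the even-degree assumption on $\eta$ and $a$, which makes the formula sign-free.
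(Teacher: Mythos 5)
The paper gives no proof of this lemma, citing exactly the same two references you identify (Boardman Ch.~V \S6.2 and \cite[Lemma~5]{banaglcappshan}) as establishing the formula as standard; your elaboration — projection formula for smooth pullback on Borel--Moore homology, equivalently via the Thom isomorphism inverse to zero-section Gysin restriction, with the even-degree hypothesis eliminating Koszul signs — is a correct account of why those references apply. One small imprecision: the Thom class lives in compactly-supported (or relative ordinary) cohomology $H^{2r}(N, N\setminus Y)$, not in Borel--Moore homology, so writing $u^\BM \in H^\BM_{*+2r}(N, N\setminus Y)$ is a slip; the cap product against the cohomological Thom class is what implements the isomorphism.
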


\begin{prop} \label{prop.ityrestrzerosect}
Let $Y$ be a pure-dimensional complex algebraic variety and
let $k: Y \hookrightarrow N$ be the zero section of an
algebraic vector bundle projection $\pi:N\to Y$. Then
\[ k^! IT_{y*} (N) = T^*_y (N) \cap IT_{y*} (Y). \]
\end{prop}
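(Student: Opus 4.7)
My plan is to derive the result directly from Proposition \ref{prop.itysmpullb} by applying the Gysin map $k^!$ to both sides and exploiting the fact that $k$ is a section of $\pi$.

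First, I would recall the two structural ingredients I need. The relative tangent bundle $T_\pi$ of a vector bundle projection $\pi:N\to Y$ is canonically identified with the pullback $\pi^* N$, because the tangent space at any point of a fiber (which is a vector space) is canonically the fiber itself. Therefore, pulling back along the zero section gives $k^* T_\pi \cong k^* \pi^* N = N$ as algebraic vector bundles over $Y$, and by naturality of Hirzebruch's cohomological class,
\[ k^* T^*_y(T_\pi) = T^*_y(k^* T_\pi) = T^*_y(N). \]
Second, since $\pi\circ k = \id_Y$, functoriality of Gysin maps for \emph{lci} morphisms yields $k^!\circ \pi^*_{\BM} = (\pi k)^!_{\BM} = \id$ on $H^{\BM}_*(Y;\rat)$; equivalently, this is the standard fact that the Thom isomorphism and the Gysin restriction to the zero section are mutually inverse.

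Now I would apply $k^!$ to the identity
\[ T^*_y(T_\pi) \cap \pi^*_{\BM} IT_{y*}(Y) = IT_{y*}(N) \]
from Proposition \ref{prop.itysmpullb}. Using the projection formula for the Gysin map of a regular closed embedding (as in (\ref{equ.gysinofcap}); all involved classes lie in even cohomological degrees, so no sign issues arise), I move the cohomology class past $k^!$:
\[ k^! IT_{y*}(N) = k^!\bigl(T^*_y(T_\pi) \cap \pi^*_{\BM} IT_{y*}(Y)\bigr) = k^* T^*_y(T_\pi) \cap k^! \pi^*_{\BM} IT_{y*}(Y). \]
Substituting $k^* T^*_y(T_\pi) = T^*_y(N)$ and $k^! \pi^*_{\BM} = \id$ yields
\[ k^! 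IT_{y*}(N) = T^*_y(N) \cap IT_{y*}(Y), \]
as desired.

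The argument is essentially formal once Proposition \ref{prop.itysmpullb} is in hand, so there is no serious obstacle. The only point requiring mild care is the verification that the projection formula $k^!(\alpha\cap\beta) = k^*\alpha\cap k^!\beta$ and the functoriality relation $k^!\pi^*_{\BM}=\id$ hold for Verdier's Gysin map on Borel-Moore homology of possibly singular algebraic varieties; both are standard properties of lci Gysin maps (see \cite[\S10]{verdierintcompl} and \cite[Ch.\ IV, \S4]{bfm1}), and the parity of degrees ensures the projection formula carries no signs.
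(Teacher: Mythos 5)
Your proof is correct and follows essentially the same strategy as the paper: start from Proposition~\ref{prop.itysmpullb}, apply $k^!$, use $T_\pi = \pi^*N$, and cancel $k^!\pi^*_\BM = \id$ via the Thom isomorphism. The only variation is cosmetic: you move the cohomology class past $k^!$ using a projection formula for the regular-embedding Gysin map on Borel--Moore homology, whereas the paper instead pulls $T^*_y(N)$ inside $\pi^*_\BM$ using its Lemma~\ref{lem.smpullbbmcap} and then cancels $k^!_\BM\pi^*_\BM$ on the resulting expression; the paper's route deliberately leans only on the explicitly-established projection formula for smooth pullback, while yours needs the (standard but not locally proved) projection formula for $k^!_\BM$, which you correctly flag and reference.
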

\begin{proof}
By the Thom isomorphism theorem, 
the Gysin pullback $k^! = k^!_\BM$ and the smooth pullback $\pi^*_\BM$
are inverse isomorphisms on Borel-Moore homology, see
Chriss-Ginzburg \cite[Prop. 2.6.43, p. 107]{chrissginzburg}.
The relative tangent bundle of $\pi$
is given by $T_\pi = \pi^* N$. Since $T^*_y$ is a natural characteristic
class in cohomology, 
$T^*_y (T_\pi) = \pi^* T^*_y (N).$
Thus, using Proposition \ref{prop.itysmpullb} and
Lemma \ref{lem.smpullbbmcap}, we get
\begin{align*}
k^!_\BM IT_{y*} (N)
&= k^!_\BM (T^*_y (T_\pi) \cap \pi^*_\BM IT_{y*} (Y)) 
 = k^!_\BM (\pi^* T^*_y (N) \cap \pi^*_\BM IT_{y*} (Y)) \\
&= k^!_\BM \pi^*_\BM (T^*_y (N) \cap IT_{y*} (Y)) 
 = T^*_y (N) \cap IT_{y*} (Y).
\end{align*}
\end{proof}

\begin{lemma} \label{lem.gshriekisrestr0spbm}
Let $g:Y\hookrightarrow X$ be a closed regular embedding of possibly
singular varieties.
Let $N=N_Y X$ denote the algebraic normal bundle and
let $c$ be the complex codimension of $Y$ in $X$.
The Gysin map $g^!: H^\BM_* (X) \to H^\BM_{*-2c} (Y)$ factors
as
\[ \xymatrix{
H^\BM_{*} (X) \ar[rr]^{g^!} \ar[rd]_{\Sp_\BM} & &
   H^\BM_{*-2c} (Y) \\
 & H^\BM_* (N) \ar[ru]_{k^!},
} \]
where $k^!$ is the Gysin restriction to the zero section
and $\Sp_\BM$ is Verdier's Borel-Moore specialization map
(\ref{equ.spbmschuer}).
\end{lemma}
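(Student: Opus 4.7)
The statement is, in substance, Verdier's construction of $g^!$ for a closed regular algebraic embedding, rephrased in our notation. The plan is to appeal to \cite[\S 10]{verdierintcompl} (and equivalently \cite[Ch. IV, \S 4]{bfm1}), where the Gysin map is built precisely through deformation to the algebraic normal bundle, so that the desired factorization is immediate from the construction.

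More explicitly, I would proceed as follows. Form the deformation to the algebraic normal bundle $p: Z^\circ \to \cplx$ associated to $g$, with $p^{-1}(0) = N$ and $p^{-1}(t) \cong X$ for $t \neq 0$. Verdier's specialization $\Sp_\BM: H^\BM_*(X) \to H^\BM_*(N)$ is defined using this deformation (via smooth pullback to $X \times \cplx$ followed by the hypersurface Gysin restriction $i^!$ to the central fiber, as recalled in (\ref{equ.deflimsto0})). For the zero section $k: Y \hookrightarrow N$ of the algebraic vector bundle $\pi_N: N \to Y$, the Thom isomorphism theorem (see \cite[Prop. 2.6.43, p. 107]{chrissginzburg}, as already used in the proof of Proposition \ref{prop.ityrestrzerosect}) identifies $k^!$ with the inverse of smooth pullback $\pi_{N,\BM}^*$. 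Verdier's definition of $g^!$ is then the composition
\[
g^! \;=\; k^! \circ \Sp_\BM \;=\; (\pi_{N,\BM}^*)^{-1} \circ \Sp_\BM,
\]
so that the factorization asserted in the lemma is literally the definition.

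No serious obstacle arises once Verdier's construction is invoked. The only point demanding care, were one to reprove this from scratch, is the independence of the construction from choices in the deformation space (in particular the passage between $Z^\circ$ and the projective compactification $W^\circ$ used in Proposition \ref{prop.spbmityisitn}); this independence is established in \cite[\S\S 8--10]{verdierintcompl} and tacitly relied upon elsewhere in this paper, so we simply cite it here.
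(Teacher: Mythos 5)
Your proposal is correct and takes essentially the same route as the paper: the paper's proof likewise simply invokes Verdier's description of $g^!$ from \cite[p.~222]{verdierintcompl} together with the Thom isomorphism identification $(\pi^*_\BM)^{-1} = k^!_\BM$. The remarks about independence of the deformation space are harmless extra context but not needed for the paper's one-line argument.
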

\begin{proof}
This is simply Verdier's description of the Gysin map as given
in \cite[p. 222]{verdierintcompl},
observing that $(\pi^*_\BM)^{-1} = k^!_\BM$ according to the
Thom isomorphism theorem on Borel-Moore homology.
\end{proof}

\begin{thm} \label{thm.it1classgysin}
Let $X,Y$ be pure-dimensional
compact complex algebraic varieties and
let $g: Y \hookrightarrow X$  
be an upwardly normally nonsingular embedding 
(Definition \ref{def.upwardlynns}). 
Let $N = N_Y X$ be the algebraic normal bundle of $g$
and let $\nu$ denote the topological normal bundle of the
topologically normally nonsingular inclusion underlying $g$.
Then
\[ g^! IT_{1,*} (X) = L^* (N) \cap IT_{1,*} (Y)
      = L^* (\nu) \cap IT_{1,*} (Y). \]
\end{thm}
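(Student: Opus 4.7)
The plan is to chain together three previously established results, with essentially no new computation required beyond unraveling definitions. All the serious analytic and Hodge-theoretic content has already been absorbed into the proofs of Propositions \ref{prop.spbmityisitn} and \ref{prop.ityrestrzerosect}.

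First I would invoke Lemma \ref{lem.gshriekisrestr0spbm} to factor Verdier's Borel-Moore Gysin map of the closed regular embedding $g$ as
\[ g^! \;=\; k^! \circ \Sp_\BM \;:\; H^\BM_*(X) \longrightarrow H^\BM_*(N) \longrightarrow H^\BM_{*-2c}(Y), \]
where $k:Y\hookrightarrow N$ is the zero-section of the algebraic normal bundle $N = N_Y X$ and $\Sp_\BM$ is the Borel-Moore specialization map. This splits the problem into two independent steps, matching the two ingredients to which we have direct access.

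Next, since $g$ is upwardly normally nonsingular and $X,Y$ are pure-dimensional and compact, Proposition \ref{prop.spbmityisitn} applies verbatim and yields $\Sp_\BM IT_{1*}(X) = IT_{1*}(N)$. Applying $k^!$ to both sides and then invoking Proposition \ref{prop.ityrestrzerosect} with $y=1$ (which is allowed because $Y$ is pure-dimensional and $\pi:N\to Y$ is an algebraic vector bundle) gives
\[ g^! IT_{1*}(X) \;=\; k^! IT_{1*}(N) \;=\; T^*_1(N)\cap IT_{1*}(Y) \;=\; L^*(N)\cap IT_{1*}(Y), \]
where the last equality is the identification $T^*_1 = L^*$ recorded in (\ref{equ.todd1ishirzelcohom}). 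This establishes the first of the two asserted equalities.

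For the second equality, I would use that upward normal nonsingularity entails tightness (built into Definition \ref{def.upwardlynns} via Definition \ref{def.tightemb}). Tightness provides an isomorphism between $\nu$ and the underlying topological vector bundle of $N$. Since the cohomological Hirzebruch class $L^*$ is determined by the rational Pontrjagin classes of the underlying oriented real bundle, and these are topological invariants, we conclude $L^*(N) = L^*(\nu)$ in $H^*(Y;\rat)$, which finishes the proof. The only point requiring any vigilance — and hence what I would check most carefully — is that the precise hypotheses of Propositions \ref{prop.spbmityisitn} and \ref{prop.ityrestrzerosect} are met, in particular that the upward normal nonsingularity hypothesis, rather than mere tightness, is what is actually needed to drive the specialization step via the analysis of $\psi^H_p$ on $IC^H_{Z^\circ}$ carried out earlier.
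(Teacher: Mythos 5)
Your proposal is correct and follows essentially the same route as the paper's proof: factor $g^!=k^!\circ\Sp_\BM$ via Lemma \ref{lem.gshriekisrestr0spbm}, then apply Proposition \ref{prop.spbmityisitn} and Proposition \ref{prop.ityrestrzerosect} (with $y=1$), and finish with $T^*_1=L^*$ and the tightness-induced bundle isomorphism $N\cong\nu$. Your remark about checking the hypotheses is also well placed, since the paper explicitly flags upward normal nonsingularity (not just tightness) as what Proposition \ref{prop.spbmityisitn} requires.
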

\begin{proof}
By Lemma \ref{lem.gshriekisrestr0spbm},
$g^! IT_{1,*} (X) = k^! \Sp_\BM IT_{1,*} (X).$
Proposition \ref{prop.spbmityisitn}, which
requires upward normal nonsingularity of the embedding 
(as well as pure-dimensionality),
yields
$k^! \Sp_\BM IT_{1,*} (X) = k^! IT_{1,*} (N),$
while by Proposition \ref{prop.ityrestrzerosect},
$k^! IT_{1,*} (N) = T^*_1 (N) \cap IT_{1,*} (Y).$
Finally, we recall that $T^*_1 (N) = L^* (N)$, (\ref{equ.todd1ishirzelcohom}).
Since $g$ is tight, there is a bundle isomorphism $N\cong \nu$ of topological
vector bundles. Hence $L^* (N)=L^* (\nu)$.
\end{proof}

\end{document}